\newtheorem{theorem}{Theorem}
\newtheorem{lemma}[theorem]{Lemma}
\newtheorem{proposition}[theorem]{Proposition}
\newtheorem{conjecture}[theorem]{Conjecture}
\theoremstyle{definition}
\theoremstyle{remark}
\newtheorem{remark}[theorem]{Remark}
\numberwithin{equation}{section}
\numberwithin{theorem}{section}
\def\A{{\mathcal A}}
\def\AA{{\mathbb A}}
\def\C{{\mathbb C}}
\def\CC{{\mathcal C}}
\def\D{{\mathfrak d}}
\def\FF{{\mathcal F}}
\def\FFF{{\mathcal F}}
\def\G{{\mathcal G}}
\def\H{\mathcal H}
\def\M{{\mathcal M}}
\def\N{\mathbb N}
\def\O{{\mathcal O}}
\def\P{{\mathcal P}}
\def\PP{{\mathcal P}}
\def\Phhi{\Phi}  
\def\Pssi{\Psi}
\def\SS{{\mathcal S}}
\def\TE{{\mathcal T}}
\def\T{{\mathcal T}}
\def\UU{\overline{\A}}
\def\X{{\mathcal X}}
\def\Y{{\mathcal Y}}
\def\Z{{\mathbb Z}}
\def\d{\mathbf d}
\def\g{\mathfrak g}
\def\gl{\mathfrak g\mathfrak l}
\def\sl{\mathfrak s\mathfrak l}
\def\h{\mathfrak h}
\def\one{\mathbf 1}
\def\phhi{{\varphi}}
\def\pssi{{\psi}}
\def\thetta{{\theta}}
\def\wB{{\widetilde{B}}}
\def\hB{{\widehat{B}}}
\def\wx{{\widetilde{\bf x}}}
\def\x{{\bf x}}
\def\Ad{\operatorname{Ad}}
\def\End{\operatorname{End}}
\def\Id{{\operatorname {Id}}}
\def\Mat{\operatorname{Mat}}
\def\Poi{{\{\cdot,\cdot\}}}
\def\Tr{\operatorname{Tr}}
\def\bangle{\genfrac{\langle}{\rangle}{0pt}{}}
\def\deg{{\operatorname{deg}}}
\def\diag{\operatorname{diag}}
\def\dim{\operatorname{dim}}
\def\dnabla{{\raisebox{2pt}{$\bigtriangledown$}}\negthinspace}
\def\rank{\operatorname{rank}}
\def\romon{\mbox{\tiny\rm I}}
\def\romtw{\mbox{\tiny\rm II}}
\def\romth{\mbox{\tiny\rm III}}
\def\romfo{\mbox{\tiny\rm IV}}
\def\romfi{\mbox{\tiny\rm V}}
\def\romsi{\mbox{\tiny\rm VI}}
\def\unabla{\boldsymbol\nabla}
\def\:{{:\ }}
\begin{document}

\dedicatory
{Dedicated to the memory of Andrei Zelevinsky}

\title[Exotic cluster structure on $SL_n$]
{Exotic cluster structures on $SL_n$: the Cremmer--Gervais case}

\author{M. Gekhtman}

\address{Department of Mathematics, University of Notre Dame, Notre Dame,
IN 46556}
\email{mgekhtma@nd.edu}

\author{M. Shapiro}
\address{Department of Mathematics, Michigan State University, East Lansing,
MI 48823}
\email{mshapiro@math.msu.edu}

\author{A. Vainshtein}
\address{Department of Mathematics \& Department of Computer Science, University of Haifa, Haifa,
Mount Carmel 31905, Israel}
\email{alek@cs.haifa.ac.il}

\begin{abstract} This is the second paper in the series of papers dedicated to
the study of natural cluster structures in the rings of regular functions on simple complex Lie groups and Poisson--Lie
structures compatible with these cluster structures.
According to our main conjecture, each class in the Belavin--Drinfeld classification of Poisson--Lie structures on $\G$ corresponds to a cluster structure in $\O(\G)$. 
We have shown before that this conjecture holds for any $\G$ in the case of the standard Poisson--Lie structure   and for all Belavin-Drinfeld classes in $SL_n$, $n<5$.
In this paper we establish it for the Cremmer--Gervais Poisson--Lie structure on $SL_n$, which is the least similar to the standard one. 
\end{abstract}

\subjclass[2000]{53D17, 13F60}
\keywords{Poisson--Lie group,  cluster algebra, Belavin--Drinfeld triple}

\maketitle

\tableofcontents

\section{Introduction}
In \cite{GSVMMJ} we initiated a systematic study of
multiple cluster structures in the rings of regular functions
on simple Lie groups following an approach developed and implemented 
in \cite{GSV1, GSV5, GSVb} 
for constructing cluster structures 
on algebraic varieties. It is based on the notion of a Poisson bracket compatible with a cluster structure. The key point is that if an algebraic 
Poisson variety $\left ( \mathcal{M}, \Poi\right )$ possesses a  coordinate chart
that consists of regular functions whose logarithms have pairwise constant Poisson brackets, 
then one can try to use this chart to define a cluster structure $\CC_\M$ compatible with $\Poi$. Algebraic 
structures corresponding to $\CC_\M$ (the cluster algebra and the upper cluster algebra)
are closely related 
to the ring $\O(\M)$ of regular functions on $\mathcal{M}$. 
In fact, under certain rather mild conditions, $\O(\M)$ can be obtained by tensoring one of these
algebras with $\C$.

This construction was applied in \cite[Ch.~4.3]{GSVb} to double Bruhat cells in semisimple Lie groups
equipped with (the restriction of) the {\em standard\/} Poisson--Lie structure. It was shown that
the resulting cluster structure coincides with the one built in \cite{CAIII}. Recall that it was proved in
\cite{CAIII} that the corresponding upper cluster algebra is isomorphic to the ring of regular functions on 
the double Bruhat cell. Moreover, any cluster variable is a regular function on the whole Lie
group. Taking this into account, and since the open double Bruhat cell is dense in the corresponding Lie group, 
one can equip  the ring of regular functions on the Lie group with the same cluster structure.
The standard Poisson--Lie structure is a particular case of Poisson--Lie structures corresponding to quasi-triangular
Lie bialgebras. Such structures are associated with solutions to the classical Yang--Baxter equation (CYBE).
Their complete classification was obtained by Belavin and Drinfeld in \cite{BD}. In \cite{GSVMMJ} we conjectured that any such solution gives rise to a compatible cluster structure on the Lie group
  and provided several examples supporting this conjecture by showing that it holds true for the class of the standard Poisson--Lie  structure in any simple complex Lie group, and for 
 the whole Belavin--Drinfeld classification in $SL_n$ for $n=2,3,4$. We call the cluster structures
 associated with the non-trivial Belavin--Drinfeld data {\it exotic}.

In this paper, we prove the conjecture of \cite{GSVMMJ} in the case of the 
Cremmer--Gervais Poisson structure on $SL_n$. We chose to consider this case because, in a sense, the Poisson structure in question differs the most from the standard: the discrete data featured in the Belavin--Drinfeld classification is trivial in the standard case and has the ``maximal size'' in the Cremmer--Gervais case. Our result allows to equip $SL_n$ with a new cluster structure.
In fact, we construct a new cluster structure $\CC_{CG}$ for the affine space $\Mat_n$ of $n\times n$ matrices and then adapt it
to the $GL_n$ and $SL_n$ case using first localization and then specialization.
In Section 2 below, we collect the necessary information on cluster algebras and Poisson--Lie groups and formulate the main conjecture from \cite{GSVMMJ}. In Section 3, we present the definition of the Cremmer--Gervais Poisson bracket, formulate our main result, introduce the cluster structure $\CC_{CG}$ and outline the proof of the main theorem by breaking it into a series of intermediate results about $\CC_{CG}$ and showing how the final part of the proof ---  the fact that the ring of regular functions is isomorphic to the upper cluster algebra associated with $\CC_{CG}$ ---
 follows from them. 

The first result we need is the existence of a log-canonical coordinate system on $GL_n$ with respect to the natural extension of the Cremmer-Gervais Poisson structure. The proof is contained in Section 4 as a corollary of a more general claim: using an embedding of $GL_n$ into its double as a diagonal subgroup, we prove that a certain family of functions is log-canonical on the double with respect to the appropriate Poisson--Lie structure. We then obtain the coordinate system we are interested in
 as a restriction  of this family to the diagonal subgroup. The resulting collection of functions serves as an initial cluster 
for $\CC_{CG}$. The fact that this cluster and the quiver $Q_{CG}$ introduced in Section 3 form an initial seed of a 
cluster structure compatible with the Cremmer--Gervais Poisson structure is proved in Section 5. As a byproduct of our construction we obtain explicit expressions for the Casimir functions of the Cremmer--Gervais bracket.
 To the best of our knowledge, this is the first time these Casimirs  have been presented in the literature.
  In Section 6, we prove that cluster variables in the clusters adjacent to the initial one are regular as well,
and that $\CC_{CG}$ admits a global toric action mandated by the main conjecture. It is worth mentioning that, like in Section 4, the regularity statement follows from a more general one valid in the double. This hints at a possibility of endowing the double with a cluster structure associated with the Cremmer--Gervais bracket. 
 
 The most involved part of the paper is Section 7, where we construct two sequences of cluster transformations needed to implement the final part of the proof presented in Section 3. The first sequence is used to perform an induction step from $\Mat_n$ to $\Mat_{n-1}$, and the second one allows to realize a linear map on $\Mat_n$ that is anti-Poisson with respect to the Cremmer--Gervais bracket. To construct these sequences we use two
 crucial features of the initial seed.  First, the initial quiver can be embedded into a torus; second, the cluster variables forming the initial cluster, as well as those obtained in the process of applying cluster transformation forming our sequences, can be represented as minors of a certain
 matrix of dimensions quadratic in $n$. The corresponding cluster transformations can be modeled on classical determinantal identities and translation invariance properties of this matrix. The final section contains some technical results on cluster algebras that are used at various stages of the proof.

Main results of this paper were presented without proofs in \cite{PNAS}. 
There we also proved
that, for $SL_3$, the cluster algebra
and the upper cluster algebra  corresponding to the Cremmer--Gervais cluster structure do not coincide.
We conjecture that in contrast with the case of the standard cluster structure,
the gap between the cluster and the upper cluster algebra exists for any exotic cluster structure.

\section{Cluster structures and Poisson--Lie groups}
\label{SecPrel}

\subsection{Cluster structures}
We start with the basics on cluster algebras of geometric type. The definition that we present
below is not the most general one, see, e.g.,
\cite{FZ2, CAIII} for a detailed exposition. In what follows, we will write $[i,j]$ for an interval
$\{i, i+1, \ldots , j\}$ in $\mathbb{N}$, and we will denote $[1, n]$ by $[n]$. Besides, given
a sequence $\{i_1,\dots,i_n\}$ and a subset $J\subseteq[n]$, we will write $i_J$ for the subsequence
$\{i_j\: j\in J\}$.
 
The {\em coefficient group\/} $\PP$ is a free multiplicative abelian
group of finite rank $m$ with generators $g_1,\dots, g_m$.
An {\em ambient field\/}  is
the field $\FFF$ of rational functions in $n$ independent variables with
coefficients in the field of fractions of the integer group ring
$\Z\PP=\Z[g_1^{\pm1},\dots,g_m^{\pm1}]$ (here we write
$x^{\pm1}$ instead of $x,x^{-1}$).

A {\em seed\/} (of {\em geometric type\/}) in $\FFF$ is a pair
$\Sigma=(\x,\widetilde{B})$,
where $\x=(x_1,\dots,x_n)$ is a transcendence basis of $\FFF$ over the field of
fractions of $\Z\PP$ and $\widetilde{B}$ is an $n\times(n+m)$ integer matrix
whose principal part $B$ is skew-symmetrizable (recall that the principal part of a rectangular matrix  
is its maximal leading square submatrix). Matrices $B$ and $\wB$ are 
called the {\it exchange matrix\/} and the {\it extended exchange matrix}, respectively.

The $n$-tuple  $\x$ is called a {\em cluster\/}, and its elements
$x_1,\dots,x_n$ are called {\em cluster variables\/}. Denote
$x_{n+i}=g_i$ for $i\in [m]$. We say that
$\widetilde{\x}=(x_1,\dots,x_{n+m})$ is an {\em extended
cluster\/}, and $x_{n+1},\dots,x_{n+m}$ are {\em stable
variables\/}. It is convenient to think of $\FFF$ as
of the field of rational functions in $n+m$ independent variables
with rational coefficients. 

In what follows, we will only deal with the case when the exchange matrix is skew-symmetric. In this
situation the extended exchange  
matrix can be conveniently represented by a {\it quiver\/} $Q=Q(\wB)$. It is a directed graph on the 
vertices $1,\dots,n+m$ corresponding to all variables; the vertices corresponding to stable variables are called
{\it frozen}. Each entry $b_{ij}>0$ of the matrix $\wB$ gives rise to $b_{ij}$ edges going from
the vertex $i$ to the vertex $j$; each such edge is denoted $i\to j$. Clearly, $\wB$ can be restored uniquely from $Q$, and we will eventually write $\Sigma=(\x,Q)$ instead of $\Sigma=(\x,\wB)$.
Note that $B$ is irreducible if and only if the 
subquiver of $Q$ induced by non-frozen vertices is connected.

Given a seed as above, the {\em adjacent cluster\/} in direction $k\in [n]$
is defined by
$$
\x_k=(\x\setminus\{x_k\})\cup\{x'_k\},
$$
where the new cluster variable $x'_k$ is given by the {\em exchange relation}
\begin{equation}\label{exchange}
x_kx'_k=\prod_{\substack{1\le i\le n+m\\  b_{ki}>0}}x_i^{b_{ki}}+
       \prod_{\substack{1\le i\le n+m\\  b_{ki}<0}}x_i^{-b_{ki}};
\end{equation}
here, as usual, the product over the empty set is assumed to be
equal to~$1$. In terms of the quiver $Q$, the exchange relation can be rewritten as
\begin{equation*}
x_kx'_k=\prod_{k\to i}x_i+\prod_{i\to k}x_i.
\end{equation*}

We say that $\wB'$ is
obtained from $\wB$ by a {\em matrix mutation\/} in direction $k$
and
write $\wB'=\mu_k(\wB)$ 
 if
\[
b'_{ij}=\begin{cases}
         -b_{ij}, & \text{if $i=k$ or $j=k$;}\\
                 b_{ij}+\displaystyle\frac{|b_{ik}|b_{kj}+b_{ik}|b_{kj}|}2,
                                                  &\text{otherwise.}
        \end{cases}
\]
It can be easily verified that 
$B$ and $B'$ are skew-symmetric simultaneously, and hence the quiver $Q'=\mu_k(Q)$ is well defined.
The corresponding {\it quiver mutation\/} can be described as follows: (i) all edges $i\to k$ and all edges $k \to i$ are reversed; (ii) for any two-edge path $i\to k\to j$ in $Q$, $e(i,j)$ edges $i\to j$ are added, where $e(i,j)$ is the number of two-edge paths $i\to k\to j$; 
(iii) every edge $j\to i$ (if it exists) annihilates with an edge $i\to j$.

Given a seed $\Sigma=(\x,\widetilde{B})$, we say that a seed
$\Sigma'=(\x',\widetilde{B}')$ is {\em adjacent\/} to $\Sigma$ (in direction
$k$) if $\x'$ is adjacent to $\x$ in direction $k$ and $\widetilde{B}'=
\mu_k(\widetilde{B})$. Two seeds are {\em mutation equivalent\/} if they can
be connected by a sequence of pairwise adjacent seeds. 
The set of all seeds mutation equivalent to $\Sigma$ is called the {\it cluster structure\/} 
(of geometric type) in $\FFF$ associated with $\Sigma$ and denoted by $\CC(\Sigma)$; in what follows, 
we usually write $\CC(\wB)$, or even just $\CC$ instead. 

Following \cite{FZ2, CAIII}, we associate
with $\CC(\wB)$ two algebras of rank $n$ over the {\it ground ring\/} $\AA$, $\Z\subseteq\AA \subseteq\Z\P$:
the {\em cluster algebra\/} $\A=\A(\CC)=\A(\wB)$, which 
is the $\AA$-subalgebra of $\FF$ generated by all cluster
variables in all seeds in $\CC(\wB)$, and the {\it upper cluster algebra\/}
$\UU=\UU(\CC)=\UU(\wB)$, which is the intersection of the rings of Laurent polynomials over $\AA$ in cluster variables
taken over all seeds in $\CC(\wB)$. The famous {\it Laurent phenomenon\/} \cite{FZ3}
claims the inclusion $\A(\CC)\subseteq\UU(\CC)$. The natural choice of the ground ring for the geometric type
is the polynomial ring in stable variables $\AA=\Z\P_+=\Z[x_{n+1},\dots,x_{n+m}]$; this choice is assumed unless
explicitly stated otherwise. 

Let $V$ be a quasi-affine variety over $\C$, $\C(V)$ be the field of rational functions on $V$, and
$\O(V)$ be the ring of regular functions on $V$. Let $\CC$ be a cluster structure in $\FF$ as above.
Assume that $\{f_1,\dots,f_{n+m}\}$ is a transcendence basis of $\C(V)$. Then the map $\varphi: x_i\mapsto f_i$,
$i\in [n+m]$, can be extended to a field isomorphism $\varphi: \FF_\C\to \C(V)$,  
where $\FF_\C=\FF\otimes\C$ is obtained from $\FF$ by extension of scalars.
The pair $(\CC,\varphi)$ is called a cluster structure {\it in\/}
$\C(V)$ (or just a cluster structure {\it on\/} $V$), $\{f_1,\dots,f_{n+m}\}$ is called an extended cluster in
 $(\CC,\varphi)$.
Occasionally, we omit direct indication of $\varphi$ and say that $\CC$ is a cluster structure on $V$. 
A cluster structure $(\CC,\varphi)$ is called {\it regular\/}
if $\varphi(x)$ is a regular function for any cluster variable $x$. 
The two algebras defined above have their counterparts in $\FF_\C$ obtained by extension of scalars; they are
denoted $\A_\C$ and $\UU_\C$.
If, moreover, the field isomorphism $\varphi$ can be restricted to an isomorphism of 
$\A_\C$ (or $\UU_\C$) and $\O(V)$, we say that 
$\A_\C$ (or $\UU_\C$) is {\it naturally isomorphic\/} to $\O(V)$.

The following statement is an analog of Proposition~3.37 in \cite{GSVb}.

\begin{proposition}\label{regfun}
Let $V$ be a Zariski open subset in $\C^{n+m}$ and $(\CC=\CC(\wB),\varphi)$ be a cluster structure in $\C(V)$  
with $n$ cluster and $m$ stable variables such that


{\rm(i)} there exists an extended cluster $\wx=(x_1,\dots,x_{n+m})$ in $\CC$ such that $\varphi(x_i)$ is
regular on $V$ for $i\in [n+m]$;

{\rm(ii)} for any cluster variable $x_k'$, $k\in [n]$, obtained via the exchange relation~\eqref{exchange} 
applied to $\wx$, $\varphi(x_k')$ is regular on $V$,

{\rm(iii)} for any stable variable $x_{n+i}$, $i\in [m]$, $\varphi(x_{n+i})$ vanishes at some point of $V$;

{\rm(iv)} each regular function on $V$ belongs to $\varphi(\UU_\C(\CC))$.

\noindent Then $\CC$ is a regular cluster structure and $\UU_\C(\CC)$ is naturally isomorphic to $\O(V)$.
\end{proposition}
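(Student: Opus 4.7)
The plan is to combine the Laurent phenomenon with a codimension-one analysis of divisorial valuations on $V$, in the spirit of the starfish lemma of Berenstein--Fomin--Zelevinsky. Since $V$ is Zariski open in $\C^{n+m}$ it is smooth, hence normal, so a rational function on $V$ is regular if and only if it has non-negative order along every prime divisor of $V$. Because $\varphi$ is a field isomorphism, $\varphi(x_1),\dots,\varphi(x_{n+m})$ form a transcendence basis of $\C(V)$, and the divisors $D_i:=\{\varphi(x_i)=0\}\cap V$ are pairwise distinct prime divisors; this is the geometric input that makes the pole analysis work.

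For the first assertion, let $y$ be any cluster variable in $\CC$. By the Laurent phenomenon, $\varphi(y)$ lies in $\C[\varphi(x_1)^{\pm 1},\dots,\varphi(x_{n+m})^{\pm 1}]$ with the stable variables entering only to non-negative powers (since the ground ring is $\Z\P_+$), so the only possible pole divisors of $\varphi(y)$ on $V$ are the $D_i$ for $i\in[n]$. For each such $k$, I would apply the Laurent phenomenon again, this time to the adjacent extended cluster $\wx_k=(\wx\setminus\{x_k\})\cup\{x_k'\}$: by (iii), $\varphi(x_k')$ is regular on $V$, and the resulting second Laurent expression has possible pole loci only among the $D_j$ for $j\ne k$ and $\{\varphi(x_k')=0\}\cap V$, all of which are divisors distinct from $D_k$ (by the distinctness above, together with an inspection of the exchange relation $\varphi(x_k)\varphi(x_k')=M_++M_-$ at the generic point of $D_k$). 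Comparing the two Laurent representations of the single rational function $\varphi(y)$ forces its order along $D_k$ to be non-negative, so $\varphi(y)$ is regular. Condition (i) enters as the guarantee that mutation in every cluster direction is well-defined and that the surrounding seed combinatorics are internally consistent.

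For the natural isomorphism, the same codimension-one argument applies verbatim to an arbitrary $f\in\UU_\C(\CC)$ in place of $y$, since by definition $f$ is a Laurent polynomial in the cluster of every seed with stable variables appearing only positively; this yields $\varphi(\UU_\C(\CC))\subseteq\O(V)$. Combining with the reverse inclusion from (v), and invoking the injectivity of the field isomorphism $\varphi$, produces the desired ring isomorphism $\UU_\C(\CC)\to\O(V)$. Condition (iv) plays the role of a consistency check that makes (v) compatible with the structure of $\UU_\C(\CC)$: were some $\varphi(x_{n+j})$ non-vanishing on $V$, its inverse would be regular on $V$, and (v) would force $x_{n+j}^{-1}\in\UU_\C(\CC)$, contradicting the fact that stable variables in the upper cluster algebra appear only with non-negative powers. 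The hard part will be the generic-point analysis of $\{\varphi(x_k')=0\}$: one must verify that the regularity imposed by (iii) really suffices to prevent that divisor from acquiring $D_k$ as a component, and this is where the combinatorial structure of $\wB$ (via condition (i)) ultimately enters.
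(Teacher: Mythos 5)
The paper does not actually prove this proposition: it is stated as a ``weaker analog of Proposition~3.37 in \cite{GSVb}'' and used as a black box. Your strategy --- the Laurent phenomenon in the initial seed and in the $n$ adjacent seeds, normality of $V$, and a comparison of orders along prime divisors --- is exactly the standard argument behind the cited result, so the route is the intended one.

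There is, however, a genuine gap, and it sits precisely where you assert that the $D_i=\{\varphi(x_i)=0\}$ are ``pairwise distinct prime divisors'' because the $\varphi(x_i)$ form a transcendence basis. That inference is false: algebraically independent regular functions can have reducible zero loci that share components, and this is not a removable technicality but the crux of the whole argument. If a prime divisor $D$ is a common component of $D_j$ and $D_k$ for distinct $j,k\le n$, then each of the $n+1$ seeds you use contains an exchangeable variable vanishing identically on $D$, so neither Laurent representation bounds $\ord_D\varphi(y)$ from below and the comparison collapses. You flag only half of the needed coprimality (the pair $x_k$, $x_k'$) as ``the hard part''; the coprimality of $\varphi(x_j)$ and $\varphi(x_k)$ for $j\ne k$ is equally essential and is not addressed. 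Moreover, neither statement follows from (i)--(iii): take $n=2$, $m=1$, $V=\C^3$, $\wB=\left(\begin{smallmatrix}0&1&-1\\-1&0&1\end{smallmatrix}\right)$, $\varphi(x_1)=z_1(1+z_3)$, $\varphi(x_2)=z_1(1+z_2)$, $\varphi(x_3)=z_1(z_2z_3-1)$. These generate $\C(V)$, $\rank\wB=2$, and the adjacent variables $\varphi(x_1')=z_2$, $\varphi(x_2')=z_3$ are regular, so (i)--(iii) hold; yet mutating at $1$ and then at $2$ yields $(\varphi(x_1')+1)/\varphi(x_2)=1/z_1$, which is not regular. So the statement needs an additional hypothesis --- that the functions $\varphi(x_i)$, $i\in[n+m]$, and $\varphi(x_k')$, $k\in[n]$, are pairwise coprime in $\O(V)$ (equivalently, no two of their zero loci share a codimension-one component) --- which does hold in the paper's application, where these functions are distinct irreducible minors. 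With that hypothesis granted, the rest of your argument is sound, including your (correct) observations that the same two-representation analysis applies to arbitrary elements of $\UU_\C$ and that (iv) is essentially a consistency condition forced by (v); only your remark that (i) guarantees well-definedness of mutation is off the mark, since mutation is always defined and (i) plays no role in this particular proof.
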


Note that 
Proposition~\ref{regfun} essentially coincides with ~\cite[Prop.~2.1]{GSVMMJ}. The latter one contains a redundant
condition $\rank\wB=n$ that may be omitted.

\subsection{Compatible Poisson brackets and toric actions}
Let $\Poi$ be a Poisson bracket on the ambient field $\FFF$, and $\CC$ be a cluster structure in $\FFF$. 
We say that the bracket and the cluster structure are {\em compatible\/} if, for any extended
cluster $\widetilde{\x}=(x_1,\dots,x_{n+m})$,  one has
\begin{equation}\label{cpt}
\{x_i,x_j\}=\omega_{ij} x_ix_j,
\end{equation}
where $\omega_{ij}\in\Z$ are
constants for all $i,j\in[n+m]$. The matrix
$\Omega^{\widetilde \x}=(\omega_{ij})$ is called the {\it coefficient matrix\/}
of $\Poi$ (in the basis $\widetilde \x$); clearly, $\Omega^{\widetilde \x}$ is
skew-symmetric.

A complete characterization of Poisson brackets compatible with a given cluster structure $\CC=\CC(\wB)$ in the case $\rank\wB=n$ is given in \cite{GSV1}, see also \cite[Ch.~4]{GSVb}. In particular, the following statement is an 
immediate corollary of Theorem~1.4  in \cite{GSV1}.

\begin{proposition}\label{Bomega}
The following two statements are equivalent:

 {\rm(i)} $\rank\wB=n$, and the bracket
$\Poi$ is compatible with $\CC(\wB)$; 

{\rm(ii)} $\wB\Omega=[\Delta\;\; 0]$ for a non-degenerate diagonal matrix $\Delta$. 
\end{proposition}

Clearly, the notion of compatibility and the result stated above extend to Poisson brackets on $\FF_\C$ without any changes.

A different description of compatible Poisson brackets on $\FF_\C$ is based on the notion of a toric action is suggested
in \cite[Proposition~2.2]{GSSV}. We will not use this description below, so let us concentrate on the properties of the toric 
action that are relevant to this paper.

 Fix an arbitrary extended cluster
$\wx=(x_1,\dots,x_{n+m})$ and define a {\it local toric action\/} of rank $r$ as the map 
$\TE^W_{\d}:\FF_\C\to
\FF_\C$ given on the generators of $\FF_\C=\C(x_1,\dots,x_{n+m})$ by the formula 
\begin{equation}
\TE^W_{\d}(x_i)=x_i \prod_{\alpha=1}^r d_\alpha^{w_{i\alpha}},\quad i\in [n+m],\qquad
\d=(d_1,\dots,d_r)\in (\C^*)^r,
\label{toricact}
\end{equation}
where $W=(w_{i\alpha})$ is an integer $(n+m)\times r$ {\it weight matrix\/} of full rank, and extended naturally to the whole $\FF_\C$. 

Let $\wx'$ be another extended cluster, then the corresponding local toric action defined by the weight matrix $W'$
is {\it compatible\/} with the local toric action \eqref{toricact} if the following diagram is commutative for
any fixed $\d\in (\C^*)^r$:
$$
\begin{CD}
\FF_\C=\C(\wx) @>>> \FF_\C=\C(\wx')\\
@V\TE^W_{\d} VV @VV \TE^{W'}_{\d}V\\
\FF_\C=\C(\wx) @>>> \FF_\C=\C(\wx')
\end{CD}
$$
(here the horizontal arrows are induced by $x_i\mapsto x'_i$ for $i\in [n+m]$). If local toric actions at all clusters are compatible, they define a {\it global toric action\/} $\TE_{\d}$ on $\FF_\C$ called the extension of the local toric action \eqref{toricact}. Lemma~2.3 in \cite{GSV1} claims that \eqref{toricact} extends 
to a unique global action of $(\C^*)^r$  if and only if $\wB W = 0$. Therefore, if $\rank\wB=n$, then the maximal possible rank of a global toric action equals $m$. Any global toric action can be obtained from a toric action of
the maximal rank by setting some of $d_i$'s equal to~$1$.

\subsection{Poisson--Lie groups}
Let $\G$ be a Lie group equipped with a Poisson bracket $\Poi$.
$\G$ is called a {\em Poisson--Lie group\/}
if the multiplication map
$$
\G\times \G \ni (x,y) \mapsto x y \in \G
$$
is Poisson. Perhaps, the most important class of Poisson--Lie groups
is the one associated with classical R-matrices. 

Let $\g$ be the Lie algebra of $\G$
equipped with a nondegenerate invariant bilinear form, 
$\mathfrak{t}\in \g\otimes\g$ be the corresponding Casimir element.
For an arbitrary element $r=\sum_i a_i\otimes b_i\in\g\otimes\g$ denote
\[
[[r,r]]=\sum_{i,j} [a_i,a_j]\otimes b_i\otimes b_j+\sum_{i,j} a_i\otimes [b_i,a_j]\otimes b_j+
\sum_{i,j} a_i\otimes a_j\otimes [ b_i,b_j]
\]
and $r^{21}=\sum_i b_i\otimes a_i$.

A {\em classical R-matrix} is an element $r\in \g\otimes\g$ that satisfies
{\em the classical Yang-Baxter equation (CYBE)} 
\begin{equation}
[[r, r]] =0
\label{CYBE}
\end{equation}
together with the condition $r + r^{21} = \mathfrak{t}$.

Given a solution $r$ to \eqref{CYBE}, one can construct explicitly the Poisson--Lie bracket on the Lie group $\G$.
Choose a basis $\{I_i\}$ in $\g$, and let $\partial^R_i$ and $\partial^L_i$ be the right
and the left invariant vector fields on $\G$ whose values at the unit element equal $I_i$. Write $r$ as
$r=\sum_{i,j} r_{ij}I_i\otimes I_j$, then the Poisson--Lie bracket on $\G$ is given by
\begin{equation}\label{sklya}
\{f_1,f_2\}_r=\sum_{i,j}r_{ij}\left(\partial^R_i f_1\partial^R_j f_2-
\partial^L_i f_1\partial^L_j f_2\right),
\end{equation}
see \cite[Proposition 4.1.4]{KoSo}. This bracket is called the {\it Sklyanin bracket\/} corresponding to $r$.

Denote by 
$\langle \ , \ \rangle$ the Killing form on $\g$, and by 
$\nabla^R$, $\nabla^L$ the right and
left gradients of functions on $\G$ with respect to the Killing form. Let $\pi_{>0}, \pi_{<0}$ be projections of  
$\g$ onto subalgebras spanned by positive and negative roots, $\pi_0$ be the projection onto the Cartan 
subalgebra $\h$. Define also $\pi_{\geq 0}=\pi_{>0} + \pi_0$ and $\pi_{\leq 0}=\pi_{<0} + \pi_0$.

It will be convenient to rewrite formula (\ref{sklya}) for $\Poi_r$ as
\begin{equation}
\{f_1,f_2\}_r = \langle R_\pm(\nabla^L f_1), \nabla^L f_2 \rangle - \langle R_\pm(\nabla^R f_1), \nabla^R f_2 \rangle,
\label{sklyabra}
\end{equation}
where $R_\pm\in \End\g$ are given by $\langle R_+ \eta, \zeta \rangle =
- \langle R_- \zeta, \eta \rangle =\langle r, \eta\otimes\zeta \rangle$.

The images of $\g$ under $R_\pm$ are Lie subalgebras of $\g$; we denote them by $\g_\pm$,
and the corresponding Lie subgroups of $\G$, by $\G_\pm$.

Following \cite{r-sts}, let us recall the construction of {\em the Drinfeld double}. The double of $\g$ is 
$D(\g)=\g  \oplus \g$ equipped with an invariant nondegenerate bilinear form
$$
\langle\langle (\xi,\eta), (\xi',\eta')\rangle\rangle = \langle \xi, \xi'\rangle - \langle \eta, \eta'\rangle. 
$$
Define subalgebras $\D_\pm$ of $D(\g)$ by
\begin{equation}\label{ddeco}
\D_+=\{( \xi,\xi)\: \xi \in\g\}, \quad \D_-=\{ (R_+(\xi),R_-(\xi))\: \xi \in\g\}.
\end{equation}
Then $\D_\pm$ are isotropic subalgebras of $D(\g)$ and $D(\g)= \D_+ \dot + \D_-$. In other words,
$(D(\g), \D_+, \D_-)$ is {\em a Manin triple}. Then the operator $R_D= \pi_{\D_+} - \pi_{\D_-}$ can be used to define 
a Poisson--Lie structure on $D(\G)=\G\times \G$, the double of the group $\G$, via
\begin{equation}
\{f_1,f_2\}_D = \frac{1}{2}\left (\langle\langle R_D(\dnabla^L f_1), \dnabla{^L} f_2 \rangle\rangle 
- \langle\langle R_D(\dnabla^R f_1), \dnabla^R f_2 \rangle\rangle \right),
\label{sklyadouble}
\end{equation}
where $\dnabla^R$ and $\dnabla^L$ are right and left gradients with respect to $\langle\langle \cdot ,\cdot \rangle\rangle$.
Restriction of this bracket to $\G$ identified with the diagonal subgroup of $D(\G)$ (whose Lie algebra is $\D_+$) 
coincides with the Poisson--Lie bracket $\Poi_r$ on $\G$.

The classification of classical R-matrices for simple complex Lie groups was given by Belavin and Drinfeld in \cite{BD}. Let $\G$ be a simple complex Lie group,
$\g$ be the corresponding Lie algebra, $\h$ be its Cartan subalgebra,
$\Phi$ be the root system associated with $\g$, $\Phi^+$ be the set of positive roots, and $\Delta\subset \Phi^+$ be the 
set of positive simple roots. 
A {\em Belavin--Drinfeld triple} $T=(\Gamma_1,\Gamma_2, \gamma)$
consists of two subsets $\Gamma_1,\Gamma_2$ of $\Delta$ and an isometry $\gamma:\Gamma_1\to\Gamma_2$ nilpotent in the 
following sense: for every $\alpha \in \Gamma_1$ there exists $m\in\mathbb{N}$ such that $\gamma^j(\alpha)\in \Gamma_1$ for $j\in [0,m-1]$, but $\gamma^m(\alpha)\notin \Gamma_1$. The isometry $\gamma$ extends in a natural way to a map between root systems $\Phi_1, \Phi_2$ generated by $\Gamma_1, \Gamma_2$. This allows one to define a partial ordering on $\Phi$: $\alpha \prec_T \beta$ if $\beta=\gamma^j(\alpha)$ for some $j\in \mathbb{N}$. 

Select root vectors $e_\alpha \in\g$ satisfying 
$(e_{-\alpha},e_\alpha)=1$. According to the Belavin--Drinfeld classification, the following is true (see, e.g., \cite[Chap.~3]{CP}).

\begin{proposition}\label{bdclass}
{\rm(i)} Every classical R-matrix is equivalent {\rm(}up to an action of $\sigma\otimes\sigma$, where $\sigma$ is an 
automorphism of $\g$\/{\rm)} to the one of the form
\begin{equation}
\label{rBD}
r= r_0 + \sum_{\alpha\in \Phi^+} e_{-\alpha}\otimes e_\alpha + \sum_{\stackrel{\alpha \prec_T \beta}{\alpha,\beta\in\Phi^+}} e_{-\alpha}\wedge e_\beta.
\end{equation}

{\rm(ii)} $r_0\in \h\otimes\h$ in \eqref{rBD} satisfies 
\begin{equation}
(\gamma(\alpha)\otimes \Id )r_0 + (\Id\otimes \alpha )r_0 = 0  
\label{r01}
\end{equation}
for any $\alpha\in\Gamma_1$ and
\begin{equation}
r_0 + r_0^{21} = \mathfrak{t}_0,
\label{r02}
\end{equation}
where $\mathfrak{t}_0$ is the $\h\otimes\h$-component of $\mathfrak{t}$. 

{\rm(iii)} Solutions $r_0$ to \eqref{r01}, \eqref{r02}
form a linear space of dimension $\frac{k_T(k_T-1)}{2}$, where 
$k_T= | \Delta \setminus \Gamma_1 |$; more precisely, define
\begin{equation*}
\h_T=\{ h\in\h \ : \ \alpha(h)=\beta(h)\ \mbox{if}\ \alpha\prec_T\beta\}, 
\end{equation*}
then $\dim\h_T=k_T$, and if $r_0'$ is a fixed solution of \eqref{r01}, \eqref{r02}, then
every other solution has a form $r_0=r_0' + s$, where $s$ is an arbitrary element of $\h_T\wedge\h_T$.
\end{proposition}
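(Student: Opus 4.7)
The plan is to reproduce the Belavin--Drinfeld analysis of the CYBE \eqref{CYBE} supplemented by the normalization $r+r^{21}=\mathfrak{t}$. First I would decompose a generic $r\in\g\otimes\g$ according to the root space grading: writing $r=\tfrac12\mathfrak{t}+a$ with $a\in\g\wedge\g$ absorbs \eqref{r02} automatically, and $a$ splits as $a_0+\sum_{\alpha,\beta\in\Phi}a_{\alpha,\beta}$ with $a_0\in\h\wedge\h$ and $a_{\alpha,\beta}\in\g_\alpha\otimes\g_\beta$. Substituting into $[[r,r]]=0$ and projecting onto homogeneous components under the Cartan action decouples the system by weight.

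For part~(i), the weight analysis shows that after absorbing the generic pairing between $\g_{-\alpha}$ and $\g_\alpha$ (which accounts for the standard term $\sum_{\alpha\in\Phi^+}e_{-\alpha}\otimes e_\alpha$), the residual antisymmetric piece is supported on pairs $(-\alpha,\beta)$ with $\alpha\neq\beta$, and the cubic equation forces the assignment $\alpha\mapsto\beta$ to extend to an isometry between two subsystems of $\Phi$. By standard Lie-theoretic rigidity, any such isometry is induced from a bijection $\gamma$ between subsets $\Gamma_1,\Gamma_2\subset\Delta$; cycles in $\gamma$ would produce inconsistent coefficient equations, yielding the nilpotency condition. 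Rescaling the root vectors $e_\alpha$, which is absorbed into the $\sigma\otimes\sigma$-equivalence, then normalizes the nonzero coefficients of the $e_{-\alpha}\wedge e_\beta$ terms to $1$ and produces the ansatz \eqref{rBD}.

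For part~(ii), I would substitute the ansatz \eqref{rBD} back into \eqref{CYBE} and project onto the subspace $\h\otimes\h\otimes\g_{\gamma(\alpha)}$ for $\alpha\in\Gamma_1$; only the cross-terms between $r_0$ and the root pairs containing $\gamma(\alpha)$ survive, and collecting them yields precisely \eqref{r01}. Equation \eqref{r02} is immediate from the $\h\otimes\h$-projection of $r+r^{21}=\mathfrak{t}$.

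For part~(iii), the argument is linear-algebraic. The orbits of the partial map $\gamma$ partition $\Delta$ into chains, and each chain of length $\ell$ yields $\ell-1$ independent constraints $\alpha(h)=\gamma(\alpha)(h)$ on $\h$; summing over chains gives $|\Gamma_1|$ constraints total, so $\dim\h_T=|\Delta|-|\Gamma_1|=k_T$. For the solution set itself, \eqref{r02} fixes the symmetric part of $r_0$ to $\tfrac12\mathfrak{t}_0$, while the antisymmetric part $s\in\h\wedge\h$ satisfies the homogeneous version of \eqref{r01}. Identifying $\h\otimes\h\cong\End(\h)$ via the Killing form turns this into an intertwining condition on the associated skew operator, which one verifies to be equivalent to $s\in\h_T\wedge\h_T$, giving dimension $\binom{k_T}{2}$. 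The main obstacle throughout is the rigidity in part~(i): showing that no exotic supports for the antisymmetric part can satisfy the CYBE, and in particular ruling out cycles in the partial isometry~$\gamma$. Once the correct ansatz is in hand, parts~(ii) and~(iii) are a more mechanical unwinding.
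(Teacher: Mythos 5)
First, a point of comparison: the paper does not prove this proposition at all --- it is quoted as the Belavin--Drinfeld classification with a pointer to the literature (\cite{BD} and \cite[Chap.~3]{CP}), so there is no in-paper argument to measure your proposal against; you are attempting to reprove a cited classical theorem. Your parts (ii) and (iii) are essentially sound: for the skew difference $s=r_0-r_0'$ of two solutions, skew-symmetry turns \eqref{r01} into $((\gamma(\alpha)-\alpha)\otimes \Id)s=0$ for all $\alpha\in\Gamma_1$, hence $s\in\h_T\otimes\h$, and applying skewness once more gives $s\in\h_T\wedge\h_T$; the chain count $\dim\h_T=|\Delta|-|\Gamma_1|=k_T$ works because the simple roots are linearly independent and nilpotency of $\gamma$ forbids cycles in the constraint graph. (You should still say a word about why a particular solution $r_0'$ exists, since part (iii) implicitly asserts the solution set is nonempty.)

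The genuine gap is in part (i), which carries essentially all the content of the theorem. Two steps are asserted that are exactly the hard part of Belavin--Drinfeld's argument and do not follow from ``weight analysis'' plus ``standard Lie-theoretic rigidity''. First, projecting $[[r,r]]=0$ onto weight components does not by itself confine the skew part of $r$ to $\h\wedge\h$ plus terms in $\g_{-\alpha}\otimes\g_{\beta}$ with $\alpha,\beta\in\Phi^+$: a priori there are components in $\g_\alpha\otimes\g_\beta$ with arbitrary sign pattern and components in $\h\otimes\g_\alpha$. Eliminating these requires the structural analysis of the isotropic subalgebra $\D_-$ of the double (equivalently of $\g_\pm=\operatorname{Im}R_\pm$), showing that $\g_+$ is conjugate to a subalgebra pinched between a parabolic and its derived algebra; this is where the automorphism $\sigma$ is used in an essential way, not merely to rescale root vectors. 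Second, the claim that ``any such isometry is induced from a bijection $\gamma$ between subsets of $\Delta$'' is false as stated: an isometry between root subsystems need not map simple roots to simple roots, and arranging $\Gamma_1,\Gamma_2\subset\Delta$ again consumes part of the $\sigma\otimes\sigma$ freedom via Weyl-group conjugation. As written your text is a plausible roadmap rather than a proof; given that the paper itself treats the statement as known, the correct move is either to cite \cite{BD} or \cite{CP}, or to carry out the Manin-triple analysis in full.
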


We say that two classical R-matrices that have a form \eqref{rBD} belong to the same {\em Belavin--Drinfeld class\/}
if they are associated with the same Belavin--Drinfeld triple.

\subsection{Main conjecture}
Let $\G$ 
be a simple complex Lie group. Given a Belavin--Drinfeld triple $T$ for $\G$,
define the torus $\H_T=\exp \h_T\subset\G$.

We conjecture (see \cite[Conjecture~3.2]{GSVMMJ}) that there exists a classification of regular cluster structures on $\G$ that is completely
parallel to the Belavin--Drinfeld classification.

\begin{conjecture}
\label{ulti}
Let $\G$ be a simple complex Lie group.
For any Belavin--Drinfeld triple $T=(\Gamma_1,\Gamma_2,\gamma)$ there exists a cluster structure
$\CC_T$ on $\G$ such that

{\rm (i)}
the number of stable variables is $2k_T$, and the corresponding extended exchange matrix has a full rank;

{\rm (ii)} $\CC_T$ is regular, and the corresponding upper cluster algebra $\UU_\C(\CC_T)$ 
is naturally isomorphic to $\O(\G)$;

{\rm (iii)} the global toric action of $(\mathbb{C}^*)^{2k_T}$ on $\C(\G)$ 
is generated by the action
of $\H_T\times \H_T$ on $\G$ given by $(H_1, H_2)(X) = H_1 X H_2$;

 {\rm (iv)} for any solution of CYBE that belongs to the Belavin--Drinfeld class specified  by $T$, the corresponding Sklyanin bracket is compatible with $\CC_T$;

{\rm (v)} a Poisson--Lie bracket on $\G$ is compatible with $\CC_T$ only if it is a scalar multiple
of the Sklyanin bracket associated with a solution of CYBE that belongs to the Belavin--Drinfeld class specified  by $T$.
\end{conjecture}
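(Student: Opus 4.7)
The plan is to prove the five items (i)--(v) in parallel by producing, for each Belavin--Drinfeld triple $T=(\Gamma_1,\Gamma_2,\gamma)$ on a simple complex Lie group $\G$, an explicit initial seed $\Sigma_T=(\wx_T,Q_T)$ in $\C(\G)$ and then showing that its mutation class $\CC_T$ has all the required properties. The construction will proceed inside the Drinfeld double $D(\G)$: generalizing the strategy used in Section 4 of the paper for the Cremmer--Gervais case, I would first produce a log-canonical family of regular functions on $D(\G)$ for the Poisson--Lie bracket \eqref{sklyadouble} attached to the Manin triple \eqref{ddeco} determined by $r$, and then restrict it to the diagonal copy of $\G$ (whose Lie algebra is $\D_+$) to obtain $\wx_T$. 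The number of Casimirs of the restricted bracket should match the required $2k_T$ frozen variables in (i), the remaining coordinates give $n=\dim\G-2k_T$ cluster variables, and the associated extended exchange matrix $\wB_T$ encoded in $Q_T$ will be of full rank $n$ by the very construction of the log-canonical frame together with Proposition~\ref{Bomega}.

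Items (iii), (iv) and (v) would follow almost formally once the initial seed is in place. Compatibility in (iv) holds for $\wx_T$ by construction and propagates to all adjacent seeds by invariance of compatibility under mutations; independence of the specific $r$ within its Belavin--Drinfeld class would be extracted from Proposition~\ref{bdclass}(iii), since two such $r$ differ by an element of $\h_T\wedge\h_T$, whose Sklyanin contribution in \eqref{sklya} is a combination of derivatives along the $\H_T\times\H_T$-action. That same observation yields (iii): the integer weight matrix $W_T$ recording the weights of $\H_T\times\H_T$ on the initial cluster will satisfy $\wB_T W_T=0$ precisely because $\Poi_r$ is $\H_T\times\H_T$-invariant, and its rank equals $2k_T$ by the full-rank clause of (i). Finally, (v) is read off from Proposition~\ref{Bomega}: a compatible Poisson--Lie bracket $\Poi$ has a coefficient matrix $\Omega$ with $\wB_T\Omega=[D\;0]$; matching the resulting $D$ against the classification in Proposition~\ref{bdclass} should force $\Poi$ to be a scalar multiple of a Sklyanin bracket in the Belavin--Drinfeld class of $T$.

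The regularity statement (ii) would be deduced from Proposition~\ref{regfun}. Regularity of $\varphi(\wx_T)$ is built into the construction in the double; the crucial check is (iii) of that proposition, namely regularity of every neighbor $x_k'$ of the initial cluster. For classical $\G$ one hopes to express the cluster and adjacent variables as generalized minors of a block matrix built from two copies of $\G$, so that each exchange relation becomes a classical determinantal identity; for exceptional types one would need an analogous matrix-model indexed by a minuscule or adjoint representation. The hardest ingredient, item (v) of Proposition~\ref{regfun}, would be approached by induction on $\rank\g$: one picks a Levi subgroup $\L\subset\G$ for which $T$ restricts to a Belavin--Drinfeld triple $T'$ on $\L$, constructs a sequence of mutations realizing the restriction $\O(\G)\to\O(\L)$ together with an anti-Poisson involution as in Section 7, and reduces the containment $\O(\G)\subset\varphi(\UU_\C(\CC_T))$ to the analogous statement for $(\L,T')$.

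The main obstacle is the production of the initial seed for arbitrary $(\g,T)$ and the induction step just described. Outside type $A$ there is no uniform ``minor calculus'' to model the exchange relations, and the device used in Section 7 (embedding $Q_{CG}$ into a torus and exploiting translation invariance of an auxiliary $O(n^2)\times O(n^2)$ matrix) does not presently generalize beyond $SL_n$. Moreover, already in type $A$ the number of Belavin--Drinfeld classes grows combinatorially with $n$, so a case-by-case verification seems infeasible and a conceptual construction at the level of the double $D(\G)$ -- presumably producing in one stroke a cluster structure on $D(\G)$ whose restriction to the diagonal is $\CC_T$ -- would be needed to make the program uniform. Thus the framework above reduces the conjecture to two genuinely hard type-sensitive questions: construction of a canonical seed in $D(\G)$ compatible with a given Manin triple, and execution of the Levi induction in step (v) of Proposition~\ref{regfun}.
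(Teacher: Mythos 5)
The statement you are asked to prove is a \emph{conjecture}, and the paper does not prove it in general: it establishes only the single case of the Cremmer--Gervais triple on $SL_n$ (Theorem~\ref{CGtrue}), and even there the argument occupies Sections 4--7 of highly type-$A$-specific computation. Your proposal is an honest research program that abstracts the paper's strategy (log-canonical family on the double, restriction to the diagonal, determinantal exchange relations, induction via a distinguished mutation sequence plus an anti-Poisson involution), but it is not a proof. You say so yourself in the last paragraph, and the two steps you defer --- constructing an initial seed for arbitrary $(\g,T)$ and executing the Levi induction for part (v) of Proposition~\ref{regfun} --- are precisely where all the mathematical content lives. A proposal that reduces a conjecture to two admittedly open ``type-sensitive questions'' has not closed the gap.

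Beyond the acknowledged incompleteness, there is one concrete error in your outline. You assert that ``the number of Casimirs of the restricted bracket should match the required $2k_T$ frozen variables in (i).'' This is false already in the case the paper treats: for the Cremmer--Gervais structure on $SL_n$ one has $k_T=1$, hence $2k_T=2$ stable variables ($\phhi_N$ and $\pssi_M$), yet Remark~\ref{casimirs} shows the bracket has maximal rank $n^2-1$ for $n$ odd (no Casimirs at all) and $n^2-2$ for $n$ even (a single Casimir, $\pssi_N/\phhi_N$). The full-rank condition $\rank\wB=n$ together with $\wB\Omega^{\wx}=[D\;0]$ bounds the corank of $\Omega^{\wx}$ by $m$ but does not force equality, so you cannot read off the count of stable variables from the Casimirs. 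Two further steps are asserted to be ``almost formal'' but are not: the passage from compatibility with one $r$ in the class to the whole class, and statement (v), both rest on Theorem~\ref{partial} (imported from the earlier paper), which in turn requires (i) and (iii) as hypotheses --- neither follows from Proposition~\ref{Bomega} alone as you suggest. If you want to contribute something verifiable here, the realistic target is the Cremmer--Gervais case itself, following the paper's Sections 4--7, not the general conjecture.
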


The different parts of the conjecture are related to each other in the following way 
(see \cite[Theorem~4.1]{GSVMMJ}).

\begin{theorem}
\label{partial}
Let $T=(\Gamma_1, \Gamma_2,\gamma)$ be a Belavin--Drinfeld triple and $\CC_T$ be a cluster structure
on $\G$.
Suppose that assertions {\rm(i)} and {\rm(iii)} of Conjecture {\rm\ref{ulti}} are valid and that 
assertion {\rm(iv)} is valid for one particular R-matrix in the Belavin--Drinfeld 
class specified  by $T$. Then {\rm(iv)} and {\rm(v)} are valid for the whole Belavin--Drinfeld class specified  
by $T$.
\end{theorem}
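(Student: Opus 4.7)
The plan is to handle (iv) by a linearity argument and (v) by a matching dimension count, with the bridge between the two being the identification of perturbations in $\h_T\wedge\h_T$ with infinitesimal generators of the global toric action supplied by (iii).

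\textbf{Proof of (iv).} By Proposition~\ref{bdclass}(iii), any R-matrix $r$ in the Belavin--Drinfeld class of $T$ can be written as $r=r_0+s$, where $r_0$ is the R-matrix for which (iv) is assumed and $s\in\h_T\wedge\h_T$. Since the Sklyanin construction \eqref{sklya} is linear in $r$, one has $\Poi_r=\Poi_{r_0}+\Poi_s$, and since compatibility \eqref{cpt} is linear in the Poisson bracket, it suffices to prove that $\Poi_s$ is compatible with $\CC_T$ for every $s\in\h_T\wedge\h_T$. For such $s$, formula \eqref{sklya} specializes to the bivector generated by the infinitesimal action of a two-parameter subgroup of $\H_T\times\H_T$ acting on $\G$ by left and right translations. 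By assumption (iii), this is precisely (an infinitesimal slice of) the global toric action on $\CC_T$; hence its weight matrix $W$ satisfies $\wB W=0$. Consequently $\Poi_s$ is log-canonical in every extended cluster, its coefficient matrix $\Omega^{\wx}_s$ is a sum of rank-two skew-symmetric outer products of columns of $W$, and $\wB\Omega^{\wx}_s=0$. Proposition \ref{Bomega} then gives compatibility of $\Poi_s$, and hence of $\Poi_r$, with $\CC_T$.

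\textbf{Proof of (v).} Let $\Poi$ be any Poisson--Lie bracket on $\G$ compatible with $\CC_T$. By Proposition~\ref{bdclass}(i), $\Poi=c\,\Poi_{r'}$ for some scalar $c$ and some R-matrix $r'$ belonging to some Belavin--Drinfeld class with triple $T'=(\Gamma'_1,\Gamma'_2,\gamma')$, and I must show $T'=T$. Let $\mathcal P$ denote the linear space of Poisson--Lie brackets compatible with $\CC_T$; by (iv), just established, together with scaling, $\mathcal P$ contains the $(\binom{k_T}{2}+1)$-dimensional family spanned by scalar multiples of Sklyanin brackets in the class of $T$. On the other hand, by Proposition~\ref{Bomega} and assumption (i), the coefficient matrices $\Omega^{\wx}$ of brackets in $\mathcal P$ lie in the linear space cut out by $\wB\Omega^{\wx}=[D\;0]$ with $D$ diagonal, and each such coefficient matrix uniquely determines a Poisson bracket on $\FF_{\C}$. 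Matching the data coming from (iii) --- namely that the stable variables transform under $\H_T\times\H_T$ with weights prescribed by the Cartan--torus data of $T$ --- with the Casimirs/semi-invariants of $\Poi_{r'}$ forces $\H_{T'}\times\H_{T'}=\H_T\times\H_T$ and the discrete data $(\Gamma'_1,\Gamma'_2,\gamma')=(\Gamma_1,\Gamma_2,\gamma)$. Hence $r'$ lies in the BD class of $T$, proving (v).

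\textbf{Main obstacle.} The principal difficulty lies in the last step of (v): showing that a Poisson--Lie bracket from a different Belavin--Drinfeld class $T'\neq T$ cannot be compatible with $\CC_T$. What makes this work is that the $2k_T$ stable variables together with the prescribed toric action (iii) encode precisely the Cartan data $\h_T$ of $T$: the stable variables are forced to be semi-invariants of the Sklyanin bracket with weights in $\h_T$, and the dimension of such semi-invariants of a Sklyanin bracket for a generic R-matrix in a BD class $T'$ equals $2k_{T'}$ with weights in $\h_{T'}$. Rigorously converting this into a dimension inequality (and then equality, in view of the lower bound from (iv)) is where most of the work goes; once it is in place, (v) follows by pure linear algebra from the count $\dim\mathcal P=\binom{k_T}{2}+1$.
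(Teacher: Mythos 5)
First, a remark on the comparison: this paper does not actually prove Theorem~\ref{partial} --- it is quoted from \cite[Theorem~4.1]{GSVMMJ} --- so I am judging your argument on its merits against the argument that reference supplies. Your proof of (iv) is correct and is essentially the intended one: two R-matrices in the class of $T$ differ by some $s\in\h_T\wedge\h_T$ by Proposition~\ref{bdclass}(iii), the Sklyanin construction is linear in $r$, and $\Poi_s$ is the bivector generated by the $\H_T\times\H_T$-action of assertion (iii); since every cluster variable in every cluster is a semi-invariant of the global toric action with weight matrix $W$ satisfying $\wB W=0$, the coefficient matrix of $\Poi_s$ in any extended cluster has the form $WCW^T$, so $\wB\Omega_s^{\wx}=0$ and Proposition~\ref{Bomega} (with $D=0$) gives compatibility.

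Your proof of (v), however, has a genuine gap, and it sits exactly where all the content of (v) lies. You establish only the lower bound (that $\mathcal P$ contains the family coming from (iv) and scaling) and then assert that ``matching the data coming from (iii) \dots with the Casimirs/semi-invariants of $\Poi_{r'}$ forces'' $T'=T$; that assertion \emph{is} the statement to be proved, and your own closing paragraph concedes that turning it into a rigorous dimension count is left undone. Worse, the route you propose would require knowing the semi-invariant and Casimir structure of Sklyanin brackets for an \emph{arbitrary} BD class $T'$, which is a substantial input not available from the hypotheses. The intended argument never compares BD classes at all: by (i), $\rank\wB=n$, and by (iii) the toric action has maximal rank $2k_T$, so $\ker\wB$ is precisely the column span of $W$; if $\Omega'$ and $\Omega_0$ are the coefficient matrices of a compatible Poisson--Lie bracket $\Poi'$ and of the known $\Poi_{r_0}$, then $\wB\Omega'=[D'\;0]$, $\wB\Omega_0=[D_0\;0]$ with $D'=\lambda'\one_n$, $D_0=\lambda_0\one_n$ by irreducibility of $B$, whence $\Omega'-c\,\Omega_0=WCW^T$ for $c=\lambda'/\lambda_0$ and some skew-symmetric $C$. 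Thus $\Poi'-c\,\Poi_{r_0}$ is a bivector built from left and right $\h_T$-translations, and multiplicativity of $\Poi'$ forces it into the Sklyanin form for some $s\in\h_T\wedge\h_T$, giving $\Poi'=c\,\Poi_{r_0+s/c}$ with $r_0+s/c$ in the class of $T$ by Proposition~\ref{bdclass}(iii). Two further small points: your opening reduction ``$\Poi=c\,\Poi_{r'}$ for some BD R-matrix $r'$'' silently assumes the bracket is quasi-triangular with symmetric part a nonzero multiple of $\mathfrak t$, which deserves justification (or a separate treatment of the degenerate case); and the final ``$\dim\mathcal P=\binom{k_T}{2}+1$'' is used without ever establishing the upper bound on $\dim\mathcal P$, which is again the whole point.
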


\section{Main result and the outline of the proof}
The Belavin--Drinfeld data (triple, class) is said to be {\it trivial\/} if $\Gamma_1=\Gamma_2=\varnothing$.
In this case,   $\H_T=\H$ is the Cartan
subgroup in $\G$, $r_0$ in \eqref{rBD} can be chosen to be $\frac{1}{2} \mathfrak{t}_0$ and the corresponding 
$R_\pm\in \End\g$ becomes
$$
R_+=\frac{1}{2} \pi_{0} +  \pi_{> 0},\quad  R_-=-\frac{1}{2} \pi_{0} -  \pi_{< 0}.
$$
The resulting Poisson bracket is called   the {\it standard Poisson--Lie structure\/} on $\g$.
Conjecture \ref{ulti} in this case was verified in \cite{GSVMMJ}.

In this paper we will consider $\G=SL_n$ and the Belavin--Drinfeld data that is "the farthest" from the 
trivial data, namely, $\Gamma_1=\{\alpha_2, \dots, \alpha_{n-1}\}$,  
 $\Gamma_2=\{\alpha_1, \dots, \alpha_{n-2}\}$ and $\gamma(\alpha_i) = \alpha_{i-1}$
for $i\in [2,n-1]$.  The resulting Poisson--Lie bracket on $SL_n$ is called the
{\em Cremmer--Gervais structure\/}. The main result of this paper is

\begin{theorem}\label{CGtrue}
Conjecture {\rm \ref{ulti}} is valid for the Cremmer--Gervais Poisson--Lie structure.
\end{theorem}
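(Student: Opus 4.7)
The plan is to define an explicit cluster structure $\CC_{CG}$ on $SL_n$ and verify Conjecture~\ref{ulti} via Theorem~\ref{partial}, which reduces the verification of (iv) and (v) to establishing (iv) for a single convenient R-matrix, once (i) and (iii) are in hand. The work therefore splits into four tasks: construct an initial seed $\Sigma_{CG} = (\x_{CG}, Q_{CG})$ of the required rank with $2k_T = 2$ stable variables; verify compatibility of $\Sigma_{CG}$ with the Cremmer--Gervais bracket via Proposition~\ref{Bomega}; establish regularity and a rank-$2$ global toric action via Proposition~\ref{regfun}; and prove the crucial inclusion $\O(SL_n) \subseteq \varphi(\UU_\C(\CC_{CG}))$.

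My approach to the first three tasks is to work not on $SL_n$ directly but on the Drinfeld double $D(GL_n)$ equipped with the bracket~\eqref{sklyadouble}, whose restriction to the diagonal copy of $GL_n$ is the natural extension of the Cremmer--Gervais bracket. On the double I would exhibit a family of minors of a rectangular matrix $\wX$ of dimensions quadratic in $n$, assembled from the two factors of the double in such a way that the Weyl shift realizing the isometry $\gamma$ acts as a literal translation on $\wX$. This family should be log-canonical on the double, and its restriction to the diagonal will provide the log-canonical transcendence basis $\x_{CG}$ on $GL_n$; the quiver $Q_{CG}$ is read off from the resulting coefficient matrix. Compatibility (iv) then reduces to the direct check $\wB_{CG}\,\Omega^{\x_{CG}} = [D\ 0]$, while Proposition~\ref{regfun}(iii) follows from Desnanot--Jacobi-type determinantal identities for minors of $\wX$. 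A byproduct of the construction is the explicit form of the two Casimir functions of the bracket, and this pins down both the rank-$2$ toric action and its identification with the $\H_T \times \H_T$-action on $SL_n$, giving (iii).

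The main obstacle is the last task. Since $\O(SL_n)$ is generated by the matrix entries $x_{ij}$, it suffices to display each $x_{ij}$ as a Laurent polynomial in the variables of some seed mutation-equivalent to $\Sigma_{CG}$. I plan to produce two explicit sequences of cluster transformations whose effect can be traced at the level of $\wX$. The first sequence implements an induction step from $\Mat_n$ down to $\Mat_{n-1}$: exploiting the block structure of $\wX$ and modelling each mutation on a classical determinantal identity combined with the translation invariance of $\wX$ under the shift realizing $\gamma$, it produces a seed in which the cluster structure on $\Mat_{n-1}$ appears as a sub-seed and recovers the entries of an $(n-1)\times(n-1)$ block. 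The second sequence realizes a linear map on $\Mat_n$ that is anti-Poisson for the Cremmer--Gervais bracket; such a map need not itself be a cluster transformation, but I expect its action on cluster variables to be reproducible by an explicit mutation sequence, which then roughly doubles the collection of recovered matrix entries and accounts for the remaining ones. Embeddability of $Q_{CG}$ into a torus is the combinatorial tool that makes both sequences tractable. Finally, the passage from $\Mat_n$ to $SL_n$ together with the remaining Laurent-phenomenon bookkeeping is handled by general cluster-algebra lemmas collected separately.
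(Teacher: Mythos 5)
Your proposal follows essentially the same route as the paper: the log-canonical family of minors of a block matrix $U(X,Y)$ on the double on which $\gamma$ acts by translation, compatibility via Proposition~\ref{Bomega}, regularity via Desnanot--Jacobi identities, and the two mutation sequences $\mathcal S$ (induction $\Mat_n\to\Mat_{n-1}$) and $\mathcal T$ (realizing $X\mapsto W_0XW_0$) combined with a two-localization lemma to recover the matrix entries. The outline is correct; the remaining work is the (very substantial) explicit verification of each step.
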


\begin{remark}
1. Usually, the name Cremmer--Gervais is associated with the Belavin--Drinfeld data 
opposite to the one described above, that is, with $\Gamma_1$ and $\Gamma_2$ switched and $\gamma$ 
replaced with $\gamma^{-1}$ (see, e.g., \cite{CrGe}, \cite{Ho}). The cluster structures that 
correspond to this version are isomorphic under the transposition $X\mapsto X^t$.

 2. The claim of the theorem was verified for $n=3,4$ in \cite{GSVMMJ} and for $n=5$ in \cite{Ei}.
\end{remark}

We prove Theorem~\ref{CGtrue} by producing a cluster structure $\CC_{CG}=\CC_{CG}(n)$ that possesses all the
needed properties. In fact, we will construct an exotic cluster structure in the space $\Mat_n$ of 
$n\times n$ matrices compatible with a natural extension of the Cremmer--Gervais Poisson bracket and derive the 
required properties of $\CC_{CG}$ from similar features of the latter cluster structure. 
 Note that in the ``intermediate'' case of $GL_n$ we also obtain a regular cluster structure compatible
with the extension of the Cremmer--Gervais Poisson bracket, however, in this case the ring of regular functions on $GL_n$ is isomorphic to the localization of the upper cluster algebra with respect to the function $\det X$.
In what follows we use the same notation $\CC_{CG}$ for all three cluster structures and indicate explicitely which
one is meant when needed.

To describe the initial cluster for  $\CC_{CG}$, we need to introduce some notation.
For a matrix $A$, we denote by $A_{i_1\ldots i_l}^{j_1\ldots j_m}$ its submatrix formed
by rows $i_1,\ldots, i_l$ and columns $j_1,\ldots, j_m$. If all rows (respectively, columns) of $A$ are 
involved, we will omit the lower (respectively, upper) list of indices.
If $X$, $Y$ are two $n\times n$ matrices, denote by  $\X$ and $\Y$  $(n-1)\times (n+1)$ matrices 
$$
\X = \left [ X_{[2,n]}\  0\right ], \quad \Y = \left [0\   Y_{[1,n-1]} \right ].
$$
Put $k=\lfloor \frac{n+1}{2}\rfloor$ and $N=k (n-1)$. Define a $k (n-1) \times (k+1) (n+1)$ matrix
\begin{equation}
U(X, Y) = \left [
\begin{array}{ccccc}
\Y & \X & 0 & \cdots & 0\\
0 & \Y & \X  & 0 & \cdots\\
0 & \ddots& \ddots &\ddots & 0\\
0 & \cdots & 0 & \Y & \X
\end{array}
\right ].
\label{uho}
\end{equation}

Define three families of functions in $X\in SL_n$ 
via
\begin{equation}\label{inclust}
 \begin{aligned}
\thetta_i(X)&=\det X_{[n-i+1,n]}^{[n-i+1,n]}, \; i\in [n-1];\\
\phhi_i(X)&=\det U(X,X)_{[N-i+1, N]}^{[k (n+1) - i +1, k (n+1)]}, \; i\in [N];\\
\pssi_i(X)&=\det U(X,X)_{[N-i+1, N]}^{[k (n+1) - i +2, k (n+1)+1]}, \; i\in [M].
\end{aligned}
\end{equation}
In the last family, $M=N$ if $n$ is even and $M=N-n+1$ if $n$ is odd.

\begin{theorem}
\label{logcan}
The functions~\eqref{inclust} form a log-canonical family with respect to the Cremmer--Gervais
bracket. 
\end{theorem}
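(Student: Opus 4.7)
Following the strategy indicated above the theorem, my plan is to lift the family $\{\thetta_i,\phhi_i,\pssi_i\}$ to functions on the Drinfeld double $D(GL_n)=GL_n\times GL_n$, prove log-canonicity there with respect to the double bracket $\Poi_D$ of \eqref{sklyadouble}, and then restrict to the diagonal subgroup $GL_n\hookrightarrow D(GL_n)$. Since the diagonal embedding is a Poisson map sending $\Poi_D$ to the Cremmer--Gervais Sklyanin bracket, the restriction automatically preserves log-canonicity, reducing the problem to a statement on the double where the Manin-triple decomposition \eqref{ddeco} is directly available.

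The lifts are the natural ones: leave $\X=[X_{[2,n]}\ 0]$ unchanged inside $U(X,X)$ but replace each $\Y=[0\ X_{[1,n-1]}]$ by $[0\ Y_{[1,n-1]}]$ for an independent second matrix variable $Y$, obtaining the matrix $U(X,Y)$ of \eqref{uho}. Define lifts of $\phhi_i$ and $\pssi_i$ by taking the same minors of $U(X,Y)$ that appear in \eqref{inclust}; the $\thetta_i$ lift to trailing principal minors of the first factor $X$ alone. Using the Jacobi formula, the left and right gradients of these determinants can be written as rank-type matrices assembled from complementary rows and columns of $U(X,Y)$. The crucial structural observation is that the one-step row-shift between consecutive blocks of $U$ encodes exactly the Belavin--Drinfeld isometry $\gamma(\alpha_i)=\alpha_{i-1}$ defining the Cremmer--Gervais data, so these gradients should interact cleanly with the operator $R_D=\pi_{\D_+}-\pi_{\D_-}$.

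The main technical step---and the place I expect the bulk of the work to lie---is to show that, for every pair $F_i,F_j$ of lifted functions, one has $\{F_i,F_j\}_D=\omega_{ij}F_iF_j$ with $\omega_{ij}$ a scalar constant. The mechanism is that when the gradients are projected onto $\D_+$ (the diagonal copy of $\g$) and onto $\D_-$ (the twisted graph of the $R_\pm$), the shift-covariance of $U(X,Y)$ forces the ``new'' minors that would naively appear on the right-hand side of \eqref{sklyadouble} to collapse, via Sylvester-type determinantal identities, into a multiple of the product $F_iF_j$ itself. Concretely I would reduce the calculation to a finite list of pattern-matching identities between minors of $U(X,Y)$ whose row and column sets have a prescribed overlap, each of which succeeds precisely because of the staggered block layout of \eqref{uho}; I expect that this bookkeeping, rather than any single hard identity, is what consumes the proof.

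For the $\thetta_i$ the analysis is much simpler: after lifting, their gradients are block triangular, so only the Cartan component $r_0$ and the upper-triangular nilpotent parts of $r$ contribute, and constant log-canonical coefficients follow by a short direct computation that largely mirrors the standard-Poisson-Lie case. Restricting all of the verified double-bracket identities to the diagonal $Y=X$ then produces the Cremmer--Gervais brackets on the functions \eqref{inclust} and simultaneously reads off the coefficient matrix of the log-canonical family, completing the proof.
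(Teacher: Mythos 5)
Your plan coincides with the paper's own proof: the paper lifts the family to the double via $U(X,Y)$, proves log-canonicity there with respect to $\{\cdot,\cdot\}_D$ (Theorem~\ref{dlogcan}), and obtains Theorem~\ref{logcan} by restriction to the diagonal subgroup, with the bulk of the work being exactly the bookkeeping you anticipate — namely, recursion relations satisfied by the blocks of the gradients of $\log\phhi_q$, $\log\pssi_q$ (coming from the block-bidiagonal structure of $U$) that force all $X,Y$-dependent terms in the bracket to telescope away. The only cosmetic difference is that the paper does not treat the $\thetta_i$ separately but absorbs them via $\pssi_{M+j}=\pssi_M\thetta_j$.
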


Consequently, we choose family~\eqref{inclust} as an initial (extended) cluster for  $\CC_{CG}(n)$. 
Furthermore, functions $\phhi_N$ and $\pssi_M$ are the only stable variables for $\CC_{CG}(n)$;
for the motivation for this choice see Proposition~\ref{sigizmund} below. 

Next, we need to describe the quiver  that corresponds to the initial cluster above. 
In fact, it will be convenient to do this in the $\Mat_n$ rather than $SL_n$ situation, in
which case the initial cluster is augmented by the  addition of one more stable
variable, 
$\thetta_n(X)=\det X$. The Cremmer--Gervais Poisson structure is extended to $\Mat_n$
by requiring that $\thetta_n(X)$ is a Casimir function.
We denote the quiver corresponding to the augmented initial cluster by $Q_{CG}(n)$.
Its vertices are $n^2$ nodes of the $n\times n$ rectangular grid indexed by pairs
$(i,j)$, $i, j \in [n]$, with $i$ increasing top to bottom and $j$ increasing left
to right. Before
describing  edges of the quiver, let us explain the correspondence between the
cluster variables and the vertices of $Q_{CG}(n)$.

For any cluster or stable variable $f$ in the augmented initial cluster, consider the upper
left matrix
entry of the submatrix of $U(X,X)$ (or $X$) associated with this variable. This
matrix entry is $x_{ij}$ for some $i, j \in [n]$. 
Thus we define a correspondence  $\rho \: f \leftrightarrow (i,j)\in
[n]\times [n]$.

\begin{lemma}
\label{vertices}
$\rho$ is a one-to-one correspondence between the augmented initial cluster and
$[n]\times [n]$.
\end{lemma}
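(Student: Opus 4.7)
The plan is a direct combinatorial verification: compute $\rho$ explicitly on each of the three families of variables, and then check that the images tile $[n]\times[n]$ exactly once.

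For the diagonal contribution, $\thetta_i$ with $i\in[n-1]$ is $\det X_{[n-i+1,n]}^{[n-i+1,n]}$, which is realized inside $U(X,X)$ as the minor at rows $[n-i,n-1]$ and columns $[2n-i+2,2n+1]$ (sitting entirely in the $\X$-block of row-block $1$, column-block $2$); its upper-left entry is $x_{n-i+1,n-i+1}$. Assigning $\rho(\thetta_n):=(1,1)$ by convention (this is the only variable not realized as a minor of $U(X,X)$), the $\thetta$'s fill the main diagonal bijectively.

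For $\phhi_i$ and $\pssi_i$ I would locate the upper-left corner of the defining submatrix in $U(X,X)$ using two Euclidean divisions: write $i-1=a(n-1)+c$ with $c\in[0,n-2]$ to read off the row-block index $s=k-a$ and in-block row $r'=n-1-c$, and decompose the column index by $n+1$ to get the column-block index $t$ and in-block column $c'$. Since only the block pairs $(s,s)$ (a $\Y$-block) and $(s,s+1)$ (an $\X$-block) carry non-zero entries, each family splits into two cases, yielding closed forms such as $\rho(\phhi_i)=(n-1-c,\,n-c+2a)$ in the $\Y$-case $c\ge 2a$ and $\rho(\phhi_i)=(n-c,\,2a-c)$ in the $\X$-case $c<2a$, with analogous formulas for $\pssi_i$ (plus an extra sub-case hitting the first column of $X$).

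The crucial observation is that the offset $d=j-i$ in these formulas depends only on $a$: the $\Y$-contributions of $\phhi$ and $\pssi$ give $d=2a+1$ and $d=2a+2$ respectively, while the $\X$-contributions give $d=2a-n$ and $d=2a+1-n$. Hence the $\phhi$'s populate the anti-diagonals of one parity and the $\pssi$'s those of the other, and on each anti-diagonal the row coordinate sweeps through a full interval of $n-|d|$ consecutive integers --- exactly matching the number of grid points on that anti-diagonal. The range constraints $i\in[1,N]$ for $\phhi$ and $i\in[1,M]$ for $\pssi$ (with $M=N-n+1$ when $n$ is odd) are tuned so that each row is hit exactly once per anti-diagonal and nothing spills onto the main diagonal. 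The main obstacle is pure bookkeeping: the two Euclidean divisions (by $n-1$ and $n+1$) must be compared to identify the correct block, and the parity of $n$ has to be absorbed by the choice $M=N$ versus $M=N-n+1$. Once this accounting is in place, injectivity and surjectivity on each anti-diagonal are immediate, and $\rho$ is a bijection.
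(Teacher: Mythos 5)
Your proposal is correct, and the computations you sketch do check out: your closed forms for $\rho(\phhi_i)$ (and the analogous ones for $\pssi_i$) agree with the relabeling the paper itself performs much later in \eqref{pqviaij}--\eqref{fviaphi}. But your route is genuinely different from the paper's. The paper never proves Lemma~\ref{vertices} in isolation; it is recovered as the case $t=0$ of Lemma~\ref{rho_t} in Section~7, whose proof is graph-theoretic rather than computational. There one observes that the initial cluster variables are exactly the distinct trailing principal minors of three staircase matrices, that one step along the diagonal of each such matrix translates into a specific type of edge of $Q_{CG}(n)$, and that the three resulting ``diagonal paths'' issuing from the frozen vertices are simple and disjoint (exactly one edge of the relevant type enters each non-frozen vertex) and together cover all of $[n]\times[n]$. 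That argument buys generality: it applies verbatim to the mutated quivers $Q_t(n)$ and the matrices $V^{(t)}_{pq}$, which is precisely what the induction behind Theorem~\ref{transform1} requires. Your argument buys explicitness: the two Euclidean divisions and the anti-diagonal bookkeeping give the bijection directly, with no reference to the quiver, and in effect anticipate \eqref{fviaphi}. One caution: the clean summary ``$\phhi$'s fill the anti-diagonals of one parity and $\pssi$'s those of the other'' is literally true only for $n$ odd; for $n$ even the offset $2a-n$ is even, so within a single family the positive and negative offsets have opposite parities. Your counting is insensitive to this (each family still meets each of its anti-diagonals in a full interval of rows, and $M=N$ restores the total), but that sentence needs the adjustment you already hint at when you say the parity of $n$ is absorbed by the choice of $M$.
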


We will assign each cluster variable $f$ to the vertex indexed by $(i,j) = \rho(f)$.
In particular, the stable variable $\thetta_n$ is assigned to vertex $(1,1)$, 
 the stable variable $\phhi_N$ is assigned to vertex $(2,1)$ if $n$ is odd and $(1,n)$
 if $n$ is even, and  the stable variable $\pssi_M$ is assigned to vertex $(1,n)$ if
$n$ is odd and $(2,1)$
 if $n$ is even.

Now, we describe arrows of $Q_{CG}(n)$. There are horizontal arrows
$(i,j+1) \to (i,j)$ for  all $i \in [n], j\in [n-1]$ except $(i,j)=(1, n-1)$;
vertical arrows
$(i+1,j) \to (i,j)$ for  all $i \in [n-1], j\in [n]$ except $(i,j)=(1,1)$; 
diagonal arrows $(i,j) \to (i+1,j+1)$ for  all $i, j\in [n-1]$.
In addition, we have arrows between vertices of the first and the last rows:
$(n,j) \to (1,j)$ and $(1,j) \to (n,j+1)$ for $j\in [2, n-1]$;  and 
 arrows between vertices of the first and the last columns:
$(i,n) \to (i+2,1)$ and $(i+2,1) \to (i+1, n)$ for $i\in [1, n-2]$. This
concludes the description of $Q_{CG}(n)$. The quiver  $Q'_{CG}(n)$ that correspond to the
$SL_n$ case is obtained
from $Q_{CG}(n)$ by deleting the vertex $(1,1)$ and erasing all arrows incident to this
vertex. Quivers $Q_{CG}(4)$ and $Q_{CG}(5)$ are shown on Fig.~\ref{fig:Qcg45}.

\begin{figure}[ht]
\begin{center}
\includegraphics[height=6.4cm]{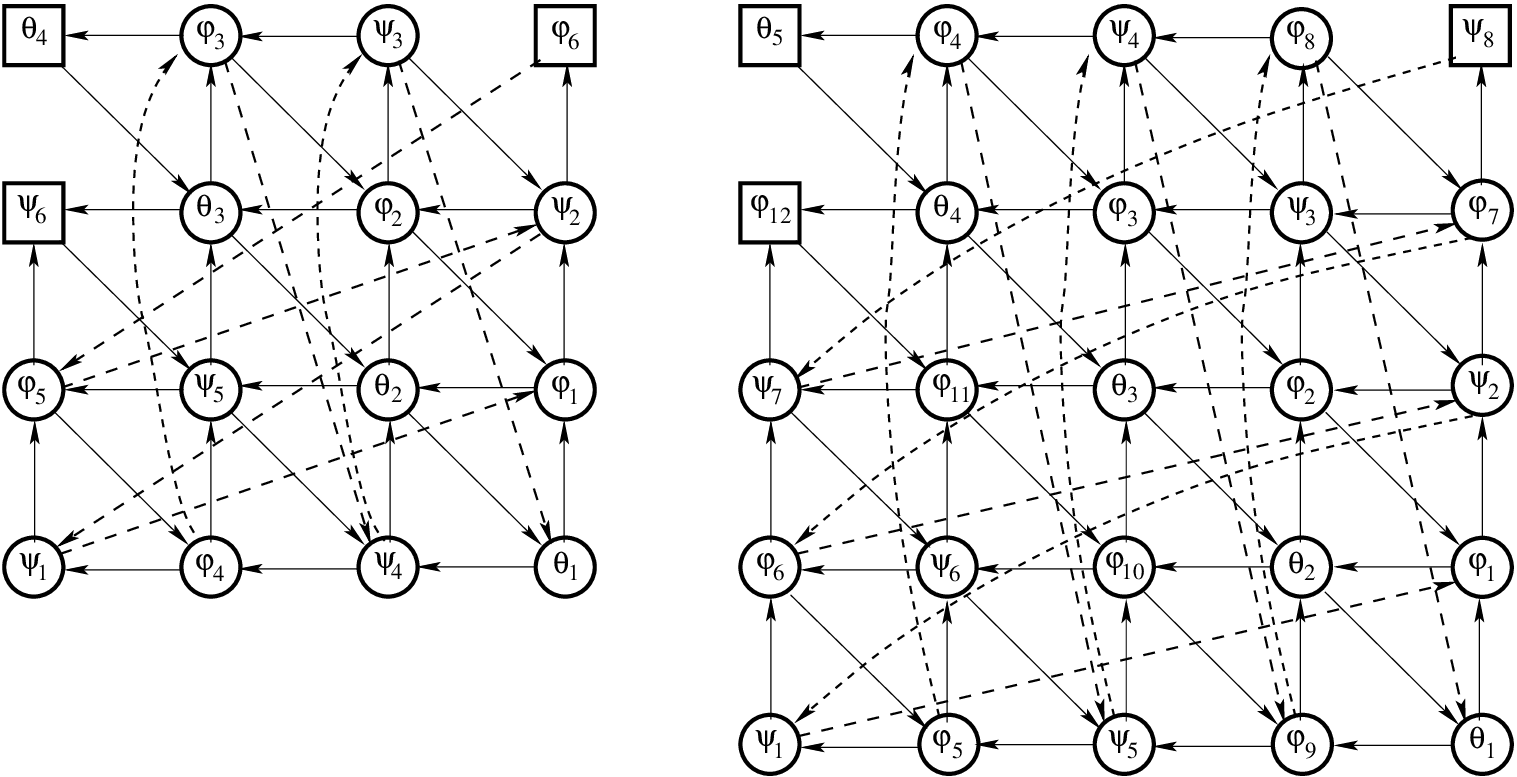}
\caption{Quivers $Q_{CG}(4)$ and $Q_{CG}(5)$}
\label{fig:Qcg45}
\end{center}
\end{figure}

\begin{theorem}
\label{quiver} {\rm (i)}
The quivers $Q_{CG}(n)$  and $Q'_{CG}(n)$ define cluster structures compatible with the
Cremmer-Gervais Poisson structure on $\Mat_n$ and $SL_n$ respectively.

{\rm (ii)} The corresponding extended exchange matrices are of full rank.
\end{theorem}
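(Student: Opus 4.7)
The plan is to invoke Proposition~\ref{Bomega}. Since the functions in~\eqref{inclust}, together with $\thetta_n=\det X$ in the $\Mat_n$ case, form a log-canonical family by Theorem~\ref{logcan}, the coefficient matrix $\Omega^{\wx}$ of the Cremmer--Gervais bracket in this basis is a well-defined skew-symmetric integer matrix. To prove part~(i) it then suffices to verify the matrix identity
\[
\wB\,\Omega^{\wx} \;=\; \bigl[\,D\;\;0\,\bigr]
\]
for a nondegenerate diagonal $D$, where $\wB$ is read off from $Q_{CG}(n)$ (respectively $Q'_{CG}(n)$). This one relation gives compatibility of the initial seed, and the standard mutation calculus then propagates log-canonicity to every other cluster. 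Part~(ii) follows formally from the same identity once the diagonal entries of $D$ are known to be nonzero, since a left null vector $v$ of $\wB$ would then satisfy $v\,[D\;0]=0$ and hence $v=0$.

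The first concrete step is to extract the entries of $\Omega^{\wx}$ from the proof of Theorem~\ref{logcan}. That proof realizes the $\phhi_i$ and $\pssi_i$ as minors of the auxiliary matrix $U(X,X)$ and computes their pairwise Poisson brackets by lifting to the double and pulling back to the diagonal subgroup. Using the bijection $\rho$ of Lemma~\ref{vertices}, I would record each exponent $\omega_{ij}$ as a function of the grid positions $\rho(x_i),\rho(x_j)\in[n]\times[n]$. The aim is to obtain a presentation of $\Omega^{\wx}$ local enough to interact cleanly with the locally described arrows of $Q_{CG}(n)$.

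With this in hand, the identity $\wB\,\Omega^{\wx}=[D\;0]$ is verified row by row. The row of $\wB$ labelled by a mutable vertex $v=\rho(x_k)$ is supported on the handful of neighbours of $v$: the horizontal, vertical, and diagonal grid neighbours, plus, for $v$ on the boundary, the ``wrap-around'' arrows between the top and bottom rows and between the leftmost and rightmost columns. Writing out $\sum_i b_{ki}\omega_{ij}=d_k\delta_{kj}$ for each $j$ then reduces to a short list of local identities on the $\omega$'s, which I expect to be Pl\"ucker-type relations among the minors of $U(X,X)$ attached to the neighbours of $v$. The nonzero $d_k$ are identified in the course of this computation, yielding~(ii), and the passage from $\Mat_n$ to $SL_n$ is immediate: deleting vertex $(1,1)$ corresponds to specialising the Casimir $\thetta_n$ to $1$, which preserves both the log-canonical relations and the verified matrix identity.

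The principal obstacle I foresee is bookkeeping. The Cremmer--Gervais bracket is considerably less standard than the standard Poisson--Lie structure, and the functions $\phhi_i,\pssi_i$ are minors of the $k(n-1)\times(k+1)(n+1)$ matrix $U(X,X)$ rather than of $X$ itself, so every entry of $\Omega^{\wx}$ is the outcome of a non-trivial intersection-pattern computation on the double. The art is to find a combinatorial description of $\Omega^{\wx}$ indexed by the $(i,j)$ grid in which the contributions of horizontal, vertical, and diagonal arrows visibly cancel at interior vertices, and in which the wrap-around arrows reflect the shift $\gamma(\alpha_i)=\alpha_{i-1}$ at the heart of the Cremmer--Gervais data; without such an organising principle the verification threatens to collapse into an unreadable case analysis over $n^2$ variables.
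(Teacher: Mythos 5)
Your proposal follows essentially the same route as the paper: Section~5 proves Theorem~\ref{quiver} precisely by computing the coefficient matrix $\Omega$ from the log-canonicity computation of Section~4 (formula~\eqref{omegan}), verifying $\widetilde{B}(n)\Omega(n)=[\lambda\one_{n^2-3}\;0]$ with $\lambda=1$ row by row according to the vertex types of $Q_{CG}(n)$, and deducing full rank from the nonzero diagonal. The only cosmetic difference is that the ``organising principle'' you hope for turns out to be not Pl\"ucker relations but a small calculus of identities for the diagonal matrices $T_m(q)$ and the shift operator $\sigma_-$ (Proposition~\ref{identities}), which is exactly the local cancellation device you anticipate needing.
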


Note that $k_T=1$ in the Cremmer--Gervais case, and hence the corresponding
Belavin--Drinfeld class contains a unique R-matrix. Therefore, Theorem~\ref{quiver} establishes
parts (i) and (iv) of Conjecture~\ref{ulti}  in the Cremmer--Gervais case.

Another property of the cluster structure $\CC_{CG}$ is given by the following theorem.

\begin{theorem}
\label{regular}
Mutation of any function in the family~\eqref{inclust} except for $\phhi_N$ and 
$\pssi_M$ results in a regular function on $\Mat_n$.
\end{theorem}

As a corollary, we get part (iii) of  Conjecture~\ref{ulti}, see  Proposition~\ref{torus} below.
Consequently, by Theorem~\ref{partial}, part (v) of Conjecture~\ref{ulti} is established as well.

To establish part (ii) of the conjecture, we invoke Proposition~\ref{regfun}. Condition (i) follows 
from the definition of the initial cluster~\eqref{inclust},
condition (ii) is verified by Theorem~\ref{regular}, condition (iii) is verified by direct calculation.
Therefore, to settle part (ii) of Conjecture~\ref{ulti} and thus to complete the proof of Theorem~\ref{CGtrue},
it remains to check that every function in $\O(\Mat_n)$ belongs to 
the upper cluster algebra $\UU_{CG}=\UU_{\C}(\CC_{CG})$.

The proof relies on induction on $n$.  For $n=2$ the Cremmer--Gervais cluster structure coincides with the standard one,
which was treated already in~\cite{FZ2}.
The main ingredient of the proof is a construction of
two distinguished sequences of cluster
 transformations. The first sequence, $\mathcal S$, followed by freezing some of the
cluster variables and localization
 at a single cluster variable $\phhi_{n-1}(X)$, leads to a map $\zeta \: \Mat_n \setminus
 \{X\:\phhi_{n-1}(X)=0\}\to
\Mat_{n-1}$ that ``respects" the Cremmer--Gervais cluster
 structure. The map $\zeta$ is needed to perform an induction step. However, because
of the localization mentioned above, we also need a second sequence, $\mathcal T$, 
of transformations. This sequence can be viewed as a cluster-algebraic realization of the
anti-Poisson involution
 $X \mapsto W_0 X W_0$ on $\Mat_n$ equipped with the Cremmer--Gervais Poisson bracket, where
$W_0$ is the matrix corresponding to the longest permutation $w_0$
 (cf.~Lemma \ref{antipoiss} below). This allows one to apply $\zeta$ to
  $W_0 X W_0$ as well and then invoke certain general properties of cluster algebras
to fully utilize the induction assumption.

If $v=(v_i)_{i=1}^n$ is a vector in $\mathbb{C}^n$, we denote by $N(v)$ a
lower-triangular Toeplitz matrix
 $$
 N(v) = \left (
 \begin{matrix}
 v_1 & 0 & \cdots & 0\\
 v_2 & v_1 & \ddots  & \vdots\\
 \vdots & \ddots & \ddots  & 0\\
 v_n  & \cdots & v_2 & v_1
 \end{matrix}
 \right ).
 $$
Let 
\begin{equation}
\label{v_j}
v_j(X) = \frac{\left ( X^{-1}\right)_{jn}}{\left ( X^{-1}\right)_{1n}}
=\frac{(-1)^{j-1}\det X_{[n-1]}^{[j-1]\cup[j+1,n]}}{\phhi_{n-1}(X)}.
\end{equation}
Then
\begin{equation}
\label{zeta}
 X N(v(X)) = \left (
 \begin{array}{cc}
 0 & \zeta(X)\\
 \star & \star
 \end{array}
 \right ),
\end{equation}
 where $\zeta(X)$ is an $(n-1)\times (n-1)$ matrix and $\star$ stands for an expression whose explicit form is not essential for future
computations. Note that $v_1(X)=1$ for any $X$, and hence
$N(v(X))$ is unipotent. 

Denote by $\hat Q_{CG}(n)$ the quiver obtained by adding to $Q_{CG}(n)$ two
additional arrows: $(1,1) \to (n,2)$ and $(n,1)\to (1,1)$.

 \begin{theorem}
 \label{transform1}
  There exists a sequence $\mathcal S$ of cluster transformations in $\CC_{CG}(n)$
such that $\mathcal S(Q_{CG}(n))$ contains
  a subquiver isomorphic to $\hat Q_{CG}(n-1)$ and cluster variables indexed by
vertices of this subquiver satisfy
  $$\mathcal S(f)_{ij}(X)=
  \phhi_{n-1}(X)^{\varepsilon_{ij}}
  f_{ij}(\zeta(X)),\quad i,j\in [n-1],
  $$
  where $\varepsilon_{ij}=1$ if $(i,j)$ is a $\pssi$-vertex in $Q_{CG}(n-1)$
and $\varepsilon_{ij}=0$ otherwise.
 \end{theorem}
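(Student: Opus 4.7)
The plan is to build $\mathcal{S}$ as a composition of sub-sequences of mutations, each of which performs a ``column reduction'' on the cluster variables of $\CC_{CG}(n)$, pushing them toward the corresponding cluster variables of $\CC_{CG}(n-1)$ evaluated on $\zeta(X)$. The guiding observation is that $\zeta(X) = [XN(v(X))]_{[n-1]}^{[2,n]}$, so passing from $X$ to $\zeta(X)$ amounts to right multiplication by the unitriangular lower Toeplitz matrix $N(v(X))$, followed by dropping the annihilated first column and the last row. Since $N(v(X))$ is Toeplitz and its entries are rational in $\phhi_{n-1}(X)$ and the cofactors of $X$, right multiplication by it both respects the block-shift structure built into $U(X,X)$ and contributes a uniform factor of $\phhi_{n-1}(X)$ whenever the reduction hits a ``$\pssi$-type'' column of $U$. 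This accounts, a posteriori, for the exponent pattern $\varepsilon_{ij}$ in the statement.

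First I would fix an embedding of the vertex set of $\hat Q_{CG}(n-1)$ into the grid $[n]\times[n]$ of $Q_{CG}(n)$ and, for each vertex $(i,j)$ in the image, identify the specific minor of $U(X,X)$ that must equal $\phhi_{n-1}(X)^{\varepsilon_{ij}} f_{ij}(\zeta(X))$. These ``target'' minors are the cluster variables that $\mathcal{S}$ must ultimately produce. The translation invariance of $U(X,X)$, in the form that every horizontal strip is a cyclic shift of $[\Y\ \X]$, is what makes it possible to rewrite each $f_{ij}(\zeta(X))$ (possibly after multiplication by $\phhi_{n-1}(X)$) as an honest minor of the enlarged matrix $U(X,X)$ itself, so that the target coincides with a legitimate candidate cluster variable of $\CC_{CG}(n)$.

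Next, I would assemble $\mathcal{S}$ in stages that mutate along anti-diagonals of the target subquiver. The diagonal arrows $(i,j)\to(i+1,j+1)$ built into $Q_{CG}(n)$ are exactly what is required so that each mutation's exchange relation matches a Desnanot--Jacobi or short Pl\"ucker identity for minors of $U(X,X)$; by translation invariance the same identity is available at many shifted positions simultaneously, which is what makes a single sub-sequence replace an entire anti-diagonal of cluster variables coherently. After all stages are performed, one verifies by the standard quiver-mutation rule that the subquiver induced on the image of $\hat Q_{CG}(n-1)$ is isomorphic to $\hat Q_{CG}(n-1)$; the two extra arrows incident to the vertex $(1,1)$ of $\hat Q_{CG}(n-1)$ are exactly the residual interactions with the column of $Q_{CG}(n)$ that played the role of the ``frozen boundary'' during the reduction.

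The hard part will be the combinatorial design of $\mathcal{S}$: the order of mutations must be chosen so that at every intermediate step each active cluster variable remains a single minor of $U(X,X)$, or a product of such divided by stable variables, since otherwise the short Pl\"ucker identities cannot be invoked on the next step. Tightly coupled to this is the bookkeeping of the exponent of $\phhi_{n-1}(X)$, because each mutation can change that exponent at several adjacent vertices, and only a carefully orchestrated sequence makes the telescoping yield the prescribed exponent $\varepsilon_{ij}$ (equal to $1$ exactly at $\pssi$-vertices, and $0$ elsewhere). I expect both obstacles to be managed by exploiting the torus embedding of $Q_{CG}(n)$ alluded to in the introduction, which imposes a rigid cyclic structure on the permissible mutation patterns and, in particular, naturally pairs each diagonal mutation with the shift identity it is supposed to realize.
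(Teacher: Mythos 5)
There is a genuine gap: your proposal identifies the correct high-level strategy (mutation sequences modeled on determinantal identities for minors of $U(X,X)$, exploiting translation invariance, ending with the recognition of the initial cluster of $\CC_{CG}(n-1)$ evaluated at $\zeta(X)$), but this strategy is essentially what the paper already announces in Section 3; the entire mathematical content of the theorem is the explicit construction you defer as ``the hard part.'' The paper's proof builds $\mathcal S=\mathcal S_v\circ\mathcal S_h$ in two stages, each decomposed into $O(n)$ subsequences of mutations along explicitly described diagonal paths, and for each stage it specifies the intermediate quivers ($Q_t(n)$, resp.\ $Q_{ns}(n)$) and proves (Lemmas \ref{rho_t}--\ref{Sv}) that every intermediate cluster variable is the determinant of the core of an explicitly named submatrix of $V(X,X)$ (resp.\ of the reduced matrix $\bar V$), so that each exchange relation reduces to \eqref{jacobi} or \eqref{notjacobi} after cancelling common factors (Lemma \ref{coredodgson}). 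Without producing this data --- the order of mutations, the intermediate quivers, and the identification of every intermediate variable as a single minor --- the existence claim is not established; there is no a priori reason a sequence with your desiderata exists.

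A second, more localized problem is your accounting of the factor $\phhi_{n-1}(X)$. You attribute it to the entries of $N(v(X))$ being rational in $\phhi_{n-1}(X)$ and propose to track a telescoping exponent of $\phhi_{n-1}$ through the mutations. In the paper's argument no such bookkeeping occurs: all intermediate cluster variables are honest polynomial minors, and since $N(v(X))$ is unipotent lower triangular, right multiplication by the block matrix $Z$ leaves all trailing principal minors unchanged. The exponent $\varepsilon_{ij}$ appears only in the final interpretation step, because the trailing minors of the transformed $V'_{22}$ that correspond to $\pssi$-vertices factor as $\thetta_{n-1}(\zeta(X))\,\pssi_i(\zeta(X))$ with $\thetta_{n-1}(\zeta(X))=\phhi_{n-1}(X)$, while those corresponding to $\phhi$- and $\thetta$-vertices do not acquire such a factor. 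Tracking powers of $\phhi_{n-1}$ at every mutation, as you propose, would be both unnecessary and much harder to control.
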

 
For any function $g$ on $\Mat_n$ define $g^{w_0}(X)=g(W_0XW_0)$.
 Besides, for any quiver $Q$ denote by $Q^{opp}$ the quiver obtained from $Q$ by reversing all
arrows. 

  \begin{theorem}
 \label{transform2}
 There exists a sequence $\mathcal T$ of cluster transformations in $\CC_{CG}$ such
that $\mathcal T(Q_{CG}(n))$ is isomorphic
 to $Q_{CG}^{opp}(n)$ and cluster variables indexed by vertices of
 $\mathcal T(Q_{CG}(n))$ satisfy
 $$\mathcal T(f)_{ij}(X) = f^{w_0}_{ij}(X).$$
 \end{theorem}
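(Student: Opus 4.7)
The plan is to construct $\mathcal{T}$ explicitly and to verify its action on both the quiver and the cluster variables. The structural motivation comes from Lemma \ref{antipoiss}: since $X \mapsto W_0XW_0$ is an anti-Poisson involution for the Cremmer--Gervais bracket, it sends the coefficient matrix $\Omega^\wx$ to $-\Omega^\wx$, and by Proposition \ref{Bomega} the compatible exchange matrix $\wB$ must map to $-\wB$. At the level of quivers this is precisely arrow reversal, which is why the final quiver should coincide with $Q_{CG}^{opp}(n)$.

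First I would compute $f^{w_0}$ explicitly for each function $f$ in the initial cluster. Since $(W_0XW_0)_{ij} = X_{n+1-i,\,n+1-j}$, the trailing principal minor $\thetta_i^{w_0}(X)$ coincides with the leading principal minor $\det X^{[1,i]}_{[1,i]}$ (the two row/column reversal signs cancel). Similarly, $\phhi_i^{w_0}$ and $\pssi_i^{w_0}$ can be rewritten as minors of $U(X,X)$ indexed by ``diametrically opposite'' rows and columns, after appropriate signed permutations of the block rows of $U$. These target minors are natural candidates to be attached to mirrored grid positions $(n+1-i,n+1-j)$, which makes the identification of the reversed quiver with $Q_{CG}^{opp}(n)$ reasonable at the combinatorial level.

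Next I would construct $\mathcal{T}$ as an iterated composition of local mutation sweeps that implement this mirroring in small pieces. The plan is to exploit two structural features emphasized in the introduction: the fact that $Q_{CG}(n)$ embeds into a torus (so that ``sweeping'' sequences analogous to those used for $A_n$-type Grassmannian quivers are available), and the fact that every cluster variable that arises is a minor of $U(X,X)$ (so that each individual mutation matches a Desnanot--Jacobi or short Pl\"ucker identity among such minors). I would organize $\mathcal{T}$ as a series of passes through the grid, for instance along anti-diagonals, with each pass simultaneously reversing a local batch of arrows and replacing the cluster variable at position $(i,j)$ by the minor attached to its mirror position, eventually sweeping through the entire grid.

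The main obstacle is precisely this explicit construction, together with the verification that every intermediate cluster variable remains a minor of $U(X,X)$ so that the exchange relations produced by the mutations continue to match determinantal identities. Unlike $\mathcal{S}$ from Theorem \ref{transform1}, whose role as an induction step $\Mat_n \to \Mat_{n-1}$ gives it a natural hierarchical organization, the involution $X \mapsto W_0XW_0$ acts globally on $X$ and $\mathcal{T}$ is expected to be considerably longer and more delicate. The ``wrap-around'' arrows of $Q_{CG}(n)$ between the first and last rows and columns are a further source of difficulty, since they must reverse correctly without disrupting the determinantal matching for the ordinary horizontal, vertical, and diagonal arrows. I would expect the proof to proceed block by block, with the general cluster-algebraic tools collected in the final section of the paper providing the combinatorial infrastructure for splicing the local pieces together.
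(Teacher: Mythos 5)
Your preliminary observations are correct and match the paper's setup: $\thetta_i^{w_0}=\det X_{[i]}^{[i]}$, the functions $\phhi_\alpha^{w_0}$ and $\pssi_\beta^{w_0}$ are the \emph{leading} principal minors of the same dense submatrices of the augmented matrix $\bar U$ whose \emph{trailing} principal minors give $\phhi_\alpha$ and $\pssi_\beta$, and the identification of the final quiver with $Q_{CG}^{opp}(n)$ does come from the anti-Poisson property of $X\mapsto W_0XW_0$ (Lemma~\ref{antipoiss}). One caveat on that last point: knowing that $\{f^{w_0}_{ij}\}$ is log-canonical with coefficient matrix $-\Omega^{\wx}$ does not by itself force the exchange matrix to be $-\wB$; you need the uniqueness statement of Lemma~\ref{Q-restore}, which requires irreducibility of the principal part \emph{and} the invariance of the toric-action weight space under $w_0$ (Lemma~\ref{antitorus}), and even then the quiver is only pinned down up to an integer scalar $\lambda$, which must be evaluated on a single explicit edge to get $\lambda=-1$.

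The genuine gap is that the theorem is an existence statement for a concrete sequence of mutations, and your proposal does not construct it: ``a series of passes through the grid, for instance along anti-diagonals'' is a placeholder for what is, in the paper, the entire content of the proof. The actual sequence $\T$ occupies six distinct stages, beginning with a relabeling of $Q_{CG}(n)$ onto an annulus (a change of fundamental domain of its universal cover) so that the wrap-around arrows become a ``weave'' between the inner and outer circles; each stage uses a differently shaped family of mutation paths (spirals around the annulus, radial paths, shifted anti-diagonals with vertex relocation, a Grassmannian-type sweep borrowed from $Q_{Gr}(n-1)$), maintains two synchronized copies of the quiver so that vertices can be frozen in one copy and repositioned in the other as each $f^{w_0}_{ij}$ is reached, and verifies every single exchange relation as an instance of \eqref{jacobi} or \eqref{notjacobi} applied to the \emph{core} of a suitable submatrix of $\bar U$ after cancelling common factors forced by the staircase shape (Lemma~\ref{coredodgson}). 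None of this is derivable from the general principles you cite; in particular, intermediate cluster variables are cores of minors with non-contiguous row and column sets, so ``every intermediate variable is a minor of $U(X,X)$'' is not automatic and has to be proved inductively along each path. Until the sequence is exhibited and these identities are checked, the theorem is not proved.
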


 To complete the proof of Theorem \ref{CGtrue}, it remains to show that functions $x_{ij} \in \O (\Mat_n)$ belong to the upper cluster algebra $\UU_{CG}(n)$. We will use induction on $n$.

Consider  the localization of $\UU_{CG}(n)$ at $\phhi_{n-1}(X)$, denoted by
$\UU_{CG}(n)\langle \phhi_{n-1}\rangle $.
Minors $\det X_{[n-1]}^{[j-1]\cup[j+1,n]}$ are cluster variables; for $j\in [2, n-1]$ this
can be seen from the third line in the formula presented in Lemma \ref{cycles} below with
$l=j-1, p= l+1, q =1$, and for $j=n$, from the fact that $\theta_{n-1}^{w_0}(X)=\det X_{[n-1]}^{[ n-1]}$. It follows from  \eqref{v_j} that
the entries of $N(v(X))^{-1}$, which are polynomial in $v_j(X)$, belong to 
$\UU_{CG}(n)\langle \phhi_{n-1}\rangle $. 

Next, we claim  that Theorem \ref{transform1}  allows one to apply Lemma \ref{MishaSha} to seeds
$(\hat Q_{CG}(n-1), \{ \mathcal S(f)_{ij}(X) \}_{i,j\in[n-1]})$ and  
$(  Q_{CG}(n-1), \{ f_{ij}(\zeta(X)) \}_{i,j\in[n-1]})$.  The role
of $x_{n+m}$ featured in  Lemma~\ref{MishaSha} is played by the cluster variable $\phhi_{n-1}(X)$ 
attached to the  vertex $(1,1)$  in both $\hat Q_{CG}(n-1)$ and $Q_{CG}(n-1)$. It follows immediately from
Theorem~\ref{quiver} that the vector $\tilde w=(\deg f_{ij}/(n-1))_{i,j=1}^{n-1}$ satisfies the assumptions
of Lemma~\ref{MishaSha}. Next, let us show that the vector $\hat w=(\varepsilon_{ij}+\deg f_{ij}/(n-1))_{i,j=1}^{n-1}$ satisfies these assumptions as well.
 By Theorem~\ref{transform1}, the condition we need to check
translates into the following one: for any non-frozen vertex $(i,j)$ the difference between the multiplicity 
$\hat \kappa_{ij}$ of the arrow between $(i,j)$ and $ (1,1)$ in $\hat Q_{CG}(n-1)$ 
and the multiplicity $\kappa_{ij}$ of the same arrow
in $Q_{CG}(n-1)$ equals the  difference 
between the number of $\pssi$-vertices pointing to $(i,j)$ and the number of $\pssi$-vertices pointed from 
$(i,j)$; note that $(1,1)$ is not a $\pssi$-vertex, and hence  the arrows
involved in this condition are the same in $\hat Q_{CG}(n-1)$ and $Q_{CG}(n-1)$. Observe now that for every vertex $(i,j)$ 
other than $(n-1,1)$ and $(n-1,2)$, the number of $\pssi$-vertices pointed from $(i,j)$ 
 is equal to the number of $\pssi$-vertices pointing to $(i,j)$, and hence 
$\hat \kappa_{ij}= \kappa_{ij}$. Next, 
$(n-1,1)$ is connected only with one $\pssi$-vertex: it is pointed from $(n-3,n-1)$, 
and $(n-1,2)$ is connected to three $\pssi$-vertices: it points to
$(n-1,1)$ and $(n-2,2)$  and is pointed from $(n-1,3)$. 
In both cases the condition on 
$\hat \kappa_{ij}-\kappa_{ij}$ is satisfied, and thus Lemma \ref{MishaSha} can be invoked
with  $\gamma=(\varepsilon_{ij})_{i,j=1}^{n-1}$.
It guarantees that if
we apply the same sequence of  cluster transformations to both seeds above and call the resulting cluster variables $\hat y (X)$ and $y(X)$, respectively, then $\hat y = \phhi_{n-1}(X)^{\alpha} y$ for some $\alpha$.  
Moreover, $\alpha$ is integer since both $\hat y$ and $y$ are rational functions of $X$. By the induction hypothesis, for any cluster obtained from the seed $(  Q_{CG}(n-1), \{ f_{ij}(\zeta(X)) \}_{i,j\in[n-1]})$,
 matrix entries of $\zeta(X)$ are Laurent polynomials in cluster variables forming this cluster. This, together with the previous observation, means
matrix entries of $\zeta(X)$
viewed as functions of $X$  belong to $\UU_{CG}(n)\langle \phhi_{n-1}\rangle $.
Therefore, the same is true for matrix entries of $X_{[n-1]}= \left [ 0\ \zeta(X) \right ] N(v(X))^{-1}$.

We will now show that $x_{nj}\in \UU_{CG}(n)\langle \phhi_{n-1}\rangle $ for $j\in [2,n-1]$ (recall that $x_{n1}$ and $x_{nn}$ belong to our initial cluster. Since $\theta_i(X) = \det X^{[n-i+1,n]}_{[n-i+1,n]}$, $i\in[n]$, 
and $\phhi_i(X)=  \det X^{[n-i+1,n]}_{[n-i,n-1]}$, $i\in[n-1]$, we see that $x_{n, n-i} = \frac {1}{\phhi_i(X)} \left ((-1)^{i} \theta_{i+1}(X) + P_i(X)\right )$, where
$P_i(X)$ is a polynomial in $x_{ij}$ with $i < n$ and $x_{nj}$ with $j > n -i$. Applying these relations recursively for $i\in [2, n-2]$, we conclude
that $x_{n, n-i}$ can be represented as an element of $ \UU_{CG}(n)\langle \phhi_{n-1}\rangle $ divided by the cluster variable $\phhi_i(X)$.
Next, recall from Remark \ref{more_cl_var} that functions $-\Lambda_{i+2}(X, X)$,  $i\in [n-2]$, where $\Lambda_{i}(X,X)$ is the $i\times i$ principal dense
trailing minor of the matrix $V(X,X)$, are cluster variables. We then can write $x_{n,n-i}=\frac {1}{\pssi_{i+1}(X)} \left ((-1)^{i-1} \Lambda_{i+2}(X,X) + \tilde P_i(X)\right )$, where
$\tilde P_i(X)$ is another polynomial in $x_{ij}$ with $i < n$ and $x_{nj}$ with $j > n -i$, and conclude that $x_{n, n-i}$ can be alternatively represented as an element of $ \UU_{CG}(n)\langle \phhi_{n-1}\rangle $ divided by the cluster variable $\pssi_{i+1}(X)$.

Note that $\phhi_i(X)$ and $\pssi_{i+1}(X)$ are irreducible for $i\in[n-2]$. Hence,
by Lemma \ref{twoloc}, the existence of two representations as described above means that $x_{n,n-i}$
multiplied by a suitable power of $\phhi_{n-1}(X)$ belongs to $\UU_{CG}(n)$, and hence $x_{n,n-i}$
itself belongs to $\UU_{CG}(n)\langle \phhi_{n-1}\rangle $. 

 We have shown that $x_{ij} \in \UU_{CG}(n)\langle \phhi_{n-1}\rangle $ for $i,j\in [n]$. 
 Similarly, it follows from Theorem \ref{transform2} that
  $x_{ij} \in \UU_{CG}(n)\langle \phhi^{w_0}_{n-1}\rangle $. Since $\phhi_{n-1}(X) \ne \phhi^{w_0}_{n-1}(X)$
  are both irreducible, we can invoke Lemma \ref{twoloc} again to conclude that  $x_{ij} \in \UU_{CG}(n) $.
The proof is complete.

\section{Initial cluster}

The goal of this section is to prove Theorem~\ref{logcan}.

A Poisson-Lie bracket on $SL_n$ can be extended to a Poisson bracket on $\Mat_n$ by requiring that
the determinant is a Casimir function. The restriction of this bracket to $GL_n$ is Poisson--Lie.
Formulae \eqref{sklya}, \eqref{sklyabra} remain valid in this case provided $\langle \cdot,\cdot\rangle$ is
understood as the Killing form for $\sl_n$ extended in the natural way to the whole $\gl_n$, that is,
$\langle X, Y\rangle = \Tr(XY)$.

Put $S = e_{12} + \cdots + e_{n-1 n}$ and $D_n=\diag (1,2,\ldots,n)$.  For any square matrix $A$ define
$$
\gamma_+ (A) = S A S^T, \quad \gamma_- (A) = S^T A  S.
$$
Besides, write $A_{>0}$, $A_{\geq 0}$, $A_{<0}$, $A_{\leq 0}$ and $A_0$
instead of $\pi_{>0}A$,  $\pi_{\geq 0}A$, $\pi_{<0}A$, $\pi_{\leq 0}A$ and $\pi_0A$, respectively.

\begin{lemma}
\label{CGR}
The Cremmer--Gervais Poisson--Lie bracket on $GL_n$ is given by 
\[
\{ f_1, f_2\}(X) = \left \langle  R_+ (\nabla f_1(X) X),  \nabla f_2(X) X\right \rangle -  
\left \langle  R_+ (  X \nabla f_1(X) ), X \nabla f_2(X)  \right \rangle,
\]
where
$\nabla$ is the gradient with respect to $\langle \cdot,\cdot\rangle$ and the operator $R_+\in \End \gl_n$ 
is defined by
\begin{equation}
R_+(\eta) =  \frac{1}{1-\gamma_+} \eta_{> 0} - \frac{\gamma_-}{1-\gamma_-} \eta_{\leq 0} 
+\frac {n-1} {2n}\Tr \eta \cdot\one   + \frac{1}{n} \big (\Tr \eta \cdot D_n - \Tr (D_n \eta)\cdot\one\big ).
\label{R+CG} 
\end{equation}
\end{lemma}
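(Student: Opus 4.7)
The plan is to compute $R_+$ directly from the defining relation $\langle R_+\eta, \zeta\rangle = \langle r, \eta\otimes\zeta\rangle$ using the explicit Cremmer--Gervais R-matrix \eqref{rBD} and the trace form, and then verify that the bracket \eqref{sklyabra} takes the stated form under the identifications $\nabla^L f = \nabla f \cdot X$, $\nabla^R f = X\cdot \nabla f$ that arise for matrix-valued functions paired via $\Tr(XY)$.

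First I would rewrite $r$ in matrix units. For our triple the positive root $\alpha_k + \cdots + \alpha_{l-1}$ corresponds to $e_\alpha = e_{kl}$, and $\gamma(\alpha_i) = \alpha_{i-1}$ extends to $\gamma(e_{kl}) = e_{k-1,l-1}$, which is precisely $\gamma_-(e_{kl})$ (valid when $k\geq 2$). Thus $\alpha \prec_T \beta$ iff $e_\beta = \gamma_-^j(e_\alpha)$ for some $j\geq 1$, and the non-Cartan part of $r$ equals
\begin{equation*}
\sum_{k<l} e_{lk}\otimes e_{kl} \;+\; \sum_{k<l}\sum_{j\geq 1}\bigl(e_{lk}\otimes\gamma_-^j(e_{kl}) - \gamma_-^j(e_{kl})\otimes e_{lk}\bigr).
\end{equation*}
Using $\langle e_{ij}, \zeta\rangle = \zeta_{ji}$ and the fact that $\gamma_+$ is the trace-form adjoint of $\gamma_-$ (since $\gamma_\pm(A)_{ij} = A_{i\pm 1, j\pm 1}$), a direct pairing shows: the standard summand contributes $\eta_{>0}$, the ``first half'' of the wedge sum contributes $\frac{\gamma_+}{1-\gamma_+}\eta_{>0}$ to the strictly upper block of $R_+\eta$, and its skew partner contributes $-\frac{\gamma_-}{1-\gamma_-}\eta_{<0}$ to the strictly lower block. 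Summing yields the off-diagonal portion $\frac{1}{1-\gamma_+}\eta_{>0} - \frac{\gamma_-}{1-\gamma_-}\eta_{<0}$ of \eqref{R+CG}.

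For the diagonal of $R_+\eta$ I would determine $r_0\in\h\otimes\h$. Since $k_T = 1$ gives $\dim(\h_T\wedge\h_T) = 0$, equations \eqref{r01}--\eqref{r02} determine $r_0$ uniquely in $\mathfrak{sl}_n$; the extension to $\gl_n$ (and hence to $\Mat_n$) is then pinned down by requiring $\det X$ to be Casimir, i.e.\ $R_+(\one) = 0$. A direct computation with this $r_0$ and the chosen $\gl_n$-extension shows that its contribution to $R_+\eta$ is exactly $-\frac{\gamma_-}{1-\gamma_-}\eta_0 + \frac{n-1}{2n}\Tr\eta\cdot\one + \frac{1}{n}\bigl(\Tr\eta\cdot D_n - \Tr(D_n\eta)\cdot\one\bigr)$. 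The Casimir check is a useful consistency test: with $\eta_0 = \one$, $\Tr\eta = n$, $\Tr(D_n\eta) = \tfrac{n(n+1)}{2}$, the sum $R_+(\one)$ collapses to $(\one - D_n) + \frac{n-1}{2}\one + \bigl(D_n - \frac{n+1}{2}\one\bigr) = 0$, confirming both the Casimir property and the specific coefficients $\frac{n-1}{2n}$ and $\frac{1}{n}$. Combined with the strictly lower contribution from the previous step, this produces $-\frac{\gamma_-}{1-\gamma_-}\eta_{\leq 0}$ as in \eqref{R+CG}.

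The main obstacle is the Cartan step: fixing $r_0$ precisely and justifying the $\gl_n$-extension with the stated constants; everything else is direct bookkeeping with matrix units and the shift operators $\gamma_\pm$, after which substituting $\nabla^L f = \nabla f\cdot X$ and $\nabla^R f = X\nabla f$ into \eqref{sklyabra} immediately gives the formula in the lemma.
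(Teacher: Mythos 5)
Your proposal is correct and follows essentially the same route as the paper: compute $R_+$ from $\langle R_+\eta,\zeta\rangle=\langle r,\eta\otimes\zeta\rangle$, handle the nilpotent parts via the shift operators, pin down $r_0$ from \eqref{r01}--\eqref{r02}, and confirm $R_+\one=0$. One slip to fix: with the paper's convention $\gamma_+(A)=SAS^T$ (equivalently $(\gamma_+A)_{ij}=A_{i+1,j+1}$, which is the entry formula you yourself quote), the map $e_{kl}\mapsto e_{k-1,l-1}$ realizing $\gamma$ on root vectors is $\gamma_+$, not $\gamma_-$, so the wedge sum should read $e_{lk}\wedge\gamma_+^j(e_{kl})$; your final contributions $\frac{1}{1-\gamma_+}\eta_{>0}$ and $-\frac{\gamma_-}{1-\gamma_-}\eta_{<0}$ are nonetheless the correct ones, so the mislabel does not propagate. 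Be aware that the step you defer as ``a direct computation'' is where the paper's proof does its real work: it exhibits $r_0=\sum_i\hat h_i\otimes\hat h_i-\sum_i\hat h_i\otimes\hat h_{i+1}$ in the dual basis and inverts the Cartan matrix to show $R_+h_i=-\frac{\gamma_-}{1-\gamma_-}h_i+\frac1n\one$; your $\eta=\one$ check is only a consistency test, not a derivation of the diagonal part.
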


\begin{proof} Let $r$ be the r-matrix associated with the Cremmer--Gervais data.
We will compute $R_+\in \End \gl_n$ that corresponds to $r$ viewed as an element of 
$ \gl_n\otimes \gl_n$ via $\langle R_+ \eta, \zeta \rangle =
\langle r, \eta\otimes\zeta \rangle$, see~\eqref{sklyabra}.

First observe that in the Cremmer--Gervais case, the map $\gamma : \alpha_{i+1} \mapsto \alpha_i$,  
$i\in  [n-1]$, 
translates into the map $e_{i,i+1} \mapsto e_{i-1,i} = S e_{i,i+1} S^T = \gamma_+(e_{i,i+1} )$ on root vectors 
corresponding to simple
positive roots in $\Gamma_1$. Therefore, if $\alpha$ is any positive root in the root system generated by 
$\Gamma_1$, then
$e_{\gamma(\alpha)} = \gamma_+(e_\alpha)$. Moreover, for any $\alpha$ we have $\gamma^m(\alpha)\notin \Gamma_1$ 
if and only if 
$\gamma_+^m(e_\alpha)=0$. This means that for positive $\alpha$ \eqref{rBD} yields
\[
\langle R_+ e_\alpha, e_\beta \rangle = 
\left\langle e_{-\alpha} \otimes \sum_{m\geq 0} \gamma^m_+(e_\alpha),e_{\alpha} \otimes  e_\beta\right\rangle=
\left\langle \sum_{m\geq 0} \gamma^m_+(e_\alpha),e_\beta\right\rangle,
\]
and so
\[
R_+e_\alpha=\sum_{m\geq 0} \gamma^m_+(e_\alpha)=\frac{1}{1-\gamma_+} e_\alpha. 
\]

Similarly, for any positive $\beta$
\[
\langle R_+ e_{-\beta}, e_\alpha \rangle = 
-\left\langle \sum_{m\geq 1} \gamma^m_+(e_\alpha) \otimes e_{-\alpha}, 
e_{-\beta}\otimes e_\alpha\right\rangle=-\left\langle e_\alpha,  \sum_{m\geq 1} \gamma^m_-(e_{-\beta})\right\rangle,
\]
and hence
 \[
R_+e_{-\beta}=-\sum_{m\geq 1} \gamma^m_-(e_{-\beta})=-\frac{\gamma_-}{1-\gamma_-} e_{-\beta}.
\]

Next, let us turn to the action of $R_+$ on the Cartan subalgebra; it is defined by the term $r_0$ in \eqref{rBD}. 
Let $h_i=e_{ii} - e_{i+1,i+1}$, $i\in [n-1]$, be the standard basis of the Cartan subalgebra in $\sl_n$ and 
let $\hat h_i\in \h$ form the dual basis defined by $\alpha_j(\hat h_i)=\langle h_j, \hat h_i \rangle =\delta_{ji}$.
Since $\mathfrak t_0$ in \eqref{r02} is given by 
$\mathfrak t_0 = \sum_{i,j=1}^{n-1} c_{ij} \hat h_i\otimes \hat h_j$ where
$C=\left[2 \delta_{ij} - (\delta_{i,j-1}+\delta_{i,j+1})\right]_{i,j=1}^{n-1}$ is the Cartan matrix for 
$\sl_n$,
it is easy to check that for the Cremmer--Gervais data, the unique choice of $r_0$ that satisfies \eqref{r01}, 
\eqref{r02} is given by the formula 
$r_0 = \sum_{i=1}^{n-1} \hat h_i \otimes \hat h_i - \sum_{i=1}^{n-2} \hat h_i \otimes \hat h_{i+1}$. 
This implies  $R_+h_i = \hat h_i - \hat h_{i+1}$ where $\hat h_n=0$.   
Since $\sum_{j=1}^{n-1} c_{ij}\hat h_j  = h_i$, we get
$R_+ h_i =\sum_{i=1}^{n-1} [A  C^{-1}]_{ij} h_j$ where $A$ is the $(n-1)\times( n-1)$ matrix with $1$'s on the diagonal and $-1$'s on the 
first superdiagonal.  A straightforward computation shows that
$$
A C^{-1}  = -\left [ \begin{array}{ccccc} 0 & 1 & 1 & \cdots & 1\\ \cdots & \cdots & \cdots & \cdots &\cdots \\
\\ \cdots & 0& 0 & 1 & 1 \\  0 & \cdots & 0 &0 &  1\\
0 & 0 & \cdots & 0 & 0 \end{array}
\right ] + [1, \cdots, 1]^T\left [ \frac 1 n, \frac 2n,\cdots, \frac{n-1} n \right ].
$$
Thus, 
\begin{equation*}
\begin{aligned}
R_+ h_i &=-( h_{i+1} + \cdots + h_{n-1}) + \frac 1 n \sum_{j=1}^{n-1} j h_{j} \\
&=- \frac{\gamma_-}{1-\gamma_-} h_i + e_{nn} + \frac 1 n \sum_{j=1}^{n-1} j h_{j}=
-\frac{\gamma_-}{1-\gamma_-} h_i + \frac 1 n \one
\end{aligned}
\end{equation*}
as prescribed by \eqref{R+CG}.

Finally, since $\langle r, \one\otimes \one \rangle = 0$, we must have $R_+\one=0$.
Plugging $\eta=\one$ into \eqref{R+CG} one gets
\[
R_+\one=-\frac{\gamma_-}{1-\gamma_-} \one -\frac{n-1} 2 \one + D_n - \frac{n+1} 2 \one=
\one - D_n + D_n -\one=0
\] 
as required. Thus we checked that the action of $R_+$ on all basis vectors in $\mathfrak{gl}_n$ is consistent 
with equation \eqref{R+CG}, and the proof is completed.
\end{proof}
       
In what follows, we will need to compute Poisson brackets of certain functions
on $GL_n$ with respect to the Cremmer--Gervais structure. It turns out, however, that 
computations become more 
transparent if they are performed in the double $D(GL_n)$  of $GL_n$ equipped with 
the Poisson-Lie structure \eqref{sklyadouble} for functions that reduce to the ones we are interested in
after restriction to the diagonal subgroup of $D(GL_n)$.

For a function $f$ on $D(GL_n)$, we have 
\begin{eqnarray}\nonumber
\dnabla^R f(X,Y) &=\left (X\nabla_X f(X,Y), -Y\nabla_Y f(X,Y)\right ), \\ \nonumber
 \dnabla^L f(X,Y) &=\left (\nabla_X f(X,Y)X, -\nabla_Y f(X,Y)Y\right ),
\end{eqnarray}
where $\nabla_X, \nabla_Y$ denote gradients of $f$ as a function of $X$ and $Y$ with respect to the trace-form. 
Then a simple computation shows that \eqref{sklyadouble} can be rewritten
as
\begin{equation}\label{sklyadoubleGL}
\begin{split}
\{f_1,f_2\}_D = &\langle R_+(E_L f_1), E_L f_2\rangle -  \langle R_+(E_R f_1), E_R f_2\rangle\\
&+  \langle X\nabla_X  f_1, Y\nabla_Y f_2\rangle - \langle\nabla_X  f_1\cdot X, \nabla_Y f_2 \cdot Y\rangle,
\end{split}
\end{equation}
where
\[
E_R = X \nabla_X + Y\nabla_Y, \quad E_L =  \nabla_X X+ \nabla_Y Y.
\]

We will need to compute Poisson brackets for functions of the form
$f(U(X,Y))$ with respect to \eqref{sklyadoubleGL}. Since $U(X,Y)$ is a $k\times (k+1)$ block
matrix with $(n-1)\times (n+1)$ blocks, we can represent the gradient of $f$ viewed as a function on 
such matrices and taken with respect to the appropriate trace form as a $(k+1)\times k$ block
matrix  $\unabla f$ with $(n+1)\times (n-1)$ blocks $\unabla_{ij} f$. 
We will view $U(X,Y) \unabla f(U(X,Y))$ as a $k\times k$ block matrix with $(n-1)\times (n-1)$ blocks
and $\unabla f(U(X,Y)) U(X,Y)$ as a $(k+1)\times (k+1)$ block matrix with $(n+1)\times (n+1)$ blocks. 
We use notation $\Phhi(f)$ and $\Pssi(f)$ for "block-traces" of these two matrices:
\begin{equation}\label{blocktrace}
\begin{aligned} 
&\Phhi(f)=\Phhi(f)(U(X,Y))=\sum_{i=1}^k \left ( U(X,Y) \unabla f(U(X,Y)) \right )_{ii}, \\ 
&\Pssi(f)=\Pssi(f)(U(X,Y))=\sum_{i=1}^{k+1} \left ( \unabla f(U(X,Y))  U(X,Y)\right )_{ii}.
\end{aligned}
\end{equation}

Define
\[
 I=I(f)=\sum_{i=1}^k \unabla_{ii} f,\qquad J=J(f)=\sum_{i=1}^{k} \unabla_{i+1,i} f
\]
and write down the products $J\X$ and $I\Y$ as
\[
 J\X=[JX_{[2,n]}\  0]=\left [\begin{array}{cc}
J_X & 0 \\ \star & 0\end{array}\right ],\qquad
I\Y=[0\  IY_{[1,n-1]}]=\left [\begin{array}{cc}
0 & \star \\ 0 & I_Y\end{array}
\right ].
\]

\begin{lemma}\label{uhogradlemma} 
The gradients $\nabla_X f(U(X,Y))$ and $\nabla_Y f(U(X,Y))$ satisfy relations
\begin{equation}\label{uhograd}
\begin{aligned}
\nabla_X f(U(X,Y)) X = J_X, &\quad 
X \nabla_X f(U(X,Y))  = \left [\begin{array}{cc}
0 & \star \\ 0 & \X J\end{array}\right ]=J_{\X};\\
\nabla_Y f(U(X,Y)) Y = I_Y,
 &\quad Y \nabla_Y f(U(X,Y))  = \left [\begin{array}{cc}
\Y I & 0 \\ \star & 0\end{array}\right ]=I_{\Y}.
\end{aligned}
\end{equation}
\end{lemma}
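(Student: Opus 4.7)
The plan is to derive both formulas by a direct chain-rule computation that exploits the sparse block structure of $U(X,Y)$. First I would record the key observation: $X$ enters $U(X,Y)$ only through the $k$ superdiagonal blocks $U_{i,i+1}=\X=[X_{[2,n]}\ 0]$, while $Y$ enters only through the $k$ diagonal blocks $U_{i,i}=\Y=[0\ Y_{[1,n-1]}]$. Consequently, a variation $\delta X$ at fixed $Y$ produces a block-superdiagonal $\delta U$ with nonzero blocks $[\delta X_{[2,n]}\ 0]$, while a variation $\delta Y$ at fixed $X$ produces a block-diagonal $\delta U$ with nonzero blocks $[0\ \delta Y_{[1,n-1]}]$.

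For the $X$-half, the chain rule gives $\tr(\nabla_X f(U)\cdot\delta X) = \tr(\unabla f(U)\cdot\delta U)$. Expanding the right-hand side block-by-block, the block-superdiagonal support of $\delta U$ picks out precisely the subdiagonal blocks $\unabla_{i+1,i}f$ of $\unabla f$, whose sum is by definition $J$. One application of the cyclic property of the trace then reduces the expression to $\tr(\delta X_{[2,n]}\cdot J_1)$, where $J_1$ denotes the first $n$ rows of $J$, and matching this against an arbitrary variation of $X$ identifies $\nabla_X f(U)=[0\ J_1]$. The first formula in \eqref{uhograd} is now immediate: $[0\ J_1]\cdot X = J_1 X_{[2,n]} = J_X$. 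The second follows from $X\cdot[0\ J_1] = [0\ XJ_1]$ together with the identification $X_{[2,n]}J_1 = [X_{[2,n]}\ 0]\cdot J = \X J$ of the lower-right $(n-1)\times(n-1)$ block, which matches the definition of $J_{\X}$.

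The $Y$-half runs entirely in parallel: block-diagonality of $\delta U$ instead selects the diagonal blocks $\unabla_{ii}f$, whose sum is $I$, and the same manipulations yield the remaining two identities, with $\Y I$ appearing in the upper-left block of $Y\cdot\nabla_Y f(U)$ in place of the $\X J$ that appeared in the lower-right block of $X\cdot\nabla_X f(U)$. There is no real conceptual obstacle -- the entire lemma is a bookkeeping exercise -- and the only point that requires care is the asymmetric zero-padding in $\X=[X_{[2,n]}\ 0]$ (drop the top row of $X$, pad on the right) versus $\Y=[0\ Y_{[1,n-1]}]$ (drop the bottom row of $Y$, pad on the left), which swaps the locations of the starred row and column in the final block description in the $Y$-case relative to the $X$-case.
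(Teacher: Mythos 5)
Your proof is correct and follows essentially the same route as the paper's: a chain-rule computation of the directional derivative $\Tr\left([\delta X_{[2,n]}\ 0]J\right)$ (resp.\ $\Tr\left([0\ \delta Y_{[1,n-1]}]I\right)$) combined with cyclic trace manipulations and block bookkeeping. The only cosmetic difference is that you explicitly identify $\nabla_X f(U)=[0\ \, J_{[1,n]}]$ as an intermediate object and then multiply by $X$ on either side, whereas the paper never writes down $\nabla_X f$ itself and instead obtains $\nabla_X f\cdot X$ and $X\cdot\nabla_X f$ directly by substituting $X\delta X$ and $\delta X X$ for $\delta X$ in the directional-derivative formula; the two computations are identical in substance.
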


\begin{proof} By the definition of the gradient,
\[
  \langle \delta X, \nabla_X f(U(X,Y))\rangle=\left.\frac{d}{dt}\right\vert_{t=0} f(U(X+t \delta X, Y))
\]
for any $n\times n$ matrix $\delta X$. Further,
\[
\left.\frac{d}{dt}\right\vert_{t=0} f(U(X+t \delta X, Y))=
\left \langle\unabla f,\left.\frac{d}{dt}\right\vert_{t=0} U(X+t \delta X, Y)  \right \rangle=
 \Tr\left([\delta X_{[2,n]}\  0]J\right).
  \]
Therefore,
\begin{equation*}
 \begin{aligned}
  \Tr \left(\delta X \nabla_X f(U(X,Y))X\right)&=\Tr \left(X\delta X\nabla_X f(U(X,Y))\right)=
  \Tr \left(\X\left[\begin{array}{cc}
\delta X & 0 \\ 0 & 0\end{array}\right ]J\right)\\
&=\Tr\left(\left[\begin{array}{cc}
\delta X & 0 \\ 0 & 0\end{array}\right ]J\X\right)=\Tr(\delta XJ_X),
 \end{aligned}
\end{equation*}
and the first relation in \eqref{uhograd} follows.
 
Similarly,
\[
 \langle\nabla_X f(U(X,Y)), \delta X\rangle=\Tr \left(J[\delta X_{[2,n]}\  0]\right).
\]
Therefore,
\begin{equation*}
 \begin{aligned}
  \Tr \left( X \nabla_X f(U(X,Y))\delta X\right)&=\Tr \left(\nabla_X f(U(X,Y))\delta X X\right)=
  \Tr \left(J\delta X_{[2,n]}\left[\begin{array}{c}
\star  \\ \X\end{array}\right ]\right)\\
&=\Tr\left(\left[\begin{array}{c}
\star \\ \X\end{array}\right ][0\  J]\left[\begin{array}{c}
\delta X_1 \\ \delta X_{[2,n]}\end{array}\right ]\right)=\Tr(J_{\X}\delta X),
 \end{aligned}
\end{equation*}
and the second relation in \eqref{uhograd} follows.
 
The remaining two relations for  $\nabla_Y f(U(X,Y))$ are obtained in the same way via
$\langle \delta Y, \nabla_Y f(U(X,Y))\rangle =\Tr\left ( [0\  \delta Y_{[1,n-1]}]I\right)$.
\end{proof}

\begin{proposition}\label{brack_uho}
The Poisson bracket between  $f_1(U(X,Y))$, $f_2(U(X,Y))$
viewed as functions on the double of $GL_n$ is given by the formula
\begin{equation}\label{brack_uho_form}
 \begin{split}
\{ f_1, f_2\}_D&= \frac{1}{n} \left (\Tr(D_{n-1} \Phhi^1) \Tr\Phhi^2 - \Tr(D_{n-1} \Phhi^2) \Tr\Phhi^1 \right)\\
&   - \frac{1}{n} \left (\Tr(D_{n+1} \Pssi^1) \Tr\Pssi^2 - \Tr(D_{n+1} \Pssi^2) \Tr\Pssi^1 \right) \\
&  - \left \langle  \frac{1}{1-\gamma_+} 
\Phhi^1_{>0} - \frac{\gamma_-}{1-\gamma_-}\Phhi^1_{\leq 0}, \Phhi^2 \right \rangle
+ \left \langle  \frac{1}{1-\gamma_+} 
\Pssi^1_{>0} - \frac{\gamma_-}{1-\gamma_-}\Pssi^1_{\leq 0}, \Pssi^2 \right \rangle\\ 
& + \langle \Y I^1, \X J^2\rangle 
-\langle  I^1\Y,J^2\X\rangle,
\end{split}
\end{equation}
where superscripts $1$ and $2$ refer to objects associated with $f_1$ and $f_2$, respectively.
\end{proposition}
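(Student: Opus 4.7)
The plan is to substitute the expression for $R_+$ from Lemma~\ref{CGR} into formula \eqref{sklyadoubleGL} and then use Lemma~\ref{uhogradlemma} to rewrite everything in terms of $I^i,J^i,X,Y$. The right-hand side of \eqref{brack_uho_form} corresponds to a re-grouping of the resulting expression, and I would tackle the five types of contributions in order of increasing difficulty: the $\frac{n-1}{2n}\Tr(\eta)\one$-piece of $R_+$, then the $D_n$-piece, then the $\gamma_\pm$-pieces together with the cross-terms of \eqref{sklyadoubleGL}.

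The scalar piece $\frac{n-1}{2n}\Tr(\eta)\one$ yields expressions symmetric in $f_1,f_2$, which cancel between the $E_L$- and $E_R$-contributions because $\Tr(E_Lf)=\Tr(E_Rf)=\Tr\Phhi(f)=\Tr\Pssi(f)$; this identity is immediate from cyclicity and the block forms in Lemma~\ref{uhogradlemma}, and will be used throughout. For the $D_n$-piece, writing $\Tr(D_nE_Lf)-\Tr(D_nE_Rf)=\Tr(\nabla_Xf\,[X,D_n])+\Tr(\nabla_Yf\,[Y,D_n])$ using $[X,D_n]_{ij}=(j-i)X_{ij}$ and the explicit block forms $\nabla_Xf=[0,\tilde J]$, $\nabla_Yf=[\check I,0]$, one obtains the key identity
\[
\Tr(D_nE_Lf)-\Tr(D_nE_Rf)=\Tr(D_{n+1}\Pssi(f))-\Tr(D_{n-1}\Phhi(f))-\Tr\Phhi(f).
\]
The extra $-\Tr\Phhi(f)$ contributes a term symmetric in $f_1,f_2$ after antisymmetrization and cancels, leaving exactly the first two lines of \eqref{brack_uho_form} with the correct signs.

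The main obstacle is the $\gamma_\pm$-pieces together with the two cross-terms of \eqref{sklyadoubleGL}. Expanding $\frac{1}{1-\gamma_+}=\sum_{m\geq0}\gamma_+^m$ and $\frac{\gamma_-}{1-\gamma_-}=\sum_{m\geq1}\gamma_-^m$ as geometric series, one computes each entrywise pairing $\langle\gamma_+^m(E_Lf_1)_{>0},E_Lf_2\rangle$ using $(E_Lf)_{ij}=\sum_k J_{ik}X_{k+1,j}+\sum_k I_{i+1,k}Y_{k,j}$, re-indexes the resulting double sum to exhibit it as $\langle\gamma_+^m\Pssi(f_1)_{>0},\Pssi(f_2)\rangle$ computed on $(n+1)\times(n+1)$ matrices, and similarly for $\gamma_-$. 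The analogous calculation for $E_Rf$ and $\Phhi(f)$ produces, with the opposite sign, the $\Phhi$-line of the proposition on $(n-1)\times(n-1)$ matrices. The subtlety is that $\gamma_\pm$ appears here on matrices of three different sizes, and the boundary terms arising from the zero columns of $\X$ and $\Y$ under this re-indexing are precisely what must combine with the direct cross-term contributions $\Tr(J^1_{\X}I^2_{\Y})-\Tr(J^1_X I^2_Y)$ coming from the second line of \eqref{sklyadoubleGL} to produce the final line $\langle\Y I^1,\X J^2\rangle-\langle I^1\Y,J^2\X\rangle$ of \eqref{brack_uho_form}.

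Carefully tracking which entries of $E_Lf$ and $E_Rf$ lie in the strict upper versus weak lower triangular parts across the re-indexing, and matching the resulting boundary terms against the cross-terms of \eqref{sklyadoubleGL}, is where most of the bookkeeping lies. This is essentially an index-shifting argument, but the presence of three matrix sizes ($n$, $n\pm 1$) and the asymmetry between the positions of $\X$ and $\Y$ in $U(X,Y)$ make it the technical heart of the proof.
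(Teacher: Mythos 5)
Your proposal follows essentially the same route as the paper's proof: the same decomposition of $R_+$ into its scalar, $D_n$, and nilpotent pieces, the same trace identities $\Tr\Phhi=\Tr\Pssi=\Tr E_L=\Tr E_R$ and $\Tr(D_nE_L)-\Tr(D_nE_R)=\Tr(D_{n+1}\Pssi)-\Tr(D_{n-1}\Phhi)-\Tr\Phhi$ producing the first two lines of \eqref{brack_uho_form}, and the same matching of the cross-terms of \eqref{sklyadoubleGL} against the mismatch between $E_{L/R}$ and $\Pssi/\Phhi$ to produce the last line. The only difference is one of execution: where you expand $\frac{1}{1-\gamma_\pm}$ entrywise as a geometric series and re-index, the paper works at the block level, writing $\frac{1}{1-\gamma_+}(E_L)_{\geq 0}$ as $(J_X)_{\geq 0}$ plus a copy of $\frac{\gamma_+}{1-\gamma_+}\Pssi_{\geq 0}$ compressed into the upper-left $n\times n$ corner (together with three analogous identities for $\gamma_-$ and for $E_R$), which packages your boundary-term bookkeeping into a handful of clean manipulations with $\langle A_{>0},B\rangle=\langle A_{>0},B_{\leq 0}\rangle$.
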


\begin{proof} First, observe that by \eqref{uhograd}, $E_L=J_X + I_Y$
and $E_R= J_{\X}+I_{\Y}$. 
Besides, $\Phhi=\X J+\Y I$ and $\Pssi= J\X+I\Y$, hence
\begin{equation}\label{trcom}
\Tr J_X =\Tr J\X=\Tr\X J=\Tr J_{\X},\quad  \Tr I_Y=\Tr I\Y=\Tr\Y I=\Tr I_{\Y}
\end{equation}
implies
\begin{equation}\label{tre}
\begin{aligned}
 \Tr E_R=\Tr\Phhi,&\quad \Tr(D_nE_R)=\Tr (D_{n-1}\Phhi)+\Tr J_X,\\
 \Tr E_L=\Tr\Pssi,&\quad \Tr(D_nE_L)=\Tr (D_{n+1}\Pssi)-\Tr I_Y.
\end{aligned}
 \end{equation}

Consider the first term in \eqref{sklyadoubleGL}: using Lemma \ref{CGR} and \eqref{tre}, we obtain
\begin{equation}\label{1}
\begin{aligned}
\langle R_+(E^1_L), E_L^2\rangle&= \left \langle  \frac{1}{1-\gamma_+} 
(E^1_L)_{>0} - \frac{\gamma_-}{1-\gamma_-}(E^1_L)_{\leq 0}, E_L^2 \right \rangle 
+  \frac {n-1} {2n} \Tr\Pssi^1 \Tr\Pssi^2  \\ 
&
+ \frac{1}{n} \big (\Tr\Pssi^1 ( \Tr(D_{n+1} \Pssi^2)-\Tr I_Y^2) - (\Tr (D_{n+1} \Pssi^1)-\Tr I_Y^1)
\Tr \Pssi^2 \big ).
\end{aligned}
\end{equation}

Since $J_X$ is the upper-left $n\times n$ block of $J \X$, while
$I_Y$ is the lower-right $n\times n$ block of $I \Y$,
we conclude that
$$
\frac {1}{1-\gamma_+} (E_L)_{\geq 0} =   (J_X )_{\geq 0} +
\left [\one_n\ 0\right ] \frac {\gamma_+}{1-\gamma_+} \Pssi_{\geq 0} 
\left [\begin{array}{c} \one_n\\ 0\end{array} \right ]
$$
and
$$
\frac {\gamma_-}{1-\gamma_-} (E_L)_{\leq 0} =   -(J_X )_{\leq 0} +
\left [0\ \one_n\right ] \frac {\gamma_-}{1-\gamma_-} \Pssi_{\leq 0} \left [\begin{array}{c} 0\\ \one_n\end{array} \right ].
$$
Clearly, the projection $\pi_{\geq 0}$ in the first relation above can be replaced by $\pi_{>0}$.
Therefore, the first term in \eqref{1} is equal to
\[
 \left \langle  
J^1_X, E_L^2 \right \rangle +
 \left \langle  \frac{\gamma_+}{1-\gamma_+} 
\Pssi^1_{>0}, \left [\begin{array}{cc} E_L^2 & 0\\ 0 & 0 \end{array}\right ]\right\rangle - 
\left\langle\frac{\gamma_-}{1-\gamma_-}\Pssi^1_{\leq 0}, \left [\begin{array}{cc} 0 & 0\\ 0 &E_L^2  
\end{array}\right ]\right\rangle.
\]
Note that $\langle A_{>0},B\rangle=\langle A_{>0}, B_{\leq 0}\rangle$ 
and $\langle A_{\leq 0},B\rangle=\langle A_{\leq 0}, B_{\geq 0}\rangle$ 
for any matrices $A$ and $B$, hence
\begin{equation*}
\begin{aligned} 
 \left\langle  \frac{\gamma_+}{1-\gamma_+} \Pssi^1_{>0}, 
  \left[
 \begin{array}{cc} E_L^2 & 0\\ 
                       0 & 0 \end{array}
\right ]\right\rangle & =
\left\langle  \Pssi^1_{>0}, 
\left[
\begin{array}{cc} 0 & 0\\ 
                  0 & \frac{1}{1-\gamma_-}(E_L^2)_{\leq 0} \end{array}
\right]\right\rangle\\
 & =\left \langle  \Pssi^1_{>0}, 
 \left[
 \begin{array}{cc} 0 & 0\\ 
                   0 & ( I_Y^2 )_{\leq 0} \end{array}
\right ]\right\rangle + \left \langle  \Pssi^1_{>0},
 \frac{\gamma_-}{1-\gamma_-}\Pssi^2_{\leq 0} 
\right\rangle\\
 & =\left \langle  \Pssi^1_{>0}, 
 \left [
 \begin{array}{cc} 0 & 0\\ 
                   0 & I_Y^2 \end{array}\right ]
 \right\rangle + \left \langle  
\frac{\gamma_+}{1-\gamma_+}\Pssi^1_{>0}, \Pssi^2 \right\rangle\\
 & =\left \langle  \Pssi^1_{>0},  \Pssi^2 \right\rangle - 
\left \langle  \Pssi^1_{>0}, J^2\X\right\rangle + \left \langle  
\frac{\gamma_+}{1-\gamma_+}\Pssi^1_{>0}, \Pssi^2 \right\rangle\\
 & = - \left \langle  \Pssi^1_{>0}, J^2\X\right\rangle + \left \langle  
\frac{1}{1-\gamma_+}\Pssi^1_{>0}, \Pssi^2 \right\rangle
\end{aligned}
\end{equation*}
and
\begin{align*}
 \left \langle  \frac{\gamma_-}{1-\gamma_-} 
\Pssi^1_{\leq 0}, \left [\begin{array}{cc} 0 & 0\\ 0 & E_L^2  \end{array}\right ]\right\rangle 
&=\left \langle  \Pssi^1_{\leq 0}, 
\left [\begin{array}{cc}  \frac{1}{1-\gamma_+}( E_L^2 )_{\geq 0} & 0 \\ 0 & 0 \end{array}\right ]
\right\rangle\\
 &=\left \langle  
\Pssi^1_{\leq 0}, \left [\begin{array}{cc} ( J_X^2 )_{\geq 0} & 0 \\ 0 & 0 \end{array}\right ]\right\rangle 
+ \left \langle  \Pssi^1_{\leq 0}, 
 \frac{\gamma_+}{1-\gamma_+}\Pssi^2_{\geq 0} 
\right\rangle\\
 &=\left \langle  
\Pssi^1_{\leq 0}, J^2\X\right\rangle + \left \langle  
 \frac{\gamma_-}{1-\gamma_-}\Pssi^1_{\leq 0},  \Pssi^2 \right\rangle.
\end{align*}
Therefore,
\begin{equation}\label{2}
 \begin{aligned}
\left \langle  \frac{1}{1-\gamma_+} 
{E^1_L}_{>0} - \frac{\gamma_-}{1-\gamma_-}{E^1_L}_{\leq 0}, E_L^2 \right \rangle &=
\left \langle  J^1_X, E_L^2 \right \rangle - \left \langle  \Pssi^1, J^2\X\right\rangle\\
& +\left \langle  
 \frac{1}{1-\gamma_+}\Pssi^1_{> 0}-
 \frac{\gamma_-}{1-\gamma_-}\Pssi^1_{\leq 0},  \Pssi^2 
\right\rangle.  
\end{aligned}
\end{equation}

Now, let us turn to the second term in \eqref{sklyadoubleGL}. 
Using Lemma \ref{CGR} and \eqref{tre}, we obtain
\begin{equation}\label{3}
\begin{aligned}
\langle R_+(E^1_R), E_R^2\rangle&= \left \langle  \frac{1}{1-\gamma_+} 
(E^1_R)_{>0} - \frac{\gamma_-}{1-\gamma_-}(E^1_R)_{\leq 0}, E_R^2 \right \rangle 
+  \frac {n-1} {2n} \Tr\Phhi^1 \Tr\Phhi^2  \\ 
&
+ \frac{1}{n} \big (\Tr\Phhi^1 ( \Tr(D_{n-1} \Phhi^2)+\Tr J_X^2) - (\Tr (D_{n-1} \Phhi^1)+\Tr J_X^1)
\Tr \Phi^2 \big ).
\end{aligned}
\end{equation}

Since $\X J$ is the lower-right $(n-1)\times (n-1)$ block of $J_\X$, while
$\Y I$ is the upper-left $(n-1)\times (n-1)$ block of $I_\Y$,
we conclude that
$$
\frac {1}{1-\gamma_+}(E_R )_{\geq 0} =  
(J_{\X})_{\geq 0}+ \frac {1}{1-\gamma_+}
\left [\begin{array}{cc}
\Phhi_{\geq 0} & 0 \\  0 & 0\end{array}\right ]
$$
and
$$
\frac {\gamma_-}{1-\gamma_-} \left (E_R\right )_{\leq 0} = 
-(J_{\X})_{\leq 0} + \frac {1}{1-\gamma_-}
\left [\begin{array}{cc} 0 & 0 \\
0 & \Phhi_{\leq 0} \end{array}\right].
$$

Therefore, the first term in \eqref{3}  is equal to 
$$
\left \langle  J_{\X} + \frac {1}{1-\gamma_+}
\left [\begin{array}{cc}
\Phhi^1_{>0} & 0 \\  0 & 0\end{array}\right ] - \frac {1}{1-\gamma_-}
\left [\begin{array}{cc} 0 & 0 \\
0 & \Phhi^1_{\leq 0} \end{array}\right ] , E_R^2
 \right \rangle.
$$
Similarly to the previous case, we have
\begin{align*}
\left \langle  \frac {1}{1-\gamma_+}
\left [\begin{array}{cc}
\Phhi^1_{>0} & 0 \\  0 & 0\end{array}\right ], E_R^2
 \right \rangle &=
\left \langle 
\left [\begin{array}{cc}
\Phhi^1_{>0} & 0 \\  0 & 0\end{array}\right ],  \frac {1}{1-\gamma_-}( E_R^2)_{\leq 0}
 \right \rangle\\
 &\hskip -1cm=
\left \langle 
\left [\begin{array}{cc}
\Phhi^1_{>0} & 0 \\  0 & 0\end{array}\right ], (I_{\Y})_{\leq 0}+ 
\frac {1}{1-\gamma_-}\left [\begin{array}{cc} 0 & 0 \\
0 & \Phhi^2_{\leq 0} \end{array}\right ]
 \right \rangle\\
& \hskip -1cm = \left \langle \Phhi^1_{>0}, (\Y I^2)_{\leq 0}\right \rangle
+ \left \langle
\left [\begin{array}{cc}
\Phhi^1_{>0} & 0 \\  0 & 0\end{array}\right],
\frac {1}{1-\gamma_-}
\left [\begin{array}{cc} 0 & 0 \\
0 & \Phhi^2_{\leq 0} \end{array}\right]
 \right \rangle
\\
&\hskip -1cm =  \left \langle \Phhi^1_{>0}, (\Y I^2)_{\leq 0}\right \rangle
+ \left \langle
\frac {\gamma_+}{1-\gamma_+}\Phhi^1_{>0},
\Phhi^2
 \right \rangle
\\
&\hskip -1cm = - \left \langle \Phhi^1_{>0}, \X J^2\right \rangle
+ \left \langle
\frac {1}{1-\gamma_+}\Phhi^1_{>0},
\Phhi^2
 \right \rangle
\end{align*}
and
\begin{align*}
\left \langle  \frac {1}{1-\gamma_-}
\left [\begin{array}{cc} 0 & 0 \\
0 & \Phhi^1_{\leq 0}\end{array}\right], E_R^2
 \right \rangle &=
\left \langle 
\left [\begin{array}{cc} 0 & 0 \\
0 & \Phhi^1_{\leq 0}\end{array}\right],  \frac {1}{1-\gamma_+}( E_R^2)_{\geq 0}
 \right \rangle\\
& = \left \langle \left [\begin{array}{cc} 0 & 0 \\
0 & \Phhi^1_{\leq 0}\end{array}\right], (J^2_{\X})_{\geq 0}+
\frac {1}{1-\gamma_+}
\left [\begin{array}{cc}  \Phhi^2_{\geq 0} & 0 \\ 0 & 0\end{array}\right]
 \right \rangle
\\
&=  \left \langle \Phhi^1_{\leq 0}, \X J^2\right \rangle
+ \left \langle
\frac {\gamma_-}{1-\gamma_-}\Phhi^1_{\leq 0},
\Phhi^2 \right \rangle,
\end{align*}
and so
\begin{equation}\label{4}
 \begin{aligned}
\left \langle  \frac{1}{1-\gamma_+} 
{E^1_R}_{>0} - \frac{\gamma_-}{1-\gamma_-}{E^1_R}_{\leq 0}, E_R^2 \right \rangle &=
\left \langle  J^1_{\X}, E_R^2 \right \rangle - \left \langle  \Phhi^1, \X J^2\right\rangle\\
& +\left \langle  
 \frac{1}{1-\gamma_+}\Phhi^1_{> 0}
- \frac{\gamma_-}{1-\gamma_-}\Phhi^1_{\leq 0},  \Phhi^2 
\right\rangle.  
\end{aligned}
\end{equation}

Using \eqref{sklyadoubleGL}, \eqref{uhograd} and (\ref{1})--(\ref{4}), 
we obtain
\begin{align*}
\{f_1,f_2\}_D &=
 \frac{1}{n} \left (\Tr(D_{n-1} \Phhi^1) \Tr\Phhi^2 - \Tr(D_{n-1} \Phhi^2) \Tr\Phhi^1 \right)\\
&  - \frac{1}{n} \left (\Tr(D_{n+1} \Pssi^1) \Tr\Pssi^2 - \Tr(D_{n+1} \Pssi^2) \Tr\Pssi^1 \right) \\
&  -\frac{1}{n} \left (\Tr\Pssi^1 \Tr I_Y^2+\Tr\Phhi^1 \Tr J_X^2 - \Tr I_Y^1 \Tr\Pssi^2 - 
\Tr J_X^1  \Tr\Phhi^2 \right) \\
&  - \left \langle  \frac{1}{1-\gamma_+} 
\Phhi^1_{>0} - \frac{\gamma_-}{1-\gamma_-}\Phhi^1_{\leq 0}, \Phhi^2 \right \rangle
+ \left \langle  \frac{1}{1-\gamma_+} 
\Pssi^1_{>0} - \frac{\gamma_-}{1-\gamma_-}\Pssi^1_{\leq 0}, \Pssi^2 \right \rangle\\ 
& 
+\left \langle  
J^1_X, J_X^2 \right \rangle -
\left \langle  
\Pssi^1, J^2\X\right\rangle 
-\left \langle  
J^1_{\X}, J_{\X}^2 \right \rangle
+\left \langle \Phhi^1, \X J^2\right \rangle.
\end{align*}
The expression on the third line is equal to zero by \eqref{trcom}. Finally,
$\langle J^1_X,J^2_X\rangle=\langle J^1\X,J^2\X\rangle$ and $\langle J^1_{\X}, J^2_{\X}\rangle
=\langle \X J^1,\X J^2\rangle$. Therefore,
\[
\left \langle  J^1_X, J_X^2 \right \rangle -\left \langle  \Pssi^1, J^2\X\right\rangle 
-\left \langle  J^1_{\X}, J_{\X}^2 \right \rangle+\left \langle \Phhi^1, \X J^2\right \rangle=
-\langle I^1\Y,J^2\X\rangle + \langle \Y I^1, \X J^2\rangle,
\]
and \eqref{brack_uho_form} follows.
\end{proof}

Consider the family of functions on the double of $GL_n$ given by
\begin{equation}\nonumber
 \begin{aligned}
\thetta_i=\thetta_i(X,Y)&=\det X_{[n-i+1,n]}^{[n-i+1,n]}, \quad i\in [n-1];\\
\phhi_i=\phhi_i(X,Y)&=\det U(X,Y)_{[N-i+1, N]}^{[k (n+1) - i +1, k (n+1)]}, \quad i\in [N];\\
\pssi_i=\pssi_i(X,Y)&=\det U(X,Y)_{[N-i+1, N]}^{[k (n+1) - i +2, k (n+1)+1]}, \quad i\in [M];
\end{aligned}
\end{equation}
recall that $k=\lfloor \frac{n+1}{2}\rfloor$, $N=k (n-1)$,  $M=N$ if $n$ is even and $M=N-n+1$ if $n$ is odd.

\begin{theorem}
\label{dlogcan}
The functions $\thetta_i$, $\phhi_j$, $\pssi_l$ form a log-canonical family with respect to $\Poi_D$. 
\end{theorem}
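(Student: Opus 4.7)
My plan is to verify log-canonicity pair by pair, splitting the family into two parts: pairs drawn from $\{\phhi_i, \pssi_j\}$, whose brackets are computed directly from Proposition~\ref{brack_uho}, and pairs involving some $\thetta_i$, which depends only on $X$ and so reduces to a bracket in the single factor $GL_n$ via the Sklyanin formula of Lemma~\ref{CGR}.

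For the first group I would begin by describing the gradient $\unabla f$ of a dense trailing minor $f$ of $U(X,Y)$: since $f$ uses only the last $i$ rows and a window of $i$ consecutive columns, the entries of $\unabla f$ are signed cofactors supported in those rows and columns, and by the block structure of $U(X,Y)$ only a bounded number of the blocks $\unabla_{ab} f$ are nonzero. With this description, Euler's identity for determinants yields $\Tr \Phhi(f) = \Tr \Pssi(f) = i\cdot f$, and the weighted traces $\Tr(D_{n-1}\Phhi(f))$ and $\Tr(D_{n+1}\Pssi(f))$ reduce to explicit scalar multiples of $f$, so the first two lines of \eqref{brack_uho_form} are immediately log-canonical. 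The remaining terms in \eqref{brack_uho_form}---the $\gamma_\pm$-twisted pairings of the $\Phhi$'s and $\Pssi$'s together with the cross terms $\langle \Y I^1, \X J^2\rangle - \langle I^1\Y, J^2\X\rangle$---form the main object of the computation. The structural reason to expect log-canonicity is that $U(X,Y)$ is designed so that its block-Toeplitz shift matches the Cremmer--Gervais twist: the superdiagonal block $\X$ is $X$ padded by a zero column on the right and the diagonal block $\Y$ is $Y$ padded by a zero column on the left, and this offset is precisely what is intertwined by $\gamma_\pm$. Expanding the twisted pairings entry-by-entry and tracking the indices of surviving cofactors, I expect the off-diagonal contributions of $\langle \tfrac{1}{1-\gamma_+}\Phhi^1_{>0} - \tfrac{\gamma_-}{1-\gamma_-}\Phhi^1_{\leq 0}, \Phhi^2\rangle$ to cancel against the cross term $\langle \Y I^1, \X J^2 \rangle$, and analogously for the $\Pssi$ pairings and $\langle I^1\Y, J^2\X\rangle$; what remains is a scalar multiple of $f_1 f_2$. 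The one-column offset between the column windows defining $\phhi_i$ and $\pssi_j$ is exactly what is needed for this cancellation to persist in the mixed brackets.

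For brackets involving $\thetta_i$, the $Y$-gradient vanishes so Proposition~\ref{brack_uho} degenerates and $\{\thetta_i,\cdot\}_D$ is computed directly from Lemma~\ref{CGR}. Since $\thetta_i$ is a trailing principal minor of $X$, its gradient is supported in the trailing $i\times i$ block of $X$; the shift operators $\gamma_\pm$ act on it in a controlled way, and the Cartan correction in \eqref{R+CG} supplies the log-canonical scalar. The mixed brackets $\{\thetta_i, \phhi_j\}_D$ and $\{\thetta_i, \pssi_k\}_D$ are handled by the same mechanism once the gradients from the previous step are substituted. The principal obstacle throughout is the cancellation analysis in the $\gamma_\pm$ and cross terms for $\phhi/\pssi$ pairs, which I would formalize as a lemma identifying the surviving diagonal contribution as an explicit scalar multiple of $f_1 f_2$; pinning down these scalars (and thereby exhibiting the exchange matrix of the Cremmer--Gervais bracket in the $\thetta$-$\phhi$-$\pssi$ basis) is the crux of the proof.
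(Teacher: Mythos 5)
Your overall strategy --- compute brackets of the logarithms via Proposition~\ref{brack_uho}, show the trace terms are constant, and then argue that the $\gamma_\pm$-twisted pairings and the cross terms conspire to leave only a constant --- is the same as the paper's, but the crux of the argument is left undone and, more importantly, the cancellation you anticipate is organized incorrectly. In the actual computation the twisted $\Phhi$-pairing does \emph{not} cancel against $\langle \Y I^1, \X J^2\rangle$: because $\Phhi(\bar\phhi_q)$ and $\Phhi(\bar\pssi_q)$ are upper triangular with constant diagonal and $\Pssi(\bar\phhi_q)$, $\Pssi(\bar\pssi_q)$ are (essentially) lower triangular with constant diagonal (Lemmas~\ref{gradfi}(ii), \ref{gradpsi}(ii)), the term $\langle \tfrac{1}{1-\gamma_+}\Phhi^1_{>0}-\tfrac{\gamma_-}{1-\gamma_-}\Phhi^1_{\leq 0},\Phhi^2\rangle$ collapses to a constant on its own, and the $\Pssi$-pairing collapses to a constant plus the single term $\langle\Pssi^1_{>0},\Pssi^2_{\leq 0}\rangle$, which is nonzero only when $f_1$ is a $\pssi$. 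What actually has to be shown is that $\langle\Pssi^1_{>0},\Pssi^2_{\leq 0}\rangle + \langle\Y I^1,\X J^2\rangle - \langle I^1\Y, J^2\X\rangle$ is constant, i.e.\ the two cross terms must cancel against \emph{each other} (and against the residual $\Pssi$ term). This is done by telescoping the block recurrences $\Y\unabla_{i-1,j}+\X\unabla_{ij}=0$, $\unabla_{ij}\X+\unabla_{i,j+1}\Y=0$ satisfied by the gradient of a trailing principal minor, carried out separately in the four cases $(\phhi,\phhi)$, $(\pssi,\pssi)$, $(\phhi,\pssi)$, $(\pssi,\phhi)$; the telescoping leaves a residue $\Tr(\X J^2)$ which is then evaluated by the antisymmetry trick of setting $q_1=q_2$ and using $\{f,f\}_D=0$. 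None of this mechanism appears in your plan, and "expanding the twisted pairings entry-by-entry" in the pairing you propose would not close the argument. A smaller imprecision: Euler's identity gives only the unweighted traces $\Tr\Phhi=\Tr\Pssi=q$; for the weighted traces $\Tr(D_{n-1}\Phhi)$, $\Tr(D_{n+1}\Pssi)$ you need the full diagonals of the block-traces to be constant, which requires the triangularity statements above, not just homogeneity.

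Your treatment of the $\thetta_i$ is a genuinely different (and heavier) route than the paper's. Rather than computing $\{\thetta_i,\cdot\}_D$ separately through the single-factor Sklyanin bracket, the paper extends the definition of $\pssi_i$ to $i\in[M+1,N]$ and observes $\pssi_{M+j}=\pssi_M\thetta_j$, so that $\log\thetta_j=\log\pssi_{M+j}-\log\pssi_M$ and the $\thetta$'s come for free once the $2N$ functions $\phhi_i,\pssi_i$ are handled. Your alternative could in principle work, but it forces you to redo the mixed brackets $\{\thetta_i,\phhi_j\}_D$ and $\{\thetta_i,\pssi_k\}_D$ with an asymmetric gradient ($\nabla_Y\thetta_i=0$), which the factorization identity avoids entirely. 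I would adopt the paper's reduction here and concentrate your effort on the four-case telescoping computation, which is where the theorem is really proved.
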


Theorem~\ref{logcan} is obtained from Theorem~\ref{dlogcan} by restriction to the diagonal subgroup
in the double.

The rest of this section is devoted the proof
of Theorem \ref{dlogcan}, which will only be presented for $n$ odd. The case of $n$ even can be 
treated in a similar way. Observe that if we extend the definition
of $\pssi_i$ above to $i\in [M+1, N]$, we will obtain $\pssi_{M+j}=\pssi_M \thetta_j$ for $j\in [1, n-1]$.
Thus, to show that the entire family of functions above is log-canonical, it is sufficient
to deal with $2N$ functions $\phhi_i$, $\pssi_i$ for $i\in [N]$. Moreover, since \eqref{cpt} is
equivalent to $\{\log x_i,\log x_j\}=\omega_{ij}$, it will be convenient to deal with $\bar\phhi_i=\log\phhi_i$ and 
$\bar\pssi_i=\log\pssi_i$ instead of $\phhi_i$ and $\pssi_i$. 

Let us start with computing  right and left gradients of $\bar\phhi_i$ and $\bar\pssi_i$, as well as
``block-traces'' $\Phhi(\bar\phhi_i)$, $\Phhi(\bar\pssi_i)$, $\Pssi(\bar\phhi_i)$ and $\Pssi(\bar\pssi_i)$.

Denote by $\unabla^i$ the gradient of $\bar \phhi_i$ viewed as a function of $U$. 

\begin{lemma}\label{gradfi}
{\rm (i)} For any $q\in [N]$,
 \begin{equation}\label{aux0}
\unabla^q=
\left [\begin{array}{ccc} 0& \cdots & 0\\ \unabla^q_{11} & \cdots &\unabla^q_{1k} \\ 
\cdots &\cdots &\cdots\\
\unabla^q_{k-1,1} & \cdots &\unabla^q_{k-1,k} \\  0 & \cdots & 0
\end{array}\right ],
\end{equation}
where the entries  are blocks of size $(n+1)\times (n-1)$ satisfying equations
\begin{equation}\label{aux1}
 \begin{aligned}
\Y \unabla^q_{i-1,j} + \X \unabla^q_{i j} &= 0, \qquad 1\leq j<i\leq k-1, \\ 
\Y \unabla^q_{k-1,j} &= 0, \qquad 1\leq j\leq k-1,\\ 
\unabla^q_{i j}\X + \unabla^q_{i,j+1}\Y&=0, \qquad 1\leq i<j\leq k-1.
\end{aligned}
\end{equation}

{\rm (ii)}  For any $q\in [N]$, $\Phhi(\bar\phhi_q)$ is an upper triangular matrix with the diagonal
entries not depending on $X, Y$, and $\Pssi(\bar\phhi_q)$ is a lower triangular matrix with the diagonal
entries not depending on $X, Y$.
\end{lemma}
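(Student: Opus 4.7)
The plan is to compute $\unabla^q$ directly from the log-determinant formula and then read off everything from the identities $MM^{-1} = M^{-1}M = I$ applied to the submatrix $M = U[R,C]$ with $R = [N-q+1, N]$ and $C = [k(n+1)-q+1, k(n+1)]$. Since $\bar\phhi_q = \log\det M$, its gradient has its only non-zero entries at $(\unabla^q)_{C_j, R_i} = (M^{-1})_{j,i}$ for $i,j \in [q]$. For $n$ odd one has $N = (k-1)(n+1)$, so $q \leq N$ forces $C \subseteq [n+2, k(n+1)]$. Consequently the non-zero rows of $\unabla^q$ lie strictly between row $n+1$ and row $k(n+1)+1$, i.e.\ inside the middle $k-1$ block-rows of the $(k+1)\times k$ block array, which gives the form \eqref{aux0}.

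For \eqref{aux1} I would expand $U\unabla^q$ and $\unabla^q U$ block by block. Writing $\nabla^q_{(i+1,j)} = \unabla^q_{i,j}$ with boundary zeros $\unabla^q_{0,j} = \unabla^q_{k,j} = 0$, the staircase pattern of $U$ yields $(U\unabla^q)_{(i,j)} = \Y\unabla^q_{i-1,j} + \X\unabla^q_{i,j}$ and symmetrically $(\unabla^q U)_{(a,b)} = \unabla^q_{a-1,b}\Y + \unabla^q_{a-1,b-1}\X$. The identity $MM^{-1} = I$ tells us $(U\unabla^q)_{r,c} = 0$ when $c \notin R$ and equals $\delta_{r,c}$ when $r, c \in R$; similarly for $\unabla^q U$ via $M^{-1}M = I$. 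So each equation in \eqref{aux1} reduces to the vanishing of a single off-diagonal block.

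The vanishing is a direct index inequality. In block $(i,j)$ of $U\unabla^q$ with $j < i \leq k$, an entry $(r,c)$ satisfies $r \in [(i-1)(n-1)+1, i(n-1)]$ and $c \in [(j-1)(n-1)+1, j(n-1)]$, so $r > c$; if $c \notin R$ the entry is zero, and if $c \in R$ then $c \geq N-q+1$ combined with $r > c$ and $r \leq N$ puts $r$ into $R$, giving $\delta_{r,c} = 0$. This yields the first two lines of \eqref{aux1}. The third line is the mirror argument on $(\unabla^q U)_{(a,b)}$ with $a < b \leq k$: from $p \in C$, $s > p$ and $s \leq k(n+1)$ one concludes $s \in C$, so $\delta_{p,s} = 0$.

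Part (ii) reuses the same observation on diagonal blocks. The diagonal entry $\Phhi(\bar\phhi_q)_{m,m} = \sum_{i=1}^{k} (U\unabla^q)_{(i-1)(n-1)+m,\,(i-1)(n-1)+m}$ collapses to the integer count $|\{i \in [k] : (i-1)(n-1)+m \in R\}|$, manifestly independent of $X$ and $Y$. For a sub-diagonal entry at $(m_1,m_2)$ with $m_1 > m_2$, each summand has $r = (i-1)(n-1)+m_1 > c = (i-1)(n-1)+m_2$, and the same ``$c \in R \Rightarrow r \in R$'' step makes every term vanish, so $\Phhi$ is upper triangular. Lower triangularity of $\Pssi$ with constant diagonal is obtained verbatim from $\unabla^q U$. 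The main delicate point, and the place most likely to trip up the bookkeeping, will be keeping the two block indexings of $\unabla^q$ synchronized---the outer $(k+1)\times k$ array $\nabla^q_{(\cdot,\cdot)}$ versus the inner $(k-1)\times k$ array $\unabla^q_{\cdot,\cdot}$---and handling the edge cases at block-rows $1$ and $k+1$ consistently.
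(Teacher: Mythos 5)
Your argument is correct and is essentially the paper's own proof: the paper likewise identifies $\unabla^q$ with the inverse of the trailing principal $q\times q$ submatrix $B_q$ of the middle (square) block $B$ of $U$, padded with zeros, and reads off \eqref{aux1} and part (ii) from the resulting block identities $U\unabla^q=\left[\begin{smallmatrix}0&\star\\0&\one_q\end{smallmatrix}\right]$ and $\unabla^q U$ of the analogous form (equations \eqref{Unabla}, \eqref{nablaU}). Your entrywise index-inequality bookkeeping is just an unpacking of that block-level statement, so the two proofs coincide in substance.
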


\begin{proof}
(i) Let us re-write $U(X,Y)$ as 
$$
U(X,Y)= \left [ A\, B\, C \right ],
$$
where $A$ and $C$ are block-columns with blocks of size $(n-1)\times(n+1)$
and $B$ is the $k\times (k-1)$ block matrix 
\[
B = \left [
\begin{array}{ccc}
 \X & 0 & \cdots \\
\Y & \X & 0  \\
 \ddots& \ddots &\ddots \\
 \cdots & 0 & \Y
\end{array}
\right ];
\]
note that $B$ is a square matrix, since $k(n-1)=(k-1)(n+1)$.
The family $\phhi_q$, $q\in [N]$, is given by 
principal contiguous minors of $B$ extending from the lower right 
corner, and hence \eqref{aux0} follows.

Denote by $B_q$ the principal submatrix of $B$ that corresponds to $\phhi_q$. Then
$\unabla^q$ can be written as
\[
\unabla^q=\left [\begin{array}{c} 0\\ \unabla^q_*\\ 0 \end{array}\right ],
\]
where 
$$
\unabla^q_* = \left [\begin{array}{cc} 0 & 0 \\ 0  & B_q^{-1} \end{array}\right ].
$$
Therefore, we have
\begin{equation}\label{Unabla}
U \unabla^q = B \unabla^q_* = \left [\begin{array}{cc} 0 & \star \\ 0  & \one_q \end{array}\right ]  
\end{equation}
and
\begin{equation}\label{nablaU}
\unabla^q U=
\left [\begin{array}{ccc} 0 & 0 & 0\\ \star & \unabla^q_* B & \star \\ 0 & 0 & 0 \end{array}\right ] =
\left [\begin{array}{cccc} 0& 0 & 0  & 0\\ 
\star & 0 & 0 & \star \\ \star & \star & \one_q & \star\\
0& 0 & 0  & 0
\end{array}\right ],
\end{equation}
where the first and the last block columns have width $n+1$.

On the other hand,
\[
B \unabla^q_*= \left [\Y \unabla^q_{i-1,j} + \X \unabla^q_{i j} \right ]_{i,j=1}^k, \quad 
\unabla^q_* B= \left [\unabla^q_{i j}\X + \unabla^q_{i,j+1}\Y \right ]_{i,j=1}^{k-1},
\]
which gives \eqref{aux1} (here and in what follows $\unabla^q_{kk}=0$).

(ii) Follows immediately from \eqref{Unabla} and \eqref{nablaU}.
\end{proof}

Similarly, denote by $\bar\unabla^q$ the gradient of $\bar \pssi_q$ viewed as a function
of $U$. 

\begin{lemma}\label{gradpsi}
{\rm (i)} For any $q\in [N]$,
 \begin{equation}\label{aux11}
\bar\unabla^q=
\left [\begin{array}{ccc} 0& \cdots & 0\\ 
\bar\unabla^q_{11} & \cdots
&\bar\unabla^q_{1k} \\ 
\cdots &\cdots &\cdots\\
\bar\unabla^q_{k,1} & \cdots &\bar\unabla^q_{k,k} 
\end{array}\right ],
\end{equation}
where the entries 
are blocks of size $(n+1)\times (n-1)$, the first row of each block $\bar\unabla^q_{1i}$ is zero,
and all rows of the blocks $\bar\unabla^q_{ki}$ except for the first one are zero.
Moreover, nonzero
entries satisfy  
equations
\begin{equation}\label{aux2}
 \begin{aligned}
\Y \bar\unabla^q_{i-1,j} + \X \bar\unabla^q_{i j} &= 0, \qquad
1\leq j<i\leq k, \\
\bar\unabla^q_{i j}\X + \bar\unabla^q_{i,j+1}\Y&=0, \qquad 1\leq i<j\leq k-1, \\
\bar\unabla^q_{ik}\X&=\delta_{ik}e_{11}, \quad 1\leq i \leq k.
\end{aligned}
\end{equation}

{\rm (ii)} For any $q\in [N]$, $\Phhi(\bar\pssi_q)$ is an upper triangular matrix
with the diagonal
entries not depending on $X, Y$, while $\Pssi(\bar\pssi_q)$ is a sum of 
$(n+1)\times(n+1)$
lower triangular matrices with the diagonal entries not depending on $X, Y$ and the
matrix
$$
\bar\unabla^q_{kk} \X=\left [\begin{array}{cc} 1 &  \star\\ 0 & 0
\end{array} \right ].
$$
\end{lemma}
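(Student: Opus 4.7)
The plan is to mirror the proof of Lemma~\ref{gradfi}, with careful bookkeeping of the one-column rightward shift that distinguishes $\pssi_q$ from $\phhi_q$. I would first decompose $U(X,Y)=[A'\,B'\,C']$ where $B'$ is the square $N\times N$ block of $U$ obtained by selecting columns $[n+3,\,k(n+1)+1]$, so that $\pssi_q$ equals the principal $q\times q$ minor of $B'$ extending from its bottom-right corner. The essential new feature is that this column range ends at column $k(n+1)+1$, which is the \emph{first} column of the $(k+1)$-st block-column of $U$: this column has nonzero entries only in the last block-row of $U$, equal to $\X e_1 = (x_{21},\dots,x_{n1})^T$. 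One then writes $\bar\unabla^q$ as the embedding of $(B'_q)^{-1}$ (with $B'_q$ the bottom-right $q\times q$ corner of $B'$) into the appropriate positions in a matrix of the size of $U^T$.

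The block shape \eqref{aux11} follows by tracking which rows and columns of $\bar\unabla^q$ are forced to vanish by this embedding. Since $q\le N<k(n+1)-n$, columns $1,\dots,n+2$ of $U$ are never among the selected columns; this translates into the vanishing of the entire top block-row of $\bar\unabla^q$ and of the first row of each block $\bar\unabla^q_{1i}$. Likewise, columns $k(n+1)+2,\dots,(k+1)(n+1)$ of $U$ lie outside the selected range, which forces all rows of $\bar\unabla^q_{k,i}$ except the first to vanish. Equations \eqref{aux2} are then read off the block expansions of $U\bar\unabla^q$ and $\bar\unabla^q U$, exactly as in the proof of Lemma~\ref{gradfi}: the first two equations express the appropriate block triangularity of $U\bar\unabla^q$ and $\bar\unabla^q U$ coming from $(B'_q)^{-1} B'_q = \one_q = B'_q (B'_q)^{-1}$. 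The new ingredient is the third equation: computing block $(k,k+1)$ of $\bar\unabla^q U$ by the block product formula gives $\bar\unabla^q_{kk}\X$, and the single support row intersecting this block (namely row $k(n+1)+1$) lands inside the $\one_q$ piece of $(B'_q)^{-1} B'_q$, contributing a single $1$ at position $(1,1)$; the analogous calculation for $i<k$ gives $\bar\unabla^q_{ik}\X=0$.

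Part~(ii) is then a short check using part~(i): applying the first equation of \eqref{aux2} to the sum defining $\Phhi(\bar\pssi_q)$ collapses all strictly below-diagonal contributions and yields an upper-triangular matrix with trace-like constant diagonal entries, while applying the second equation to $\Pssi(\bar\pssi_q)$ gives a lower-triangular contribution plus the boundary term $\bar\unabla^q_{kk}\X$ from the rightmost block-column of $U$, which by the third equation is the rank-one matrix $e_{11}$. The main technical obstacle I anticipate is the bookkeeping for this asymmetric boundary contribution: its presence breaks the symmetry between $\Phhi$ and $\Pssi$ observed in the $\phhi$-case, and in the subsequent Poisson bracket computation via \eqref{brack_uho_form} it is precisely this rank-one term that survives and must be matched with the other log-canonical contributions to yield Theorem~\ref{dlogcan}.
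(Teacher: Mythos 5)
Your proposal is correct and follows essentially the same route as the paper: you use the identical decomposition $U=[\bar A\,\bar B\,\bar C]$ with the square $N\times N$ middle block in columns $[n+3,\,k(n+1)+1]$, realize $\bar\unabla^q$ as the embedding of $\bar B_q^{-1}$, and read off \eqref{aux11} and \eqref{aux2} from the products $U\bar\unabla^q$ and $\bar\unabla^q U$, exactly as in the paper's proof (which mirrors Lemma~\ref{gradfi}). You also correctly isolate the one genuinely new feature — the boundary column $k(n+1)+1$ with support $\X^1=(x_{21},\dots,x_{n1})^T$, which produces the third relation $\bar\unabla^q_{ik}\X=\delta_{ik}e_{11}$ and the asymmetric $e_{11}$-type term in $\Pssi(\bar\pssi_q)$ — matching the paper's argument for part (ii).
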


\begin{proof}
(i) The family $\pssi_q,  q\in [N]$, is formed by principal contiguous minors
extending from the lower right 
corner of the matrix 
\[
\bar B = \left [
\begin{array}{ccccc}
\X^{[2,n+1]} & 0 & 0 & \cdots & 0 \\
\Y^{[2,n+1]} & \X & 0 &\cdots & 0  \\
0 &\Y &\X &\cdots & 0\\
\vdots& \ddots& \ddots &\ddots &\vdots \\
0 & \cdots & 0 &  \Y & \X^1
\end{array}
\right ]; 
\]
here the entries in the first column are $(n-1)\times n$ blocks, entries in the last
column are $(n-1)\times 1$
blocks, and all other entries are $(n-1)\times (n+1)$ blocks. This 
suggests to re-write $U(X,Y)$ as
$$
U(X,Y)= \left [ \bar A \;\bar B \;\bar C \right ],
$$
where $\bar A$ 
and $\bar C$ 
are block-columns of width $n+2$ and $n$, respectively, and immediately implies
\eqref{aux11}.

Denote by $\bar B_q$ the principal submatrices of $\bar B$  that corresponds to
$\pssi_q$. Then
$\bar\unabla^q$ can be written as
\[
\bar\unabla^q=\left [\begin{array}{c} 0\\\bar \unabla^q_*\\ 0 \end{array}\right ],
\]
 and
$$
\bar\unabla^q_* = \left [\begin{array}{cc} 0 & 0 \\ 0  & \bar B_q^{-1}
\end{array}\right ].
$$
Therefore, 
\begin{equation}\label{Ubnabla}
U \bar\unabla^q = \bar B \bar\unabla^q_* = 
\left [\begin{array}{cc} 0 & \star \\ 0  & \one_q \end{array}\right ] 
\end{equation}
and
\begin{equation}\label{bnablaU}
\bar\unabla^q U=\left [\begin{array}{ccc} 0 & 0 & 0\\ 
\star & \bar\unabla^q_* B & \star \\ 0 & 0 & 0 \end{array}\right ] =
\left [\begin{array}{cccc} 0& 0 & 0  & 0\\ 
\star & 0 & 0 & \star \\ \star & \star & \one_q & \star\\
0& 0 & 0  & 0
\end{array}\right ].
\end{equation}
Similarly to the previous case, this implies \eqref{aux2}.

(ii) Follows immediately from \eqref{Ubnabla} and \eqref{bnablaU}.
\end{proof}

Consider two functions $f_1, f_2$ from the set $\{\bar\phhi_i, \bar\pssi_i\: i\in[N]\}$.
Lemmas \ref{gradfi}(ii), \ref{gradpsi}(ii) 
imply that the 
first two summands in the right hand side of \eqref{brack_uho_form} do not depend on $X$ and $Y$.
Furthermore, $\Phhi^2=\Phhi^2_{\geq 0}$ and $\Phhi^1_{\leq 0}=\Phhi^1_{0}$, and therefore
\begin{equation}
\label{aux3}
\begin{aligned}
\left \langle  \frac{1}{1-\gamma_+} 
\Phhi^1_{>0} - \frac{\gamma_-}{1-\gamma_-}\Phhi^1_{\leq 0}, \Phhi^2 \right \rangle&=
\left \langle  \frac{1}{1-\gamma_+} 
\Phhi^1_{>0} - \frac{\gamma_-}{1-\gamma_-}\Phhi^1_{0}, \Phhi^2_{\geq 0} \right \rangle\\
&= - \left \langle \frac{\gamma_-}{1-\gamma_-}\Phhi^1_{0}, \Phhi^2_{0} \right \rangle
\end{aligned}
\end{equation}
does not depend on $X$ and $Y$ as well. On the other hand,
\begin{equation}
\label{aux4}
\begin{aligned}
\left \langle  \frac{1}{1-\gamma_+} 
\Pssi^1_{>0} - \frac{\gamma_-}{1-\gamma_-}\Pssi^1_{\leq 0}, \Pssi^2 \right \rangle&=
\left \langle 
\Pssi^1_{>0} - \frac{\gamma_-}{1-\gamma_-}\Pssi^1_{0}, \Pssi^2_{\leq 0} \right \rangle\\
& = \left \langle 
\Pssi^1_{>0}, \Pssi^2_{\leq 0} \right \rangle
 - \left \langle \frac{\gamma_-}{1-\gamma_-}\Pssi^1_{0}, \Pssi^2_{0} \right \rangle.
\end{aligned}
\end{equation}
Here we use the fact that  matrices with non-zero entries only in the first row are  
annihilated by $\gamma_+$ and are orthogonal  to the image of $\gamma_-$.
The second term in the resulting expression does not depend on $X$ and $Y$, while 
\begin{equation}\label{psipsi}
\left \langle 
\Psi^1_{>0}, \Psi^2_{\leq 0} \right \rangle=
\begin{cases}
\left \langle 
\bar\unabla^i_{kk}\X-e_{11},\Pssi^2 \right \rangle,\qquad\text{if   $f_1=\log\pssi_i$},\\
0,\qquad\qquad\qquad\qquad\qquad \text{if $f_1=\log\phhi_i$}. 
 \end{cases}
\end{equation}

Thus, to complete the proof of Theorem \ref{dlogcan}, we have to show that the expression
\begin{equation}\label{bad}
 \left \langle \Psi^1_{>0}, \Psi^2_{\leq 0} \right \rangle+ 
 \langle \Y I^1,\X J^2\rangle  -  \langle I^1\Y, J^2\X\rangle
\end{equation}
does not depend on $X, Y$. 
Consider the following  cases:
\begin{equation}\label{4cases}
\begin{aligned}
&1.\ f_1=\bar\phhi_{q_1}, f_2=\bar\phhi_{q_2},\quad 1\leq q_2 \leq q_1 \leq N;\\
&2.\ f_1=\bar\pssi_{q_1}, f_2=\bar\pssi_{q_2},\quad 1\leq q_2 \leq q_1 \leq N;\\
&3.\ f_1=\bar\phhi_{q_1}, f_2=\bar\pssi_{q_2},\quad 1\leq q_2 \leq q_1 \leq N;\\
&4.\ f_1=\bar\pssi_{q_1}, f_2=\bar\phhi_{q_2},\quad 1\leq q_2 \leq q_1 \leq N.
\end{aligned}
\end{equation}

{\em Case 1}. By \eqref{aux0}, \eqref{aux3} and \eqref{psipsi}, we need to evaluate
$$
 \langle \Y I^1,\X J^2\rangle  -  \langle I^1\Y, J^2\X\rangle=\sum_{i,j=1}^{k-1}\left 
(\Tr( \Y\unabla_{i,i+1}^{q_1} \X \unabla_{jj}^{q_2}) - \Tr(
\unabla_{i,i+1}^{q_1} \Y \unabla_{jj}^{q_2} \X)\right ).
$$

Let $j=i+s$ with $1\leq i\leq k-1$ and  $0\leq s \leq k-i-1$. Applying the first and the third equalities in \eqref{aux1} 
alternately for $k-j-1$ times each and, additionally, the second equality, we obtain
\begin{equation}\label{indu1}
\begin{aligned}
 \unabla_{i,i+1}^{q_1} \Y \unabla_{i+s,i+s}^{q_2} \X&=-\unabla_{i,i+1}^{q_1} \X \unabla_{i+s+1,i+s}^{q_2} 
 \X\\
 &=\unabla_{i,i+2}^{q_1} \Y \unabla_{i+s+1,i+s}^{q_2} \X=\ldots=
\unabla_{i,k-s}^{q_1} \Y \unabla_{k-1,i+s}^{q_2} \X=0.
\end{aligned}
\end{equation}
Similarly, for $1\leq i\leq k-2$ and $1\leq s\leq k-i-1$, we apply the third and the first equalities in \eqref{aux1} alternately
for $k-j-1$ times each and, additionally, the third and the second equalities  to get
\begin{equation*}
\Y\unabla_{i,i+1}^{q_1} \X \unabla_{i+s,i+s}^{q_2} =-\Y\unabla_{i,i+2}^{q_1} \Y \unabla_{i+s,i+s}^{q_2}
=\ldots=-\Y\unabla_{i,k-s+1}^{q_1} \Y \unabla_{k-1,i+s}^{q_2}=0. 
\end{equation*}

Assume now that $j=i-s$ with $2\leq i\leq k-1$ and $1\leq s\leq i-1$. Applying the first and the third equalities in \eqref{aux1} 
alternately for $k-i-1$ times each and, additionally, the first equality once more, we obtain
\begin{equation*}
\unabla_{i,i+1}^{q_1} \Y \unabla_{i-s,i-s}^{q_2} \X=-\unabla_{i,i+1}^{q_1} \X 
\unabla_{i-s+1,i-s}^{q_2} \X=
\ldots=-\unabla_{ik}^{q_1} \X \unabla_{k-s,i-s}^{q_2} \X. 
\end{equation*}
Similarly, for $1\leq i\leq k-1$ and $0\leq s\leq i-1$, we apply the third and the first equalities in \eqref{aux1} alternately
for $k-i-1$ times each  to get
\begin{equation*}
\Y\unabla_{i,i+1}^{q_1} \X \unabla_{i-s,i-s}^{q_2} =
-\Y\unabla_{i,i+2}^{q_1} \Y \unabla_{i-s,i-s}^{q_2}
=\ldots=
\Y\unabla_{ik}^{q_1} \X \unabla_{k-s-1,i-s}^{q_2} \X.
\end{equation*}

Therefore,
\begin{equation*}
\begin{aligned}
& \langle \Y I^1,\X J^2\rangle  -  \langle I^1\Y, J^2\X\rangle =
\\
&
= \sum_{i=2}^{k-1} \sum_{s=1}^{i-1}\Tr\left( \unabla_{ik}^{q_1} \X \unabla_{k-s,i-s}^{q_2} \X\right) +
\sum_{j=1}^{k-1} \sum_{t=0}^{j-1} \Tr\left(\Y\unabla_{jk}^{q_1} \X \unabla_{k-t-1,j-t}^{q_2}\right) 
\\
& 
= \sum_{j=1}^{k-2} \sum_{t=0}^{j-1}\Tr\left(\X \unabla_{j+1,k}^{q_1} \X \unabla_{k-t-1,j-t}^{q_2}\right) 
+\sum_{j=1}^{k-1} \sum_{t=0}^{j-1} \Tr\left(\Y\unabla_{jk}^{q_1} \X \unabla_{k-t-1,j-t}^{q_2}\right) 
\\
&
= \Tr\left(\sum_{j=1}^{k-2} \sum_{t=0}^{j-1}\left[\X\unabla_{j+1,k}^{q_1} + \Y\unabla_{jk}^{q_1}\right]
\X \unabla_{k-t-1,j-t}^{q_2}  +
\sum_{s=1}^{k-1} \Y\unabla_{k-1,k}^{q_1} \X \unabla_{ss}^{q_2}\right ). 
\end{aligned}
\end{equation*}

As it was noted in the proof of Lemma~\ref{gradfi}, 
$\left[\X\unabla_{j+1,k}^{q_1} + \Y\unabla_{jk}^{q_1}\right]=[B\unabla^{q_1}_*]_{j+1,k}$. By \eqref{Unabla},
this block vanishes provided it lies off the diagonal within the unit submatrix $\one_{q_1}$. These two 
conditions are equivalent to $(k-j)(n-1)\leq q_1$ and $j+1<k$. Besides, the block $\unabla_{k-t-1,j-t}^{q_2}$
of $\unabla^{q_2}_*$ vanishes provided it lies below the diagonal outside the submatrix $B_{q_2}^{-1}$. These 
two conditions are equivalent to $(k-j+t)(n-1)\geq q_2$ and $j<k-1$. Therefore, the first term above vanishes,
since otherwise $q_1<(k-j)(n-1)\leq (k-j+t)(n-1)<q_2$, a contradiction.

By \eqref{Unabla}, the second term can be re-written as 
\[ 
\Tr\left(
 [B \unabla^{q_1}_*]_{kk} \sum_{s=1}^{k-1} \X \unabla_{ss}^{q_2}\right )
= \Tr(\X J^2),
\]
since  $\sum_{s=1}^{k-1} \X \unabla_{ss}^{q_2}=0$ for $q_2\leq q_1 < n-1$,
and $ [B \unabla^{q_1}_*]_{kk}=\one_{n-1}$
for $q_1\geq n-1$.
Thus it remains to show that $\Tr(\X J^2)$ does not depend on $X, Y$. To do this, 
we set $q_1=q_2$ and use \eqref{brack_uho_form}, \eqref{aux3}, \eqref{aux4} :
$
0 =  \left \langle \frac{\gamma_-}{1-\gamma_-}\Phhi^2_{0}, \Phhi^2_{0} \right \rangle
-  \left \langle \frac{\gamma_-}{1-\gamma_-}\Pssi^2_{0}, \Pssi^2_{0} \right \rangle + \Tr(\X J^2), 
$
and so
\begin{equation}
\label{bad1}
\Tr(\X J^2) = - \left \langle \frac{\gamma_-}{1-\gamma_-}\Phhi^2_{0}, \Phhi^2_{0} \right \rangle
+ \left \langle \frac{\gamma_-}{1-\gamma_-}\Pssi^2_{0}, \Pssi^2_{0} \right \rangle
\end{equation}
does not depend on $X, Y$.

{\em Case 2}. As in the the previous case, apply alternately equalities in \eqref{aux2} to get
\begin{equation*}
\begin{aligned}
 \bar\unabla_{i,i+1}^{q_1} \Y \bar\unabla_{i+s,i+s}^{q_2}
\X&=-\bar\unabla_{i,i+1}^{q_1} \X \bar\unabla_{i+s+1,i+s}^{q_2} 
 \X\\
 &=\bar\unabla_{i,i+2}^{q_1} \Y \bar\unabla_{i+s+1,i+s}^{q_2} \X=\ldots=
-\bar\unabla_{i,k-s}^{q_1} \X \bar\unabla_{k,i+s}^{q_2} \X\\
&=(\delta_{s,0}-1)\bar\unabla_{i,k-s}^{q_1} \X  \bar\unabla_{k,i+s}^{q_2} \X\\ &
=(1-\delta_{s,0})
\bar\unabla_{i,k-s+1}^{q_1} \Y \bar\unabla_{k,i+s}^{q_2} \X
\end{aligned}
\end{equation*}
for $1\leq i\leq k-1$ and $0\leq s \leq k-i-1$. Note that the resulting relation trivially holds for $s=k-i$ as well.  

Recall that 
 $ \bar\unabla_{kj}^{q_2}$ has only the first row nonzero, therefore
 \begin{equation}\label{zerotr}
  \Tr (A_1B\bar\unabla_{kj}^{q_2}A_2)=0
 \end{equation}
for any matrices $A_1$ and $A_2$ and any $B$ that has a zero first column. Using this relation for 
$A_1=\bar\unabla_{i,k-s+1}^{q_1}$, $A_2= \X$, $B=\Y$ and $j=i+s$, we conclude
 that $\Tr (\bar\unabla_{i,i+1}^{q_1} \Y \bar\unabla_{i+s,i+s}^{q_2} \X)=0$.

Similarly, 
for   $1\leq i\leq k-1$ and $1\leq s\leq k-i$  we 
get
\begin{equation*}
\begin{aligned}
\Y\bar\unabla_{i,i+1}^{q_1} \X \bar\unabla_{i+s,i+s}^{q_2}
&=-\Y\bar\unabla_{i,i+2}^{q_1} \Y \bar\unabla_{i+s,i+s}^{q_2}
=\ldots\\
&=
\begin{cases} -\Y\bar\unabla_{i,k-s+2}^{q_1} \Y  \bar\unabla_{k,i+s}^{q_2} &
\mbox{if}\ s >1,\\
\Y \bar\unabla_{ik}^{q_1}\X   \bar\unabla_{k,i+s}^{q_2} & \mbox{if}\ s=1.
\end{cases}
\end{aligned}
\end{equation*}
Note that by the last relation in \eqref{aux2}, $\bar\unabla_{ik}^{q_1}\X$ has a zero
first column. Therefore, using \eqref{zerotr}
for $A_1=\Y$, $A_2=\one_{n-1}$, $B=\bar\unabla_{ik}^{q_1}\X$ and $j=i+s$, we 
conclude that $\Tr (\Y\bar\unabla_{i,i+1}^{q_1} \X \bar\unabla_{i+s,i+s}^{q_2})=0$.

Next, arguing as in Case 1, but using \eqref{aux2} rather than \eqref{aux1}, we obtain
\begin{equation*}
\bar\unabla_{i,i+1}^{q_1} \Y \bar\unabla_{i-s,i-s}^{q_2} \X=-\bar\unabla_{ik}^{q_1}
\X \bar\unabla_{k-s,i-s}^{q_2} \X\ 
\quad \text{for $2\leq i\leq k-1$, $1\leq s \leq i-1$}.
\end{equation*}
and
\begin{equation*}
\Y\bar\unabla_{i,i+1}^{q_1} \X \bar\unabla_{i-s,i-s}^{q_2} =
\Y\bar\unabla_{ik}^{q_1} \X \bar\unabla_{k-s-1,i-s}^{q_2} 
\quad\text{for $1\leq i\leq k-1$, $0\leq s \leq i-1$}.
\end{equation*}

Using relations above, we compute
\begin{equation*}
\begin{aligned}
&\langle \Y I^1,\X J^2\rangle  - \langle I^1\Y, J^2\X\rangle    =\\
&
= \sum_{i=2}^{k-1} \sum_{s=1}^{i-1}\Tr\left( \bar\unabla_{ik}^{q_1} \X
\bar\unabla_{k-s,i-s}^{q_2} \X\right) +
\sum_{j=1}^{k-1} \sum_{t=0}^{j-1} \Tr\left(\Y\bar\unabla_{jk}^{q_1} \X
\bar\unabla_{k-t-1,j-t}^{q_2}\right) \\
&= \Tr\left(\sum_{j=1}^{k-2} \sum_{t=0}^{j-1}
[\bar B\bar \unabla^{q_1}_*]_{j+1,k}
\X \bar\unabla_{k-t-1,j-t}^{q_2}  +
\sum_{s=1}^{k-1} \Y\bar\unabla_{k-1,k}^{q_1} \X \bar\unabla_{ss}^{q_2}\right ). 
\end{aligned}
\end{equation*}
The argument identical to the one used in Case 1 shows that the inner double sum  above vanishes. 

Consider now the term $\left \langle 
\Psi^1_{>0}, \Psi^2_{\leq 0} \right \rangle$ in \eqref{bad}. Using \eqref{psipsi}
and \eqref{zerotr} for $A_1=I^2$, $A_2=\X$, $B=\Y$, $j=k$, we can rewrite it as 
\begin{equation*}
\begin{aligned}
\left \langle 
\Psi^1_{>0}, \Psi^2_{\leq 0} \right \rangle & = \left \langle 
\bar\unabla^{q_1}_{kk}\X - e_{11}, \Psi^2 \right \rangle= \left \langle
\bar\unabla^{q_1}_{kk}\X , J^2\X \right \rangle -\Pssi^2_{11} \\
&= \left \langle \bar\unabla^{q_1}_{kk}\X, \sum_{s=1}^{k} 
\bar\unabla_{ss}^{q_2}\X 
\right \rangle -\Pssi^2_{11}
 \\
&=\Tr\left ( \X\bar\unabla^{q_1}_{kk}
\sum_{s=1}^{k} \X \bar\unabla_{ss}^{q_2} \right )-\Pssi^2_{11}.
\end{aligned}
\end{equation*}
Therefore, taking into account that $\Tr (\Y\bar\unabla_{k-1,k}^{q_1} \X \bar\unabla_{kk}^{q_2})=0$,
we get
\begin{equation*}
\begin{aligned}
&\left \langle 
\Psi^1_{>0}, \Psi^2_{\leq 0} \right \rangle + \langle \Y I^1,\X J^2\rangle  -
\langle I^1\Y, J^2\X\rangle    =
\\
&
= \Tr\left(
\sum_{s=1}^{k}\left [ \Y\bar\unabla_{k-1,k}^{q_1} + \X\bar\unabla^{q_1}_{kk}
\right ] \X \bar\unabla_{ss}^{q_2}\right )-\Pssi^2_{11}
\\
&
=\Tr \left ([\bar B\bar \unabla^{q_1}_*]_{kk} 
 \sum_{s=1}^{k} \X \bar\unabla_{ss}^{q_2}\right )-\Pssi^2_{11}
=\Tr(\X J^2) - \Pssi^2_{11}.
\end{aligned}
\end{equation*}
As in the previous case, we set $q_1=q_2$ and use 
 \eqref{brack_uho_form}, \eqref{aux3}, \eqref{aux4} to see that
\begin{equation}
\label{bad2}
\Tr(\X J^2) - \Pssi^2_{11} = - \left \langle \frac{\gamma_-}{1-\gamma_-}\Phhi^2_{0}, \Phhi^2_{0} \right \rangle
+ \left \langle \frac{\gamma_-}{1-\gamma_-}\Pssi^2_{0}, \Pssi^2_{0} \right \rangle 
\end{equation}
does not depend on $X, Y$.

{\em Case 3}. 
Using alternately \eqref{aux1} and \eqref{aux2}, we obtain relations similar to those utilized in Cases 1 and 2:
\begin{equation*}
\begin{aligned}
&\Tr (\unabla_{i,i+1}^{q_1} \Y \bar\unabla_{i+s,i+s}^{q_2} \X)=0  
\quad \text{for $1\leq i\leq k-1$, $1\leq s \leq k- i$},\\
& \unabla_{i,i+1}^{q_1} \Y \bar\unabla_{i-s,i-s}^{q_2} \X=
-\unabla_{ik}^{q_1} \X \bar\unabla_{k-s,i-s}^{q_2} \X \quad \text{for $1\leq i\leq k-1$, $0\leq s \leq i-1$},\\
&\Tr(\Y\unabla_{i,i+1}^{q_1} \X \bar\unabla_{i+s,i+s}^{q_2}) = 0 \quad 
\text{for $1\leq i \leq k-2$,  $1\leq s \leq k-i$},\\
& 
\Y \unabla_{i,i+1}^{q_1} \X \bar\unabla_{i-s+1,i-s+1}^{q_2} =
\Y\unabla_{ik}^{q_1} \X \bar\unabla_{k-s,i-s+1}^{q_2} 
\quad\text{for $1\leq i\leq k-1$, $0\leq s \leq  i$}.
\end{aligned}
\end{equation*}
These relations allow us to compute
\begin{equation*}
\begin{aligned}
& \langle \Y I^1,\X J^2\rangle  -  \langle I^1\Y, J^2\X\rangle = \Tr(\Y\unabla_{k-1,k}^{q_1} \X \bar\unabla_{kk}^{q_2})
\\
&
+ \sum_{i=1}^{k-1} \sum_{s=0}^{i-1}\Tr\left( \unabla_{ik}^{q_1} \X \bar\unabla_{k-s,i-s}^{q_2} \X\right) +
\sum_{j=1}^{k-1} \sum_{t=0}^{j} \Tr\left(\Y\unabla_{jk}^{q_1} \X \bar\unabla_{k-t,j-t+1}^{q_2}\right) 
\\
&
= \Tr\left(
\sum_{j=1}^{k-1} \sum_{t=0}^{j-1}[B\unabla^{q_1}_*]_{jk}
\X \bar\unabla_{k-t,j-t}^{q_2}  +
[B\unabla^{q_1}_*]_{kk}\sum_{s=1}^{k}  \X \bar\unabla_{ss}^{q_2}  \right ).
\end{aligned}
\end{equation*}
Arguing as in the previous cases, we conclude that $[B\unabla^{q_1}_*]_{jk}
\X \bar\unabla_{k-t,j-t}^{q_2} =0$ for $1\leq t \leq j-1$. If $t=0$, $[B\unabla^{q_1}_*]_{jk}
\X \bar\unabla_{kj}^{q_2} $ is nonzero provided $(k-j)(n-1) < q_2 \leq q_1 < (k-j+1)(n-1)$.
However, in this case the last $q_1 - (k-j)(n-1)$ rows of $[B\unabla^{q_1}_*]_{jk}$ are zero
and the first $(k-j+1)(n-1) - q_2$ columns of $\bar\unabla_{kj}^{q_2} $ are zero, which means
that $\Tr([B\unabla^{q_1}_*]_{jk} \X \bar\unabla_{kj}^{q_2})=0 $.
Furthermore, if $q_2\leq q_1 < n-1$, then the first $n-1 -q_1$ columns of $\bar\unabla_{kk}^{q_2}$ are zero and the last $q_1$ rows of $[B\unabla^{q_1}_*]_{kk}$
coincide with those of the identity matrix $\one_{n-1}$, therefore
$$
\Tr \left ([B \unabla^{q_1}_*]_{kk}  \X \bar\unabla_{kk}^{q_2}\right )= 
\Tr \left (\X \bar\unabla_{kk}^{q_2}\right )=1.
$$
 Thus,
\begin{equation}\label{bad3}
\begin{aligned}
 \langle \Y I^1,\X J^2\rangle  -  \langle I^1\Y, J^2\X\rangle &=
\Tr \left ([B \unabla^{q_1}_*]_{kk} 
 \sum_{s=1}^{k} \X \bar\unabla_{ss}^{q_2}\right )
=\Tr (\X J^2)\\
&=\Pssi^2_{11}-\left\langle \frac{\gamma_-}{1-\gamma_-}\Phhi_0^2,\Phhi_0^2\right\rangle+
\left\langle \frac{\gamma_-}{1-\gamma_-}\Pssi_0^2,\Pssi_0^2\right\rangle
\end{aligned}
\end{equation}
does not depend on $X, Y$.

{\em Case 4}. Similarly to Case 1, we have
\begin{equation*}
\begin{aligned}
&\bar\unabla_{i,i+1}^{q_1} \Y \unabla_{i+s,i+s}^{q_2} \X=0 \ 
\quad \text{for $1\leq i\leq k-1$, $0\leq s \leq k-i-1$},\\
& 
\bar\unabla_{i,i+1}^{q_1} \Y \unabla_{i-s,i-s}^{q_2} \X=-\bar\unabla_{ik}^{q_1} \X \unabla_{k-s,i-s}^{q_2} \X\ 
\quad \text{for $2\leq i\leq k-1$, $1\leq s \leq i-1$},\\
&\Y\bar\unabla_{i,i+1}^{q_1} \X \unabla_{i+s,i+s}^{q_2} = 0 \quad \text{for $1\leq i\leq k-2$,  $1\leq s \leq k-i-1$},\\
& 
\Y\bar \unabla_{i,i+1}^{q_1} \X \unabla_{i-s+1,i-s+1}^{q_2} =
\Y\bar\unabla_{ik}^{q_1} \X \unabla_{k-s,i-s+1}^{q_2} 
\quad\text{for $1\leq i\leq k-1$, $1\leq s \leq  i$}\ 
\end{aligned}
\end{equation*}
and
\begin{equation}\label{bad4}
\begin{aligned}
&\left \langle 
\Psi^1_{>0}, \Psi^2_{\leq 0} \right \rangle +  \langle \Y I^1,\X J^2\rangle  -  \langle I^1\Y, J^2\X\rangle = 
\Tr\left (\X\bar\unabla^{q_1}_{kk}
\sum_{s=1}^{k-1} \X \unabla_{ss}^{q_2} \right ) -\Pssi_{11}^2
\\
&
+ \sum_{i=2}^{k-1} \sum_{s=1}^{i-1}\Tr\left(\bar \unabla_{ik}^{q_1} \X \unabla_{k-s,i-s}^{q_2} \X\right) +
\sum_{j=1}^{k-1} \sum_{t=1}^{j} \Tr\left(\Y\bar\unabla_{jk}^{q_1} \X \unabla_{k-t,j-t+1}^{q_2}\right) 
\\
&
= \Tr\left(
\sum_{j=1}^{k-2} \sum_{t=1}^{j-1}[\bar B\bar \unabla^{q_1}_*]_{jk}
\X \unabla_{k-t,j-t+1}^{q_2}  +
[\bar B \unabla^{q_1}_*]_{kk}\sum_{s=1}^{k-1}  \X \unabla_{ss}^{q_2}  \right ) - \Pssi^2_{11}
\\
&
=\Tr \left ([\bar B\bar \unabla^{q_1}_*]_{kk} 
 \sum_{s=1}^{k-1} \X \unabla_{ss}^{q_2}\right )  - \Pssi^2_{11}
=\Tr(\X J^2) - \Pssi^2_{11}\\
&=-\left\langle \frac{\gamma_-}{1-\gamma_-}\Phhi_0^2,\Phhi_0^2\right\rangle+
\left\langle \frac{\gamma_-}{1-\gamma_-}\Pssi_0^2,\Pssi_0^2\right\rangle-\Pssi_{11}^2
\end{aligned}
\end{equation}
does not depend on $X, Y$.

The proof of Theorem \ref{dlogcan} is complete.
\qed

We finish this section with a proposition which, though not needed for the proof of our main theorem,
 serves as a justification for our choice of functions $\phhi_N(X)$, $\pssi_M(X)$ as stable 
variables in the cluster structure $\CC_{CG}(GL_n)$ in addition to $\thetta_n(X)$. Indeed, in all known examples of cluster structures on Poisson varieties stable variables have two key properties: they behave well under certain
natural group actions, and the vanishing locus of each of these functions foliates into a union of non-generic symplectic
leaves. The latter property usually is a consequence of each stable variable being log-canonical with certain globally defined
 coordinate functions. The proposition below shows that both these properties are valid in our current situation.

\begin{proposition}
\label{sigizmund}
{\rm(i)}  
Functions $\phhi_N(X,Y), \pssi_M(X,Y)$ are semi-invariants of the right and left action of $D_-$ in $D(GL_n)$.

{\rm(ii)}  
For any $i,j\in [n]$, $\phhi_N(X)$ and $\pssi_M(X)$ are log-canonical with $x_{ij}$ with respect to  
the Cremmer--Gervais Poisson--Lie bracket.
\end{proposition}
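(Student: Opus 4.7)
The plan is to derive both parts by direct computation, leveraging the block-structural information about $\unabla\bar\phhi_N$ and $\unabla\bar\pssi_M$ recorded in Lemmas~\ref{gradfi} and~\ref{gradpsi}, together with the explicit Cremmer--Gervais operator $R_+$ of Lemma~\ref{CGR}. The two parts will share machinery but not reduce to one another: part~(i) is a statement on the double, while part~(ii) is a computation of Sklyanin brackets on $GL_n$ with coordinate functions.

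For part~(i), I would first infinitesimalize. The right action of $(R_+(\xi),R_-(\xi)) \in \D_-$ sends $(X,Y)$ to $(X\exp(tR_+(\xi)), Y\exp(tR_-(\xi)))$, so for $f \in \{\bar\phhi_N,\bar\pssi_M\}$ the semi-invariance condition becomes the requirement that
\[
\Tr(R_+(\xi)\,\nabla_X f \cdot X) + \Tr(R_-(\xi)\,\nabla_Y f \cdot Y)
\]
be an $(X,Y)$-independent linear functional of $\xi$, with an analogous condition for the left action with $X\nabla_X f$ and $Y\nabla_Y f$ in place of $\nabla_X f\cdot X$ and $\nabla_Y f\cdot Y$. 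By Lemma~\ref{uhogradlemma} these four products are precisely $J_X$, $J_\X$, $I_Y$, $I_\Y$, and by Lemma~\ref{gradfi}(ii) (respectively Lemma~\ref{gradpsi}(ii)) the block-traces $\Phhi(f)=\X J+\Y I$ and $\Pssi(f)=J\X+I\Y$ are upper, respectively lower, triangular with $X,Y$-independent diagonal. Substituting the explicit form of $R_+$ and applying the $\gamma_\pm$-manipulations used in the proof of Theorem~\ref{dlogcan} (compare~\eqref{aux3} and~\eqref{aux4}) then collapses all $X,Y$-dependent contributions, leaving only the diagonal data of $\Phhi(f)$ and $\Pssi(f)$; this is exactly the required character $\chi_f(\xi)$. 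The $\bar\pssi_M$ case is the same modulo the extra $e_{11}$-contribution in $\Pssi(\bar\pssi_M)$, which is still $X,Y$-independent.

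For part~(ii), I would plug directly into the Sklyanin form of Lemma~\ref{CGR}. For $f_2 = \log x_{ij}$ we have $\nabla f_2 = e_{ji}/x_{ij}$, so the bracket reduces to
\[
\{\log \phhi_N,\log x_{ij}\}(X) = \frac{1}{x_{ij}}\bigl(X R_+(\nabla\log\phhi_N\cdot X) - R_+(X\nabla\log\phhi_N)X\bigr)_{ij}.
\]
Restricting the double-gradient formulas to the diagonal via $\nabla\phhi_N(X)=(\nabla_X+\nabla_Y)\phhi_N(U(X,Y))\big|_{Y=X}$ and again invoking Lemma~\ref{uhogradlemma}, all products of $\nabla\log\phhi_N$ with $X$ reduce to the triangular-with-constant-diagonal matrices of Lemma~\ref{gradfi}(ii). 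The same pattern of cancellations that established log-canonicity among the $\phhi_i$'s and $\pssi_j$'s in Theorem~\ref{dlogcan} then absorbs every $X$-dependent summand, leaving the $(i,j)$-entry proportional to $x_{ij}$ with a numerical coefficient determined entirely by those diagonals. The $\pssi_M$ case is handled identically, with Lemma~\ref{gradpsi}(ii) in place of Lemma~\ref{gradfi}(ii).

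The main obstacle I anticipate is the bookkeeping in part~(i): the interplay of $\gamma_\pm$-shifts with the sliding block structure of $U(X,Y)$ generates many cross-terms, and one must carefully order the uses of the constraints~\eqref{aux1} and~\eqref{aux2} to verify that they indeed assemble into a single character of $\D_-$ rather than something residually dependent on $X,Y$. Once the right-action case is handled, the left-action case follows by symmetry using $\X J$, $\Y I$ in place of $J_X$, $I_Y$, and part~(ii) becomes a specialization in which $f_2$ has the particularly simple gradient $e_{ji}/x_{ij}$, so the argument shortens accordingly.
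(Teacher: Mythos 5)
Your overall strategy is workable in outline, but at the two decisive points it asserts cancellations rather than proving them, and in both parts it misses the structural shortcut that makes the paper's proof short. For part (i) the paper does not infinitesimalize at all: it writes down the explicit block form of $D_-\subset D(GL_n)$ (upper/lower block-triangular pairs sharing a common $GL_{n-1}$-block $A$), observes that the two-sided action translates into left multiplication of $U(X,Y)$ by $\diag(A,\dots,A)$ and right multiplication by $\diag(B,\dots,B)$, and then reads off semi-invariance of $\phhi_N$ and $\pssi_M$ from multiplicativity of determinants of the relevant submatrices. Your infinitesimal criterion $\langle R_+(\xi),\nabla_Xf\cdot X\rangle+\langle R_-(\xi),\nabla_Yf\cdot Y\rangle=\mathrm{const}$ is correct, but the step where you claim the $\gamma_\pm$-manipulations ``collapse all $X,Y$-dependent contributions'' is exactly the content that needs proof, and it does not follow from Lemmas~\ref{gradfi}(ii), \ref{gradpsi}(ii): those lemmas control the block-traces $\Phhi(f)=\X J+\Y I$ and $\Pssi(f)=J\X+I\Y$, not the individual $n\times n$ matrices $J_X$, $I_Y$, $J_{\X}$, $I_{\Y}$ that actually appear when you pair with $R_\pm(\xi)$ for an arbitrary $\xi$. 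Passing from the latter to the former is precisely the long computation carried out in the proof of Proposition~\ref{brack_uho}, and it is tailored to pairings of two gradients from the family, not to pairings with a general Lie algebra element.

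The same confusion affects part (ii). Your Sklyanin-bracket formula for $\{\log\phhi_N,\log x_{ij}\}$ is correct, but the objects entering it are $E_L=J_X+I_Y$ and $E_R=J_{\X}+I_{\Y}$ restricted to the diagonal, which are not the triangular-with-constant-diagonal matrices of Lemma~\ref{gradfi}(ii), so the announced ``same pattern of cancellations'' cannot simply be absorbed. The idea you are missing is that $x_{ij}$ is itself a matrix entry of $U(X,Y)$ (of the lower-right $X$-block for $i\in[2,n]$, or of the upper-left $Y$-block), so one can take $f_1=\log x_{ij}$, $f_2=\log\phhi_N$ in the already-proven formula \eqref{brack_uho_form}. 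Then \eqref{Unabla} and \eqref{nablaU} give $\Phhi^2=k\one_{n-1}$, $\Pssi^2=(k-1)\one_{n+1}$ (scalar, not merely triangular), while $I^1=0$ and $J^1=x_{ij}^{-1}e_{j,i-1}$ make the last two terms of \eqref{brack_uho_form} vanish and reduce the rest to the constant diagonal data $\Phhi^1_0=e_{i-1,i-1}$, $\Pssi^1_0=e_{jj}$; restriction to the diagonal subgroup finishes the argument. Without this device (or a full re-derivation of the Proposition~\ref{brack_uho}-type cancellations for the pair $(\log x_{ij},\log\phhi_N)$), your part (ii) remains a plan rather than a proof.
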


\begin{proof} (i) In the Cremmer--Gervais case, the subgroup $D_-$ of $D(GL_n)$ that corresponds to the subalgebra $\D_-$ of $\D(\gl_n)$ defined in \eqref{ddeco} is
given by
$$
D_-=\left \{\left (  
\left [
\begin{array}{cc} a & \star\\ 0 & A\end{array}
\right ], \quad
\left [
\begin{array}{cc} A & 0\\ \star & a\end{array}\ 
\right ) 
\right ]\right \},
$$
where $A\in GL_{n-1}$ (see, e.g. \cite{Y}).
Thus, the two-sided action of $D_-$ on $( X, Y)\in D(GL_n)$,
$$
(X, Y) \mapsto \left ( \left [
\begin{array}{cc} a & \star\\ 0 & A\end{array}
\right ] X \left [
\begin{array}{cc} a' & \star\\ 0 & A'\end{array}
\right ], \left [
\begin{array}{cc} A & 0\\ \star & a\end{array}\right ] \ Y \left [
\begin{array}{cc} A' & 0\\ \star & a'\end{array}\ 
\right ]
\right ),
$$
translates into a transformation of $U(X,Y)$ given by the multiplication on the left by the block diagonal
matrix diag$(\underbrace{A,\ldots, A}_{k \ \mbox{times}})$ and on the right by the block diagonal
matrix diag$(\underbrace{B,\ldots, B}_{k+1 \ \mbox{times}})$, where $B$ is an $(n+1)\times (n+1)$ matrix of the form
$ B=\left [
\begin{array}{ccc} a' & \star& 0 \\ 0 & A' & 0\\ 0 & \star & a'\end{array}
\right ] $. If $n$ is odd, $\phhi_N$ is the determinant of the $N\times N$ submatrix of $U(X,Y)$ obtained by 
deleting the first and the last $n+1$ columns. Under the action above, it is multiplied by 
$\det A^{k} \det B^{k-1}$.  If $n$ is even,  $\phhi_N$ is the determinant of the $N\times N$ submatrix of 
$U(X,Y)$ obtained by deleting the first $n+2$ and the last $n$ columns. 
This determinant is transformed via multiplication 
by $\det A^{k} \det B^{k-1}a'$.
This proves the semi-invariance of $\phhi_N(X,Y)$.

Similarly, if $n$ is even, then $\pssi_M(X,Y)= \pssi_N(X,Y)$ is the determinant of the 
submatrix of $U(X,Y)$ obtained by deleting the first $n+2$ and the last $n$ columns. Under the action 
described above, this determinant is multiplied by
$\det A^{k} \det B^{k-1} a'$. If $n$ is odd then   $\pssi_M(X,Y)= \pssi_{N-n+1}(X,Y)$ 
is the determinant of the  submatrix of $U(X,Y)$ obtained by deleting the first 
$n-1$ rows and the first $2n+1$ and the last $n$ columns.
Under the action, it gets multiplied by $\det A^{k-1} \det B^{k-2}a'^2$. 
This completes the proof of the first claim.

(ii) For $i\in [2,n]$, $j\in [n]$, let us consider $\{\log x_{ij}, \log \phhi_N\}_D$. 
If we view $x_{ij}$ as the $(i-1,j)$-th matrix entry of the lower right block
of $U(X,Y)$, we can use Proposition \ref{brack_uho} with $f_1(X,Y)=\log  x_{ij}$ and 
$f_2(X,Y)=\log \phhi_N(X,Y)$. By \eqref{Unabla}, \eqref{nablaU}, $\Phhi^2 = k \one_{n-1}$ and 
$\Pssi^2 = (k-1) \one_{n+1}$. Also, $I^1=0$ and $J^1=\frac {1} {x_{ij}} e_{j,i-1}$,
 so $\Phhi^1 = \frac {1} {x_{ij} }\X e_{j,i-1}$, $\Pssi^1 = \frac {1} {x_{ij} }e_{j,i-1} \X$. 
 It follows that the last two terms in \eqref{brack_uho_form} vanish and the rest of the terms 
 depend only on diagonal parts $\Phhi^1_0=e_{i-1,i-1}$ and $\Pssi^1_0 =  e_{jj}$. 
 Then \eqref{brack_uho_form} implies that $\{\log x_{ij}, \log \phhi_N\}_D$ is constant. 

Next, for $i\in [n-1]$, $j\in [n]$, let us consider $\{ \log \phhi_N, \log y_{ij}\}_D$, 
where $y_{ij}$ is viewed as the $(i,j+1)$-th matrix entry of the upper left block
of $U(X,Y)$. Now, $\Phhi^1 = k \one_{n-1}$ and $\Pssi^1 = (k-1) \one_{n+1}$, $J^2 = 0$, 
$I^2 = \frac {1} {y_{ij}} e_{j+1,i}$ and
\eqref{brack_uho_form} involves only diagonal parts $\Phhi^2_0= e_{ii}$ and $\Pssi^2_0= e_{j+1, j+1}$ 
so $\{ \log \phhi_N, \log y_{ij}\}_D$ is constant. Assertion (ii) follows after restricting the bracket 
$\Poi_D$ to the diagonal subgroup of the double. The case of $\pssi_M$ can be treated similarly.
\end{proof}

\section{Initial quiver}
The goal of this section is to prove Theorem~\ref{quiver}.

As in the previous section, we concentrate on the case $n$ odd.
It follows from \eqref{brack_uho_form}, \eqref{aux3},
\eqref{aux4}, \eqref{bad1}, \eqref{bad2}, \eqref{bad3} and \eqref{bad4} that the bracket 
$\Poi_D$ for $f_{1}$, $f_2$ as in~\eqref{4cases} is given by
\begin{equation}\label{omegan}
\begin{aligned}
\{f_1,f_2\}_D&=\frac 1n\left(\Tr(D_{n-1}\Phhi_0^1)\Tr\Phhi_0^2-\Tr(D_{n-1}\Phhi_0^2)\Tr\Phhi_0^1\right)\\
&-\frac 1n\left(\Tr(D_{n+1}\Pssi_0^1)\Tr\Pssi_0^2-\Tr(D_{n+1}\Pssi_0^2)\Tr\Pssi_0^1\right)\\
&+\left\langle\sigma_-\Phhi_0^1,\Phhi_0^2\right\rangle -\left\langle\sigma_-\Phhi_0^2,\Phhi_0^2\right\rangle\\
&-\left\langle\sigma_-\Pssi_0^1,\Pssi_0^2\right\rangle+\left\langle\sigma_-\Pssi_0^2,\Pssi_0^2\right\rangle
+\varepsilon\Pssi_{11}^2,
\end{aligned}
\end{equation}
where $\varepsilon=0$ in cases 1 and 2, $\varepsilon=1$ in case 3, $\varepsilon=-1$ in case 4, 
and $\sigma_-=\gamma_-/(1-\gamma_-)$ is the sum of all shifts in the south-eastern direction.

For any nonnegative integer $q$ and any  $m \in \mathbb{N}$ define
$$
T_{m} (q) =\left  \lfloor  \frac q m\right \rfloor \one_m + \left [
\begin{array}{cc}  0 & 0 \\ 0 & \one_{\hat q}\end{array} \right ],
$$
where $\hat q=q\bmod m$ and $0\le \hat q\le m-1$.

Lemmas \ref{gradfi}, \ref{gradpsi} imply that 
\begin{gather*}
\Phhi_0(\bar\phhi_q) =\Pssi_0(\bar\pssi_q)= T_{n-1}(q),\\ 
\quad \Pssi_0(\bar\phhi_q) = T_{n+1}(q),
\quad \Pssi_0(\bar\pssi_q) = e_{11} + T_{n+1}(q-1),
\end{gather*}
and hence
$$
\Tr(\Phhi_0(\bar\phhi_q))=\Tr(\Pssi_0(\bar\phhi_q))=
\Tr(\Phhi_0(\bar\pssi_q))=\Tr(\Pssi_0(\bar\pssi_q))=q.
$$

In what follows we will need several identities involving $T_m(q)$ and $\sigma_-$.
Denote $\Delta_m(q)=T_m(q)-T_m(q-1)$ for $q\in \N$.

\begin{proposition}\label{identities}
{\rm (i)} For any $m, q_1\in\N$ and any integer $r$ such that $q_1+rm\ge 0$,
\begin{equation}\label{Tshift}
 T_m(q_1+rm)=T_m(q_1)+r\one_m.
\end{equation}

{\rm (ii)} For any $m\in \N$ and $q_1\in [m]$ 
\begin{equation}\label{idty0}
 \Tr\left(D_m(\Delta_m(q_1+1)-\Delta_m(q_1))\right)=-1.
\end{equation}
 
 {\rm (iii)} For any $m, q_1\in \N$
\begin{equation}\label{idty1}
 \langle\sigma_-\Delta_m(q_1),\Delta_m(q_1)\rangle=0.
 \end{equation}

 {\rm (iv)} For any $m\in \N$ and any diagonal $\xi$
 \begin{equation}\label{idty2}
  \sigma_-\xi+\sigma_+\xi=\Tr\xi\cdot\one_m-\xi,
 \end{equation}
where $\sigma_+=\gamma_+/(1-\gamma_+)$ is conjugate to $\sigma_-$ with respect to the trace-form.
 
 {\rm (v)}  For any $m, q_1\in \N$ such that $m\nmid q_1$ and any diagonal $\xi$
 \begin{equation}\label{idty7}
  \langle\sigma_-\xi, \Delta_m(q_1)\rangle+
  \langle\sigma_+\xi, \Delta_m(q_1+1)\rangle
  =\Tr\xi.
 \end{equation}
 
 {\rm (vi)} For any $m, q_1\in \N$ such that $m\nmid q_1$ and any diagonal $\xi$
 \begin{equation}\label{idty6}
  \left\langle\sigma_-\xi,\Delta_m(q_1+1)-\Delta_m(q_1)\right\rangle=-\langle \xi, \Delta_m(q_1+1)\rangle.
 \end{equation}
 
  {\rm (vii)} For any $m, q_1\in \N$
 \begin{equation}\label{idty3}
  \langle\sigma_-\one_m,T_m(q_1)\rangle+\langle\sigma_-T_m(q_1),\one_m\rangle=(m-1)q_1.
 \end{equation}

 {\rm (viii)}
 For any $m, q_1, q_2\in \N$
 \begin{equation}\label{idty4}
  \langle\sigma_-T_m(q_1),\Delta_m(q_2)\rangle+\langle\sigma_-\Delta_m(q_2), T_m(q_1)\rangle
  =q_1-\langle T_m(q_1), \Delta_m(q_2)\rangle.
 \end{equation}

 {\rm (ix)} For any $m, q_1\in \N$
 \begin{equation}\label{idty5}
  \langle\sigma_-T_m(q_1),T_m(q_1)\rangle-\langle\sigma_-T_m(q_1-1),T_m(q_1-1)\rangle=
  q_1-\langle T_m(q_1), \Delta_m(q_1)\rangle.
 \end{equation}
 \end{proposition}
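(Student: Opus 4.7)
My plan is to reduce every identity to an explicit calculation on diagonal matrices by first recording three basic facts. Write $\hat q = q\bmod m$ and $\lfloor q/m\rfloor = (q-\hat q)/m$; then
\[
T_m(q)=\sum_{i=1}^{m-\hat q}\tfrac{q-\hat q}{m}\,e_{ii}+\sum_{i=m-\hat q+1}^m\bigl(\tfrac{q-\hat q}{m}+1\bigr)e_{ii},
\]
so $T_m(q)$ is a diagonal matrix whose $m-\hat q$ top entries all equal $\lfloor q/m\rfloor$ and whose $\hat q$ bottom entries all equal $\lfloor q/m\rfloor+1$. From this, $\Delta_m(q)=e_{jj}$ for the unique $j\in[m]$ with $j\equiv 1-q\pmod m$ (hence $j=m-\hat q+1$ when $\hat q>0$ and $j=1$ when $\hat q=0$), so $\Delta_m(q)$ is always a single coordinate idempotent. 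Finally, for a diagonal matrix $\xi=\sum_i\xi_i e_{ii}$ the identities
\[
\sigma_-\xi=\sum_{j=1}^m\Bigl(\sum_{i<j}\xi_i\Bigr)e_{jj},\qquad
\sigma_+\xi=\sum_{j=1}^m\Bigl(\sum_{i>j}\xi_i\Bigr)e_{jj}
\]
follow from $\gamma_-e_{ii}=e_{i+1,i+1}$ (with $e_{m+1,m+1}=0$) and the analogous formula for $\gamma_+$.

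With these preparations, claim (i) is immediate from the definition of $T_m$ since both $\lfloor q/m\rfloor$ and $\hat q$ are shifted correctly. Claim (iv) is the observation that $\xi+\sigma_+\xi+\sigma_-\xi$ has every diagonal entry equal to $\Tr\xi$, and claim (iii) follows because $\Delta_m(q_1)=e_{jj}$ is supported on a single index and $\sigma_-$ strictly raises that index. Claims (ii), (vi) and (vii) reduce to small bookkeeping: if $m\nmid q_1$ then $\Delta_m(q_1)=e_{jj}$ and $\Delta_m(q_1+1)=e_{j-1,j-1}$ for the same $j\in[2,m]$, so in (ii) we get $\Tr(D_m(e_{j-1,j-1}-e_{jj}))=-1$; for (vi) we compute $\langle\sigma_-\xi,e_{j-1,j-1}-e_{jj}\rangle=\sum_{i<j-1}\xi_i-\sum_{i<j}\xi_i=-\xi_{j-1}=-\langle\xi,\Delta_m(q_1+1)\rangle$; for (vii) we add the $\sigma_-$ and $\sigma_+$ halves and invoke (iv) at the indices $j-1,j$.

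The remaining four identities (v), (viii), (ix), (iii) in its stronger form, are handled by the same computation on diagonal entries, using the combinatorial description of $T_m(q)$. For (viii), write $t_i$ for the $i$-th diagonal entry of $T_m(q_1)$ and let $\Delta_m(q_2)=e_{j_0j_0}$; then
\[
\langle\sigma_-T_m(q_1),\Delta_m(q_2)\rangle+\langle\sigma_-\Delta_m(q_2),T_m(q_1)\rangle
=\sum_{i<j_0}t_i+\sum_{i>j_0}t_i=\Tr T_m(q_1)-t_{j_0}=q_1-\langle T_m(q_1),\Delta_m(q_2)\rangle,
\]
which is (viii). Identity (ix) then follows by telescoping: $T_m(q_1)=T_m(q_1-1)+\Delta_m(q_1)$ makes the difference on the left equal to $\langle\sigma_-\Delta_m(q_1),T_m(q_1-1)\rangle+\langle\sigma_-T_m(q_1-1),\Delta_m(q_1)\rangle+\langle\sigma_-\Delta_m(q_1),\Delta_m(q_1)\rangle$; applying (iii) to the last term and (viii) with $q_2=q_1$ and $T_m(q_1-1)=T_m(q_1)-\Delta_m(q_1)$ in place of $T_m(q_1)$ gives the stated right-hand side. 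Identity (v), the ``alternating'' version of the trace relation, is obtained analogously by specializing (iv) via $\langle\sigma_-\one_m,T_m(q_1)\rangle+\langle T_m(q_1),\sigma_+\one_m\rangle$ and using $\sigma_-\one_m=\sum_{j}(j-1)e_{jj}$.

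The only genuinely delicate point is the indexing convention at the ``wrap-around'' value $q_1\equiv 0\pmod m$, where $\Delta_m(q_1)=e_{11}$ rather than $e_{jj}$ for $j=m-\hat q+1$; I expect this is the main place where one must verify that the divisibility hypotheses $m\nmid q_1$ assumed in (vi)--(vii) are really needed, and conversely that (ii), (v), (viii), (ix) do not require such a hypothesis because the trace-based quantities are insensitive to the boundary case. Once this case analysis is handled uniformly using $j\equiv 1-q\pmod m$, every identity reduces to a one-line diagonal calculation.
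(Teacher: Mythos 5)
Your proof is correct and follows essentially the same route as the paper: both reduce every identity to an explicit entrywise computation on diagonal matrices using $T_m(q)=\lfloor q/m\rfloor\one_m+\sum_{i>m-\hat q}e_{ii}$, $\Delta_m(q)=e_{jj}$ with $j\equiv 1-q\pmod m$, and $(\sigma_-\xi)_{jj}=\sum_{i<j}\xi_i$, the only cosmetic difference being that the paper routes (v), (vii), (viii) through the adjointness $\langle\sigma_-a,b\rangle=\langle a,\sigma_+b\rangle$ together with item (iv), which is equivalent to your direct index sums. Two small slips worth fixing: the arguments you label (v) and (vii) are interchanged --- ``adding the $\sigma_-$ and $\sigma_+$ halves at the indices $j$ and $j-1$'' is the proof of (v), while the computation based on $\sigma_-\one_m+\sigma_+\one_m=(m-1)\one_m$ is the proof of (vii) --- and your closing guess that (ii) survives the wrap-around case $m\mid q_1$ is false (for $q_1=m$ one gets $\Tr(D_m(e_{mm}-e_{11}))=m-1$, not $-1$), although the paper's own one-line argument tacitly makes the same restriction to consecutive entries and the identity is only ever invoked with $q_1\le m-1$.
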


\begin{proof}
 (i) Evident.
 
 (ii) Clearly,  $\Delta_m(q_1)=e_{m+1- q_1, m+1- q_1}$ for $q_1\in [m]$, 
 hence the trace in question is the difference of the two 
 consecutive diagonal entries of $D_m$.
 
 (iii) Evident.
 
 (iv) Consider the $(i,i)$-th entry of $\sigma_-\xi+\sigma_+\xi$. Clearly, the contribution of $\sigma_-\xi$ equals $\xi_{11}+\dots+\xi_{i-1,i-1}$, whereas the contribution of $\sigma_+\xi$
 equals $\xi_{i+1,i+1}+\dots+\xi_{mm}$, since $\sigma_+$ is the sum of all shifts in the north-west direction. Now \eqref{idty2} follows immediately.

(v) Indeed, 
$$
\left\langle\sigma_-\xi,\Delta_m(q_1)\right\rangle+\left\langle\sigma_+\xi,\Delta_m(q_1+1)\right\rangle
=\left\langle\xi,\sigma_+\Delta_m(q_1)\right\rangle+
\left\langle\xi,\sigma_-\Delta_m(q_1+1)\right\rangle.
$$
If $m\nmid q_1$ then the first summand above equals $\xi_{11}+\dots+\xi_{m-\hat q_1, m-\hat q_1}$, whereas
the second summand equals $\xi_{m+1-\hat q_1, m+1-\hat q_1}+\dots+\xi_{mm}$, and \eqref{idty7} follows. 

 (vi) Indeed, 
$$
\left\langle\sigma_-\xi,\Delta_m(q_1+1)-\Delta_m(q_1)\right\rangle=
\left\langle\xi,\sigma_+(\Delta_m(q_1+1)-\Delta_m(q_1))\right\rangle.
$$
  Therefore, if $m\nmid q_1$, then $\sigma_+(\Delta_m(q_1+1)-\Delta_m(q_1))=-\Delta_m(q_1+1)$, and \eqref{idty6} follows.
  
 (vii) By \eqref{idty2} for $\xi=\one_m$,
 \begin{gather*}
 \langle\sigma_-\one_m,T_m(q_1)\rangle+\langle\sigma_-T_m(q_1),\one_m\rangle=
 \langle\sigma_-\one_m+\sigma_+\one_m, T_m(q_1)\rangle\\
 =\langle m\one_m,T_m(q_1)\rangle-\langle \one_m,T_m(q-1)\rangle=(m-1)q_1.
 \end{gather*}
 
 (viii) By \eqref{idty2} for $\xi=T_m(q_1)$,
 \begin{gather*}
 \langle\sigma_-T_m(q_1),\Delta_m(q_2)\rangle+\langle\sigma_-\Delta_m(q_2), T_m(q_1)\rangle=  
  \langle\sigma_-T_m(q_1)+\sigma_+T_m(q_1),\Delta_m(q_2)\rangle \\
 = q_1\langle \one_m, \Delta_m(q_1)\rangle-\langle T_m(q_1),\Delta_m(q_1)\rangle=
 q_1 -   \langle T_m(q_1),\Delta_m(q_1)\rangle.
 \end{gather*}
 
 (ix) Indeed,
 \begin{equation*}
 \begin{aligned}
  &\langle\sigma_-T_m(q_1),T_m(q_1)\rangle-\langle\sigma_-T_m(q_1-1),T_m(q_1-1)\rangle\\
  & =\langle\sigma_-T_m(q_1),\Delta_m(q_1)\rangle+\langle\sigma_-\Delta_m(q_1),T_m(q_1-1)\rangle\\
&= \langle\sigma_-T_m(q_1),\Delta_m(q_1\rangle+\langle\sigma_-\Delta_m(q_1),T_m(q_1)\rangle\\ 
&\qquad-\langle\sigma_-\Delta_m(q_1),\Delta_m(q_1)\rangle.
\end{aligned}
\end{equation*} 
The third term vanishes by \eqref{idty1}, and the sum of the first two terms equals $q_1-\langle T_m(q_1), \Delta_m(q_1)\rangle$ by \eqref{idty4} for $q_2=q_1$.
\end{proof}

Let $\widetilde{B}(n)$ denote the (extended) exchange matrix corresponding to the quiver $Q_{CG}(n)$, and $\Omega(n)$ denote the 
matrix of brackets $\Poi_D$ for functions in~\eqref{4cases} given by \eqref{omegan}. By Proposition~\ref{Bomega}, 
we have to check that 
\begin{equation}\label{criterion}
\widetilde{B}(n)\Omega(n)=[\lambda\one_{n^2-3}\ 0]
\end{equation}
for some constant $\lambda$. We will prove that this equality holds
with $\lambda=1$. Clearly, this implies that $\wB(n)$ is of full rank. Similar statements for
the quiver $Q'_{CG}(n)$ and the Cremmer--Gervais bracket on $SL_n$ follow immediately.

There are three types of vertices in the quiver $Q_{CG}(n)$: $\phhi$-vertices, $\pssi$-vertices and 
$\thetta$-vertices. In turn, $\phhi$-vertices can be broken into two types: standard (correspond to 
$\phhi_q$ with $q\in [n+1,N-1]$, $q\ne M$) and superdiagonal (correspond to $\phhi_q$ with $q\in [2,n-1]$), and 
three special vertices that correspond to $\phhi_1$, $\phhi_n$ and $\phhi_M$. 
Standard and superdiagonal
$\phhi$-vertices have 6 incident edges, as shown on Fig.~\ref{fig:typ1}. 

\begin{figure}[ht]
\begin{center}
\includegraphics[height=4cm]{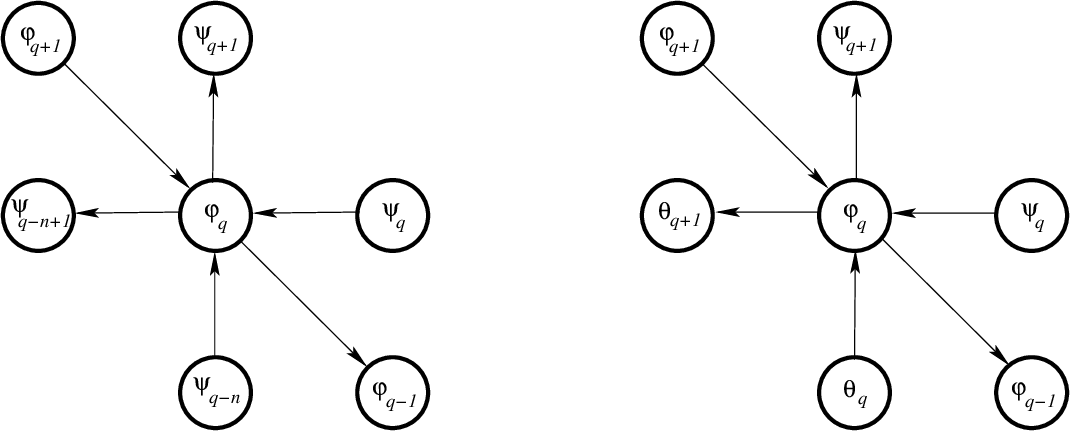}
\caption{Standard and superdiagonal $\phhi$-vertices}
\label{fig:typ1}
\end{center}
\end{figure}

Therefore, for a standard $\phhi$-vertex, checking \eqref{criterion} with $\lambda=1$ amounts to checking
\begin{equation}\label{check11}
\begin{gathered}
 \{\bar\pssi_{q+1}-\bar\phhi_{q+1}+\bar\phhi_{q-1}-\bar\pssi_q+\bar\pssi_{q-n+1}-\bar\pssi_{q-n},
 \bar\phhi_p\}_D=\delta_{pq},\\
 \{\bar\pssi_{q+1}-\bar\phhi_{q+1}+\bar\phhi_{q-1}-\bar\pssi_q+\bar\pssi_{q-n+1}-\bar\pssi_{q-n},
  \bar\pssi_p\}_D=0
\end{gathered}
\end{equation}
for all $p\in [N]$.
Subdiagonal $\phhi$-vertices
that correspond to $\phhi_q$ with $q\in [M+1,N-1]$ are included into standard ones
for the following reason. For subdiagonal vertices, $\bar\pssi_q$ and $\bar\pssi_{q+1}$ should be replaced
by $\bar\thetta_{q-M}$ and $\bar\thetta_{q-M+1}$, respectively. However, as we have already noted,   
$\bar\thetta_r=\bar\pssi_{M+r}-\bar\pssi_M$;
therefore, relations \eqref{check11} remain relevant for subdiagonal $\phhi$-vertices as well.
For the same reason, a similar relation for $\bar\thetta_p$ is not included, since it follows from
the second relation in~\eqref{check11}.

Consider the contribution of the first line in ~\eqref{omegan} to $\Poi_D$. Clearly, this contribution for a
pair of functions $f_{q_1}$ and $f_{q_2}$ equals
\begin{equation*}\label{line1}
\frac 1n \left(q_2\Tr(D_{n-1}T_{n-1}(q_1))-q_1\Tr(D_{n-1}T_{n-1}(q_2))\right)
\end{equation*}
for $f=\bar\phhi$ and $f=\bar\pssi$ alike and arbitrary $q_1, q_2$. Taking into account ~\eqref{Tshift} for 
$m=n-1$, $r=-1$ and $q_1=q$ or $q_1=q-1$,
we see that the contribution of the first line in~\eqref{omegan} to the left hand sides of both relations
in~\eqref{check11} vanishes.

The contribution of the second line in ~\eqref{omegan} to $\Poi_D$ equals
$$
\frac 1n \left(q_1\Tr(D_{n+1}T_{n+1}(q_2))-q_2\Tr(D_{n+1}T_{n+1}(q_1))\right)
$$
for a pair $\bar\phhi_{q_1}, \bar\phhi_{q_2}$,
$$
\frac 1n \left(q_1+q_1\Tr(D_{n+1}T_{n+1}(q_2-1))-q_2\Tr(D_{n+1}T_{n+1}(q_1))\right)
$$
for a pair $\bar\phhi_{q_1}, \bar\pssi_{q_2}$, and
$$
\frac 1n \left(q_1-q_2+q_1\Tr(D_{n+1}T_{n+1}(q_2-1))-q_2\Tr(D_{n+1}T_{n+1}(q_1-1))\right)
$$
for a pair $\bar\pssi_{q_1}, \bar\pssi_{q_2}$. Taking into account ~\eqref{Tshift} for $m=n+1$, $r=-1$,
and $q_1=q+2$ or $q_1=q+1$,
we see that the contribution of the second line in~\eqref{omegan} to the left hand sides of both relations
in~\eqref{check11} vanishes.

The contribution of the third line in ~\eqref{omegan} to $\Poi_D$ equals
$$
\langle\sigma_-T_{n-1}(q_1),T_{n-1}(q_*)\rangle -\langle\sigma_-T_{n-1}(q_2),T_{n-1}(q_*)\rangle
$$
with $q_*=\min\{q_1,q_2\}$, for $f=\bar\phhi$ and $f=\bar\pssi$ alike and arbitrary $q_1, q_2$.
Therefore, for $p\ge q+1$ the  contribution of the third line in ~\eqref{omegan} to the left hand 
sides of  both relations in~\eqref{check11} equals
\begin{equation*}
\begin{split} 
\langle \sigma_-T_{n-1}(q-1),\one_{n-1}\rangle +\langle \sigma_-\one_{n-1},T_{n-1}(q-1)\rangle\\
-\langle \sigma_-T_{n-1}(q),\one_{n-1}\rangle -\langle \sigma_-\one_{n-1},T_{n-1}(q)\rangle=2-n
\end{split}
\end{equation*}
by~\eqref{idty3} for $m=n-1$ and $q_1=q$ or $q_1=q-1$. 

For $q-n+1\le p\le q$ this contribution equals
\begin{equation*}
\begin{split} 
\langle \sigma_-T_{n-1}(q-1),\one_{n-1}\rangle +\langle \sigma_-\one_{n-1},T_{n-1}(q-1)\rangle
-\langle \sigma_-T_{n-1}(q),\one_{n-1}\rangle \\-\langle \sigma_-\one_{n-1},T_{n-1}(q)\rangle
+\langle\sigma_-T_{n-1}(q), T_{n-1}(q)\rangle -\langle\sigma_- T_{n-1}(q-1), T_{n-1}(q-1)\rangle\\
-\langle\sigma_-\Delta_{n-1}(q), T_{n-1}(p)\rangle-\langle\sigma_-T_{n-1}(p),
\Delta_{n-1}(q)\rangle\\
=2-n+q-p+\langle  T_{n-1}(p)-T_{n-1}(q), \Delta_{n-1}(q)\rangle
\end{split}
\end{equation*}
by \eqref{idty3}, \eqref{idty4} for $q_1=p$ and $q_2=q$ and \eqref{idty5} for $q_1=q$. 
Clearly,
$$
\langle T_{n-1}(p)-T_{n-1}(q), \Delta_{n-1}(q)\rangle=\delta_{pq}-1
$$ 
when $q-n+1\le p\le q$, hence the contribution equals $1-n+q-p+\delta_{pq}$.

Finally, for $p\le q-n$ the  contribution of the third line in ~\eqref{omegan} to the left hand 
sides of  both relations in~\eqref{check11} vanishes.

The contribution of the fourth line in ~\eqref{omegan} to $\Poi_D$ equals
$$
\langle\sigma_-T_{n+1}(q_2),T_{n+1}(q_*)\rangle -\langle\sigma_-T_{n+1}(q_1),T_{n+1}(q_*)\rangle
$$
for a pair $\bar\phhi_{q_1}, \bar\phhi_{q_2}$,
$$
\langle\sigma_-T_{n+1}(q_2-1),T_{n+1}(q_*-1)\rangle -\langle\sigma_-T_{n+1}(q_1-1),T_{n+1}(q_*-1)\rangle
$$
for a pair $\bar\pssi_{q_1}, \bar\pssi_{q_2}$, and
$$\texttt{}
\langle\sigma_-T_{n+1}(q_2-1),T_{n+1}(q_\circ)\rangle -\langle\sigma_-T_{n+1}(q_1),T_{n+1}(q_\circ)\rangle
+q_*,
$$
where $q_\circ$ equals $q_1$ if $q_1\le q_2$ and $q_2-1$ if $q_2\le q_1$, 
for a pair $\bar\phhi_{q_1}, \bar\pssi_{q_2}$; the coincidence of the two results for 
$q_1=q_2$ follows immediately from~\eqref{idty1} with $m=n+1$ and $q_1=q$. 
Therefore, for $p\ge q+1$ the  contribution of the third line 
in ~\eqref{omegan} to the left hand 
sides of  both relations in~\eqref{check11} equals
\begin{equation*}
\begin{split} 
\langle \sigma_-T_{n+1}(q+1),\one_{n+1}\rangle +\langle \sigma_-\one_{n+1},T_{n+1}(q+1)\rangle\\
-\langle \sigma_-T_{n+1}(q),\one_{n+1}\rangle -\langle \sigma_-\one_{n+1},T_{n+1}(q)\rangle-2=n-2
\end{split}
\end{equation*}
by~\eqref{idty3} for $m=n+1$ and $q_1=q+1$ or $q_1=q$. 

For $q-n+1\le p\le q$ this contribution equals
\begin{equation*}
\begin{split} 
\langle \sigma_-T_{n+1}(q+1),\one_{n+1}\rangle +\langle \sigma_-\one_{n+1},T_{n+1}(q+1)\rangle
-\langle \sigma_-T_{n+1}(q),\one_{n+1}\rangle \\-\langle \sigma_-\one_{n+1},T_{n+1}(q)\rangle
-\langle\sigma_-T_{n+1}(q+1), T_{n+1}(q+1)\rangle +\langle\sigma_- T_{n+1}(q), T_{n+1}(q)\rangle
-\delta_{f\phhi}\\+\langle\sigma_-\Delta_{n+1}(q+1), T_{n+1}(p-\delta_{f\pssi})\rangle+\langle\sigma_-T_{n+1}(p-\delta_{f\pssi}),
\Delta_{n+1}(q+1)\rangle-\delta_{f\pssi}\delta_{pq}\\
=n-2-q+p-\delta_{f\psi}\delta_{pq}-\langle T_{n+1}(p-\delta_{f\pssi})-T_{n+1}(q+1), \Delta_{n+1}(q+1)\rangle
\end{split}
\end{equation*}
by \eqref{idty3}, \eqref{idty4} for $q_1=p-\delta_{f\pssi}$ and $q_2=q+1$ and \eqref{idty5} for $q_1=q+1$. Expression
$\delta_{f\pssi}\delta_{pq}$ is a consequence of the following fact: the contribution of the fourth line 
in~\eqref{omegan} to the bracket
$\{\bar\phhi_{q-1}-\bar\pssi_q, \bar f_q\}_D$
equals $q$ for  $f=\phhi$ and $q-1$ for $f=\pssi$.
Clearly,
$$
\langle T_{n+1}(p-\delta_{f\pssi})-T_{n+1}(q+1), \Delta_{n+1}(q+1)\rangle=-1
$$ 
when $q-n+1\le p\le q$, hence the contribution equals $n-1-q+p-\delta_{f\pssi}\delta_{pq}$.

Finally, for $p\le q-n$ the  contribution of the fourth line in ~\eqref{omegan} to the left hand 
sides of  both relations in~\eqref{check11} vanishes.

Comparing the obtained results we see that relations~\eqref{check11} are valid for all standard $\phhi$-vertices.

For superdiagonal $\phhi$-vertices corresponding to $q\in [2,n-2]$, checking \eqref{criterion} amounts to checking
\begin{equation}\label{check12}
\begin{gathered}
 \{\bar\pssi_{q+1}-\bar\phhi_{q+1}+\bar\phhi_{q-1}-\bar\pssi_q+\bar\pssi_{M+q+1}-\bar\pssi_{M+q},
 \bar\phhi_p\}_D=\delta_{pq},\\
 \{\bar\pssi_{q+1}-\bar\phhi_{q+1}+\bar\phhi_{q-1}-\bar\pssi_q+\bar\pssi_{M+q+1}-\bar\pssi_{M+q},
  \bar\pssi_p\}_D=0
\end{gathered}
\end{equation}
for all $p\in [N]$. Here, as in the case of subdiagonal $\phhi$-vertices, we have used the identity
$\bar\thetta_{M+q+1}-\bar\thetta_{M+q}= \bar\pssi_{M+q+1}-\bar\pssi_{M+q}$. For $q=n-1$ the term
$\bar\pssi_{M+q+1}$ in both relations in \eqref{check12} should be replaced by $\bar\pssi_M$, since $\thetta_n=\det X$ is a Casimir function.

The contributions of each line in ~\eqref{omegan} to $\Poi_D$ are given by the same formulas as in the case 
of standard $\phhi$-vertices. Recall that $M=(k-1)(n-1)$ with
$k=\frac{n+1}2\in\N$. Therefore,
taking into account~\eqref{Tshift} for 
$m=n-1$, $r=k-1$ and $q_1=q+1$ or $q_1=q$, and~\eqref{idty0} for $m=n-1$, $q_1=q$,
we see that for $q\in [2,n-2]$ the contribution of the first line in~\eqref{omegan} to the left hand sides of both relations in~\eqref{check12} equals $-p/n$. 

Note that $M-2=(k-2)(n+1)$, so,  
taking into account ~\eqref{Tshift} for $m=n+1$, $r=k-2$,
and $q_1=q+2$ or $q_1=q+1$,
we see that for $q\in [2,n-2]$ the contribution of the second line in~\eqref{omegan} to the left hand sides of both relations
in~\eqref{check12} equals $p/n$.

Let $p\ge M+q+1$ and $q\in [2,n-2]$, 
then the  contribution of the third line in ~\eqref{omegan} to the left hand 
sides of  both relations in~\eqref{check12} equals
\begin{equation*}
\begin{split} 
(k-1)\left(\langle \sigma_-T_{n-1}(q+1),\one_{n-1}\rangle +\langle \sigma_-\one_{n-1},T_{n-1}(q+1)\rangle\right)\\
-(k-1)\left(\langle \sigma_-T_{n-1}(q),\one_{n-1}\rangle +\langle \sigma_-\one_{n-1},T_{n-1}(q)\rangle\right)\\
-\langle \sigma_-T_{n-1}(p),\Delta_{n-1}(q+1)-\Delta_{n-1}(q)\rangle
+\langle \sigma_-T_{n-1}(q+1),T_{n-1}(q+1)\rangle\\
-2\langle \sigma_-T_{n-1}(q),T_{n-1}(q)\rangle+\langle \sigma_-T_{n-1}(q-1),T_{n-1}(q-1)\rangle\\
=1+(k-1)(n-2)+\langle T_{n-1}(p), \Delta_{n-1}(q+1)\rangle\\
-\langle T_{n-1}(q+1),\Delta_{n-1}(q+1)\rangle+\langle T_{n-1}(q),\Delta_{n-1}(q)\rangle
\end{split}
\end{equation*}
by~\eqref{idty6}  for $m=n-1$, $\xi=T(p)$, $q_1=q$ (\eqref{idty6} is applicable since $q\in [2,n-2]$), and~\eqref{idty3}, \eqref{idty5} for $m=n-1$ and $q_1=q+1$ or $q_1=q$.
Clearly,
\begin{equation}\label{adidty}
 \langle T_{n-1}(q),\Delta_{n-1}(q)\rangle=
 \langle T_{n-1}(q+1),\Delta_{n-1}(q+1)\rangle=1
\end{equation}
since $q\in [2,n-2]$, and 
$$
\langle T_{n-1}(p), \Delta_{n-1}(q+1)\rangle=\left\lfloor\frac p{n-1}\right\rfloor=k-1
$$
since $p\ge M+q+1$, hence the contribution equals $1+(k-1)(n-1)=M+1$.

For $q+1\le p\le M+q$ and $q\in [2,n-2]$ this contribution equals
\begin{equation*}
 \begin{split}
 \langle \sigma_-T_{n-1}(q-1), T_{n-1}(q-1)\rangle-
 \langle \sigma_-T_{n-1}(p), T_{n-1}(q-1)\rangle\\-
 \langle \sigma_-T_{n-1}(q), T_{n-1}(q)\rangle+
 \langle \sigma_-T_{n-1}(p), T_{n-1}(q)\rangle\\+
  \langle \sigma_-T_{n-1}(q+1), T_{n-1}(p)\rangle-
  \langle \sigma_-T_{n-1}(q), T_{n-1}(p)\rangle\\
 = -q+\langle T_{n-1}(q), \Delta_{n-1}(q)\rangle
  \langle \sigma_-T_{n-1}(p), \Delta_{n-1}(q)\rangle\\+
   \langle\sigma_+T_{n-1}(p), \Delta_{n-1}(q+1)\rangle
   =1-q+p
 \end{split}
 \end{equation*}
by~\eqref{idty5} for $m=n-1$ and $q_1=q$, \eqref{idty7} for $\xi=T_{n-1}(p)$ and $q_1=q$, and~\eqref{adidty}.

Finally, for $p\le q$ and $q\in [2,n-2]$ this contribution equals
\begin{equation*}
  -\langle \sigma_-\Delta_{n-1}(q),T_{n-1}(p)\rangle+
   \langle\sigma_-\Delta_{n-1}(q+1)), T_{n-1}(p)\rangle=\delta_{pq}
 \end{equation*}
since 
$$
\langle\sigma_-\Delta_{n-1}(q_1), T_{n-1}(p)\rangle=\begin{cases}
p\quad\text{if $p<q_1<n$,}\\
p-1\quad\text{if $p=q_1<n$.}
\end{cases}
$$

Let $p\ge M+q+1$ and $q\in [2,n-2]$, then the  contribution of the fourth line in ~\eqref{omegan} to the left hand 
sides of  both relations in~\eqref{check12} equals
\begin{equation*}
\begin{split} 
-2-(k-2)\left(\langle \sigma_-T_{n+1}(q+2),\one_{n+1}\rangle +\langle \sigma_-\one_{n+1},T_{n+1}(q+2)\rangle\right)\\
+(k-2)\left(\langle \sigma_-T_{n+1}(q+1),\one_{n+1}\rangle +\langle \sigma_-\one_{n+1},T_{n+1}(q+1)\rangle\right)\\
+\langle \sigma_-T_{n+1}(p-\delta_{f\pssi}),\Delta_{n+1}(q+2)-\Delta_{n+1}(q+1)\rangle
-\langle\sigma_-T_{n+1}(q+2), T_{n+1}(q+2)\rangle\\ +2\langle\sigma_-T_{n+1}(q+1), T_{n+1}(q+1)\rangle
-\langle\sigma_-T_{n+1}(q), T_{n+1}(q)\rangle\\
=-3-(k-2)n + \langle T_{n+1}(q+2),\Delta_{n+1}(q+2)\rangle\\
-\langle T_{n+1}(q+1),\Delta_{n+1}(q+1)\rangle
-\langle T_{n+1}(p-\delta_{f\pssi}), \Delta_{n+1}(q+2)\rangle
\end{split}
\end{equation*}
by~\eqref{idty5} for $m=n+1$ and $q_1=q+2$ or $q_1=q+1$,~\eqref{idty6}  for $m=n+1$, $\xi=T(p-\delta_{f\pssi})$, $q_1=q+1$, 
and~\eqref{idty3} for $m=n+1$ and $q_1=q+2$ or $q_1=q+1$.
Clearly,
\begin{equation}\label{adidty2}
 \langle T_{n+1}(q+1),\Delta_{n+1}(q+1)\rangle=
 \langle T_{n+1}(q+2),\Delta_{n+1}(q+2)\rangle=1
\end{equation}
since $q\in [2,n-2]$, and 
$$
\langle T_{n+1}(p-\delta_{f\pssi}), \Delta_{n+1}(q+2)\rangle=\left\lfloor\frac {p-\delta_{f\pssi}}{n+1}\right\rfloor=k-2
$$
since $p\ge M+q+1$, hence the contribution equals $-3-(k-2)(n+1)=-M-1$.

For $q+1\le p\le M+q$ and $q\in [2,n-2]$ this contribution equals
\begin{equation*}
 \begin{split}
  -1-\delta_{f\pssi}+q-\langle \sigma_-T_{n+1}(p-\delta_{f\pssi}), \Delta_{n+1}(q+1)\rangle-
   \langle\sigma_+T_{n+1}(p-\delta_{f\pssi}), \Delta_{n+1}(q+2)\rangle\\
   +\langle T_{n+1}(q+1), \Delta_{n+1}(q+1)\rangle=-1+q-p
 \end{split}
 \end{equation*}
by~\eqref{idty5} for $m=n+1$ and $q_1=q+1$,
\eqref{idty7} for $m=n+1$, $\xi=T_{n+1}(p-\delta_{f\pssi})$ and $q_1=q+1$ and~\eqref{adidty2}.

Finally, for $p\le q$ and $q\in [2,n-2]$ this contribution equals
\begin{equation*}
 \begin{split}
  -\langle \sigma_-\Delta_{n+1}(q+2),T_{n+1}(p-\delta_{f\pssi})\rangle+
   \langle\sigma_-\Delta_{n+1}(q+1)), T_{n+1}(p-\delta_{f\pssi})\rangle-\delta_{f\pssi}\delta_{pq}\\
   =-\langle\sigma_+T_{n+1}(p-\delta_{f\pssi}), \Delta_{n+1}(q+2)\rangle
   +\langle\sigma_+T_{n+1}(p-\delta_{f\pssi}), \Delta_{n+1}(q+1)\rangle-\delta_{f\pssi}\delta_{pq}\\=
   -\delta_{f\pssi}\delta_{pq}
 \end{split}
 \end{equation*}
since 
$$
\langle\sigma_- \Delta_{n+1}(q_1),T(p)\rangle=p
$$
for $p<q_1<n$.

Comparing the obtained results we see that relations~\eqref{check12} are valid for all superdiagonal 
$\phhi$-vertices with $q\in [2,n-2]$.

Special $\phhi$-vertices have 5 incident edges, as shown on Fig.~\ref{fig:spec1}. It is easy to check that relations similar 
to~\eqref{check11} and~\eqref{check12} are valid for special $\phhi$-vertices. In fact, this check does not involve
any additional computations. For example, since $\bar\thetta_1=\bar\pssi_{M+1}-\bar\pssi_M$, relations for
$\phhi_M$ coincide with relations for a standard $\phhi$-vertex and are proved in the same way. To treat vertices $\phhi_1$
and $\phhi_n$ it suffices to introduce an artificial vertex corresponding to $q=0$ (this vertex should be a $\phhi$-vertex
in the former case and a $\pssi$-vertex in the latter).

\begin{figure}[ht]
\begin{center}
\includegraphics[height=3.5cm]{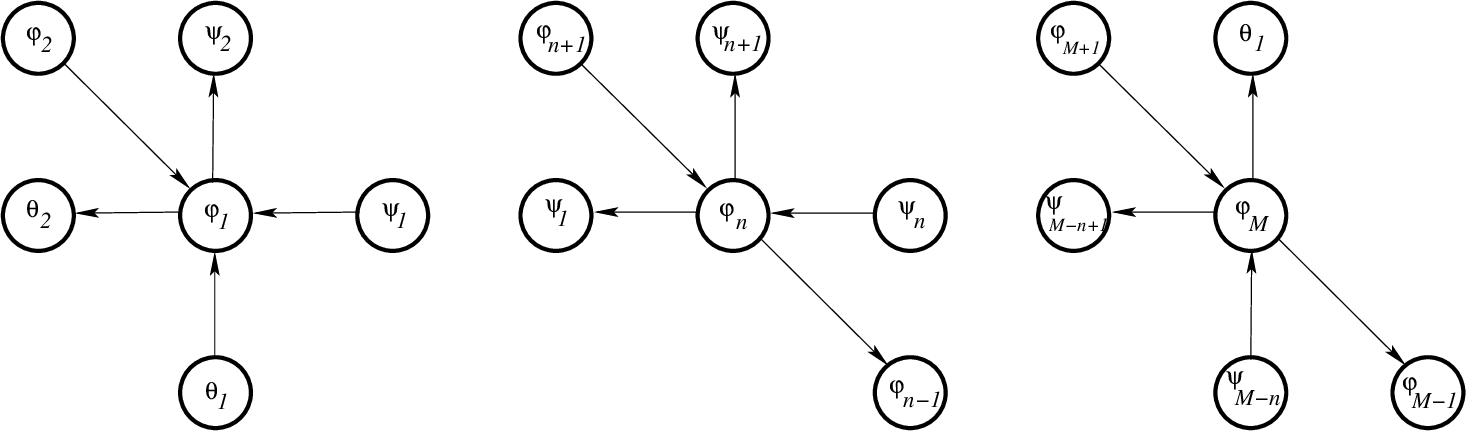}
\caption{Special $\phhi$-vertices}
\label{fig:spec1}
\end{center}
\end{figure}

Most of the $\pssi$-vertices and $\thetta$-vertices are standard, except for the 
two special vertices corresponding
to $\pssi_1$ and $\thetta_1$.
Standard 
$\pssi$-vertices and $\thetta$-vertices have 6 incident edges, as shown on Fig.~\ref{fig:typ2}. 

\begin{figure}[ht]
\begin{center}
\includegraphics[height=3.5cm]{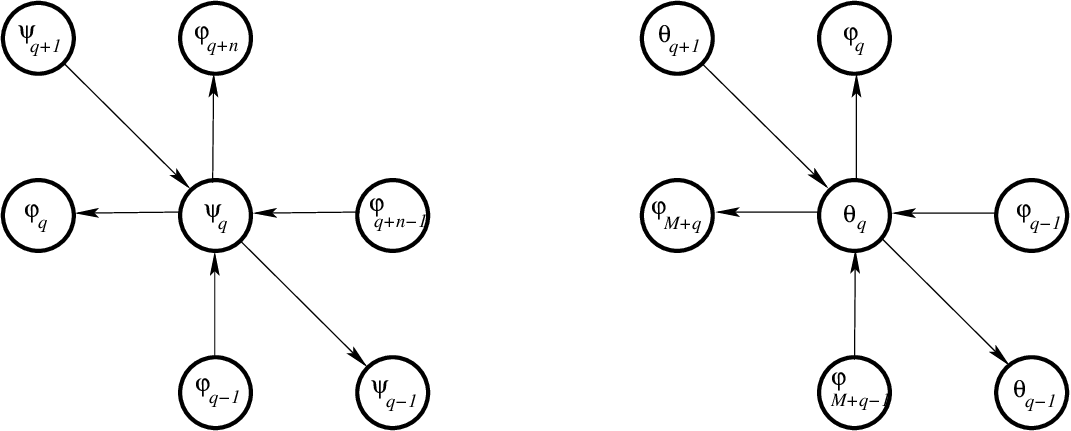}
\caption{Standard $\pssi$-vertices and $\thetta$-vertices}
\label{fig:typ2}
\end{center}
\end{figure}

The special $\pssi$-vertex and $\thetta$-vertex have 4 incident edges, as shown on Fig.~\ref{fig:spec2}.

\begin{figure}[ht]
\begin{center}
\includegraphics[height=3.5cm]{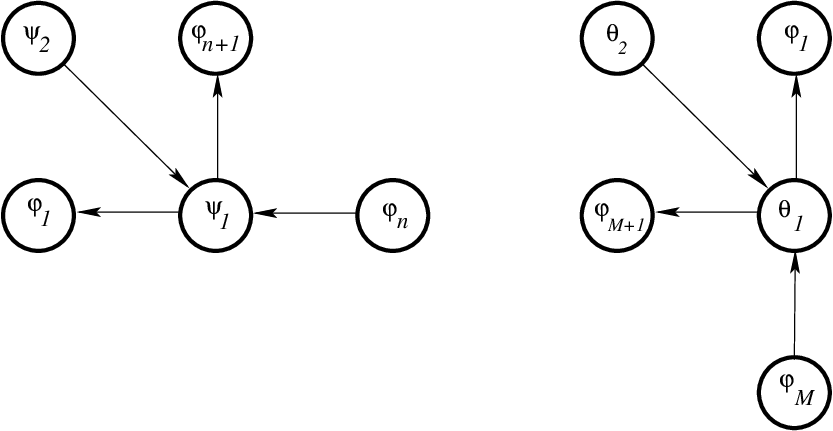}
\caption{Special $\pssi$-vertex and special $\thetta$-vertex}
\label{fig:spec2}
\end{center}
\end{figure}

For standard $\pssi$-vertices and $\thetta$-vertices, checking \eqref{criterion} amounts to checking
\begin{equation*}
\begin{gathered}
\{\bar\phhi_{q}-\bar\phhi_{q-1}-\bar\pssi_{q+1}+\bar\pssi_{q-1}+\bar\phhi_{q+n}-\bar\phhi_{q+n-1},
 \bar\phhi_p\}_D=0,\\
 \{\bar\phhi_{q}-\bar\phhi_{q-1}-\bar\pssi_{q+1}+\bar\pssi_{q-1}+\bar\phhi_{q+n}-\bar\phhi_{q+n-1},
 \bar\pssi_p\}_D=\delta_{pq},
\end{gathered}
\end{equation*}
for all $p\in [N]$. These relations are verified similarly to~\eqref{check11} and~\eqref{check12}. 
Special $\pssi$-vertices are treated similarly to special $\phhi$-vertices.

\begin{remark}
\label{casimirs}
{\rm A log-canonical coordinate system on a Poisson manifold is a useful tool for computing
the rank of the Poisson structure at a generic point and finding Casimir functions, as both tasks reduce to
computing the rank and the kernel of a constant matrix. In particular,~\eqref{criterion} guarantees that the maximal
rank of the Cremmer--Gervais structure on $SL_n$ is no less that $n^2-3$.
With a little more work along the lines of arguments used in this section, one can show that the maximal rank
is, in fact, $n^2-1$ for $n$ odd and $n^2-2$ for $n$ even, with the Casimir function in the latter case given
by $\frac {\pssi_N} {\phhi_N}$. 
}
\end{remark}

\section{Regularity}\label{sec:regular}
The goal of this section is to prove Theorem~\ref{regular} and to establish, as a corollary,
part (iii) of Conjecture~\ref{ulti} in the Cremmer--Gervais case. 

We need to show that for any mutable cluster variable $f$ in  the initial cluster given by 
~\eqref{inclust}, the adjacent variable $f'$ is a regular function
on $\Mat_n$. We will rely on two classical determinantal identities. 
The first is the Desnanot--Jacobi identity for minors of a square matrix $A$ (see \cite[Th.~3.12]{Br}):
\begin{equation}
\label{jacobi}
\det A \det A_{\hat \alpha \hat \beta}^{\hat \gamma \hat \delta} = 
\det A_{\hat \alpha }^{ \hat \gamma} \det A_{\hat \beta}^{ \hat \delta}-
\det A_{\hat \alpha }^{ \hat \delta} \det A_{\hat \beta}^{ \hat \gamma}
\end{equation}
where "hatted" subscripts and superscripts indicate deleted rows and columns, respectively. 
The second identity is for minors
of a matrix $B$ with the number of rows one less than the number of columns:
\begin{equation}
\label{notjacobi}
\det B^{\hat \alpha} \det B^{\hat \beta \hat \gamma}_{\hat \delta} + 
\det B^{\hat \gamma} \det B^{\hat \alpha\hat \beta}_{\hat \delta} = 
\det B^{\hat \beta} \det B^{\hat \alpha \hat \gamma}_{\hat \delta}.
\end{equation}
It can be obtained from~\eqref{jacobi} by adjoining an appropriate row.

 We will apply these identities to minors of a modification $V(X,Y)$ of the matrix
 $U(X,Y)$ defined in \eqref{uho}. This modification is defined as follows: 
 first, $U(X,Y)$ is {\it augmented\/} via
 attaching two extra rows, $[\underbrace{0\cdots 0}_{n+1}\ x_{11} \cdots x_{1n} \ 0 \cdots 0]$ on top 
 and $[ 0  \cdots 0 \ y_{n1} \cdots y_{nn} \ \underbrace{0\cdots 0}_{n+1}]$ at the bottom;
 second, the first and the last $n$ columns of the augmented matrix are deleted:
 $$
 V(X,Y)=\begin{bmatrix}
 Y_* & X & 0 & 0 & 0 & \hdots & 0\\
 0 & \Y & \X & 0 & 0 & \hdots & 0\\
 0 & 0 & \Y & \X & 0 & \hdots & 0\\
   &  & \ddots & \ddots & \ddots & \ddots &\\
 0 & 0 & \hdots & 0 & \Y & \X & 0\\
 0 & 0 & \hdots & 0 & 0 & Y & X^*  
 \end{bmatrix}
 $$
 with $X^*=[x_{21}, \hdots, x_{n1}, 0]^T$ and $Y_*=[0, y_{1n}, \hdots, y_{n-1,n}]^T$. 
 If $n$ is odd, $V(X, Y)$ is a square $(N+2)\times (N+2)$ matrix, and if $n$ is even, it
 is an $(N+2)\times (N+1)$ matrix.
 
  As before, we will only present the proof for odd $n$. Although in the proof of Theorem \ref{regular} 
  we only need to consider $V(X,X)$, we will work with identities satisfied by the minors of $V(X,Y)$.
Recall that functions $\phhi_q(X,Y)$, $q\in [N]$, represent dense minors whose lower right corner 
is $V(X,Y)_{N+1,N+1}=y_{n-1, n}$
  and $\pssi_q(X,Y)$, $q\in [N]$, represent dense minors whose lower right corner  is $V(X,Y)_{N+1,N+2}=x_{n1}$. 
  Functions $\thetta_q(X,Y)=\det X_{[n-q+1,n]}^{[n-q+1,n]}$, $q\in [n]$, can be represented as 
  $\thetta_q(X)=\det V(X,Y)_{[n-q+1,n]}^{[n-q+2,n+1]}$. We also have 
  $\theta_q(Y,X)=\det V(X,Y)_{[N-q+3,N+2]}^{[N-q+2,N+1]}$. Observe that 
$$
\det V(X,Y)_{[N-q+3,N+2]}^{[N-q+2,N+1]} = \thetta_n(Y,X) \pssi_{q-n} (X, Y)\qquad\text{for $q> n$}
$$
and 
$$
\det V(X,Y)^{[N-q+3,N+2]}_{[N-q+2,N+1]} =\pssi_q(X,Y)= \thetta_{q-M}(X,Y) \pssi_{M} (X, Y)\qquad
 \text{for $q> M$}.
$$

Denote by $\Lambda_q$ the $q\times q$ dense lower right minor of $V(X,Y)$. 
Applying the Desnanot--Jacobi identity to the $(q+2) \times (q+2)$ dense lower right submatrix of $V(X,Y)$
with $\alpha=\gamma=1$ and $\beta=\delta=q+2$, we obtain
\begin{equation}\label{supernei}
\Lambda_{q+2}(X, Y)\phhi_q (X,Y) = \Lambda_{q+1}(X, Y)\phhi_{q+1} (X,Y) - \thetta_{q+1}(Y,X)\pssi_{q+1} (X,Y) 
\end{equation}
for $q\in [n-1]$. Note that
$\Lambda_1=0$ and $\Lambda_2=- x_{n1} y_{n n} = - \phhi_1(X,Y) \thetta_1(Y,X)$.
Comparing~\eqref{supernei} for $q=1$ with the leftmost part of Fig.~\ref{fig:spec1}, one sees that the 
cluster mutation applied to 
$\phhi_1(X)=\phhi_1(X,X)$ transforms it into $\phhi_1'(X)=-\Lambda_3(X,X)$. 

For $q > 1$, we can eliminate $\Lambda_{q+1}$ from the $(q-1)$st and the $q$th equations in~\eqref{supernei} 
by multiplying the former by $\phhi_{q+1}(X,Y)$ and the latter by $\phhi_{q-1}(X,Y)$ and adding the two. 
This results in
\begin{equation*}
\begin{aligned}
(\phhi_{q+1}(X,Y)&\Lambda_{q}(X, Y)- \phhi_{q-1}(X,Y)\Lambda_{q+2}(X, Y) )\phhi_q (X,Y) \\
&   = \phhi_{q+1}(X, Y)\thetta_q(Y,X)\pssi_{q} (X,Y) + \phhi_{q-1}(X, Y)\thetta_{q+1}(Y,X)\pssi_{q+1} (X,Y)
\end{aligned}
\end{equation*}
for all superdiagonal $\phhi$-vertices.
Since the right-hand side above is consistent with the rightmost part of Fig.~\ref{fig:typ1},
 we see that for all such vertices 
 $\phhi'_q (X) = \phhi_{q+1}(X)\Lambda_{q}(X, X)- \phhi_{q-1}(X)\Lambda_{q+2}(X, X)$ is regular
on $\Mat_n$. 

Denote by $\Upsilon_q$ the determinant of the $q\times q$ submatrix of $V(X,Y)$ obtained by deleting the 
second column from the $q\times (q+1)$ dense submatrix of $V(X,Y)$ whose lower right corner is $V(X,Y)_{N+1,N+1}$,
and by $\bar\Upsilon_q$ the determinant of  the $q\times q$ submatrix of $V(X,Y)$ obtained by deleting the second 
column from the $q\times (q+1)$ dense submatrix of $V(X,Y)$ whose lower right corner is $V(X,Y)_{N+1,N+2}$.

Assume that $B$ in~\eqref{notjacobi} is the submatrix of $V(X,Y)$ involved in the definition of $\Upsilon_q$
and $\alpha=\delta=1$, $\beta=2$, $\gamma=q+1$.
Then
\begin{gather*}
B^{\hat\alpha}=\phhi_q(X,Y), \quad B^{\hat\beta}=\Upsilon_q(X,Y), \quad B^{\hat\gamma}=\varkappa(Y)\pssi_{q+1-n}(X,Y),\\
B^{\hat\beta\hat\gamma}_{\hat\delta}=\varkappa(Y)\bar\Upsilon_{q-n}(X,Y),\quad 
B^{\hat\alpha\hat\gamma}_{\hat\delta}=\varkappa(Y)\pssi_{q-n}(X,Y),\quad
B^{\hat\alpha\hat\beta}_{\hat\delta}=\phhi_{q-1}(X,Y),
\end{gather*}
where $\varkappa(Y) = \det Y_{[1,n-1]}^{[1,n-1]}$. 
Applying \eqref{notjacobi}  we obtain
\begin{equation}
\label{aux54}
\phhi_{q}(X,Y) \bar\Upsilon_{q-n}(X,Y)+ \phhi_{q-1}(X,Y)\pssi_{q+1-n}(X,Y)  
 = \Upsilon_{q}(X,Y) \pssi_{q-n}(X,Y),
\end{equation}
for $q\in [n+1,N]$.

Similarly, assume that $B$ in~\eqref{notjacobi} is the submatrix of $V(X,Y)$ involved in the definition 
of $\bar\Upsilon_q$
and $\alpha=\delta=1$, $\beta=2$, $\gamma=q+1$.
Then
\begin{gather*}
B^{\hat\alpha}=\pssi_q(X,Y), \quad B^{\hat\beta}=\bar\Upsilon_q(X,Y), \quad B^{\hat\gamma}=\phhi_{q}(X,Y),\\
B^{\hat\beta\hat\gamma}_{\hat\delta}=\Upsilon_{q-1}(X,Y),\quad 
B^{\hat\alpha\hat\gamma}_{\hat\delta}=\phhi_{q-1}(X,Y),\quad
B^{\hat\alpha\hat\beta}_{\hat\delta}=\pssi_{q-1}(X,Y).
\end{gather*}
Applying \eqref{notjacobi}  we obtain
\begin{equation}
\label{aux55}
\pssi_{q}(X,Y) \Upsilon_{q-1}(X,Y) + \phhi_{q}(X,Y) \pssi_{q-1}(X,Y) = \bar\Upsilon_q(X,Y) \phhi_{q-1}(X,Y)
\end{equation}
for $q\in [2,N]$.

Note that $\bar\Upsilon_1(X,Y)=y_{n-1,n}=\phhi_1(X,Y)$. Therefore,~\eqref{aux54} for $q=n+1$ reads
\begin{equation*}
\phhi_{n+1}(X,Y) \phhi_{1}(X,Y) + \pssi_{2}(X,Y) \phhi_{n}(X,Y)
= \Upsilon_{n+1}(X,Y) \pssi_{1}(X,Y). 
\end{equation*}
Since the left hand side above is consistent with the leftmost part of Fiq.~\ref{fig:spec2}, we infer
that $\pssi'_{1}(X)=\Upsilon_{n+1}(X,X)$ is regular on $\Mat_n$.

Next, $\Upsilon_n(X,Y)=\pssi_1(X,Y)\phhi_{n-1}(X,Y)$ 
hence~\eqref{aux55} for $q=n+1$ becomes
\begin{equation*}
\pssi_{n+1}(X,Y) \pssi_{1}(X,Y) \phhi_{n-1}(X,Y) + \phhi_{n+1}(X,Y) \pssi_{n}(X,Y) = 
\bar\Upsilon_{n+1}(X,Y) \phhi_{n}(X,Y).
\end{equation*}
The left hand side above is consistent with the central part of Fiq.~\ref{fig:spec1}, 
thus $\phhi'_{n}(X)=\bar\Upsilon_{n+1}(X,X)$ is regular on $\Mat_n$.

For $q\in [2,M-1]$, we eliminate $\bar\Upsilon_q$ from the $q$th equation in~\eqref{aux55} and 
the $(q+n)$th equation in~\eqref{aux54} by adding $\phhi_{q+n}$ times the former to $\phhi_{q-1}$ 
times the latter. Then we get
\begin{equation*}
\begin{aligned}
&\pssi_{q}(X,Y) \left (\Upsilon_{q+n}(X,Y)\phhi_{q-1}(X,Y) - \Upsilon_{q-1}(X,Y)\phhi_{q+n}(X,Y)\right ) \\
& \qquad =  \phhi_{q-1}(X,Y) \phhi_{q-1+n}(X,Y)\pssi_{q+1}(X,Y) + \phhi_{q}(X,Y) \phhi_{q+n}(X,Y)\pssi_{q-1}(X,Y).
\end{aligned}
\end{equation*}
After restricting to $Y=X$, the right hand side becomes consistent with the leftmost part of Fig.~\ref{fig:typ2}, 
hence $\pssi_{q}'(X)= \Upsilon_{q+n}(X,X)\phhi_{q-1}(X) - \Upsilon_{q-1}(X,X)\phhi_{q+n}(X)$
is a regular function on $\Mat_n$  for all standard $\pssi$-vertices.

On the other hand, we can eliminate $\Upsilon_q$ from the $q$th equation in~\eqref{aux54} and 
the$(q+1)$st equation in~\eqref{aux55} to obtain
\begin{equation}
\begin{aligned}
\label{aux56}
&\phhi_{q}(X,Y) \left (\bar\Upsilon_{q+1}(X,Y)\pssi_{q-n}(X,Y) - \bar\Upsilon_{q-n}(X,Y)\pssi_{q+1}(X,Y)\right ) \\
& \qquad =  \phhi_{q+1}(X,Y) \pssi_{q -n }(X,Y)\pssi_{q}(X,Y) + \phhi_{q-1}(X,Y) \pssi_{q+1}(X,Y)\pssi_{q-n+1}(X,Y)
\end{aligned}
\end{equation}
for $q\in [n+1,N-1]$. The right hand side of~\eqref{aux56} for $q\in [n+1,M-1]$ is consistent with the leftmost 
part of Fig.~\ref{fig:typ1}, hence 
$\phhi'_{q}(X,X)=\bar\Upsilon_{q+1}(X,X)\pssi_{q-n}(X,X) - \bar\Upsilon_{q-n}(X,X)\pssi_{q+1}(X,X)$
is a regular function on $\Mat_n$ for the corresponding standard $\phhi$-vertices.

Next, for $q\geq M$ we have $\pssi_q(X,Y)=\thetta_{q-M}(X,Y)\pssi_M(X,Y)$ and $\bar\Upsilon_{q+1}(X,Y)=
\det X_{[q+1-M,n]}^{q-M,n-1]}\pssi_M(X,Y)$ with $\thetta_0(X,Y)=1$.
Therefore,~\eqref{aux56} transforms into
\begin{equation}
\begin{aligned}
\label{aux57}
&\phhi_{q}(X,Y) \left (\det X^{[q-M,n-1]}_{[q+1-M,n]}\pssi_{q-n}(X,Y) - \bar\Upsilon_{q-n}(X,Y)\pssi_{q+1-M}(X,Y)\right ) \\
& \qquad =  \phhi_{q+1}(X,Y) \pssi_{q -n }(X,Y)\pssi_{q-M}(X,Y) + \phhi_{q-1}(X,Y) \pssi_{q+1-M}(X,Y)\pssi_{q-n+1}(X,Y).
\end{aligned}
\end{equation}
Since the right hand side of~\eqref{aux57} is consistent with the rightmost part of Fig.~\ref{fig:spec1} for $q=M$
and with the leftmost part of Fig.~\ref{fig:typ1} for $q>M$ (with $\pssi_q$ and $\pssi_{q+1}$ replaced by
$\thetta_{q-M}$ and $\thetta_{q-M+1}$ as explained immediately after Fig.~\ref{fig:typ1}), we infer that 
$\phhi'_{q}(X,X) =\det X^{[q-M,n-1]}_{[q+1-M,n]}\pssi_{q-n}(X,X) - \bar\Upsilon_{q-n}(X,X)\pssi_{q+1-M}(X,X)$
is regular on $\Mat_n$ for all the remaining $\phhi$-vertices. 

So far, we established that condition (iii) of Proposition~\ref{regfun} holds for all $\phhi$-vertices and
all $\pssi$-vertices. It remains to check it for $\thetta$-vertices.
To this end, denote by
$\hat \Upsilon_q$, $q\in [M+1,N]$,  the determinant of the submatrix obtained by deleting the $(q - M+1)$st 
row from the $(q+1)\times q$ dense submatrix 
 of $V(X,Y)$ whose lower right corner is $V(X,Y)_{N+1,N+1}$. 
  Assume that $B$ in~\eqref{notjacobi} is the transpose of the submatrix of $V(X,Y)$ involved in the definition 
of $\hat\Upsilon_q$,
and $\alpha=\delta=1$, $\beta=2$, $\gamma=q+1$.
Then
\begin{gather*}
B^{\hat\alpha}=\phhi_q(X,Y), \quad B^{\hat\beta}=\hat\Upsilon_q(X,Y), \quad 
B^{\hat\gamma}=\varkappa(X,Y)\thetta_{q-M+1}(X,Y),\\
B^{\hat\beta\hat\gamma}_{\hat\delta}=\varkappa(X,Y)\phhi_{q-M}(Y,X),\quad
B^{\hat\alpha\hat\gamma}_{\hat\delta}=\varkappa(X,Y)\thetta_{q-M}(X,Y),\quad
B^{\hat\alpha\hat\beta}_{\hat\delta}=\hat\Upsilon_{q-1}(X,Y),
\end{gather*}
 where now $\varkappa(X,Y) = \det  V(X,Y)_{[n+1,N]}^{[n+2,N+1]}$.
Applying \eqref{notjacobi}  we obtain
 \begin{equation}
\label{aux58}
\phhi_{M+\bar q}(X,Y)\phhi_{\bar q}(Y,X) + \thetta_{\bar q+1}(X,Y)\hat\Upsilon_{M+\bar q-1}(X,Y) 
 = \hat\Upsilon_{M+\bar q}(X,Y) \thetta_{\bar q}(X,Y)
\end{equation}
 for $\bar q=q-M\in [n-1]$.
 
 Note that $\hat\Upsilon_M(X,Y)=\phhi_M(X,Y)$. Therefore, 
 for $q=M+1$ \eqref{aux58} becomes
 \begin{equation*}
 \phhi_{M+1}(X,Y)\phhi_{1}(Y,X)  + \thetta_{2}(X,Y) \phhi_{M}(X,Y) = \hat\Upsilon_{M+1}(X,Y) \thetta_{1}(X,Y).
\end{equation*}
The left hand side of this equation restricted to the diagonal $Y=X$ is consistent with the rightmost 
part of Fig.~\ref{fig:spec2},
hence $\thetta'_1(X)=\hat\Upsilon_{M+1}(X,X)$ is a regular function on $\Mat_n$.

For $\bar q\in [2, n-1]$, we can eliminate $\hat\Upsilon_{\bar q+M-1}$ from the $(\bar q-1)$st and the $\bar q$th 
equation  in~\eqref{aux58} and obtain 
\begin{equation*}
\begin{aligned}
&  \thetta_{\bar q-1}(X,Y) \phhi_{M+\bar q}(X,Y) \phhi_{\bar q}(Y,X)+ \thetta_{\bar q+1}(X,Y)
\phhi_{M+\bar q-1}(X,Y) \phhi_{\bar q-1}(Y,X) \\
&\qquad\qquad=\left (\thetta_{\bar q-1}(X,Y) \hat\Upsilon_{M+\bar q}(X,Y)- \thetta_{\bar q+1}(X,Y) 
\hat\Upsilon_{M+\bar q-2}(X,Y)\right ) 
\thetta_{\bar q}(X,Y). 
\end{aligned}
\end{equation*}
The left-hand side of this equation 
is consistent with the rightmost part of Fig.~\ref{fig:typ2},  
and hence 
$\thetta'_{\bar q}(X)=\thetta_{\bar q-1}(X) \hat\Upsilon_{M+\bar q}(X,X)- 
\thetta_{\bar q+1}(X) \hat\Upsilon_{M+\bar q-2}(X,X)$ is regular on $\Mat_n$ for all standard $\thetta$-vertices.
 This concludes the proof.

\begin{remark}
 \label{more_cl_var}
 {\rm Formula \eqref{aux58} with $\bar q = 1$ and $X=Y$ shows that the cluster transformation applied to
 $\thetta_1(X)$ in the initial cluster results in the cluster variable $\hat \Upsilon_{M+1}(X, X)$.
 It is easy to deduce from the structure of the initial quiver that in the resulting cluster, the cluster transformation
 applied to $\thetta_2(X)$ is again given by~\eqref{aux58} with  $X=Y$ and, this time, $\bar q = 2$. Applying cluster
 transformations to $\thetta_i(X)$, $i\in [n-1]$, consecutively and using~\eqref{aux58} at every step, one concludes
 that all the functions $\hat \Upsilon_{M+i}(X, X)$ are cluster variables in our cluster algebra. Similarly,
 applying cluster
 transformations to $\phhi_i(X)$, $i\in [n-2]$, consecutively and using~\eqref{supernei} and the structure of the initial quiver,
 we see that all the functions $-\Lambda_{i+2}(X,X)$ belong to the set of cluster variables as well.
 }
 \end{remark}

 We are now in a position to prove part (iii) of Conjecture \ref{ulti} for the 
Cremmer--Gervais
case. In this case, $\H_T=\exp \h_T$ is one-dimensional and $\h_T=\{ h\in\h \: 
\alpha_i(h)=\alpha_{i+1}(h), i\in [n-2]\}=
\left \{ t \left ( D_n - \frac{n+1}{2}\one_n\right )\:  t\in\mathbb{C}\right \}$.
We will extend  the claim to the case of the  Cremmer--Gervais structure on
$GL_n$ as follows.

\begin{proposition}
\label{torus}
The global toric action of $(\mathbb{C}^*)^{3}$ on $\A_{CG}(GL_n)$ is generated by
multiplication by scalars and the action
of $(\mathbb{C}^*)^{2}$
given by $(t_1, t_2)(X) = t_1^{D_n} X t_2^{D_n}$, where 
$t^D=\diag(t^{d_1},\dots,t^{d_m})$ for $D=\diag(d_1,\dots,d_m)$. The rank of this action equals $3$.
\end{proposition}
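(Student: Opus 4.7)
The plan is to exhibit the $(\mathbb{C}^*)^3$-action on $GL_n$ generated by scalar multiplication $X\mapsto\lambda X$ together with the two $D_n$-twisted multiplications $X\mapsto t_1^{D_n}X$ and $X\mapsto X\,t_2^{D_n}$, show that it acts on every initial cluster variable by a character, and invoke Lemma~2.3 of \cite{GSV1} to conclude that it defines a global toric action of maximal rank. Since $\CC_{CG}(GL_n)$ has $3$ stable variables ($\thetta_n$, $\phhi_N$, $\pssi_M$) and $\wB(n)$ has rank $n^2-3$ by Theorem \ref{quiver}(ii), that lemma caps the rank of a global toric action at $3$; the three generators above commute and their infinitesimal generators are linearly independent endomorphisms of $\Mat_n$, so together they generate a $(\mathbb{C}^*)^3$.

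The key technical step is semi-invariance of $\phhi_j$ and $\pssi_k$. I would establish the factorization
\begin{equation*}
U(\lambda t_1^{D_n}X\,t_2^{D_n},\lambda t_1^{D_n}X\,t_2^{D_n})=D_L\,U(X,X)\,D_R,
\end{equation*}
with $(D_L)_{(R-1)(n-1)+i,(R-1)(n-1)+i}=(t_1t_2)^{-(R-1)}t_1^i$ for $R\in[k]$, $i\in[n-1]$, and $D_R$ determined by requiring the identity to hold on each $\Y$- and $\X$-entry of every block row. The only nontrivial point is consistency of $D_R$ at the overlap columns $c=R(n+1)+j$, $j\in[2,n]$, where $\X$ of block row $R$ meets $\Y$ of block row $R+1$; this reduces to the telescoping recursions $\alpha_{R+1}=\alpha_R-1$ and $g_{R+1}=g_R-1$ on the $t_1$- and $t_2$-exponents of $D_L$, which the proposed $D_L$ satisfies. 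From the factorization, any dense minor of $U(X,X)$ picks up a fixed monomial in $(\lambda,t_1,t_2)$ under the action, so every $\phhi_j$ and $\pssi_k$ is a semi-invariant. The semi-invariance of $\thetta_i=\det X_{[n-i+1,n]}^{[n-i+1,n]}$, with weight $(\lambda^i,\,t_1^{s_i},\,t_2^{s_i})$ where $s_i=\sum_{m=n-i+1}^n m$, is immediate.

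Collecting these weights assembles a matrix $W\in\mathbb{Z}^{n^2\times 3}$. To see $\rank W=3$, it is enough to inspect the rows indexed by $\thetta_n$, $\phhi_N$, $\pssi_M$: their $\lambda$-exponents $n,N,M$ are distinct, and the $t_1,t_2$-exponents of $\phhi_N$ and $\pssi_M$ are not proportional because their defining column ranges differ by a single shift, so a direct $3\times 3$ determinant computation is nonzero. The relation $\wB(n) W=0$ is then automatic: the action is a genuine group action on $GL_n$, so the Laurent phenomenon expresses each mutated cluster variable as a Laurent polynomial in semi-invariants, which forces each exchange relation at the initial seed to balance weights---precisely $\wB(n)W=0$. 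Lemma~2.3 of \cite{GSV1} then promotes the local toric action to a global one, and by the rank-$3$ cap this is the full global toric action, completing the proof. The principal obstacle is the bookkeeping involved in constructing $D_L,D_R$ and verifying the overlap consistency in the factorization; the remaining steps are routine.
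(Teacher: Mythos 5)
Your construction of the action and the factorization $U(\lambda t_1^{D_n}Xt_2^{D_n},\cdot)=D_LU(X,X)D_R$ is essentially the paper's argument for semi-invariance of the initial cluster variables (the paper writes the left- and right-multiplication factorizations separately, but they combine exactly as you describe), and your use of Lemma~2.3 of \cite{GSV1} together with the rank bound $\rank\wB(n)=n^2-3$ is the right framework. However, there is a genuine gap at the step where you claim that $\wB(n)W=0$ is ``automatic'' because the action is a group action and the Laurent phenomenon applies. The condition $\wB W=0$ says precisely that in each exchange relation $x_kx_k'=M_1+M_2$ issued from the initial seed, the two monomials $M_1$ and $M_2$ have \emph{equal} weight under the torus. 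The Laurent phenomenon gives no such thing: $x_k'=(M_1+M_2)/x_k$ is a Laurent polynomial in the initial cluster regardless of whether the weights balance, and a Laurent polynomial in semi-invariants need not be a semi-invariant. If the weights of $M_1$ and $M_2$ differed, the action would still be perfectly well defined on $\C(GL_n)$ but would fail to be a toric action in the adjacent cluster, i.e.\ it would not extend globally. This balance is a nontrivial compatibility between the quiver $Q_{CG}(n)$ and the weight matrix and must be verified; the paper does so by inspecting the explicit formulas for the adjacent cluster variables obtained in the regularity proof and observing that each is either a monomial in minors of $U(X,X)$ or a binomial in which every row and column of $U(X,X)$ occurs the same number of times in both terms.

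Two smaller points. First, your rank argument rests on an asserted but uncomputed $3\times3$ determinant; note that for $\thetta_n=\det X$ the $t_1$- and $t_2$-weights coincide, so the claimed nonvanishing really does require the computation for $\phhi_N$ and $\pssi_M$. The paper avoids this by showing instead that the $(\C^*)^3$-action on $GL_n$ itself is faithful (evaluating on the entries $x_{11},x_{12},x_{22}$ forces $t_1t_0t_2=t_1t_0t_2^2=t_1^2t_0t_2^2=1$), which bounds the rank from below since the cluster variables generate $\C(GL_n)$. Second, the scalar part of the action requires homogeneity of the exchange relations, which is the $\lambda$-column of the same condition $\wB W=0$ and is likewise checked in the paper by inspection rather than deduced abstractly.
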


\begin{proof} The claim that multiplication of element of $GL_n$ by scalars
gives rise to a global toric action is equivalent to claiming that all cluster
variables in 
$\CC_{CG}$ are homogeneous functions of $X$. We only need to check this for cluster
variables in and adjacent to the initial cluster. This can be easily seen by direct
inspection
of expressions for the adjacent cluster variables obtained in the previous proof.
Indeed, all equations used in that proof are homogeneous in matrix entries.
Consequently, the left and right actions of $\H_T$ can be replaced by the actions
of $\exp(\{tD_n\: t\in\C\})$.

Next, observe that 
$$
U(t^{D_n} X, t^{D_n} X)=
\mbox{diag}(t^{D_{n-1} - (i-1)\one_{n-1}})_{i=1}^k U(X, X)
\mbox{diag}(t^{ (j-1)\one_{n+1}})_{j=1}^{k+1}
$$ 
and 
$$
U( X t^{D_n} , X t^{D_n})=
\mbox{diag}(t^{ - (i-1)\one_{n-1}})_{i=1}^k U(X, X)\mbox{diag}(t^{D_{n+1} +
j\one_{n+1}})_{j=1}^{k+1}.
$$
Since every initial cluster variable $f$ can be realized as a minor of $U(X,X)$,
it follows
that $f(t_1^{D_n} X t_2^{D_n})=t_1^\alpha t_2^\beta f(X)$ for some integers $\alpha,
\beta$. The fact that the same is true for cluster variables adjacent to the initial
cluster
follows from an easily verified fact that expressions we obtained for these variables
are either monomial in minors of $U(X,X)$ or binomials in minors such that every column
and row of $U(X,X)$ appears in both terms the same number of times.

To prove that the rank of the action equals $3$, assume to the contrary that it is less than $3$.
Then there exist $t_0, t_1, t_2\in\C^*$ such that $t_1^{D_n}t_0Xt_2^{D_n}=X$ and 
$(t_0,t_1,t_2)\ne (1,1,1)$. In particular, $t_1t_0x_{11}t_2=x_{11}$, $t_1t_0x_{12}t_2^2=x_{12}$
and $t_1^2t_0x_{22}t_2^2=x_{22}$, which implies $t_1t_0t_2=t_1t_0t_2^2=t_1^2t_0t_2^2=1$. Therefore,
$(t_0,t_1,t_2)=(1,1,1)$, a contradiction.
\end{proof}

 \section{Quiver transformations}
The goal of this section is to prove Theorems~\ref{transform1} and~\ref{transform2}.

Recall that the augmentation of $U(X,X)$ was defined at the beginning of Section~\ref{sec:regular}
via
 attaching two extra rows: $[\underbrace{0\cdots 0}_{n+1}\ x_{11} \cdots x_{1n} \ 0 \cdots 0]$ on top 
of $U(X,X)$ and  $[ 0  \cdots 0 \ x_{n1} \cdots x_{nn} \ \underbrace{0\cdots 0}_{n+1}]$ at the bottom.
Let $\bar U$ be the submatrix of this augmentation obtained by deleting the first and the last columns
(note that all the deleted elements are zeros). It has certain translation invariance 
properties inherited from
the matrix $U(X,X)$. In particular, any submatrix of $\bar U_{[2,n]}^{[n]}$ is invariant under 
the shift of row and column indices by $-1$ and $n$, respectively (see translation $\tau_1$ in Fig.~\ref{fig:baru}). Next, any submatrix of $\bar U_{[n-1]}^{[n+1,2n]}$ is invariant under 
the shift of row and column indices by $n$ and $1$, respectively (see translation $\tau_2$ in Fig.~\ref{fig:baru}). Finally, any submatrix of $\bar U_{\hat 1}$ is invariant under 
the shift of row and column indices by $n-1$ and $n+1$, respectively, provided the result of the translation fits into $\bar U$ (see translation $\tau_3$ in Fig.~\ref{fig:baru}).

 \begin{figure}[ht]
\begin{center}
\includegraphics[height=8cm]{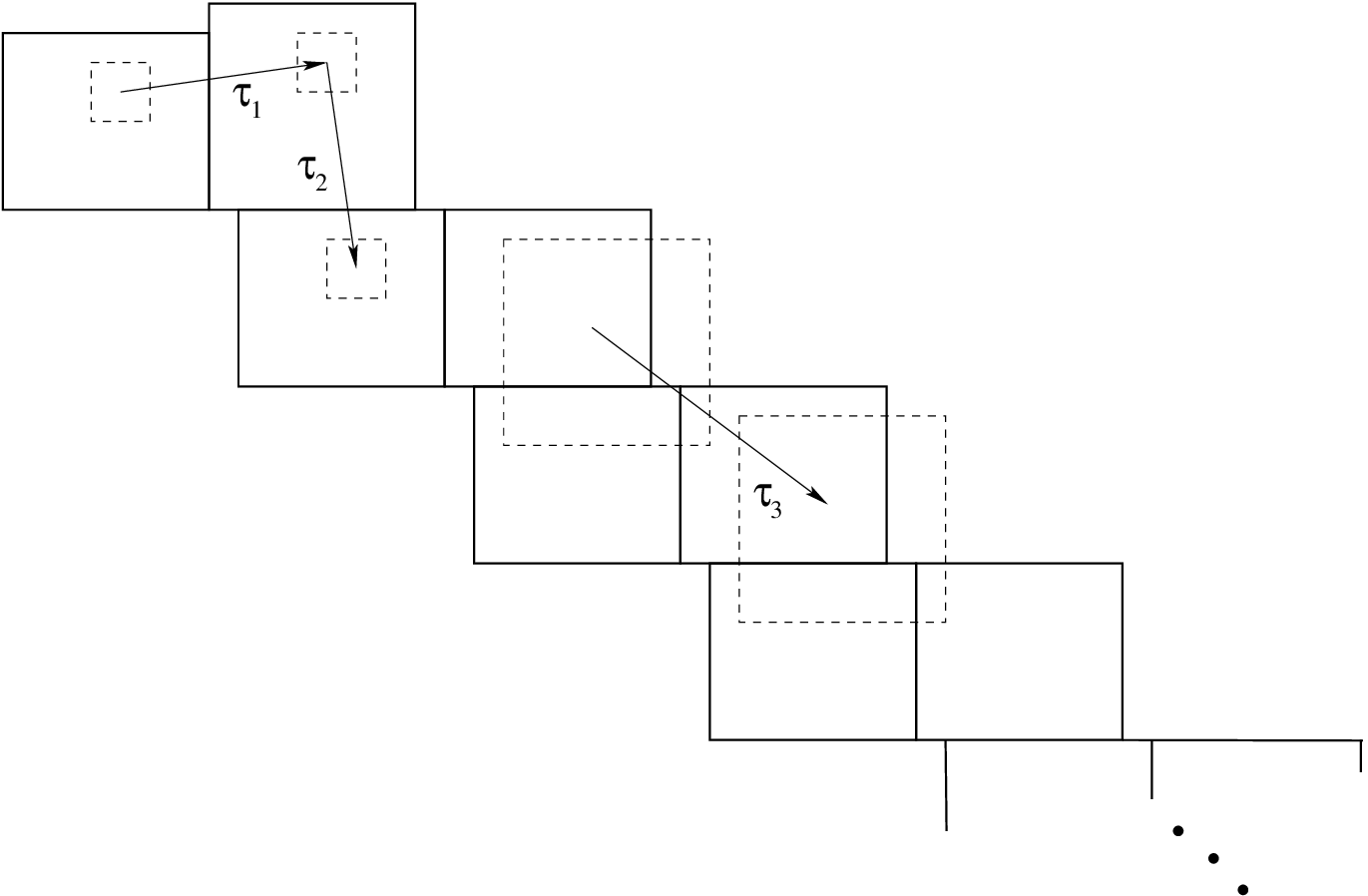}
\caption{Translation invariance properties of $\bar U$}
\label{fig:baru}
\end{center}
\end{figure}

 Define the {\em core\/} of a matrix to be its maximal square irreducible leading principal submatrix. 
 In what follows, 
  ${\bangle J I}_A$ will denote
 the determinant of the core of the submatrix  $A^{\hat J}_{\hat I}$ obtained from $A$ by deleting rows indexed by $I$ and columns indexed by $J$. 
 Occasionally, we will also use notation ${\bangle J I}^{(q)}_A$ to describe the product of determinants of the first $q$ irreducible components of the maximal
 square principal submatrix of $A^{\hat J}_{\hat I}$, so ${\bangle J I}^{(1)}_A$ is just ${\bangle J I}_A$.

A matrix $A=(a_{ij})$ is said to have a {\em staircase shape\/} if there exist two sequences of row indices $i_{t 1} < i_{t 2} < \cdots $ and of column indices 
$j_{t 1} < j_{t 2} < \cdots$, $t=1,2$, such that  for any $\alpha, \beta$ either both expressions
$i_{1\alpha}-i_{2\beta}-1$ and $j_{2\beta}-j_{1\alpha}-1$ vanish simultaneously, or at least one of them
is strictly positive, and $a_{ij} = 0 $ for $i \ge i_{1 \alpha}$, $j \le j_{1 \alpha}$ and  for 
$i\le i_{2\alpha}$, $j\ge j_{2 \alpha}$. 
For example, matrices $U(X,Y)$, $V(X,Y)$ and $\bar U$ have a staircase shape. 
Viewed as polynomial functions of its nonzero matrix
entries, minors of a matrix having a fixed staircase shape may not be irreducible. 
Thus, when one applies identities \eqref{jacobi}, \eqref{notjacobi} to such a matrix, a common factor 
may occur in all three terms. We have already encountered this situation in Section~\ref{sec:regular}. 
In what follows we will  utilize translation invariance to realize each cluster variable as a suitable 
minor of the augmentation $\bar U$. Consequently,
 all  cluster transformations that we use below will become equivalent to application of  
\eqref{jacobi}, \eqref{notjacobi} followed by the cancellation of common factors. To illustrate this 
strategy, we present a lemma that will be applied during the first stage of the construction of $\T$. Afterwards, we will simply point out submatrices of $\bar U$ to which we apply \eqref{jacobi}, \eqref{notjacobi}, and leave a straightforward but space-consuming verification to an interested reader.

\begin{lemma}
 \label{coredodgson}
 Let $A$ be a non-degenerate  $m\times m$ matrix  
 having a staircase shape. 
 Suppose that  $a_{\beta 1} a_{1\beta}\ne 0$
 for a fixed $\beta > 1$.
 Then the following identity is satisfied:
 \begin{equation}
{\bangle {1} {1}}_A\ {\bangle {\ } \beta}_A =  {\bangle {1} {\beta}}_A\ {\bangle {\ } 1}_A +{\bangle {\ } {\ }}_A\ {\bangle {1} { 1 \beta}}_A.
 \label{coredodgsonformula}
 \end{equation}
 \end{lemma}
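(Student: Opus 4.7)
The plan is to show that the claimed identity is essentially the standard Desnanot--Jacobi identity \eqref{jacobi} applied to $A$ with a specific choice of deleted rows and columns.

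First, I will identify each of the six core determinants appearing in \eqref{coredodgsonformula} with a concrete minor of $A$. Using the staircase structure together with the nonvanishing hypothesis $a_{1\beta}a_{\beta 1}\ne 0$ (which guarantees that the index $\beta$ lies inside the ``dense'' region of the staircase), the core of each submatrix $A^{\hat J}_{\hat I}$ for $I,J\subseteq\{1,\beta\}$ can be expressed as the determinant of its maximal square leading principal submatrix, obtained by retaining the first rows and first columns in the induced ordering. Explicitly, I expect the six cores to be realized as $\det A$, $\det A_{[m]\setminus\{1,\beta\}}^{[2,m-1]}$, $\det A_{[2,m]}^{[2,m]}$, $\det A_{[m]\setminus\{\beta\}}^{[m-1]}$, $\det A_{[2,m]}^{[m-1]}$, and $\det A_{[m]\setminus\{\beta\}}^{[2,m]}$, corresponding respectively to ${\bangle{}{}}_A$, ${\bangle{1}{1\beta}}_A$, ${\bangle{1}{1}}_A$, ${\bangle{}{\beta}}_A$, ${\bangle{}{1}}_A$, and ${\bangle{1}{\beta}}_A$.

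Second, I will apply \eqref{jacobi} to $A$ itself with deleted row indices $\{1,\beta\}$ and deleted column indices $\{1,m\}$. The six minors produced by this single instance of \eqref{jacobi} are exactly the six expressions listed above, so \eqref{jacobi} becomes
\[
{\bangle{}{}}_A \cdot {\bangle{1}{1\beta}}_A = {\bangle{1}{1}}_A \cdot {\bangle{}{\beta}}_A - {\bangle{}{1}}_A \cdot {\bangle{1}{\beta}}_A,
\]
which rearranges directly to \eqref{coredodgsonformula}.

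The main technical point is the verification in the first step: that the staircase shape and the nonvanishing assumption $a_{1\beta}a_{\beta 1}\ne 0$ together force the core of each relevant $A^{\hat J}_{\hat I}$ to coincide with its full maximal leading principal submatrix, without being further ``trimmed'' by a reducibility induced by staircase zeros crossing the leading block. This will require a brief case analysis based on the positions of the staircase corners relative to the indices $1$, $\beta$, and $m$, ensuring that the deletions do not cut into the irreducible portion of the matrix. Once this identification is established, the lemma follows from a single application of \eqref{jacobi} with no further algebraic manipulation.
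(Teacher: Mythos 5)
Your overall skeleton matches the paper's: a single application of \eqref{jacobi} to $A$ with deleted rows $\{1,\beta\}$ and deleted columns $\{1,m\}$ does produce the six relevant minors, and the resulting identity rearranges to \eqref{coredodgsonformula}. However, your ``main technical point'' is where the proof actually lives, and the claim you make there is false. The hypotheses ($A$ non-degenerate, staircase shape, $a_{\beta 1}a_{1\beta}\ne 0$) do \emph{not} force the core of each $A^{\hat J}_{\hat I}$ to coincide with its full maximal square leading principal submatrix. They only guarantee that any reducibility occurs sufficiently far down: if $s$ is the smallest index with $a_{s,s+1}=0$ and $t$ the smallest with $a_{t,t-2}=0$, then $s\ge\beta$ and $t\ge\beta+1$, but nothing prevents such $s$ or $t$ from existing with $s<m$, in which case, e.g., ${\bangle{\ }{\ }}_A=\det A_{[s]}^{[s]}\ne\det A$. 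This is not a corner case: in the application (Lemma \ref{Sh} and its relatives) the matrices $A$ are large submatrices of $\bar U$ whose cores are genuinely proper, so an argument that assumes the cores are the full minors proves the wrong statement.

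The missing idea is that the trimmed-off factors are \emph{common} and cancel. The three minors that retain column $m$ ($\det A$, $\det A_{\hat 1}^{\hat 1}$, $\det A_{\hat\beta}^{\hat 1}$) each factor as their core times the same $\Gamma=\det A_{[s+1,m]}^{[s+1,m]}$, while the three that delete column $m$ ($\det A_{\hat\beta}^{\hat m}$, $\det A_{\hat 1}^{\hat m}$, $\det A_{\hat 1\hat\beta}^{\hat 1\hat m}$) each factor as their core times the same $\Delta=\det A_{[t,m]}^{[t-1,m-1]}$; here the inequalities $s\ge\beta$, $t\ge\beta+1$ are exactly what makes these complementary blocks identical across the minors in each group. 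Every term of \eqref{jacobi} then carries the factor $\Gamma\Delta$ exactly once, and dividing it out yields \eqref{coredodgsonformula}. You would also need to address the degenerate situation where $a_{i+1,i}=0$ for some $i\in[\beta,m-1]$ (which can kill minors appearing in \eqref{jacobi}); the paper handles this by replacing $A$ with the leading principal submatrix $A_{[1,i]}^{[1,i]}$ for the smallest such $i$ before running the argument. As written, your step 1 cannot be repaired without essentially inserting this factorization-and-cancellation argument.
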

 
 \begin{proof} We will derive \eqref{coredodgsonformula} from \eqref{jacobi} with $\alpha=\gamma=1$, $\delta=m$. Due to the conditions on $A$, all the minors
 involved in \eqref{jacobi} are non-zero, provided  $a_{i+1,i}\ne 0 $
for $i\in [\beta, m-1]$. We may assume this condition to be true; otherwise, if $i$ is the smallest index such that $a_{i+1,i}= 0 $, we can replace $A$ in 
\eqref{coredodgsonformula}
with $A^{[1,i]}_{[1,i]}$ and apply the argument below.

The shape of $A$ and the condition  $a_{\beta 1} a_{1\beta}\ne 0$ guarantee that if $s$ is the  smallest index  such that $a_{s,s+1}=0$ 
and $t$ is the smallest index such that $a_{t, t-2}=0$,  then $s\geq \beta$ and $ t\geq \beta+1$. This means that if $\Delta= \det A_{[t, m]}^{[t-1,m-1]}$
and $\Gamma=\det A_{[s+1, m]}^{[s+1,m]}$, then $\det A_{\widehat \beta}^{\widehat m} = {\bangle {\ } \beta}_A \Delta$, 
$\det A_{\hat 1}^{\widehat m} = {\bangle {\ } 1}_A \Delta$, 
 $\det A_{\hat 1 \widehat \beta}^{\hat 1 \widehat m} = {\bangle {1} { 1 \beta}}_A \Delta$ and $\det A = {\bangle {\ } \ }_A \Gamma$, $\det A_{\hat 1}^{\hat 1} = {\bangle {1} 1}_A \Gamma$, 
 $\det A_{ \widehat \beta}^{\hat 1 } = {\bangle {1} {  \beta}}_A \Gamma$, hence~\eqref{coredodgsonformula} follows from~\eqref{jacobi}.
\end{proof}
 
 \subsection{Sequence $\SS$}\label{sec:Strans}
The goal of this subsection is to prove Theorem~\ref{transform1}.
 
We  will construct $\mathcal S$ as a composition of two sequences
of transformations: $\mathcal S=\mathcal S_v\circ \mathcal S_h$, which are, in turn,
decomposed into subsequences to be described below.
To explain the construction of $\mathcal S_h$, we need to define, for $t\in [0,n]$,
a quiver $Q_t(n)$, whose vertices are indexed by pairs  $(i,j)$, $i\in [n-1]$, 
$j\in [n]$, and $(n,j)$, $j\in [n-t]$, for $t<n$. We will use a convention
$Q_0(n) = Q_{CG}(n)$. The vertex set 
of  $Q_t(n)$ is regarded as a subset of vertices of $Q_{t-1}(n)$ and, consequently,
of $Q_{CG}(n)$.
The quiver $Q_t(n)$ inherits all the edges in $Q_{CG}(n)$ whose both ends
belong to this subset except the edge $(n,n-t) \to (1,n-t)$ which is erased. In
addition, $Q_t(n)$  contains edges $(n-1,n-s) \to (1,n-s)$,
$(1,n-s) \to (n-1,n-s+1)$ for $s\in [t]$ (see Fig.~\ref{fig:q2(5)} for the quiver $Q_2(5)$). Just like in $Q_{CG}(n)$, vertices
$(1,1), (2,1)$ and $(1,n)$ are frozen vertices in $Q_t(n)$. 

\begin{figure}[ht]
\begin{center}
\includegraphics[height=5cm]{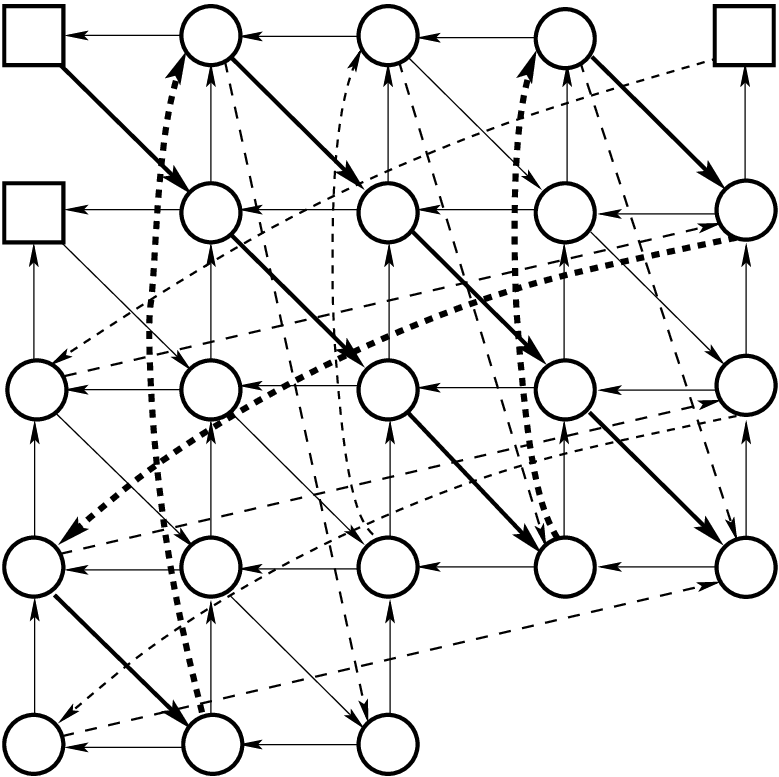}
\caption{Quiver $Q_2(5)$}
\label{fig:q2(5)}
\end{center}
\end{figure}

We now assign a function $f_{ij}^{(t)}(X)$ to each vertex $(i,j)$ of $Q_t(n)$. First of all,
 define $f_{ij}^{(0)}=f_{ij}$; 
recall that $f_{ij}=\rho^{-1}(i,j)$ is a variable of the augmented initial cluster (see Lemma~\ref{vertices}).
As in
the case of $Q_0(n)=Q_{CG}(n)$, $f_{ij}^{(t)}$ for $t>0$ is the determinant of a certain
submatrix $V_{ij}^{(t)}$
of $V(X,X)$ whose upper left entry is equal to $x_{ij}$. We start with submatrices that
correspond to frozen vertices in $Q_t(n)$. The $(1,2)$ entry of $V(X,X)$ is
$x_{11}$. If
$3l < t \leq 3 (l+1)$,  then $V_{11}^{(t)}$ is the core of $V(X,X)^{\hat 1}_{\hat \imath_{[l+1]}}$ 
with $i_\alpha=1+\alpha(n-1)$. Similarly, if $3l + 1 < t \leq 3 (l+1)+1$, then
 $V_{21}^{(t)}$ is the core of
$V(X,X)^{\hat 1}_{\hat \imath_{[0,l+1]}}$, and  if $3l + 2 < t \leq 3 (l+1)+2$, then
$V_{1n}^{(t)}$ is the core of
$V(X,X)_{\hat \imath_{[0,l+1]}}$.

Consider the collection of all distinct trailing principal minors  of submatrices
$V_{11}^{(t)},  V_{21}^{(t)},  V_{1n}^{(t)}$. The upper left entry of each such minor is 
 occupied by $x_{ij}$ for some $(i,j)$. This defines
a correspondence $\rho_t$ between distinct trailing principal minors  of 
$V_{11}^{(t)},  V_{21}^{(t)},  V_{1n}^{(t)}$ and pairs of indices $(i,j)$.

\begin{lemma}
\label{rho_t}
For any $t\in[0,n]$, 
$\rho_t$ is a one-to-one correspondence between distinct trailing principal minors 
of  $V_{11}^{(t)},  V_{21}^{(t)},  V_{1n}^{(t)}$ and vertices of $Q_t(n)$. 
\end{lemma}

\begin{proof} 
Define a {\em diagonal path\/} in $Q_t(n)$ to be the maximal directed path that starts at a
frozen vertex and consists of edges of the type $(i,j)\to (i+1,j+1)$, $(n, i) \to (1,
i)$ or $(n-1,i)\to (1,i)$, and $(i, n) \to (i+2, 1)$. Since exactly one such edge enters each non-frozen
vertex, the three diagonal paths are simple and do not intersect. We denote them 
$\P_1^{(t)}=\P_{11}^{(t)}$, $\P_2^{(t)}=\P_{21}^{(t)}$ and $\P_3^{(t)}=\P_{1n}^{(t)}$,  
according to the starting vertex. Besides, exactly one such edge
leaves each vertex, except for the following three: $(n,1)$, $(n,n-t)$ and $(n-1,n)$ for $t\in [0,n-2]$,
$(n,1)$, $(n-1,1)$ and $(n-1,n)$ for $t=n-1$, and $(n-2,n)$, $(n-1,1)$ and $(n-1,n)$ for $t=n$. 
These three vertices
are, therefore, the last vertices of the three diagonal paths. The diagonal path $\P_1^{(2)}$ in
$Q_2(5)$ is shown in bold in Fig.~\ref{fig:q2(5)}; it ends at $(4,5)$.

Let us prove that the three diagonal paths cover all vertices in $Q_t(n)$. Indeed, the path $\P_i^{(t)}$, 
$i\in [1,3]$, hits the first column of $Q_t(n)$ at vertices $(3s+i,1)$ for $0\le s\le (t+3-i)/3$.
After that the path that passed through $(t+1,1)$ hits $(n,n-t)$ and terminates (for $t=n$ the
corresponding path terminates at $(n-2,n)$). Consequently for, $t\in [0,n-2]$, the
two remaining paths hit the first column at vertices $(t+2s+2,1)$ and $(t+2s+3,1)$ for $s\ge 0$, until
one of them terminates at $(n,1)$ and the other passes through $(n-1,1)$ and terminates at $(n-1,n)$.
For $t=n-1, n$ the remaining two paths do not hit the first column any more; the one that passed through
$(n-1,1)$ terminates there, and the one that passed through $(n-2,1)$ terminates at $(n-1,n)$. 
Therefore, all vertices in the first column of $Q_t(n)$ are covered for any $t\in [0,n]$. Assume there are uncovered 
vertices in the first row of $Q_t(n)$. Let $(1,p)$ be the leftmost among these vertices, then go backwards
along the edge $(n,p)\to (1,p)$ (if $p<n-t$) or $(n-1,p)\to (1,p)$ (if $p\ge n-t$), and proceed backwards 
along the edges $(i,j)\to (i+1,j+1)$. Clearly, all vertices that are reached in this way are not covered by
 the diagonal paths. Eventually, we get to a non-covered vertex in the first column, a contradiction.

It remains to prove that the vertices in the paths  $\P_{11}^{(t)}$, 
$\P_{21}^{(t)}$ and $\P_{1n}^{(t)}$ correspond to diagonal elements of the matrices $V^{(t)}_{11}$, 
$V^{(t)}_{21}$ and $V^{(t)}_{1n}$ in the natural order,
and that the diagonal elements that ere not involved in this correspondence do not define new trailing
minors. First of all, note that one step in the south-east direction
along a diagonal in each one of these matrices corresponds to an edge $(i,j)\to (i+1,j+1)$ provided we remain in the same block, to an edge $(n,i)\to(1,i)$ provided we move from an $\X$-block to a $\Y$-block, to an edge
$(n-1,i)\to (1,i)$ provided we move from an $\X$-block with the last row deleted to a $\Y$-block, and to
an edge $(i,n)\to (i+2,1)$ provided we move from a $\Y$-block to an $\X$-block.

Denote by $V_1^{(t)}$, $V_2^{(t)}$ and $V_3^{(t)}$ the sequences of pairs of indices corresponding to the
diagonal elements of the matrices $V^{(t)}_{11}$, 
$V^{(t)}_{21}$ and $V^{(t)}_{1n}$, respectively, taken in the natural order. 
Denote by $F^{(t)}_i$ the first occurrence of 
a pair $(n,*)$ in $V^{(t)}_i$. 
Each sequence
$V_i^{(t)}$ can be decomposed as follows: $V^{(t)}_i=U_i^{(t)}F^{(t)}_iW^{(t)}_i$ (the second and the third 
subsequences are empty if there is no occurrence of $(n,*)$ in $V^{(t)}_i$. It is easy to prove by
induction the following characterization of the sequences $V^{(t)}_i$ and $\P^{(t)}_i$:
let $r$ take values in $\{1,2,3\}$ and let $r\equiv t\bmod 3$,  then
\begin{equation}\label{frecvp}
\begin{aligned}
&V^{(t)}_r= U^{(t-1)}_rW^{(t-1)}_{r+1},\qquad V^{(t)}_i= V^{(t-1)}_i\quad \text{for $i\ne r$, $t\in [n]$},\\
&\P^{(t)}_{r+1}= U^{(t)}_{r+1}F^{(t)}_{r+1},\qquad\quad \P^{(t)}_i= V^{(t)}_i\quad \text{for $i\ne r+1$,
$t\in [0,n]$}
\end{aligned}
\end{equation}
with the initial condition $W^{(0)}_{1}=\varnothing$.

For example, 
consider the matrix $V^{(t)}_{11}$. If $t=3s$ then it is obtained from $V(X,X)$ by deleting the last
row from the first $s$ blocks, and hence $F_{1}^{(t)}=(n,n-t)$. 
On the other hand, the number of edges $(n-1,i)\to (1,i)$ in the path
$\P_{1}^{(t)}$ equals $\lfloor \frac{3s+2}3\rfloor=s$, it ends at $(n,n-t)$ and does not
contain other vertices of type $(n,*)$. We thus see that in this case $\P_{r+1}^{(t)}=\P_1^{(t)}=
U_1^{(t)}F_1^{(t)}$. If $t=3s+1$ then the last row should be deleted from the $(s+1)$th block as well.
Therefore, the pair that follows $(n-1,n-t-1)$ in $V_1^{(t)}$ is $(1,n-t-1)$. In $V_2^{(t-1)}$
it followed $(n,n-t-1)$, which was the first occurrence of the last row entry, and hence
we get $V_r^{(t)}=V_1^{(t)}=U_1^{(t-1)}W_2^{(t-1)}$. Other relations in~\eqref{frecvp} are proved in a similar way.

 The one-to-one correspondence between the paths  $\P_{11}^{(t)}$, 
$\P_{21}^{(t)}$ and $\P_{1n}^{(t)}$ and the set of 
distinct trailing principal minors 
of  $V_{11}^{(t)},  V_{21}^{(t)},  V_{1n}^{(t)}$ follows immediately from~\eqref{frecvp}.
\end{proof}

\begin{remark}
In particular, since $V_{11}^{(0)}=X$, for $t=0$ we recover Lemma \ref{vertices}.
\end{remark}

Lemma \ref{rho_t} gives rise to a well-defined assignment $(i,j) \mapsto
f^{(t)}_{ij}(X)=\rho_t^{-1}(i,j)$ of minors of $V(X,X)$ to vertices of $Q_t(n)$.

Consider a seed $((f^{(t)}_{ij}), Q_t(n) )$, $t\geq 0$. Define $r\in\{1,2,3\}$ via $r\equiv t\bmod 3$
as above and 
consider the composition
of cluster transformations applied to consecutive mutable vertices along the diagonal path
$\P_{r+1}^{(t)}$ in the opposite direction 
(starting  at $(n, n-t)$).  
After the mutation at a vertex of $\P_{r+1}^{(t)}$ it is shifted to the position of
the previous vertex of the same path; so, at this moment there are two vertices occupying
the same position (see Fig.~\ref{fig:q2(5)31}).
After the second vertex of the path is mutated and  shifted, it is frozen, and the first vertex is erased.
We denote the resulting transformation $\mathcal S_h^{(t)}$.

\begin{lemma}
\label{Sh} 
For any $t\in [0,n-1]$, transformation $\mathcal S_h^{(t)}$ takes the seed $((f_{ij}^{(t)}), Q_t(n) )$ into
the seed $((f_{ij}^{(t+1)}), Q_{t+1}(n) )$.
\end{lemma}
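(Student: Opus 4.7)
My plan is to prove Lemma~\ref{Sh} by tracking, simultaneously and step by step, how the cluster variables and the edges of the quiver transform under each individual mutation along the reversed path $\P_{r+1}^{(t)}$, where $r\equiv t\pmod 3$. The core observation is that by Lemma~\ref{rho_t} and the recursion~\eqref{frecvp}, passing from time $t$ to time $t+1$ only modifies one of the three distinguished submatrices of $V(X,X)$, namely $V_{r+1}^{(t+1)}=U_{r+1}^{(t)}W_{r+1}^{(t)}$ (with notation $W$ as in the proof of Lemma~\ref{rho_t}); the trailing principal minors along $\P_{r+1}^{(t)}$ are therefore precisely the cluster variables that should be replaced, and the new ones are the corresponding trailing minors of $V_{r+1}^{(t+1)}$.

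First, I would order the mutable vertices $v_1,v_2,\ldots,v_p$ of $\P_{r+1}^{(t)}$ so that $v_p=(n,n-t)=F_{r+1}^{(t)}$, and perform mutations in the order $v_p, v_{p-1},\ldots, v_2$. For each $i$ I would verify that the exchange relation~\eqref{exchange} at $v_i$ collapses, after cancellation of a common factor, to an instance of either the Desnanot--Jacobi identity via Lemma~\ref{coredodgson} or of~\eqref{notjacobi}, applied to an explicit square submatrix of the augmented matrix $\bar U$. The translation invariance properties $\tau_1,\tau_2,\tau_3$ (Fig.~\ref{fig:baru}) are what makes it possible to realize each of the six minors appearing in such a Dodgson-type identity as $\rho_{t}$- or $\rho_{t+1}$-images at vertices of $Q_{t}(n)$ adjacent to $v_i$; in particular, the neighbors of $v_i$ in $Q_t(n)$ (horizontal, vertical, diagonal, and the special first/last row or column edges) contribute exactly the terms on the two sides of the exchange relation. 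By induction down the path, the newly produced cluster variable at $v_i$ will equal $f_{v_{i-1}}^{(t+1)}$ (the ``shift to the previous vertex''), while $v_p$ drops out at the end and $v_{p-1}$ is frozen, consistent with the way $Q_{t+1}(n)$ differs from $Q_t(n)$.

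In parallel I would track the quiver. Using the standard mutation rule on $Q_t(n)$ at $v_i$ (reverse the edges incident to $v_i$, add edges for each two-arrow path through $v_i$, cancel oppositely oriented parallel edges), I would verify that the only persistent effect of the full sequence $\mathcal S_h^{(t)}$ is to erase the arrow $(n,n-t)\to(1,n-t)$ and install the two arrows $(n-1,n-t)\to (1,n-t)$ and $(1,n-t)\to(n-1,n-t+1)$, which is exactly what distinguishes $Q_{t+1}(n)$ from $Q_t(n)$ by the definition of the latter. All other local changes introduced by intermediate mutations would be shown to cancel in subsequent steps, which is a mechanical but tedious check that I would organize by cases according to whether $v_i$ lies in the interior of a block, on the boundary between an $\X$-block and a $\Y$-block, or is adjacent to a frozen vertex.

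The main obstacle will be the bookkeeping at positions where the diagonal path $\P_{r+1}^{(t)}$ crosses block boundaries of $V(X,X)$, i.e.\ where it traverses an edge of type $(n,i)\to(1,i)$, $(n-1,i)\to(1,i)$, or $(i,n)\to(i+2,1)$. At such a vertex the natural ``core'' of the relevant submatrix of $\bar U$ changes shape, and one must invoke a different translation ($\tau_1$ vs.\ $\tau_2$ vs.\ $\tau_3$) to bring the identity into the Dodgson form required by Lemma~\ref{coredodgson}. A related subtlety is that at the bottom end $v_p=(n,n-t)$ the exchange relation uses the identity~\eqref{notjacobi} rather than~\eqref{jacobi}, since the relevant submatrix is rectangular rather than square. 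Once these boundary cases are handled uniformly, the interior steps follow by an essentially identical argument at every stage, and Lemma~\ref{Sh} is proved.
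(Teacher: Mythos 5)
Your plan follows essentially the same route as the paper's proof: an induction that tracks the local structure of $Q_t(n)$ before and after each mutation along the reversed diagonal path $\P_{r+1}^{(t)}$, combined with the identification (via Lemma~\ref{rho_t} and \eqref{frecvp}) of the cluster variables as core minors, so that each exchange relation reduces, after cancellation of common factors and use of the translation invariance, to the core Desnanot--Jacobi identity of Lemma~\ref{coredodgson}.

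Two details in your sketch do not match what actually happens and would need correcting in a full write-up, though neither invalidates the strategy. First, the mutation at the terminal vertex $(n,n-t)$ does \emph{not} require the rectangular identity \eqref{notjacobi}: all relations arising in $\SS_h^{(t)}$, including the one at $(n,n-t)$, are put into the single uniform form
${\bangle {[i-1]} {[i-1]}}_A\,{\bangle {[i-2]}{[i-2]\cup L}}_A = {\bangle{[i-2]}{[i-2]}}_A\,{\bangle{[i-1]}{[i-1]\cup L}}_A + {\bangle{[i-1]}{[i-2]\cup L}}_A\,{\bangle{[i-2]}{[i-1]}}_A^{(q)}$
for the \emph{square} matrix $A=V^{(t)}_{p_1q_1}$, and hence all follow from Lemma~\ref{coredodgson}, i.e.\ from \eqref{jacobi}; the identity \eqref{notjacobi} only enters later, in the vertical sequence of Lemma~\ref{Sv}. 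Second, your freeze/erase bookkeeping is reversed: after the whole sequence every mutated vertex has moved back one step along the path, so the position $(n,n-t)$ is merely vacated; the vertex that gets frozen is the one arriving at the frozen \emph{starting} position of the path (the former second vertex), and it is the original frozen first vertex that becomes disconnected and is erased. There is also an off-by-one in the edges you list as distinguishing $Q_{t+1}(n)$ from $Q_t(n)$ (they involve the column $n-t-1$, not $n-t$).
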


\begin{proof}   
Let us first describe the evolution of the quiver $Q_t(n)$. One can check by induction that prior to 
applying a mutation in the sequence $\mathcal S_h^{(t)}$ to a vertex $(p,q)\in\P_{r+1}^{(t)}$ the current state of
the quiver can be described as follows.

(i) Vertex $(p,q)$ is trivalent if $p=n$ and $q=1,2$, five-valent if $t>0$, $r=3$ and $(p,q)=(i,i)$ for
$i\in[2,n-1]$ or $(p,q)=(1,n-1)$, and four-valent otherwise.

(ii) The edges pointing from $(p,q)$ are $(p,q)\to (p,q-1)$ for $q\ne 1$ and $(p,1)\to (p-1,n)$ for $q=1$;
$(p,q)\to (p-1,q)$ for $p\ne 1$ and $(1,q)\to (n-1,q+1)$ for $p=1$; $(p,q)\to (1,n)$ whenever $(p,q)$ is
five-valent.
 
(iii) The edges pointing to $(p,q)$ are $(p-1,q-1)\to(p,q)$ for $p\ne 1$ and  $q\ne 1$, 
$(n-1,q)\to(1,q)$ for $p=1$, and $(p-2,n)\to (p,1)$ 
for $q=1$; the special edge from the vertex of $\P_{r+1}^{(t)}$ that has been just mutated and moved to $(p,q)$ for $p\ne n$. 

(iv) There are no edges between the vertex that has been just mutated and moved to $(p,q)$  
and vertices $(p,q-1)$ and $(p-1,q)$ (or vertices $(p-1,n)$ or $(n-1,
q+1)$ in the situations described in (ii) above).

Consequently, when $(p,q)$ is moved one position back along $\P_{r+1}^{(t)}$, the local structure of the quiver is restored completely. 

Quiver $Q_2(5)$ prior to the mutation at the vertex $(3,1)$ is shown in Fig.~\ref{fig:q2(5)31}. 
Currently there
are two vertices at position $(3,1)$: the ``new'' one (moved from $(4,2)$) and the ``old'' one, that 
will be moved to $(1,5)$ after the mutation.  

\begin{figure}[ht]
\begin{center}
\includegraphics[height=5cm]{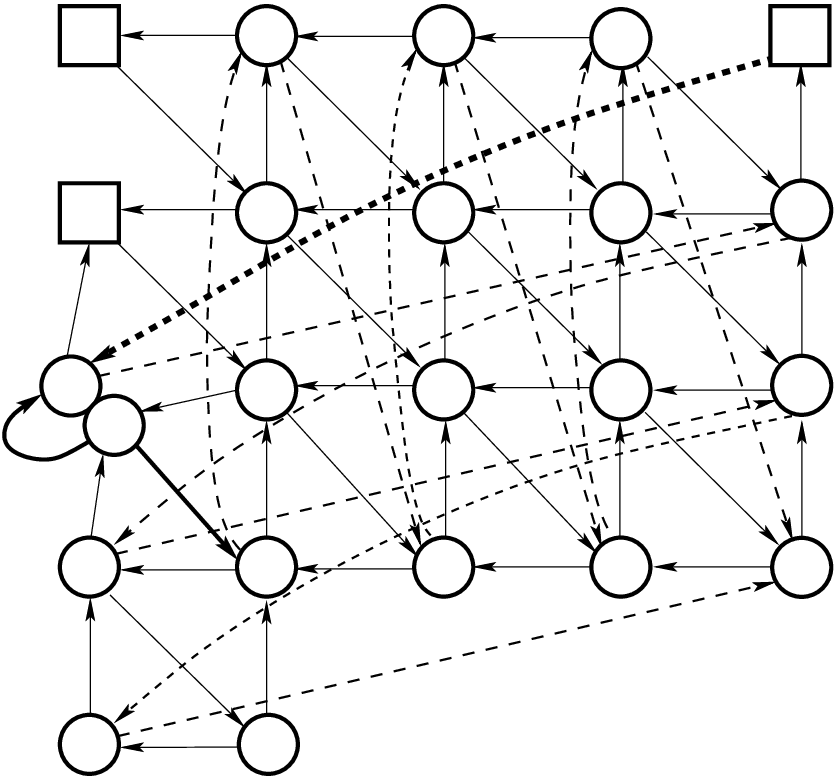}
\caption{Quiver $Q_2(5)$ prior to the mutation at the vertex $(3,1)$}
\label{fig:q2(5)31}
\end{center}
\end{figure}

Now let us turn to cluster transformations along the diagonal path $\mathcal
P^{(t)}_{r+1}$. First, we notice that for $t=0$ these transformations are
described
by identities \eqref{aux58} applied to $V(X,X)$ rather than $V(X,Y)$; namely, the transformation
corresponding to the mutation at $(p,p)$ is given by~\eqref{aux58} for $\bar q=n-p+1$, 
see Remark~\ref{more_cl_var}. 
To describe cluster transformations for $t> 0$, we apply Lemma~\ref{coredodgson} to the submatrix $A$ of $V(X,X)$ associated with the diagonal
path $\P^{(t)}_{r+1}$. More exactly, we define $A=V_{p_1q_1}^{(t)}$, where $(p_1,q_1)$ is the first
vertex of $\P_{r+1}^{(t)}$. Let $L$ be the length of $\mathcal
P^{(t)}_{r+1}$.
The $L$-th row of $A$ corresponds to 
the row of $V(X,X)$ that contains $F_{r+1}^{(t)}$, see the proof of Lemma~\ref{rho_t}.
Let us temporarily use notation $w^{(t)}_j(i)$ for  the cluster variable
attached to the $i$th vertex of the diagonal path $\mathcal P^{(t)}_j$.
Then, by Lemma~\ref{rho_t}, 
$w^{(t)}_{r+1}(i)=\det A^{\widehat{[i-1]}}_{\widehat{[i-1]}}= { \bangle
{[i-1]}  {[i-1] }}_A$. In order to verify the claim of the lemma, we need
to show that this variable transforms into $w^{(t+1)}_{r+1}(i-1)={ \bangle
{[i-2]}  {[i-2] \cup L}}_A$.

It follows from (i)--(iii) above that for $r=1$ the  relations we need to verify are
$$
 w^{(t+1)}_2(L-1) w^{(t)}_2(L) = w^{(t)}_2(L-1) w^{(t)}_3(L+1) +
w^{(t)}_1(L) w^{(t)}_3(L)
$$ 
for $t \in [n-3]$, 
$$ 
w^{(t+1)}_2(L-1) w^{(t)}_2(L) = w^{(t)}_2(L-1)  + w^{(t)}_1(L)
w^{(t)}_3(L)
$$  
for $t = n-1, n-2$, and
$$
w^{(t+1)}_2(i-1) w^{(t)}_2(i) = w^{(t)}_2(i-1) w^{(t+1)}_2(i) +
w^{(t)}_1(i) w^{(t)}_3(i)
$$
for $i\in [2,L-1]$, $t\in [n-1]$.
Note that
\begin{equation*}
w^{(t)}_1(i) ={ \bangle {[i-1]}  {[i-2] \cup L}}_A,\quad i \in [2,L], \qquad
w^{(t)}_3(i) ={ \bangle {[i-2]}  {[i-1] }}_A, \quad   i \in [2,L +1],
\end{equation*}
by Lemma~\ref{rho_t}.

Similarly, for $r=2$ we need to verify
$$ 
w^{(t+1)}_3(L-1) w^{(t)}_3(L) = w^{(t)}_3(L-1) w^{(t)}_1(L+n) +
w^{(t)}_1(L+n-1) w^{(t)}_2(L-1)
$$  
for $t \in [n-3]$,
$$ 
w^{(t+1)}_3(L-1)  w^{(t)}_3(L) = w^{(t)}_3(L-1)  + w^{(t)}_1(L+n-1)
w^{(t)}_2(L-1)
$$  
for  $t = n-1, n-2$, and
$$
w^{(t+1)}_3(i-1)  w^{(t)}_3(i) = w^{(t)}_3(i-1) w^{(t+1)}_3(i)  +
w^{(t)}_1(i+n-1) w^{(t)}_2(i-1)
$$
for  $i\in [2,L-1]$, $t\in [n-1]$.
In this case
\begin{equation*}
w^{(t)}_1(i+n) ={ \bangle {[i-2}  {[i-1] }}_A,\quad  i \in [2, L], \qquad
w^{(t)}_2(i-1) ={ \bangle {[i-1]}  {[i-2]\cup L }}_A, \quad   i \in [2, L],
\end{equation*}
by Lemma~\ref{rho_t};
note that the first relation involves the shift $\tau_3^{-1}$.

Finally, for $r=3$ we need to verify
$$
w^{(t+1)}_1(L-1)  w^{(t)}_1(L) = w^{(t)}_1(L-1) w^{(t)}_2(L) +
w^{(t)}_2(L-1 ) w^{(t)}_3(L-n)
$$  
for $t \in [n-3]$,
$$
w^{(t+1)}_1(L-1) w^{(t)}_1(L) = w^{(t)}_1(L-1)  + w^{(t)}_2(L-1)
w^{(t)}_3(L-n)
$$  
for $t =n-1, n-2$, and
$$
w^{(t+1)}_1(i-1) w^{(t)}_1(i) = w^{(t)}_1(i-1) w^{(t+1)}_1(i) +
w^{(t)}_2(i-1 ) w^{(t)}_3(i-n)
$$
for $i\in [n+1,L-1]$, $t\in [n-1]$. 
To treat the remaining case $i\in [2,n]$, note that the vertices $(p,p+1)$, $p\in [n-1]$, are not involved
in the sequence $\SS^{(t)}_h$, and hence the attached functions are $\phhi_{n-p}$, as it was in the initial
quiver. Therefore, we need to verify
$$
w^{(t+1)}_1(i-1) w^{(t)}_1(i) = w^{(t)}_1(i-1) w^{(t+1)}_1(i) +
w^{(t)}_2(i-1 ) \phhi_{n-i+1} w^{(t)}_3(1)
$$
for  $i\in [2,n]$, $t\in [n-1]$.
In the last four formulas 
\begin{align*}
w^{(t)}_2(i-1)& ={ \bangle {[i-2]}  {[i-1] }}_A,\quad  i \in [2, L+1],\\
w^{(t)}_3(i-n)&={ \bangle {[i-1]}  {[i-2]\cup L }}_A, \quad i \in [n+1,L],\\
\phhi_{n-i+1} w^{(t)}_3(1) &={ \bangle {[i-1]}  {[i-2]\cup L }}_A^{(2)}, \quad i \in [2, n].
\end{align*}

We  conclude that all relations  we need to establish are of the form
\begin{align*}
{\bangle {[i-1]} {[i-1]}}_A\ &{\bangle {[i-2] } {[i-2] \cup L}}_A =\\ 
&{\bangle
{[i-2]} {[i-2]}}_A \ {\bangle {[i-1]} { [i-1]\cup  L}}_A + { \bangle {[i-1]}
{[i-2] \cup L}}_A\ { \bangle {[i-2]}  {[i-1] }}_A^{(q)}
 \end{align*}
 with $q=2$ if $r=3$ and $i\in [2,n]$ and $q=1$ otherwise.
and thus their proof reduces to an application of Lemma \ref{coredodgson} to
the matrix $A^{\widehat{[i-2]}}_{\widehat{[i-2]}}$.
\end{proof}

\begin{remark} It follows immediately from (i)--(iv) in the proof of the lemma that at the end of the sequence of mutations involved in $\SS_h^{(t)}$ the first vertex of $\P_{r+1}^{(t)}$ becomes disconnected from the rest of the quiver, and hence could be erased.
\end{remark}

Define
$$
\mathcal S_h = \mathcal S_h^{(n-1)}\circ \mathcal S_h^{(n-2)}\circ \cdots \circ
\mathcal S_h^{(0)}.
$$

\begin{figure}[ht]
\begin{center}
\includegraphics[height=4.5cm]{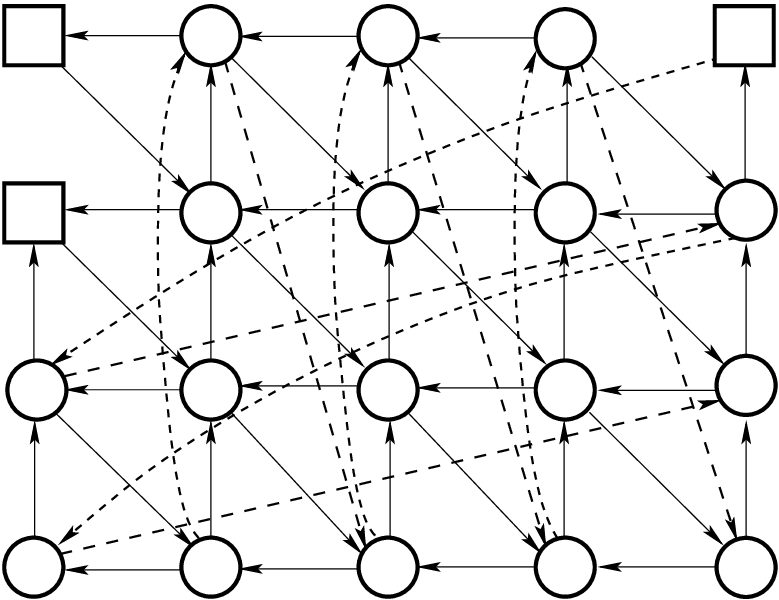}
\caption{Quiver $Q_5(5)$}
\label{fig:q5(5)}
\end{center}
\end{figure}

Lemmas above show that an application of $\mathcal S_h$ 
transforms our initial seed into $((f_{ij}^{(n)}), Q_n(n))$, where
vertices of $Q_n(n)$ form an $(n-1)\times n$ rectangular grid (see Fig.~\ref{fig:q5(5)} for the quiver
$Q_5(5)$). Furthermore, none of
the submatrices of $V(X,X)$ needed to define $f_{ij}^{(n)}(X)$  
contain the elements of the $n$th row of $X$. 
In fact, all of these defining submatrices ``fit"
(not necessarily uniquely) into the matrix
$\bar V$ obtained from $V(X,X)$ by deleting the first and the last column and 
rows numbered $n, 2n-1, \ldots, n + \nu (n-1)$ with
$\nu= \lceil \frac n 3 \rceil$. We embed matrices $V^{(n)}_{11}$, $V^{(n)}_{21}$, $V^{(n)}_{1n}$ into  $\bar V$ as dense submatrices
whose upper left corners are located at entries $(1,1)$, $(2,1)$ and $(n, n+1)$ of  $\bar V$, respectively.

It is convenient to subdivide  
 $\bar V$  into $k-1$ block columns of width $(n+1)$ and into $k$ block rows, where block rows numbered $2$ through
 $\nu$ have height $n-2$ and the remaining block rows have height $n-1$.
 Recall that the index of the last row of the $\alpha$th block row of $V(X,X)$ is $i_\alpha=1+\alpha(n-1)$. 
For $\alpha>\nu$, the index of the row in $\bar V$ that contains elements of the $i_\alpha$th row of $V(X,X)$ 
(in other words, the last row of $\alpha$th block row of $\bar V$) is $i^*_\alpha = i_\alpha - \nu$.
Further, the index of the first column in the $\beta$th block column of $\bar V$ is 
$j_\beta = 1 + (\beta-1)(n+1)$,  $\beta\in [k-1]$.

For example, for $n=5$,
 $\bar V$ is equal to
 $${\small
\left (
 \begin{array}{ccccccccccccc}
 x_{11} & x_{12} & x_{13} &  x_{14} & x_{15}& 0 & 0 & 0 & 0 & 0 & 0 & 0  \\
  x_{21} & x_{22} & x_{23} & x_{24} & x_{25}& 0 & 0 & 0 & 0 & 0 & 0 & 0 \\
 x_{31} & x_{32} & x_{33} &x_{34} & x_{35}& 0 & 0 & 0 & 0 & 0 & 0 & 0 \\
 x_{41} & x_{42} & x_{43} &x_{44} & x_{45}& 0 & 0 & 0 & 0 & 0 & 0 & 0 \\
   0 & x_{11} & x_{12} & x_{13} & x_{14} & x_{15} &x_{21}& x_{22} & x_{23} &
x_{24} & x_{25}& 0  \\
  0 & x_{21} & x_{22} & x_{23} & x_{24} & x_{25} &x_{31}& x_{32} & x_{33} &
x_{34} & x_{35}& 0  \\
 0 & x_{31} & x_{32} & x_{33} & x_{34} & x_{35} &x_{41}& x_{42} & x_{43} &
x_{44} & x_{45}& 0  \\
 0 & 0 & 0 & 0 & 0 &  0 & 0 & x_{11} & x_{12} & x_{13} & x_{14} & x_{15}  \\
  0 & 0 & 0 & 0 & 0 & 0 & 0 & x_{21} & x_{22} & x_{23} & x_{24} & x_{25} \\
  0 & 0 & 0 & 0 & 0 & 0 & 0 & x_{31} & x_{32} & x_{33} & x_{34} & x_{35}   \\
  0 & 0 & 0 & 0 & 0 & 0 & 0 & x_{41} & x_{42} & x_{43} & x_{44} & x_{45} 
 \end{array}
 \right ).
}
 $$
Here $\nu=2$, and the index of the last row in the third block row is $i^*_3=i_3-\nu=1+3(5-1)-2=11$.

To construct $\mathcal S_v$ we need yet another family of quivers denoted by
$Q_{n s}(n)$, $s\in [0, n-3]$, defined as follows. First, $Q_{n 0}(n)= Q_{n }(n)$. For
$s\geq 1$, the vertex set of $Q_{n s}(n)$ is obtained from that of $Q_{n}(n)$ by
removing vertices $(n-s,1), \ldots, (n-1,1)$. $Q_{n s}(n)$
 inherits all the edges in $Q_{n}(n)$ whose both ends
belong to this subset, except for the edge $(n-s-1,1)\to(n-s,2)$. In addition, $Q_{ns}(n)$  contains edges $(n-i-2,n) \to
(n-i,2)$,
$(n-i,2) \to (n-i-1,n)$ for $i\in [s]$. Vertices $(1,1), (2,1)$ and $(1,n)$ are
frozen in $Q_{ns}(n)$. The quiver $Q_{51}(5)$ is shown in Fig.~\ref{fig:q51(5)}.

\begin{figure}[ht]
\begin{center}
\includegraphics[height=4.5cm]{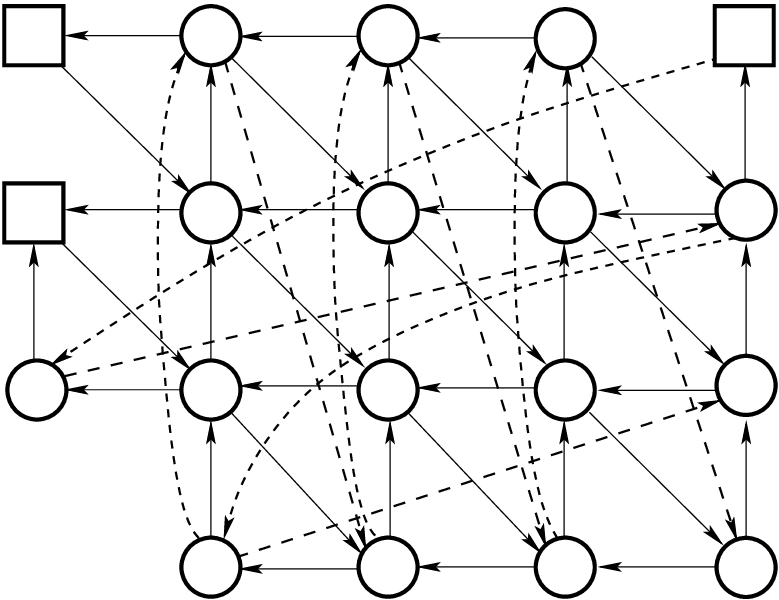}
\caption{Quiver $Q_{51}(5)$}
\label{fig:q51(5)}
\end{center}
\end{figure}

Let us assign a function $f_{ij}^{(ns)}(X)$ to each vertex $(i,j)$ of $Q_{ns}(n)$.
Since the functions will be written in terms of cores of submatrices of $\bar V$, from now on and till the end
of Section~\ref{sec:Strans} we omit explicit mentioning
of $\bar V$ in the subscript.

We start by defining submatrices  $\bar V_{11}^{(s)}$,  $\bar V_{21}^{(s)}$,  $\bar V_{1n}^{(s)}$ of $\bar V$. 
For $s=0$, we use 
$\bar V_{pq}^{(0)}= V^{(n)}_{pq}$. For $s>0$, the
formulas for $\bar V_{pq}^{(s)}$ and certain details of the proof (but not the statement) of Lemma \ref{Sv} below depend on the value of 
$n \bmod 3$, but not in a significant way. The differences do not warrant a tedious case by case consideration and thus, we will only discuss one of them, $n=6m - 1$.

In this case $\nu=2m$, and we define $\bar V_{11}^{(s)}$ as the core of
$\bar V^{\hat \jmath_{[2m-2l,2m+l]}}_{ \hat \imath^*_{[2m+1,2m+l]}}$ for $s\in[6l+1,6l+3]$, and as
the core of $\bar V^{\hat \jmath_{[2m-2l+1,2m+l-1]}}_{ \hat \imath^*_{[2m+1,2m+l]}}$ for $s\in[6l-2,6l]$.
Note that the lower right corner of $\bar V_{11}^{(s)}$ in the first case is $x_{n-1,n}$ 
located in the last row of the $(2m+l+1)$st block row of $\bar V$ (and in 
the last column of the $(2m+l)$th block column of $\bar V$). Indeed, the indices
of the row and the column of $\bar V^{\hat \jmath_{[2m-2l,2m+l]}}_{ \hat \imath^*_{[2m+1,2m+l]}}$ in
  which this entry is located are $(2l+r+1)(n-2) + 2$ and $(2l+r)(n+1) - 3r - 1$, respectively. 
  It is easy to check that these indices are equal and so the entry in question is on the diagonal of $\bar V^{\hat \jmath_{[2m-2l,2m+l]}}_{ \hat \imath^*_{[2m+1,2m+l]}}$. The entries below it are zero and thus it follows from examination of the staircase shape of $\bar V$ that $\bar V_{11}^{(s)}$ in this case can be as well
defined as the core of $\bar V^{\hat \jmath_{[2m-2l,2m+l+1]}}_{ \hat \imath^*_{[2m+1,2m+l]}}$.
Similarly, the lower right corner of $\bar V_{11}^{(s)}$ in the second case is $x_{n-2,n}$ located in the $(n-2)$nd  row of the $(2m+l)$th block row of $\bar V$, and hence $\bar V_{11}^{(s)}$ in this case can be as well defined as the core of $\bar V^{\hat \jmath_{[2m-2l+1,2m+l]}}_{\hat\imath^*_{[2m+1,2m+l]}}$.

Next, $\bar V_{21}^{(s)}$ is defined as the core of $\bar V^{\hat \jmath_{[2m-2l,2m+l-1]}}_{\hat 1\cup \hat \imath^*_{[2m+1,2m+l]}}$, or, equivalently, as the core of  $\bar V^{\hat \jmath_{[2m-2l,2m+l]}}_{\hat 1\cup \hat \imath^*_{[2m+1,2m+l]}}$
for $s\in[6l,6l+2]$, and as the core of
$\bar V^{\hat \jmath_{[2m-2l-1,2m+l]}}_{\hat 1\cup \hat \imath^*_{[2m+1,2m+l]}}$, or, equivalently, as the core of
 $\bar V^{\hat \jmath_{[2m-2l-1,2m+l+1]}}_{\hat 1\cup \hat \imath^*_{[2m+1,2m+l]}}$ for $s\in[6l+3,6l+5]$.
The justification for the two alternative representations follows from the fact that 
the lower right corner of $\bar V_{21}^{(s)}$ in the first case is $x_{n-1,n}$ located in the last row of the $(2m+l+1)$st block row of $\bar V$, while the lower right corner of $\bar V_{21}^{(s)}$ in
the second case is $x_{n-2,n}$ located in the $(n-2)$nd  row of the $(2m+l)$th block row of $\bar V$. 

Finally, $\bar V_{1n}^{(s)}$ is defined as the core of 
$\bar V^{\widehat{[n]}\cup \hat\jmath_{[2m-2l+1,2m+l]}}_{\widehat{[n-1]}\cup \hat \imath_{[2m+1,2m+l]}}$, or, equivalently,
as the core of  
$\bar V^{\widehat{[n]}\cup \hat\jmath_{[2m-2l+1,2m+l+1]}}_{\widehat{[n-1]}\cup \hat \imath_{[2m+1,2m+l]}}$ 
 for $s\in[6l-1,6l+1]$, and as the core of
 $\bar V^{\widehat{[n]}\cup \hat\jmath_{[2m-2l,2m+l]}}_{\widehat{[n-1]}\cup \hat \imath_{[2m+1,2m+l+1]}}$, or, equivalently, as the core of 
$\bar V^{\widehat{[n]}\cup \hat\jmath_{[2m-2l,2m+l+1]}}_{\widehat{[n-1]}\cup \hat \imath_{[2m+1,2m+l+1]}}$
 for $s\in[6l+2,6l+4]$. This time the justification is that the lower right corner of $\bar V_{1n}^{(s)}$ 
in the first case is $x_{n-1,n}$ located in the last row of the $(2m+l+1)$st block row of $\bar V$, while the lower right corner of $\bar V_{1n}^{(s)}$ in the second case  is $x_{n-2,n}$ located in the $(n-2)$nd  row of the $(2m+l+1)$st block row of $\bar V$.

Similar to the definition of $\rho_t$, define
a correspondence $\rho_{ns}$ between distinct trailing principal minors  of  $\bar
V_{11}^{(s)}$,  $\bar V_{21}^{(s)}$, $\bar  V_{1n}^{(s)}$ and pairs of indices $(i,j)$.

\begin{lemma}
\label{rho_ns}
For any $s\in [0,n-3]$, 
$\rho_{ns}$ is a one-to-one correspondence between distinct trailing principal
minors  of  $\bar V_{11}^{(s)}$, $\bar V_{21}^{(s)}$, $\bar  V_{1n}^{(s)}$ and vertices
of $Q_{ns}(n)$.
\end{lemma}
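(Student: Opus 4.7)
The proof will mirror the argument given for Lemma~\ref{rho_t}, proceeding by induction on $s$. The base case $s=0$ is immediate: by definition $Q_{n0}(n)=Q_n(n)$ and $\bar V_{pq}^{(0)}=V_{pq}^{(n)}$, so the statement reduces to the $t=n$ instance of Lemma~\ref{rho_t}.

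For the inductive step, I would introduce three diagonal paths $\P_{11}^{(ns)}, \P_{21}^{(ns)}, \P_{1n}^{(ns)}$ in $Q_{ns}(n)$, starting at the three frozen vertices $(1,1), (2,1), (1,n)$ and moving along edges of ``diagonal type'' present in $Q_{ns}(n)$: $(i,j)\to(i+1,j+1)$, $(n-1,i)\to(1,i)$, $(n,i)\to(1,i)$, $(i,n)\to (i+2,1)$, together with the newly added edges $(n-i-2,n)\to(n-i,2)$ and $(n-i,2)\to (n-i-1,n)$ for $i\in[s]$. A direct inspection analogous to the one carried out for Lemma~\ref{rho_t} shows that these edges enter every non-frozen vertex exactly once and leave every vertex except three terminal ones, so the three paths are simple, pairwise disjoint, and together cover all of $Q_{ns}(n)$.

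Matching these paths with the diagonals of $\bar V_{11}^{(s)}, \bar V_{21}^{(s)}, \bar V_{1n}^{(s)}$ then proceeds as in the proof of Lemma~\ref{rho_t}. A step $(i,j)\to (i+1,j+1)$ corresponds to a south-east diagonal step inside a single $\X$- or $\Y$-block of $\bar V$; a step $(n,*)\to(1,*)$ or $(n-1,*)\to(1,*)$ crosses from an $\X$-block (with or without its last row deleted) into a $\Y$-block; a step $(i,n)\to (i+2,1)$ crosses from a $\Y$-block back into an $\X$-block; and a newly added edge $(n-i-2,n)\to (n-i,2)$ or $(n-i,2)\to (n-i-1,n)$ corresponds to a diagonal step that bridges two block columns of $\bar V$ while skipping a row that was present in $V(X,X)$ but removed in the passage to $\bar V$. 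The translation invariance of $\bar U$ depicted in Fig.~\ref{fig:baru}, combined with the alternative representations listed immediately after the definitions of $\bar V_{11}^{(s)}, \bar V_{21}^{(s)}, \bar V_{1n}^{(s)}$, ensures that each such step is unambiguous and that the submatrix obtained by deleting the first $i-1$ rows and columns of the relevant $\bar V_{pq}^{(s)}$ indeed realizes the cluster variable attached to the $i$th vertex of the corresponding path.

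The main obstacle is bookkeeping. The precise indices of the rows and columns of $\bar V$ that are deleted to form the three submatrices depend on the residues $n\bmod 3$ and $s\bmod 6$, and each residue class requires its own---structurally identical---verification. Only the case $n=6m-1$ has been written out explicitly in the definitions above; the remaining cases are handled by the same template, substituting the corresponding analogues of the alternative representations and invoking the appropriate instances of translation invariance from Fig.~\ref{fig:baru}.
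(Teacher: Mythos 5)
Your proposal follows essentially the same route as the paper: three diagonal paths issuing from the frozen vertices, shown to be simple, disjoint and covering, and then matched step by step with the diagonals of $\bar V_{11}^{(s)}$, $\bar V_{21}^{(s)}$, $\bar V_{1n}^{(s)}$ (the paper makes the matching precise via a recursion obtained by decomposing each diagonal sequence at the last occurrence of a pair $(*,1)$, dual to the decomposition at the first occurrence of $(n,*)$ used for Lemma~\ref{rho_t}). Two small slips worth noting: $Q_{ns}(n)$ has no vertices in row $n$, so edges $(n,i)\to(1,i)$ should not appear in your list of diagonal-type edges; and the added edges $(i,n)\to(i+2,2)$ encode the deletion of a \emph{column} $j_\beta$ in forming $\bar V_{pq}^{(s)}$ from $\bar V$ (the transposed analogue of the row deletions in Lemma~\ref{rho_t}), not the deletion of rows in the passage from $V(X,X)$ to $\bar V$ --- the latter is already accounted for by the edges $(n-1,i)\to(1,i)$ inherited from $Q_n(n)$.
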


\begin{proof} 
The proof of the lemma is similar to the proof of Lemma~\ref{rho_t}. We start with defining  {\em diagonal 
paths\/} in $Q_{ns}(n)$ as maximal directed paths that start at a
frozen vertex and consist of edges of the type $(i,j)\to (i+1,j+1)$, $(n-1,i)\to (1,i)$, and $(i, n) \to (i+2, 1)$ or $(i,n) \to (i+2,2)$. Since exactly one such edge enters each non-frozen
vertex, the three diagonal paths are simple and do not intersect. We denote them 
$\bar\P_1^{(s)}=\bar\P_{11}^{(s)}$, $\bar\P_2^{(s)}=\bar\P_{21}^{(s)}$ and 
$\bar\P_3^{(s)}=\bar\P_{1n}^{(s)}$,  according to the starting vertex. 
Besides, exactly one such edge
leaves each vertex, except for the following three: $(n-s-1,1)$, $(n-2,n)$ and $(n-1,n)$. 
These three vertices
are, therefore, the last vertices of the three diagonal paths. 

The fact that the three diagonal paths cover all vertices in $Q_{ns}(n)$ is proved in the same way as the
similar fact for diagonal paths in $Q_t(n)$. 

It remains to prove that the vertices in the paths  $\bar\P_{11}^{(s)}$, 
$\bar\P_{21}^{(s)}$ and $\bar\P_{1n}^{(s)}$ correspond to diagonal elements of the matrices $\bar V^{(s)}_{11}$, 
$\bar V^{(s)}_{21}$ and $\bar V^{(s)}_{1n}$ in the natural order,
and that the diagonal elements that are not involved in this correspondence do not define new trailing
minors. To do that we define sequences 
$\bar V_i^{(s)}$, $i\in [1,3]$,  similarly to $V_i^{(t)}$ in the proof of Lemma~\ref{rho_t}.
Denote by $\bar F^{(s)}_i$ the last occurrence of 
a pair $(*,1)$ in $\bar V^{(s)}_i$.  
Each sequence
$\bar V_i^{(s)}$ can be decomposed as follows: $\bar V^{(s)}_i=\bar U_i^{(s)}\bar F^{(s)}_i\bar W^{(s)}_i$ (the first and the second  
subsequences are empty if there is no occurrence of $(*,1)$ in $\bar V^{(s)}_i$). It is easy to prove by
induction the following characterization of the sequences $\bar V^{(s)}_i$ and $\bar\P^{(s)}_i$:
let $\bar r$ take values in $\{1,2,3\}$ and let $\bar r\equiv n-s-1\bmod 3$,  then
\begin{equation}\label{frecbvp}
\begin{aligned}
&\bar V^{(s)}_{\bar r+1}= \bar U^{(s-1)}_{\bar r+1}\bar W^{(s-1)}_{\bar r},\qquad \bar V^{(s)}_i= \bar V^{(s-1)}_i\quad \text{for $i\ne \bar r+1$, $s\in [n-3]$},\\
&\bar\P^{(s)}_{\bar r}= \bar U^{(s)}_{\bar r}\bar F^{(s)}_{\bar r},\qquad\qquad \bar\P^{(s)}_i= \bar V^{(s)}_i\quad \text{for $i\ne \bar r$,
$s\in [0,n-3]$}
\end{aligned}
\end{equation}
with the initial condition $W^{(0)}_{\bar r_0}=\varnothing$ for $\bar r_0\equiv n-1\bmod3$.

Relations \eqref{frecbvp} are proved similarly to relations~\eqref{frecvp}; they immediately
imply the sought for one-to one correspondence.
\end{proof}

Assign a function $f^{(ns)}_{ij}(X)=\rho_{ns}^{-1}(i,j)$
to each vertex $(i,j)$ of $Q_{ns}(n)$.

Consider a seed $((f^{(ns)}_{ij}), Q_{ns}(n) )$. Denote by $\mathcal S_v^{(s)}$ a
composition
of cluster transformations applied to consecutive mutable vertices of the diagonal
of $Q_{ns}(n)$ starting with $(n-s-1, 1)$ followed by the shift along the diagonal,
freezing of the last mutated vertex and erasure of the first vertex of the diagonal.
As before, we assume that whenever a vertex is shifted, the function assigned to it is not changed.

\begin{lemma}
\label{Sv}
For any $s\in [0,n-4]$, the transformation $\mathcal S_v^{(s)}$ transforms the seed $((f_{ij}^{(ns)}), Q_{ns}(n) )$ into
 $((f_{ij}^{(n, s+1)}), Q_{n, s+1}(n) )$.
\end{lemma}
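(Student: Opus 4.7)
The plan is to mirror, step by step, the argument used for Lemma~\ref{Sh}. The structural parallel is clean: both $\mathcal S_h^{(t)}$ and $\mathcal S_v^{(s)}$ mutate along one of the three diagonal paths (now $\bar\P_1^{(s)}$, $\bar\P_2^{(s)}$, $\bar\P_3^{(s)}$ of Lemma~\ref{rho_ns}) in reverse, from its last vertex (here $(n-s-1,1)$) back toward its source, and both replace a single matrix in the triple $\{\bar V_{11}^{(s)},\bar V_{21}^{(s)},\bar V_{1n}^{(s)}\}$ by the one indexed by $s+1$ while leaving the other two untouched. Accordingly, I would split the proof into two halves: evolution of the quiver, and evolution of the cluster variables.

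For the quiver half, I would establish, by induction on the step within $\mathcal S_v^{(s)}$, a local incidence checklist at the vertex $(p,q)$ of the diagonal immediately before its mutation, patterned on items (i)--(iv) in the proof of Lemma~\ref{Sh}. The only genuine difference is that the long edges incident to $(n,*)$-vertices in $Q_t(n)$ have been replaced by edges incident to $(n-1,*)$-vertices in $Q_{ns}(n)$, so the local combinatorics is essentially the same after relabeling. This shows that each mutation restores the local quiver (now at the preceding vertex of the path) and that the terminal bookkeeping (freezing the last mutated vertex, erasing the initial one) yields exactly $Q_{n,s+1}(n)$.

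For the cluster variable half, I would realize the functions along the path being mutated as determinants of trailing principal submatrices of a single matrix $A \subset \bar V$ whose upper-left corner coincides with the appropriate starting vertex. Each interior exchange relation will then take the form
\begin{equation*}
{\bangle {[i-1]} {[i-1]}}_A \, {\bangle {[i-2]} {[i-2]\cup L}}_A
= {\bangle {[i-2]} {[i-2]}}_A \, {\bangle {[i-1]} {[i-1]\cup L}}_A
+ {\bangle {[i-1]} {[i-2]\cup L}}_A \, {\bangle {[i-2]} {[i-1]}}_A,
\end{equation*}
that is, precisely \eqref{coredodgsonformula}. The boundary mutations (the initial one at $(n-s-1,1)$ and the last one before freezing) correspond to degenerate corners of $A$ and produce shorter, monomial-type exchange relations; here the translation invariance $\tau_3$ of $\bar V$ is exactly what lets the two alternative descriptions of $\bar V_{pq}^{(s+1)}$ listed just above Lemma~\ref{Sv} coincide as minors of $\bar V$, thereby identifying the output of the mutation with the correct trailing minor at the next step.

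The main obstacle will be the case analysis driven by $s \bmod 6$ and, behind it, by $n \bmod 3$: these residues determine which of $\bar V_{11}^{(s)}$, $\bar V_{21}^{(s)}$, $\bar V_{1n}^{(s)}$ is altered between steps $s$ and $s+1$, hence which of the paths $\bar\P_i^{(s)}$ is mutated, and hence which translation identity on $\bar V$ is invoked. Following the convention already adopted in the text, I would carry out the complete verification in the representative case $n = 6m-1$, checking the six residue classes of $s \bmod 6$ one by one against the formulas for $\bar V_{pq}^{(s)}$ given above the lemma, and treat the remaining classes of $n \bmod 3$ as strictly analogous. The bulk of the work is the (routine but lengthy) factorization step: each raw Desnanot--Jacobi identity applied to a submatrix of the staircase matrix $\bar V$ carries common factors that must be cancelled to recover the intended exchange relation, and tracking these factors across all boundary cases is what makes the argument notationally heavy, though not conceptually new beyond what has already been done for $\mathcal S_h$.
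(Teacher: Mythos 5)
Your architecture matches the paper's exactly (quiver evolution via a local incidence checklist, cluster variables via determinantal identities on submatrices of $\bar V$, case analysis driven by $s\bmod 6$ for $n=6m-1$, common-factor cancellation). But the key determinantal step is mis-identified, and as written it would fail. You claim the interior exchange relations are ``precisely \eqref{coredodgsonformula}'', i.e.\ instances of Lemma~\ref{coredodgson} for trailing principal minors of a single matrix $A$. That was true for $\mathcal S_h$, because there all six minors in an exchange relation were of the form ${\bangle{[a]}{[b]\cup L}}_A$ for one matrix $A$ (index sets are initial segments plus at most the single index $L$). For $\mathcal S_v$ this breaks down: the neighbors of a mutated vertex live on the \emph{other} diagonal paths, and by the formulas for $\bar V_{11}^{(s)},\bar V_{21}^{(s)},\bar V_{1n}^{(s)}$ the corresponding functions are cores of $\bar V$ with \emph{several non-adjacent} block-column heads $j_{[\alpha,\beta]}$ and block-row tails $i^*_{[\alpha',\beta']}$ deleted, and these deleted sets differ between the three families and between $s$ and $s+1$. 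So the six minors entering one exchange relation are not trailing principal minors of any common matrix, and Lemma~\ref{coredodgson} as stated does not apply. The paper instead reduces everything to the more general identities \eqref{gen012} and \eqref{gen345}, with arbitrary index sets $I,J$ and extra indices $\alpha,\beta,\gamma,\delta$, and proves \emph{those} by applying \eqref{jacobi} (for $s>0$) or \eqref{notjacobi} (for $s=0$, where the relevant dense submatrix $C$ has one fewer rows than columns and the common factor $\det X_{[n-1]}^{[n-1]}$ must be cancelled) to the smallest dense submatrix of $\bar V_{\hat I}^{\hat J}$ containing all the cores.

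Two smaller inaccuracies: the terminal mutations do not produce ``monomial-type'' relations --- they are still binomial exchange relations, just with different neighbors --- and for $\bar r=1$, $i\in[2,n]$ one of the factors is a product of the first \emph{two} irreducible components (the ${\bangle{\ }{\ }}^{(2)}$ term absorbing $\phhi_{n-i+1}w_3^{(s)}(1)$), which your framework does not account for. Your instinct about the ``factorization step'' on raw Desnanot--Jacobi identities is the right one; the fix is to carry out that step for the general index sets above rather than to quote Lemma~\ref{coredodgson}.
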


\begin{proof} The proof of the lemma is similar to the proof of Lemma~\ref{Sh}. 
Let us first describe the evolution of the quiver $Q_{ns}(n)$. One can check by induction that prior to 
applying a mutation in the sequence $\mathcal S_v^{(s)}$ to a vertex $(p,q)\in\bar\P_{\bar r}^{(s)}$ the current state of
the quiver can be described as follows.

(i) Vertex $(p,q)$ is trivalent if $p=n$ and $q=1,2$, five-valent if $t>0$, $r=3$ and $(p,q)=(i,i)$ for
$i\in[2,n-1]$ or $(p,q)=(1,n-1)$, and four-valent otherwise.

(ii) The edges pointing from $(p,q)$ are $(p,q)\to (p,q-1)$ for $q\ne 1$ and $(p,1)\to (p-1,n)$ for $q=1$;
$(p,q)\to (p-1,q)$ for $p\ne 1$ and $(1,q)\to (n-1,q+1)$ for $p=1$; $(p,q)\to (1,n)$ whenever $(p,q)$ is
five-valent.
 
(iii) The edges pointing to $(p,q)$ are $(p-1,q-1)\to(p,q)$ for $p\ne 1$ and  $q\ne 1$, 
$(n-1,q)\to(1,q)$ for $p=1$, and $(p-2,n)\to (p,1)$ 
for $q=1$; the special edge from the vertex of $\bar\P_{\bar r}^{(s)}$ that has been just mutated and moved to $(p,q)$ for $p\ne n$. 

(iv) There are no edges between the vertex that has been just mutated and moved to $(p,q)$ 
and vertices $(p,q-1)$ and $(p-1,q)$ (or vertices $(p-1,n)$ or $(n-1,
q+1)$ in the situations described in (ii) above).

Consequently, when $(p,q)$ is moved one position back along $\bar\P_{\bar r}^{(s)}$, the local structure of the quiver is restored completely. 
   
To describe cluster transformations along the diagonal path $\bar\P^{(s)}_{\bar r}$ we consider the  submatrix $A=\bar V_{p_1q_1}^{(s)}$ of $\bar V$, where $(p_1,q_1)$ is the first
vertex of $\bar\P_{\bar r}^{(s)}$.  Let $L=L_s$ be the length of $\bar\P^{(s)}_{\bar r}$.
 The $L$-th column of $A$ corresponds to 
the column of $\bar V$ that contains $\bar F_{\bar r}^{(s)}$, see the proof of Lemma~\ref{rho_ns}.
Similar to the proof of Lemma \ref{Sh}, we use notation $w^{(s)}_j(i)$ for  the cluster variable
attached to the $i$th vertex of the diagonal path $\bar\P^{(s)}_j$.
Then, by Lemma~\ref{rho_ns}, 
$w^{(s)}_{\bar r}(i)=\det A^{\widehat{[i-1]}}_{\widehat{[i-1]}}$, and we need
to show that this variable transforms into $w^{(s+1)}_{\bar r}(i-1)$.

It follows from (i)--(iii) above that for $\bar r=1$ the  relations we need to verify are
$$
w^{(s+1)}_1(L-1)  w^{(s)}_1(L) = w^{(s)}_1(L-1) w^{(s)}_3(L-n+1) +
w^{(s)}_2(L-1 ) w^{(s)}_3(L-n)
$$  
for $s\in [n-4]$,
\begin{equation}\label{probe}
w^{(s+1)}_1(i-1) w^{(s)}_1(i) = w^{(s)}_1(i-1) w^{(s+1)}_1(i) +
w^{(s)}_2(i-1 ) w^{(s)}_3(i-n)
\end{equation}
for $i\in [n+1,L-1]$, $s\in [n-4]$, and
$$
w^{(s+1)}_1(i-1) w^{(s)}_1(i) = w^{(s)}_1(i-1) w^{(s+1)}_1(i) +
w^{(s)}_2(i-1 ) \phhi_{n-i+1} w^{(s)}_3(1)
$$
for  $i\in [2,n]$, $s\in [n-4]$.

Similarly, for $\bar r=2$ we need to verify
$$
 w^{(s+1)}_2(L-1) w^{(s)}_2(L) = w^{(s)}_2(L-1) w^{(s)}_1(L+1) +
w^{(s)}_1(L) w^{(s)}_3(L)
$$ 
for $s\in [n-4]$ and
$$
w^{(s+1)}_2(i-1) w^{(s)}_2(i) = w^{(s)}_2(i-1) w^{(s+1)}_2(i) +
w^{(s)}_1(i) w^{(s)}_3(i)
$$
for  $s\in [n-4]$, $i\in [2,L-1]$.

Finally, for $\bar r=3$ we need to verify
$$ 
w^{(s+1)}_3(L-1) w^{(s)}_3(L) = w^{(s)}_3(L-1) w^{(s)}_2(L) +
w^{(s)}_1(L+n-1) w^{(s)}_2(L-1)
$$  
for $s\in [n-4]$ and
$$
w^{(s+1)}_3(i-1)  w^{(s)}_3(i) = w^{(s)}_3(i-1) w^{(s+1)}_3(i)  +
w^{(s)}_1(i+n-1) w^{(s)}_2(i-1)
$$
for $s\in [n-4]$, $i\in [2,L-1]$.

As it was mentioned above, we will only verify these relations for the case $n= 6 m -1$. In this situation
$L_{3 \gamma} = j_{\nu - \gamma}$ for $\gamma \in[0,2m]$, $L_{3 \gamma+1} = j_{\nu - \gamma} - n$ for $\gamma \in[0,2m]$,
$L_{3 \gamma+2} = j_{\nu - \gamma} - 1$ for $\gamma \in [0,2m-1]$. 
This, together with the definition of submatrices $\bar V_{p_1q_1}^{(s)}$
implies that in ranges of index $i$ involved in relations we want to establish, 
functions $w^{(s)}_j(i) $ can be written as
\begin{equation}\label{wbar}
\begin{aligned}
&w_{1}^{(s)}(i)={\bangle {[i-1]\cup j_{[2m-2l,2m+l]}}{[i-1]\cup  i^*_{[2m+1,2m+l]}} },  
 \quad s\in[6l+1, 6l +3], \\
&w_{1}^{(s)}(i)={\bangle {[i-1]\cup j_{[2m-2l+1,2m+l-1]}}{[i-1]\cup i^*_{[2m+1,2m+l]}} },
\quad  s\in[6l-2, 6l],\\ 
&w_{2}^{(s)}(i)={\bangle {[i-1]\cup j_{[2m-2l,2m+l-1]}}{[i]\cup i^*_{[2m+1,2m+l]}} }, 
 \quad s\in[6l, 6l + 2], \\
&w_{2}^{(s)}(i)={\bangle {[i-1]\cup j_{[2m-2l - 1,2m+l]}}{[i]\cup i^*_{[2m+1,2m+l]}} },
\quad  s\in[6l+3, 6l +5],\\
&w_{3}^{(s)}(i)={\bangle {[n+i-1]\cup j_{[2m-2l+1,2m+l]}}{[n+i-2]\cup i^*_{[2m+1,2m+l]}} },
\quad s\in[6l-1, 6l + 1], \\
&w_{3}^{(s)}(i)={\bangle {[n+i-1]\cup j_{[2m-2l ,2m+l]}}{[n+i-2]\cup i^*_{[2m+1,2m+l+1]}} },
\quad  s\in[6l+2, 6l +4].
\end{aligned}
\end{equation}
For $i\in [2,n]$, the last two formulas have to be modified: the left hand side is replaced by $\phhi_{n-i+1} w^{(s)}_3(1)$, and in the right hand side $n+i$ is replaced by $i-2$ and $\bangle \ \ $ by ${\bangle \ \ }^{(2)}$. 
Note that for the reasons explained in the definition of matrices $\bar V_{pq}^{(s)}$ before
Lemma~\ref{rho_ns}, each expression of the form $\bangle{J\cup j_{[\alpha,\beta]}}{I}$ 
in~\eqref{wbar} can be replaced by $\bangle{J\cup j_{[\alpha,\beta+1]}}{I}$.
In addition, when treating the case $\bar r = 2$, we use the shift $\tau_3^{-1}$ (see Fig.~\ref{fig:baru}) to re-write
$$
w_{3}^{(6l+2)}(i)={\bangle {[i-2]\cup j_{[2m-2l-1 ,2m+l]}}{[i]\cup i^*_{[2m+1,2m+l]}} },\quad w_{3}^{(6l-1)}(i)={\bangle {[i-2]\cup j_{[2m-2l ,2m+l]}}{[i]\cup i^*_{[2m+1,2m+l-1]}} },
$$
while for $\bar r = 3$ we use $\tau_3^{-1}$ to re-write
\begin{align*}
&w_{2}^{(6l+1)}(n+i-1)={\bangle {[n+i-1]\cup j_{[2m-2l+1 ,2m+l]}}{[i]\cup  i^*_{[2m+1,2m+l+1]}} },\\
&w_{2}^{(6l-2)}(n+i-1)={\bangle {[n+i-1]\cup j_{[2m-2l+2 ,2m+l]}}{[n+i-3]\cup  i^*_{[2m+1,2m+l]}} }.
\end{align*}

Using representation \eqref{wbar} and the alternative representation together with the shifts, one 
can check  that all relations  we need to establish are of the form
\begin{equation}\label{gen012}
{\bangle {J \cup \delta} {I}}\ {\bangle {J \cup \gamma} {I\cup \alpha }} = {\bangle {J } {I}}\ {\bangle {J \cup \gamma \cup\delta} {I\cup \alpha }} + {\bangle {J \cup \gamma} {I}}^{(q)} {\bangle {J \cup \delta} {I\cup \alpha }}
 \end{equation}
(if $s\in[6l, 6 l +2]$),
or of the form
 \begin{equation}\label{gen345}
{\bangle {J \cup \delta} {I\cup \beta}}\ {\bangle {J \cup \gamma} {I\cup \alpha }} = {\bangle {J } {I}}\ {\bangle {J \cup \gamma \cup\delta} {I\cup \alpha \cup \beta }} + {\bangle {J \cup \gamma} {I\cup \beta}}^{(q)} {\bangle {J \cup \delta} {I\cup \alpha }}
 \end{equation}
 (in all the remaining cases) for suitable indices $\alpha$, $\beta$, $\gamma$, $\delta$ and index sets $I$, $J$
 with $q=2$ if $\bar r=1$ and $i\in[2,n]$ and $q=1$ otherwise. For instance, let us verify relation~\eqref{probe}. Here 
$s=6 l$, and we have to compute $w_1^{(6l+1)}(i-1)$, $w_1^{(6l+1)}(i)$, $w_3^{(6l)}(i-n)$,
$w_1^{(6l)}(i-1)$, $w_1^{(6l)}(i)$, and $w_2^{(6l)}(i-1)$. We use representation~\eqref{wbar}
for the former three functions, and the alternative reprentation for the latter three. 
Then~\eqref{probe} becomes~\eqref{gen012} with the choice 
 $\alpha = \gamma = i-1$, 
 $\delta= j_{2m-2l}$, $I =[i-2]\cup i^*_{[2m+1,2m+l]}$ and 
 $J = [i-2]\cup j_{[2m- 2l +1,2m +l]}$. 
 
 Now, either of the identities \eqref{gen012} and \eqref{gen345}
 can be established as follows. Consider the smallest dense submatrix  $C$ of $\bar V_{\hat I}^{\hat J}$ that contains al the minors  featured in the identity. For $s=0$, $C=\bar V_{[i,i_{2m+1}]}^{[i, j_{2m+1}-1]}$, which has one less rows than columns. We apply to $C$ identity \eqref{notjacobi} using columns in $C$ that correspond to columns $i, j_{2m}$ and $j_{2m+1}-1$ in $\bar V$ and the row in $C$ that correspond
 to row $i$ in $\bar V$. The needed relation is obtained after canceling the common factor  equal to $\det X_{[n-1]}^{[n-1]}$.
  If $s>0$, $C$ is a square matrix. We apply to it identity \eqref{jacobi} and cancel the common factors
 to obtain the needed relation. In doing so, we use columns in $C$ that correspond to $\gamma$ and $\delta$ and rows that correspond to $\alpha$ and $\beta$ for~\eqref{gen345}, or the row that 
 correspond to $\alpha$ and the last row of $C$ for~\eqref{gen012}.
\end{proof}

Define
$$
\mathcal S_v = \mathcal S_v^{(n-4)}\circ \mathcal S_v^{(n-5)}\circ \cdots \circ
\mathcal S_v^{(0)}.
$$
We have constructed a transformation $\mathcal S=\mathcal S_v\circ \mathcal S_h$.
To check that is has the properties stipulated in Theorem~\ref{transform1}, first consider the quiver  $Q_{n,n-3}(n)$. In this
quiver, frozen vertices $(1,1)$ and $(2,1)$ are only
connected to vertices $(1,2)$ and $(2,2)$. Freezing the latter two vertices and
erasing the former two, we obtain a quiver
isomorphic to $\hat Q_{CG}(n-1)$.

Next, consider  $\bar V_{11}^{(n-3)}$,  $\bar V_{21}^{(n-3)}$, $\bar  V_{1n}^{(n-3)}$.
We claim that
$\bar V_{11}^{(n-3)}$ is the $(n (k-1)+1)\times (n (k-1)+1)$ submatrix of $\bar V$  
obtained by deleting columns $j_2,\ldots, j_{k-1}$ and rows $i^*_{\nu +1}, 
\ldots, i^*_{k-1}$. As before, we will only
explain this for $n=6m-1$, since other cases can be dealt with in a similar way. We have $k-1= 3m -1$ and
$\nu+1=2m+1$, so 
$\bar V_{11}^{(n-3)}= \bar V_{11}^{(3(2m - 1) - 1)} $ is the core of $\bar V^{\hat \jmath_{[2,3m-1]}}_{ \hat \imath^*_{[2m+1,3m-1]}}$, as claimed.

 Similarly, $\bar V_{21}^{(n-3)}$ is the
 $(n-2)(k-1)\times (n-2)(k-1)$ submatrix of $\bar V$ obtained by deleting  
 the last $n-1$ rows and $n+1$ columns, columns $j_2,\ldots, j_{k-2}$ and rows $i^*_{\nu+1}, \ldots,i^*_{k-1}$.
 
Finally,  $\bar V_{1n}^{(n-3)}$ is the
 $(n-2)(k-1)\times (n-2)(k-1)$ submatrix of $\bar V$ obtained by deleting  
 the first $(n-1)$ rows and $n$ columns, columns $j_2,\ldots, j_{k-1}$ and rows $i^*_{\nu+1}, \ldots,i^*_{k}$.

It follows from the description above that among the functions attached to vertices
of $Q_{n,n-3}(n)$
only functions  $f_{11}^{(n, n-3)}= \det \bar V_{11}^{(n-3)}$ and $f_{21}^{(n,
n-3)}= \det \bar V_{21}^{(n-3)}$ are associated
with submatrices of $\bar V$ containing elements of the first column of the
first block column. After we removed
vertices $(1,1)$ and $(2,1)$, the remaining functions $f_{ij}^{(n, n-3)}$ are minors of the $(n
(k-1)+1)\times (n (k-1))$ matrix $A$ obtained from $\bar V$ by deleting
 the first column from  every block column and the last row from every block row of height $n-1$, 
 except for the first and the last one.   It will be convenient to make $A$ into a square 
  $(n(k-1)+1)\times (n (k-1)+1)$ matrix $V'$ by attaching the 
  vector $\mbox{col} (0, X_n^{[n-2]}, 0, \ldots, 0)$ as the first column.
For example, for  $n=5$,
 $V'$ is equal to
 $${\small
\left (
 \begin{array}{ccccccccccc}
 0  & x_{12} & x_{13} &  x_{14} & x_{15}& 0 & 0 & 0 & 0 & 0 & 0  \\
 x_{15} &  x_{22} & x_{23} & x_{24} & x_{25}& 0 & 0 & 0 & 0 & 0 & 0 \\
  x_{25} & x_{32} & x_{33} &x_{34} & x_{35}& 0 & 0 & 0 & 0 & 0 & 0 \\
x_{35} &  x_{42} & x_{43} &x_{44} & x_{45}& 0 & 0 & 0 & 0 & 0 & 0 \\
  0 &  x_{11} & x_{12} & x_{13} & x_{14} & x_{15} & x_{22} & x_{23} & x_{24} &
x_{25}& 0  \\
 0 &  x_{21} & x_{22} & x_{23} & x_{24} & x_{25} & x_{32} & x_{33} & x_{34} &
x_{35}& 0  \\
 0 &  x_{31} & x_{32} & x_{33} & x_{34} & x_{35} & x_{42} & x_{43} & x_{44} &
x_{45}& 0  \\
 0 & 0 & 0 & 0 & 0 & 0 &    x_{11} & x_{12} & x_{13} & x_{14} & x_{15}  \\
 0 & 0 & 0 & 0 & 0 & 0 &   x_{21} & x_{22} & x_{23} & x_{24} & x_{25}  \\
 0 & 0 & 0 & 0 & 0 & 0 &   x_{31} & x_{32} & x_{33} & x_{34} & x_{35}   \\
 0 & 0 & 0 & 0 & 0 & 0 &   x_{41} & x_{42} & x_{43} & x_{44} & x_{45}
 \end{array}
 \right ).
}
 $$
 
 In $V'$ we can use the shift
 $\tau_3^{-1}$ to embed $\bar V_{1n}^{(n-3)}$ alternatively as a dense submatrix whose upper left corner is located in the entry $(2,1)$.
 
Now, the set $\{ f_{ij}^{(n, n-3)}\: i\in [n-1]$, $j\in [2, n]\}$
coincides with the set of trailing principal minors of two dense submatrices of
$V'$: the first one, $V'_{22}$, is obtained by deleting the first row
and column; the second, $V'_{1n}$ is obtained by deleting the last $n$ columns and
the first one and the last $(n-1)$ rows.
Multiply $V'$ on the right by a block-diagonal unipotent lower triangular
matrix 
$$
Z=\mbox{diag}(1,\underbrace{N(v(X)),\ldots, N(v(X))}_{k-1 \ \mbox{times}}).
$$ 
As a result, both $V'_{22}$ and  $V'_{1n}$ will also be multiplied by a
unipotent lower triangular matrix, and therefore the values  of their trailing principal
minors will not change. Furthermore, the last $(k-1)n$ columns of   $V'$ are
subdivided into $k-1$ block columns of width $n$  that have a  form
$$
\left (
\begin{array}{c}
{\bf 0}_{n'_i\times n} \\
X_{[2-\delta_{i1},n-1]}^{[2,n]} \ \ 0\\
 X_{[1,n-2+\delta_{i,k-1}]} \\
{\bf 0}_{n_i''\times n} 
\end{array}
\right ) 
$$
with $n'_i=1-\delta_{i1}+(i-1)(n-2)$, $n_i''=1-\delta_{i,k-1}+(k-i-1)(n-2)$, $i\in [k-1]$.
Right multiplication of $V'$ by $Z$ results in right multiplication of each
block column
by $N(v(X))$, which, due to \eqref{zeta}, transforms the non-zero part of the block
column into
$$
\left (
\begin{array}{c}
\zeta(X)_{[2-\delta_{i1},n-1]} \ \  0\\
\ \ 0\ \ \zeta(X)_{[1,n-2+\delta_{i,k-1}]}
\end{array}
\right ).
$$
The first column of $V'$ does not change after multiplication by $Z$, however,
its nonzero entries can be re-written as $x_{in}=\zeta(X)_{in}$,  $i\in[n-2]$.
Denote  $\X'=\left [\zeta(X)_{[2,n-1]}\ 0\right ]$, $\Y'=\left [ 0\
\zeta(X)_{[1,n-2]}\right ]$.
We have transformed $V'_{22}$ into
\[
 \left [
\begin{array}{cccc}
 \X' & 0 & \cdots & \cdots\\
\Y' & \X' & 0 \cdots  \\
 \ddots& \ddots &\ddots & \ddots \\
 \cdots & 0 & \Y' & \X' \\
\cdots & 0 &0 &\  0\  \zeta(X)
\end{array}
\right ]
\]
and  $V'_{1n}$ into
\[
 \left [
\begin{array}{ccccc}
(\Y')^n_{[1,n-2]} & \X' & 0 & \cdots & 0 \\
0 & \Y' & \X' &\cdots & 0  \\
0 & 0 &\Y' &\X' &\cdots \\
\vdots& \ddots& \ddots &\ddots &\vdots \\
0 & \cdots & 0 & 0 &   \Y'
\end{array}
\right ].
\]
A comparison with the definition of the initial cluster for $\CC_{CG}(n-1)$ given by 
~\eqref{inclust} then shows that trailing minors of $V'_{22}$ are
$\thetta_i(\zeta(X))=\phhi_i(X)$, $i\in [n-1]$, and $\thetta_{n-1}(\zeta(X))
\pssi_i(\zeta(X))=\phhi_{n-1}(X) \pssi_i(\zeta(X))$,
$i\in [(k-1)(n-2)]$.
Trailing minors of $V'_{1n}$ are  $\phhi_i(\zeta(X))$,
$i\in [(k-1)(n-2)]$.

Finally, we re-label the vertices of the subquiver of  $Q_{n,n-3}(n)$ obtained by
deleting vertices $(1,1), (2,1)$ : a vertex $(i,j)$ is re-named $(i,j-1)$. The
resulting quiver coincides with $Q_{CG}(n-1)$ and, by the previous paragraph, the
functions attached to its vertices
are as prescribed by Theorem~\ref{transform1}.

 \subsection{Sequence $\T$}\label{Ttrans} 
 The goal of this subsection is to prove Theorem~\ref{transform2}.
 
We construct $\mathcal T$ in several stages, while maintaining two copies of the quiver. The same
mutations are applied to both copies. 
At the beginning of each stage, if the function attached to a vertex becomes equal to
$f^{w_0}_{ij}$ for some $i,j\in [n]$, the vertex in the first copy of the quiver is frozen. 
When this frozen vertex becomes isolated in the first copy, its counterpart in the second copy is moved 
to the $(i,j)$th position of the resulting quiver. By the end of the process, the function attached to 
each vertex $(i,j)$ of the resulting quiver $Q^*_{CG}(n)$ (the second copy) is $f^{w_0}_{ij}$.  
By Lemma~\ref{antipoiss}, $\{f^{w_0}_{ij}\}$ is a log-canonical basis. 

Let us identify the $\{f_{ij}^{w_0}\}=\{\thetta^{w_0}_i, i\in[n];  \phhi^{w_0}_\alpha, \alpha\in [N]; \pssi_\beta^{w_0}, \beta\in[M]\}$ that we seek to obtain via cluster transformations described below.
 Clearly, $\thetta^{w_0}_i = \det X^{[i]}_{[i]}$, while the other two families can be conveniently presented as 
 minors of $\bar U$. Indeed, it is not hard to observe that $\phhi_\alpha^{w_0}$ is the $\alpha\times \alpha$ leading principal minor of the dense $N\times N$ submatrix
 of $\bar U$ whose left upper corner is $\bar u_{2,n+1} = x_{21}$. (Note that the same $N\times N$ submatrix of $\bar U$ was used to define $\phi_\alpha$
 as its {\em trailing\/} principal minors.) Similarly, $\pssi_\beta^{w_0}$ is the $\beta\times \beta$ leading principal minor of the dense $M\times M$ submatrix
 of $\bar U$ whose left upper corner is $\bar u_{2,n} = x_{1n}$. (The $\tau_3$-shift of this $M\times M$ submatrix of $\bar U$ was used to define $\pssi_\beta$ as its {\em trailing\/} principal minors.)

\begin{lemma}
\label{antitorus}
The global toric action of $(\mathbb{C^*})^3$ on $\A_{CG}(GL_n)$ described in Proposition \ref{torus} induces a local toric
action of rank $3$ on the family $\{f_{ij}^{w_0}\}_{i,j=1}^n$.  Weight matrices of this action with respect to families $\{f_{ij}\}_{i,j=1}^n$ and $\{f_{ij}^{w_0}\}_{i,j=1}^n$ have the same column space.
\end{lemma}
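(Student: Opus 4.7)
My plan is to reduce the claim to a single linear-algebra computation driven by the identity $W_0 D_n W_0 = (n+1)\one_n - D_n$. Write $\tau_t(X) = t_0\, t_1^{D_n} X\, t_2^{D_n}$ for the action of $t=(t_0,t_1,t_2)\in(\C^*)^3$ furnished by Proposition~\ref{torus}. I would first verify the intertwining identity
\[
W_0\, \tau_t(X)\, W_0 \;=\; t_0\, t_1^{n+1} t_2^{n+1}\cdot t_1^{-D_n}(W_0 X W_0)\, t_2^{-D_n} \;=\; \tau_{\phi(t)}(W_0 X W_0),
\]
where $\phi(t_0,t_1,t_2) = (t_0 t_1^{n+1} t_2^{n+1},\, t_1^{-1},\, t_2^{-1})$ is an automorphism of $(\C^*)^3$. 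This is the only substantive computation and it is forced by the identity $W_0 D_n W_0 = (n+1)\one_n - D_n$.

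Next, I would invoke Proposition~\ref{torus} to obtain, for each $(i,j)$, an integer weight $w_{ij}\in\Z^3$ with $f_{ij}(\tau_t X)=t^{w_{ij}} f_{ij}(X)$. Substituting $W_0 X W_0$ into this relation and applying the intertwining identity gives
\[
f^{w_0}_{ij}(\tau_t X) \;=\; f_{ij}(W_0\, \tau_t(X)\, W_0) \;=\; \phi(t)^{w_{ij}}\, f^{w_0}_{ij}(X),
\]
showing that every $f^{w_0}_{ij}$ is a monomial character of $(\C^*)^3$, hence a weight vector; this establishes the local-toric-action claim.

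For the weight-matrix assertion, I would compute $\phi(t)^w = t^{Mw}$ for
\[
M \;=\; \begin{pmatrix} 1 & 0 & 0 \\ n+1 & -1 & 0 \\ n+1 & 0 & -1 \end{pmatrix}, \qquad \det M = 1.
\]
Thus the weight of $f^{w_0}_{ij}$ is $M w_{ij}$, and if $W$ denotes the weight matrix of $\{f_{ij}\}$ (rows $w_{ij}$), then the weight matrix of $\{f^{w_0}_{ij}\}$ is $W M^{\top}$. Since $M$ is unimodular, the two matrices share the same column space and the same rank, which equals $3$ by Proposition~\ref{torus}. I anticipate no real obstacle: once the identity $W_0 D_n W_0 = (n+1)\one_n - D_n$ is noted, the rest of the argument is a bookkeeping exercise on weights, and the $3\times 3$ change-of-basis matrix $M$ is explicit.
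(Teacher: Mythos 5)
Your proposal is correct and follows essentially the same route as the paper: both hinge on the identity $W_0 D_n W_0 = (n+1)\one_n - D_n$ to show that $f^{w_0}_{ij}(t_1^{D_n}(t_0X)t_2^{D_n}) = t_0^{\deg f_{ij}}t_1^{(n+1)\deg f_{ij}-\alpha_{ij}}t_2^{(n+1)\deg f_{ij}-\beta_{ij}}f^{w_0}_{ij}(X)$, i.e., that the new weights are obtained from the old ones by an invertible integer linear transformation. Your explicit unimodular matrix $M$ is just a cleaner packaging of the computation the paper performs inline.
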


\begin{proof} As we observed in the proof of Proposition \ref{torus}, the weight matrix of the global toric action of $(\mathbb{C^*})^3$ on $\A_{CG}(GL_n)$ with respect to the initial cluster $\{f_{ij}\}$ can be described via relations
$f_{ij} \left ( t_1^{D_n} (t_0 X) t_2^{D_n}\right ) = t_0^{\deg f_{ij}} t_1^{\alpha_{ij}} t_2^{\beta_{ij}} f_{ij}(X)$ for some integers $\alpha_{ij}$ and $\beta_{ij}$.
Note that $W_0 D_n W_0 = (n+1) \one_n - D_n$, and hence
\begin{align*}
&f^{w_0}_{ij} \left ( t_1^{D_n} (t_0 X) t_2^{D_n}\right ) = f_{ij} \left ( t_1^{W_0 D_n W_0} (t_0 W_0 X W_0) t_2^{W_0 D_n W_0}\right )\\
& = \left (t_0 (t_1 t_2)^{n+1}\right )^{\deg f_{ij}} f_{ij} \left ( t_1^{-D_n} W_0 X W_0 t_2^{-D_n}\right )\\
&=
t_0^{\deg f_{ij}} t_1^{-\alpha_{ij} + (n+1) \deg f_{ij}} t_2^{-\beta_{ij}+ (n+1) \deg f_{ij}} f^{w_0}_{ij}(X).
\end{align*}
Therefore, both the claim about the rank of the action and the claim about the column spaces follow
from the proof of Proposition~\ref{torus}.
\end{proof}

It follows from
Lemmas~\ref{Q-restore} and~\ref{antitorus} that the quiver corresponding to the basis 
$\{f^{w_0}_{ij}\}$ is $\lambda Q_{CG}(n)$ for 
some integer $\lambda\ne0$. It will be enough to check one edge of $Q^*_{CG}(n)$ to find that
 $\lambda=-1$, and hence  $Q^*_{CG}(n)$ is isomorphic to $Q_{CG}^{opp}(n)$.

Thus, we will focus our attention on the evolution of the first copy of the quiver. At the 
beginning of each stage the vertices of the first copy are uniquely labeled by pairs of integers. 
The quiver with such a labeling is denoted $Q_S(n)$, where $S$ is the
stage number (written in roman numerals). For a 
vertex labeled $(p,q)$, we denote by $f_S(p,q)$ the function attached to this vertex.  We then apply to
$Q_S(n)$ a sequence of cluster transformations $\T_S$, which consists of consecutive subsequences
$\T_S^l$; we denote $\T_S^{(l)}=\T_S^l\circ\dots\circ\T_S^1$. The function attached to $(p,q)$ after
the end of $\T_S^{(l)}$ is denoted $f_S^{(l)}(p,q)$.

 To describe the first sequence of mutations more conveniently, we relabel $n^2$ vertices of the initial quiver $Q_{CG}(n)$ in the following way: $(i,j)\mapsto (p(i,j),q(i,j))$ with
\begin{equation}\label{pqviaij}
 p(i,j)=\begin{cases}
 i, \qquad \hskip 2.5mm \text{if $i<j$,}\\
 i-1, \quad \text{if $i\geq j$,}
 \end{cases}\qquad
 q(i,j)=\begin{cases}
 j-i, \qquad\qquad\hskip 2mm\text{if $i<j$,}\\
 j-i+n+1, \quad \text{if $i\geq j$.}
 \end{cases}
\end{equation}
The relabeled nodes can be naturally placed on  an $(n-1)\times (n+1)$ grid with an additional vertex $(0,n+1)$ attached.
This is equivalent to changing the fundamental domain of the universal cover of $Q_{CG}(n)$ (see Fig.~\ref{fig:tilt} for the case $n=5$). Our initial labeling corresponds to the square fundamental
domain shown by dashed lines, while the new labeling corresponds to the truncated parallelogram
fundamental domain shown by solid lines. Therefore, the function $f_{\romon}(p,q)$ attached to a vertex $(p,q)$
is $f_{i(p,q),j(p,q)}$, where $i(p,q)$ and $j(p,q)$ are defined via the inverse of  \eqref{pqviaij}:
 \begin{equation*}
 i(p,q)=\begin{cases}
 p, \qquad \hskip 2.5mm \text{if $p+q\leq n$,}\\
 p+1, \quad \text{if $p+q >n$,}
 \end{cases}\qquad
 j(p,q)=\begin{cases}
 p+q, \qquad\hskip 2.5mm\text{if $p+q\leq n$,}\\
 p+q-n, \quad \text{if $p+q>n$.}
 \end{cases}
\end{equation*}
In other words, we have
\begin{equation}\label{fviaphi}
f_{\romon}(p,q)=\begin{cases}
\phhi_{1+r(n-1)-p}, \quad&\text{if $q=2r-1<n+1$,}\\
\pssi_{1+r(n-1)-p},  \quad&\text{if $q=2r<n+1$,}\\
\theta_{n-p}, \qquad&\text{if $q=n+1$.}
\end{cases}
\end{equation}

Let us rewrite these expressions in terms of cores of submatrices of $\bar U$. From now on we omit explicit mentioning
of $\bar U$ in the subscript.

 \begin{lemma}
 \label{lem_initcore}
 For any $p,q \in [n]$, 
 \begin{equation*}
 f_{\romon}(p,q) = \bangle {[p+q-1]}{[p]}.
  \label{initcore*}
 \end{equation*}
\end{lemma}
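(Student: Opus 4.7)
The proof will be by direct inspection, handling separately the two cases in \eqref{fviaphi} that fall in the range $q\in[n]$: $q=2r-1$ odd (so $f_{\romon}(p,q)=\phhi_{1+r(n-1)-p}$) and $q=2r$ even (so $f_{\romon}(p,q)=\pssi_{1+r(n-1)-p}$). Setting $\ell=1+r(n-1)-p$, the goal in each case is to identify the core of $\bar U^{\hat{[p+q-1]}}_{\hat{[p]}}$ as the dense $\ell\times\ell$ submatrix corresponding to $\phhi_\ell$ or $\pssi_\ell$, respectively.

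I would first make the block structure of $\bar U$ explicit: it has $N+2$ rows split into the top augmentation row (with $x_{1j}$ at position $(1,n+j)$), the $k$ block rows of $U(X,X)$ each of height $n-1$ (with $\Y=[0\ X_{[1,n-1]}]$ on the block diagonal and $\X=[X_{[2,n]}\ 0]$ on the superdiagonal), and the bottom augmentation row; its columns form $k+1$ block columns of widths $n,n+1,\dots,n+1,n$ from left to right. The translation invariance properties of $\bar U$ recorded in Fig.~\ref{fig:baru} are the mechanism that converts a trailing minor of $U(X,X)$ into a leading dense block of $\bar U^{\hat J}_{\hat I}$.

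With this setup, deleting the first $p$ rows and the first $p+q-1$ columns places the top-left corner of the resulting matrix at row $p+1$, column $p+q$ of $\bar U$. For $q$ odd this lands precisely at the starting position of a dense $\ell\times\ell$ block whose entries, by iterated application of the $\tau_1,\tau_2,\tau_3$ translations, agree entrywise with the trailing submatrix $U(X,X)_{[N-\ell+1,N]}^{[k(n+1)-\ell+1,k(n+1)]}$ defining $\phhi_\ell$; for $q$ even the analogous identification yields $\pssi_\ell$, with the columns shifted by one consistent with the definitional difference between $\phhi$ and $\pssi$. Irreducibility of the leading block (so that the core has size exactly $\ell$) is immediate from the staircase shape: the $(\ell+1)$-st row of $\bar U^{\hat{[p+q-1]}}_{\hat{[p]}}$ is identically zero on its first $\ell$ entries because it falls in the zero strip separating two consecutive block rows of $\bar U$. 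The only substantive obstacle is the careful bookkeeping of indices across the several block-column boundaries (in particular at the truncated first and last block columns of width $n$), a routine verification made manageable by the translation invariance above.
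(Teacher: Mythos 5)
Your proof follows the same route as the paper's: rewrite $f_{\romon}(p,q)$ via \eqref{fviaphi} as $\phhi_\ell$ or $\pssi_\ell$ with $\ell=1+r(n-1)-p$, use the translation $\tau_3$ to identify the defining dense block with the leading $\ell\times\ell$ principal submatrix of $\bar U^{\widehat{[p+q-1]}}_{\widehat{[p]}}$, and use the staircase shape of $\bar U$ to conclude that the core is exactly that block. One small imprecision: your observation that row $\ell+1$ vanishes in its first $\ell$ entries establishes \emph{maximality} (no larger leading principal submatrix is irreducible), not irreducibility of the $\ell\times\ell$ block itself, which still requires the easy remark --- also only asserted, not proved, in the paper --- that the sub- and superdiagonal entries of that dense block are nonzero.
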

\begin{proof}
Indeed, for $q=n+1$ \eqref{fviaphi} implies $f_{\romon}(p, n+1)=\thetta_{n-p} = \det X_{[p+1]}^{[p+1]} =
\bangle {[p+n]}{[p]}$.
 Let $q=2r-1<n+1$, then by~\eqref{fviaphi} $f_{\romon}(p,q) = \phhi_{1 +r(n-1) - p}$ is the determinant 
 of the irreducible submatrix $A$ of $\bar U$ whose upper left and lower right entries are 
 $$
 \left ( (k-r+1)(n-1) + p+1, (k-r+1)(n+1) + p+q \right )
 $$ 
 and
 $$
 \left ( k(n-1) + n, k (n+1) + n \right ).
 $$ 
 Let $A'=\tau_3^{-k+r-1}(A)$; its
upper left and lower right entries are $\left ( p+1,  p+q \right )$ and
 $\left ( (r-1)(n-1) + n, (r-1) (n+1) + n \right )$. Since rows below $(r-1)(n-1) + n$ in $\bar U$ contain only zero entries in the first 
 $(r-1) (n+1) + n $ columns, and since $A'$ is irreducible, we obtain $\phhi_{1 +r(n-1) - p} = \bangle  {[p+q-1]} {[p]}$ as required.
 The case of even $q$ can be treated in the same way.
 \end{proof}

Thus, frozen vertices are now located at $(1,n-1)$, $(1,n)$ and $(0,n+1)$. The corresponding cluster variables are $\pssi^{w_0}_M$, $\phhi^{w_0}_N$ and $\thetta^{w_0}_n$.
The arrows in the rearranged quiver are described as follows: unless the vertex $(p,q+1)$ is frozen, there is a short horizontal edge directed to the left, $(p,q+1)\to (p,q)$; there are all possible short vertical edges directed down, $(p,q) \to (p+1,q)$ and all possible sort ``northeast" edges  $(p,q) \to (p-1,q+1)$, except for $(p,q)=(1,n)$. In addition, there is a directed path of long edges between the right and left boundaries of the grid: $(n-1,n+1)\to (n-1,1)\to (n-2,n-1) \to (n-2,1) \to \cdots \to (1,1) \to (0,n+1)$, 
 and a directed path of long edges between the lower and upper boundaries: $(n-1,3)\to (1,1)\to (n-1,4) \to (2,1) \to \cdots \to (1,n-2) \to (n-1,n+1)$.  
We will denote the rearrangement of $Q_{CG}$ by $Q_{\romon}(n)$. On Fig.~\ref{fig:tilt} the two paths are shown with thick dashed and dotted lines, and thick dashed lines, respectively. The vertices with the same filling pattern are identified.

\begin{figure}[ht]
\begin{center}
\includegraphics[height=6.95cm]{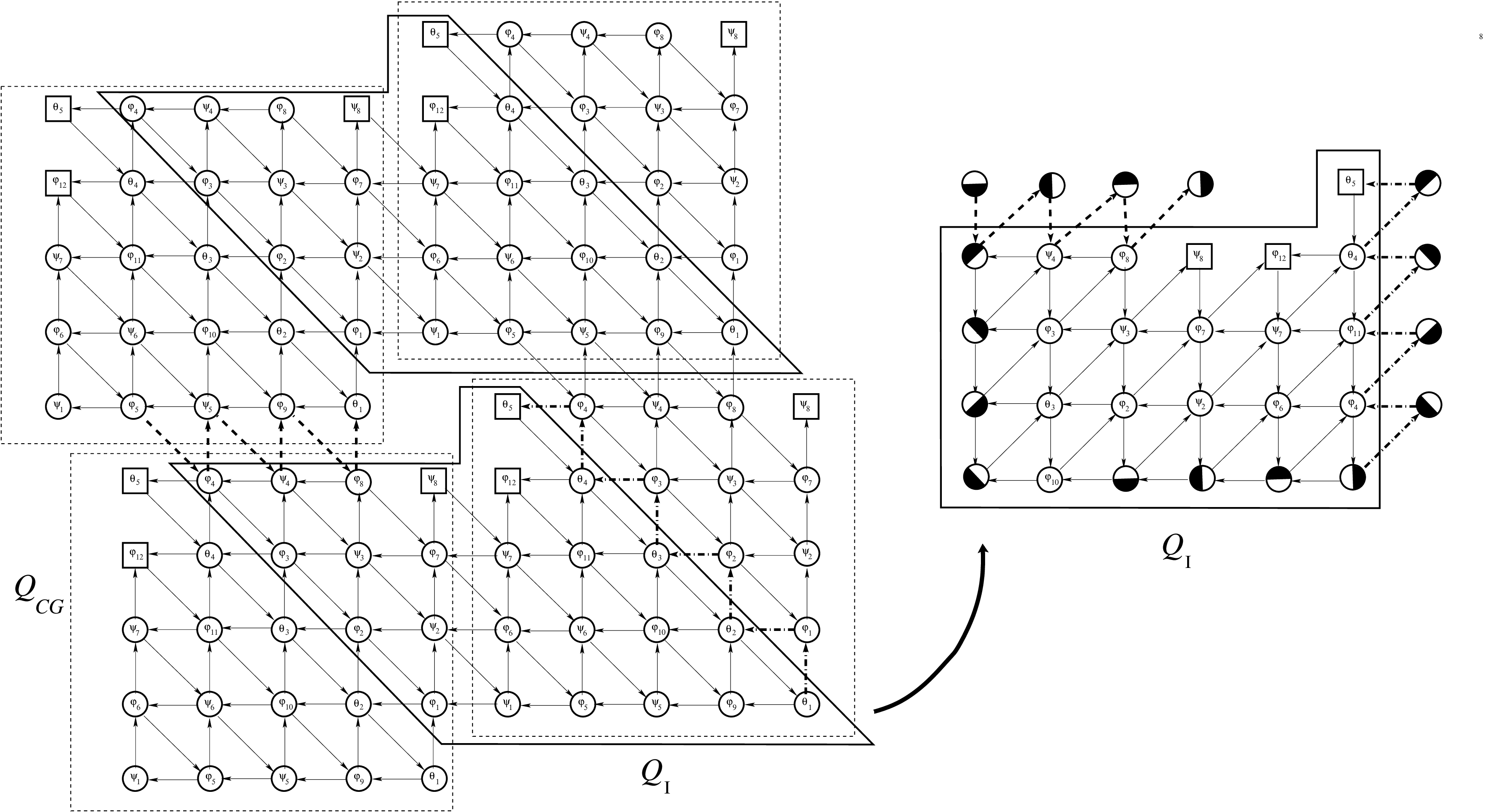}
\caption{Relabeling the vertices of $Q_{CG}(5)$}
\label{fig:tilt}
\end{center}
\end{figure}

It will be convenient to decompose $Q_{\romon}(n)$ into the {\it web\/} and the {\it weave\/}. 
The weave consists of the edges $(p,q)\to (p',q')$ such that $p=1$, $p'=n-1$, or $p=n-1$, $p'=1$, and 
hence is the second of the paths defined above (more exactly, the edges of this path), and the web is 
the rest of the quiver. Clearly, the web is a planar graph. Its vertices can be placed on $n-1$ 
concentric circles so that
all vertices $(p,q)$ with the same value of $p$ lie on the $p$th circle counting from inside. 
Consequently, the weave consists of the edges between the inner and the outer circles. Besides, the vertices with the same value of $q$ are arranged into $n+1$ rays going counterclockwise. Finally, 
we place the vertex $(0, n+1)$ in the center. The web and the weave for $Q_{\romon}(5)$ are shown on
 Fig.~\ref{fig:ann}. The weave is shown in dashed lines, and the dashed vertices should be identified 
 with the corresponding solid vertices.

 \begin{figure}[ht]
\begin{center}
\includegraphics[height=8cm]{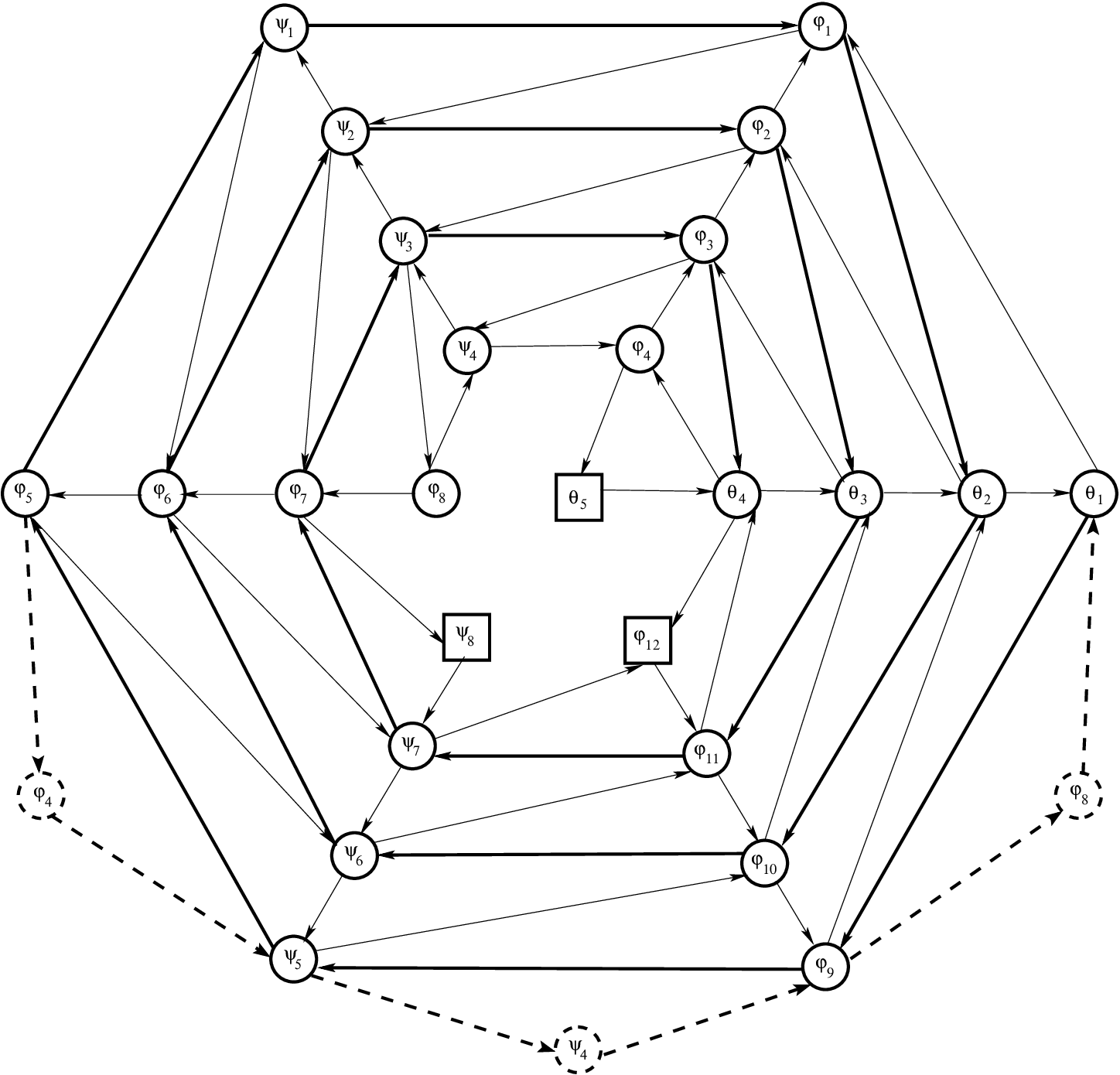}
\caption{Quiver $Q_{\romon}(5)$: the web and the weave}
\label{fig:ann}
\end{center}
\end{figure}

  The web is subdivided into consistently oriented triangles. More precisely, consider the trapezoid formed
by the vertices $(p,q)$, $(p,q-1)$, $(p-1,q-1)$ and $(p-1,q)$; here and in what follows 
$q$ is understood $\bmod\ n+1$ with values in $[n+1]$. We will 
say that $(p,q)$ is its {\it southeast\/} ({\it SE\/}) corner; {\it SW\/}, {\it NE\/} and {\it NW\/} corners are defined similarly. In what follows we will refer to a trapezoid by the position of its SE corner.
   
Each trapezoid contains one diagonal; we will distinguish NE, SE, NW and SW trapezoids, depending on the direction of the diagonal. The $(p,1)$ trapezoids  are NW, all the other trapezoids
are NE. Besides, the trapezoid $(2,n-1)$ is {\it incomplete\/}: its upper base is removed. Note that 
the trapezoid $(2,n)$ is not assumed incomplete, since we can freely add or delete edges between 
frozen vertices. Finally, there is a special triangle with vertices at $(1,1)$, $(0,n+1)$ and $(1,n+1)$.
The trapezoids are linearly ordered as follows: $(p,q)\prec (p',q')$ if $p>p'$ or $p=p'$ and $n+1\geq q>q'\geq 1$.

 We will apply a sequence $\T^1_{\romon}$ of $(n-2)(n+1)+1$ mutations along the directed path that starts at $(n-1,n+1)$, ends at $(1,n+1)$ and consists of edges $(p,q)\to (p,q-1)$, $q\in [2,n+1]$, and $(p,1)\to (p-1,n+1)$, 
 $p\in [2,n-1]$; this path is shown by thick solid lines on Fig.~\ref{fig:ann}.
Then, in the resulting quiver, we apply a sequence $\T^2_{\romon}$ of $(n-3)(n+1)+1$ mutations along a similar path
 that starts at $(n-1,n+1)$ and  ends at $(2,n+1)$, etc., the last sequence, $\T^{n-1}_{\romon}$, 
 being a single mutation at $(n-1,n+1)$.
 Denote $\T^{(l)}_{\romon} = \T^l_{\romon}\circ\cdots\circ \T^1_{\romon}$,
$Q^{(l)}_{\romon}(n)=\T^{(l)}_{\romon}(Q_{\romon}(n))$ for $l\in [n-1]$, $Q^{(0)}_{\romon}(n)=Q_{\romon}(n)$, $f^{(0)}_{\romon}(p,q)=f_{\romon}(p,q)$.

\begin{lemma}
 \label{4path} For any $l\in [n-1]$, 
    \begin{equation*}
 f^{(l)}_{\romon}(p,q)= \begin{cases}
 f^{(l-1)}_{\romon}(p,q), & \quad \text{if  $p\leq l-1$ or $p=l$, $q \leq n$},\\
 \bangle {[p+q-l-1]} {[p-l]\cup [n-l+1,n]},   &\quad  \text{if  $p\geq l+1$, $q > l$ or $p=l$, $q = n+1$},\\
  \bangle {[p+q+n-l-1]}{[p-l-1]\cup [n-l,n]},   &\quad  \text{if $p\geq l+1$, $q \leq  l$}. 
  \end{cases}
 \end{equation*}
\end{lemma}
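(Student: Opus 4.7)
The plan is to prove Lemma \ref{4path} by induction on $l$. The base case $l=0$ coincides exactly with Lemma \ref{lem_initcore}, since for $l=0$ the set $[n-l+1,n]$ is empty, the condition $q\leq l$ never occurs, and the middle branch of the formula reduces to $\bangle{[p+q-1]}{[p]}$. Assume the statement holds for $l-1$; we verify it for $l$.

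For the inductive step, I would fix $l$ and list the mutations comprising $\T^l_{\romon}$ in order: $(n-1,n+1),(n-1,n),\ldots,(n-1,1)$, then $(n-2,n+1),\ldots,(n-2,1)$, and so on down to $(l+1,1)$, followed by the single concluding mutation at $(l,n+1)$. The argument then proceeds by a secondary induction on the position in this list. In complete analogy with the proofs of Lemmas \ref{Sh} and \ref{Sv}, each intermediate stage admits an explicit description of (i) the local structure of the current quiver around the vertex being mutated (which vertices point into it and which it points out to, taking into account the web and weave of $Q_{\romon}$), and (ii) the cluster variable attached to every vertex, expressed as the core of a suitable submatrix of $\bar U$.

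Each exchange relation would then be identified with a determinantal identity on $\bar U$, followed by a cancellation of a common factor coming from the staircase shape of $\bar U$ in the spirit of Lemma \ref{coredodgson}. For horizontal mutations $(p,q)\to(p,q-1)$ with $q\geq 2$, the identity in question is Desnanot--Jacobi \eqref{jacobi} applied to a dense submatrix of $\bar U$, and the shift of the column index $[p+q-1]\mapsto[p+q-2]$ in the core-minor formula corresponds to passing from one such submatrix to its immediate neighbour. For the long-edge transitions $(p,1)\to(p-1,n+1)$, I would use the $\tau_3$-translation invariance of $\bar U$ (Fig.~\ref{fig:baru}) to re-embed the minors involved into a new dense submatrix, after which Desnanot--Jacobi applies again; this is precisely how the jump in row/column sets from $[p-l]\cup[n-l+1,n]$ and $[p+q-l-1]$ to $[p-l-1]\cup[n-l,n]$ and $[p+q+n-l-1]$ is produced. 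The final mutation at $(l,n+1)$ is one more application of \eqref{jacobi}, which has to be checked to be consistent with both branches of the formula.

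The principal obstacle is the three-fold book-keeping: at every step of the secondary induction one must simultaneously track the current quiver's local structure (to know what exchange relation is being applied), the current cluster variables as cores (to recognise the relation as a determinantal identity), and the appropriate embedding of these cores into $\bar U$ via the translations $\tau_1$, $\tau_2$, $\tau_3$ (to identify and cancel the correct common factor). The most delicate points are the transitions at the long edges $(p,1)\to(p-1,n+1)$, where the $\tau_3$-shift must be invoked; the moment when the mutation path crosses column $q=l$, where the contribution of the wrapping edges of the quiver first enters the exchange relation and causes the formula to switch from the first branch to the second; and the final mutation at $(l,n+1)$, where both branches must agree on the common boundary case $p=l$, $q=n+1$.
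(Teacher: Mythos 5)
Your plan follows the paper's proof of Lemma~\ref{4path} in all essentials: induction on $l$ with base case Lemma~\ref{lem_initcore}, a secondary induction that tracks the local quiver structure along the mutation path, identification of each exchange relation with a determinantal identity on $\bar U$, and cancellation of common factors coming from the staircase shape. As written, however, it is a roadmap rather than a proof, and two attributions in the sketch are off in ways that matter once you try to carry it out. First, the jump between the two core-minor formulas (the second versus the third branch of the statement) occurs when the path crosses from column $q=l+1$ to column $q=l$, not at the long edges $(p,1)\to(p-1,n+1)$; the latter are already covered by the $q\ge l+1$ parameter choice. The reconciliation at $q=l$ is done by rewriting $f^{(l-1)}_{\romon}(p,l)=\bangle{[p]}{[p-l+1]\cup[n-l+2,n]}$ as $\bangle{[n+p]}{[p-l]\cup[n-l+1,n]}$ via the translation $\tau_1$, not $\tau_3$. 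The $\tau_3$ shift is instead needed to rewrite the weave contributions $f^{(0)}_{\romon}(1,q)=\bangle{[q]}{1}=\bangle{[n+1+q]}{[n]}$ that enter the exchange relations at the outer-circle vertices $(n-1,q)$ through the edges $(1,\cdot)\to(n-1,\cdot)$. Second, the crux of the argument is that after all the exchange relations are put into a single uniform shape (equation~\eqref{magicformula} in the paper) they reduce to Lemma~\ref{coredodgson}, and one must verify its hypothesis $a_{\beta 1}a_{1\beta}\ne 0$ with $\beta=\nu-\mu+1$; for the parameters~\eqref{albega} this amounts to checking $1<\nu\le n$ and $\kappa+\nu-\mu\in[n+1,2n]$. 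Without that check the determinantal identity can degenerate, so it cannot be left implicit in the phrase ``Desnanot--Jacobi applies.''
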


\begin{proof}
Let us first describe the evolution of the quiver $Q_{\romon}(n)$. 
After the first mutation in $\T^1_{\romon}$, the trapezoids $(n-1,n+1)$ and $(1,n+1)$
turn to {\it oriented\/} trapezoids without a diagonal. With the subsequent mutations one of the oriented trapezoids propagates clockwise. 
One can check by induction that prior to applying a mutation in the sequence $\T^l_{\romon}$ 
to a vertex $(p,q)$, the current state of the quiver $Q^{(l-1)}_{\romon}(n)$ is described as follows.

(i) Vertex $(p,q)$  is trivalent if $p=n-1$ and $q=l+1, l+2$, and four-valent otherwise.
There are two edges pointing from $(p,q)$: $(p,q) \to (p,q\pm 1)$. 
Edges pointing towards $(p,q)$ are

i.1) $(p\pm 1, q) \to (p,q)$ if $ l \leq p \leq n-2$;

i.2) $(n- 2, q) \to (n-1,q)$ and $(1, q-l-2) \to (n-1,q)$ if $p=n-1$ and $l+3\leq q \leq n+1$;

i.3) $(n- 2, q) \to (n-1,q)$  if $p=n-1$ and $ q=l+1, l+ 2$;

i.4) $(n- 2, q) \to (n-1,q)$ and $(1, n-2+q-l) \to (n-1,q)$ if $p=n-1$ and $1\leq q \leq l$.

(ii) The web is subdivided into  oriented triangles and trapezoids. 

If $q<n+1$, there are at most six oriented trapezoids: 
$$
(p+1,q+1)\prec(p+1,1)\prec(p,q+1)\prec(p,1)\prec(l,n+1)\prec(l,1).
$$
The first two do not exist if $p=n-1$, and the last two do not exist if $l=1$.
Note that $l<p$, and hence indeed  $(p,1)\prec (l,n+1)$. 
All the trapezoids  $(p',q')\prec(p+1,q+1)$  are NE if $q'\ne 1$ and NW if $q'=1$.
All the trapezoids $(p+1,q+1)\prec(p',q')\prec (p,q+1)$ except for $(p+1,1)$ are SE.
All the trapezoids $(p,q+1)\prec(p',q')\prec(l,n+1)$ are NE if $q'\ne1$ and NW if $q'=1$, $p'\ne p$.
All the trapezoids $(l,n+1)\prec(p',q')$ except for $(l,1)$ are SE if $q'\ne n+1$ and SW if $q'=n+1$.
The trapezoid   $(2,n-1)$ is incomplete. Finally, the special triangle
degenerates after the unique mutation at $(1,1)$, when the edge $(1,1)\to (0,n+1)$ disappears.

If $q=n+1$, there are at most two oriented trapezoids: $(l,n+1)\prec(l,1)$; they do not exist if $l=1$.
Trapezoid $(p+2,1)$ is NW (it does not exist if $p=n-1, n=2$). Trapezoid $(p+1,1)$ is SW (it does not exist
if $p=n-1$). All the trapezoids  $(p',q')\prec(p+2,1)$  are NE if $q'\ne 1$ and NW if $q'=1$.
All the trapezoids $(p+2,1)\prec(p+1,q')\prec (p+1,1)$ are SE. All the trapezoids $(p+1,1)\prec(p',q')\prec(l,n+1)$ are NE if $q'\ne1$ and NW if $q'=1$.
The rest of the pattern, starting from the trapezoid $(l,n+1)$, is the same as above.

(iii) The weave is only affected by mutations along the outer circle. Prior to mutation at
$(n-1,q)$ it consists of two paths having together $2(n-1)$ edges. For $q\ne l, l+1$ these paths are
\begin{equation*}
\begin{split}
(n-1, i)\to (1, 1)\to &(n-1,i+1)\to (1,2)\to \cdots\\
& \cdots\to (n-1,q-1)\to (1,q-i)\to (n-1, q)
\end{split}
\end{equation*}
and 
\begin{equation*}
\begin{split}
(n-1, q+1)\to (1, q-i+1)\to &(n-1,q+2)\to (1,q-i+2)\to \cdots\\
& \cdots\to (n-1,i-3)\to (1,n-2)\to (n-1, i-2),
\end{split}
\end{equation*}
where
$$
i=\begin{cases} l+3,\quad \text{if $1\le q<l$},\\
                l+2,\quad \text{if $l+2\le q<n+1$}.
  \end{cases}
$$
The first path is empty for $q=l+2$, and the second path is empty for $q=n+1$, $l=1$ (that is, before the start
of mutations). 
For $q=l, l+1$ the two paths remain the same as for $q=l+2$.               

After the mutation at $(n-1,1)$ in $\T^{l}_{\romon}$  and prior to the start of $\T^{l+1}_{\romon}$, 
the two paths are given by the formulas above with $q=n+1$ and $i=l+2$.

The quiver $Q^{(2)}_{\romon}(5)$ before the mutation at $(4,2)$ in the sequence $\T^3_{\romon}$ is shown on Fig.~\ref{fig:ann2}.
The vertex $(4,2)$ is four-valent, the edges pointing towards $(4,2)$ come from $(3,2)$ and $(1,2)$, according
to i.4) above. 
Trapezoids $(5,3)$ and $(5,1)$ do not exist, since $p=n-1=4$. Trapezoids $(4,3)$, $(4,1)$, $(3,6)$ and $(3,1)$
are oriented. The types of all other trapezoids are exactly as described in (ii). Finally, $q=2<3=l$, and hence 
$i=6$ and the weave consists of the paths $(4,6)\to(1,1)\to(4,1)\to(1,2)\to(4,2)$ and
$(4,3)\to(1,3)\to(4,4)$, as stipulated by (iii). 

 \begin{figure}[ht]
\begin{center}
\includegraphics[height=8cm]{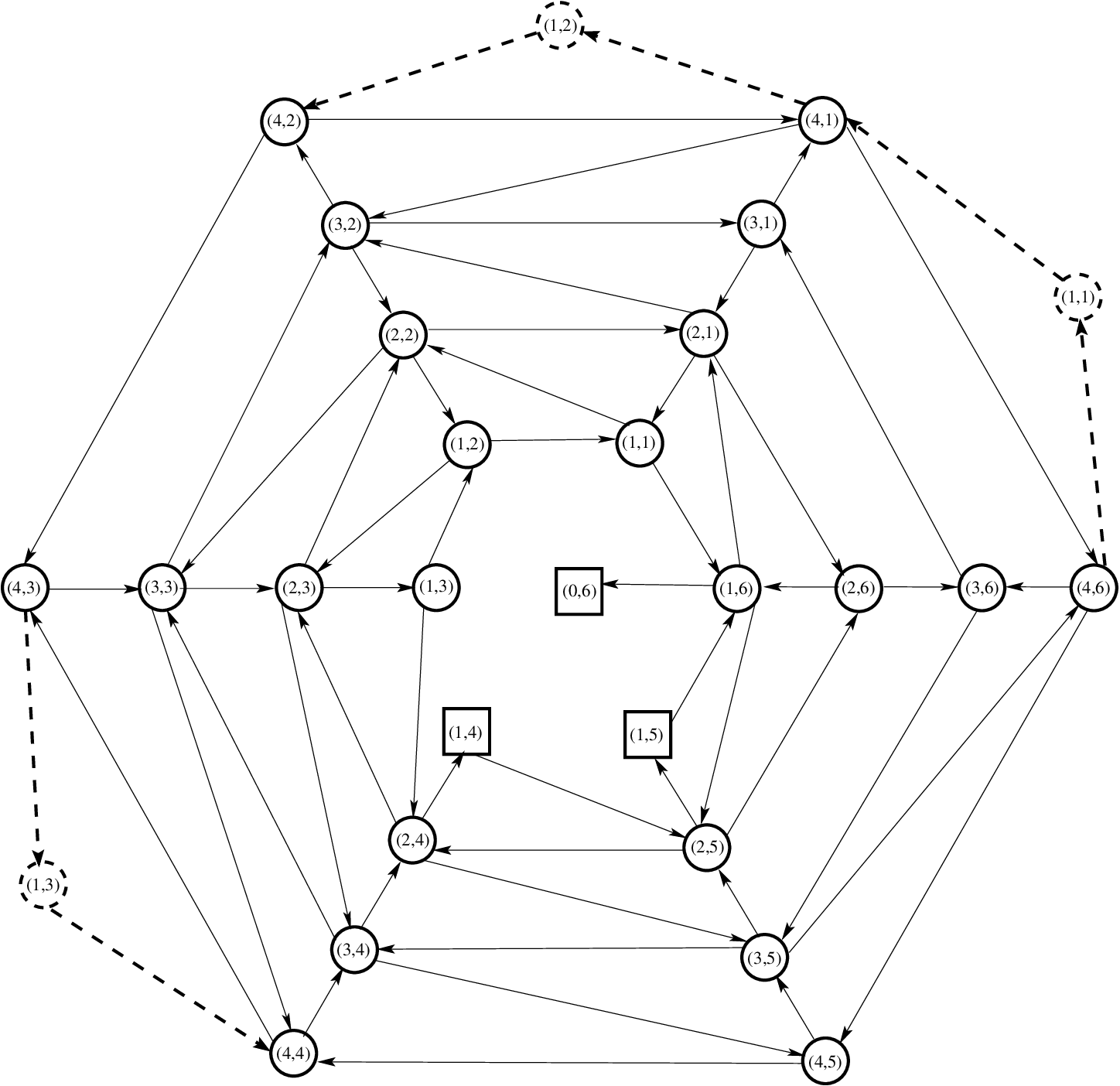}
\caption{Quiver $Q^{(2)}_{\romon}(5)$ before the mutation at $(4,2)$ in $\T^3_{\romon}$}
\label{fig:ann2}
\end{center}
\end{figure}

We need to establish that functions $f^{(l)}_{\romon} (p,q)$ satisfy identities dictated by the order in which transformations are applied and by property (i) in the above description of the current state of the quiver. Thus,  we must have
\begin{equation*}
f^{(l-1)}_{\romon} (p,q) f^{(l)}_{\romon} (p,q) = f^{(l-1)}_{\romon} (p,q-1) f^{(l)}_{\romon} (p,q+1) + f^{(l-1)}_{\romon} (p-1,q) f^{(l)}_{\romon} (p+1,q)
\end{equation*}
 for $l+1\leq p \leq n-2$, $q\leq n$,
 \begin{equation*}
 \begin{split}
f^{(l-1)}_{\romon} (p,n+1) f^{(l)}_{\romon} (p,n+1) = f^{(l-1)}_{\romon} (p,n)& f^{(l-1)}_{\romon} (p,1)\\ 
+ &f^{(l-1)}_{\romon} (p-1,n+1) f^{(l)}_{\romon} (p+1,n+1)
\end{split} 
\end{equation*}
 for $q=n+1$, $l\leq p \leq n-2$,
\begin{equation*}
\begin{split}
f^{(l-1)}_{\romon} (n-1,q) f^{(l)}_{\romon} (n-1,q) = f^{(l-1)}_{\romon} (n-1,q-1) &f^{(l)}_{\romon} (n-1,q+1)\\ 
+ &f^{(l-1)}_{\romon} (n-2,q) \Delta (q) 
\end{split}
\end{equation*}
for $p=n-1$, $q < n+1$, where
$$
\Delta (q)=\begin{cases}  
f_{\romon}^{(0)}(1, n+ q-l-2) &\quad  \mbox{if}\ q\leq l,\\
    1, &\quad  \mbox{if}\ q= l+1, l+2,\\
  f_{\romon}^{(0)}(1, q-l-2), &\quad  \mbox{if}\ q\geq l+3,
\end{cases}\ 
$$
and
\begin{equation*}
\begin{split}
f^{(l-1)}_{\romon} (n-1,n+1) f^{(l)}_{\romon} (n-1,n+1) = &f^{(l-1)}_{\romon} (n-1,n) f^{(l-1)}_{\romon} (n-1,1)\\ + &f^{(l-1)}_{\romon} (n-2,n+1) f_{\romon}^{(0)}(1, n-l-1).
\end{split}
\end{equation*}

We observe that all these relations can be written in the form
\begin{align}
\nonumber
&{\small \bangle {[\kappa] } {[\mu] \cup [\nu+1, n]}\ \bangle {[\kappa-1] } {[\mu-1] \cup [\nu, n]}= }\\ 
\label{magicformula}
& {\small \qquad
\bangle {[\kappa-1] } {[\mu] \cup [\nu+1, n]}\ \bangle {[\kappa] } {[\mu-1] \cup [\nu, n]} 
+ \bangle {[\kappa-1] } {[\mu-1] \cup [\nu+1, n]}\ \bangle {[\kappa] } {[\mu] \cup [\nu, n]},}
\end{align}
where
\begin{equation}\label{albega}
\begin{aligned}
&\mu=p-l+1,\quad \nu=n-l+1,\quad \kappa=p+q-l \qquad\text{for $q\geq l+1$},\\
&\mu=p-l,\quad \nu=n-l, \quad \kappa=p+q+n-l \qquad\text{for $q\leq l$}.
\end{aligned}
\end{equation}
Here one has to use an alternative representation for $f_{\romon}^{(l-1)} (p,l)$ obtained via $\tau_1$:
$$
f_{\romon}^{(l-1)} (p,l) =  \bangle {[p] } {[p-l+1]\cup [n-l+2,n]} =   \bangle {[n+p] } {[p-l]\cup [n-l+1,n]}, 
$$
and for $f_{\romon}^{(0)}(1,q)$ obtained via $\tau_3$:
$$
f_{\romon}^{(0)}(1,q) =  \bangle {[q] } {1} = \bangle {[n+1 + q] } {[n]},
$$
together with a convention
$$
\bangle {[n]} {[n]} = \bangle {[n+1]} {[n]} = 1;
$$
the latter is used when $(p,q)$ is trivalent.

Consider the matrix $C$ obtained from the augmentation $\bar U$ of $U(X,X)$ by deleting columns indexed by $[\kappa-1]$ and rows indexed by $[\mu-1] \cup [\nu+1, n]$. In other words, $C$ is the matrix used to define
 $\bangle {[\kappa-1] } {[\mu-1] \cup [\nu+1, n]}$. 
Then $C$ is in the staircase form since, for $p>1$, $C$ does not involve 
the first row of $\bar U$  and, for $(p,q)=(1,n+1)$,  $C$ does not involve 
the first $n$ columns of $\bar U$. Let $t$ be the smallest index such that $c_{ii}=0$ for $i>t$. Then we define $A$ to be a leading principal $t\times t$ submatrix of $C$ and observe that \eqref{magicformula} reduces to \eqref{coredodgsonformula} with $\beta=\nu - \mu +1$. The condition
$a_{\beta 1} a_{1 \beta} = \bar u_{\nu \kappa}\bar u_{\mu ,\kappa +\nu - \mu}\ne 0$ is satisfied since $1 < \nu \leq n$ and $\kappa +\nu - \mu$ lies in the range $ [n+1, 2n ]$, by~\eqref{albega}. 
\end{proof}

\begin{remark}
\label{first_last}
{\rm (i) Note that
$f^{(l)}_{\romon}(n-1,l+2) = \bangle {[n]}{[n-l-1]\cup [n-l+1,n]} = x_{n-l,1}$ and 
$f^{(l)}_{\romon}(n-1,l+1) = 
\bangle {[n-1]}{[n-l-1]\cup [n-l+1,n]} = x_{n-l-1,n}$, 
and thus Lemma~\ref{4path} implies that all matrix entries of the first and the last columns of $X$ are cluster variables.

(ii) Consider the evolution of the edge between the vertices $(n-1,n-1)$ and $(n-2,n-1)$ in the quivers $Q_{\romon}^{(l)}$. In the initial quiver (for $l=0$) this edge points from $(n-2,n-1)$ to $(n-1,n-1)$. It follows from condition (ii) in the proof of Lemma~\ref{4path} that each sequence $\T_{\romon}^l$ for $l\in [n-3]$ affects this edge twice by mutations at its endpoints. Therefore, the direction of the edge in $Q_{\romon}^{(n-3)}$ is the same as in the initial quiver. The sequence
$\T_{\romon}^{n-2}$ affects this edge only once, by the mutation at $(n-1,n-1)$, and the sequence $\T_{\romon}^{(n-1)}$ does not affect it at all. Hence, in the resulting quiver $Q_{\romon}^{(n-1)}$ the edge points from $(n-1,n-1)$ to
$(n-2,n-1)$.}
\end{remark}
 
Denote by $Q_{\romtw}(n)$ the
quiver obtained from $\T^{(n-1)}_{\romon}(Q_{\romon}(n))$ by shifting the vertices of the ray $n+1$ 
outwards: $(p,n+1)\mapsto (p+1,n+1)$, $p\in [0,n-1]$.

\begin{proposition}
\label{end4path} 
{\rm (i)} $Q_{\romtw}(n)$ is decomposed into the web and the weave. The web is subdivided into 
consistently oriented triangles,
forming trapezoids. All trapezoids $(p,q)$ are SE for $q\ne1$ and SW for $q=1$. Trapezoids $(2,1)$ 
and $(2,n-1)$ are incomplete: they lack the upper base.  Trapezoid $(n-1,n)$ is missing.
Besides, there is an additional triangle with vertices $(n,n+1)$, $(n-1,n+1)$, $(n-1,1)$.
The weave consists of a directed path
$(n-1,1)\to (1,1)\to (n-1,2) \to (1,2)\to\cdots \to(1,n-2) \to (n-1,n-1)$. 

{\rm (ii)} The cluster variables  $f_{\romtw}(p,q)$ attached to the vertices of $Q_{\romtw}(n)$ are
\begin{equation*}
 f_{\romtw}(p,q)= \begin{cases}
 \bangle {[n]}{[n-p+2,n]},  & \quad \text{if $q=n+1$}, \\
  \bangle {[n]}{1\cup [n-p+2,n]},   & \quad \text{if  $q=n$}, \\
  \bangle{[n-1]} {1\cup [n-p+2,n]},   &\quad  \text{if  $q=n-1$}, \\
   \bangle {[n+q]} {[n-p+1,n]},   &\quad  \text{if $q \leq  n-2$, $p \geq q$}, \\
 \bangle {[n+q+1]}{[n] \cup[2n-p+1,2n-1]},
 &\quad  \text{if  $q \leq  n-2$, $p < q$}.
    \end{cases}
 \end{equation*}
\end{proposition}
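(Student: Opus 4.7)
The strategy is to reduce both parts to Lemma~\ref{4path}.

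For part~(ii), I would determine for each vertex $(p,q)$ the smallest stage after which the attached cluster variable stabilizes, and then substitute into the explicit formulas of Lemma~\ref{4path}. Inspection of the recursion in that lemma shows that $f^{(l)}_{\romon}(p,q) = f^{(l-1)}_{\romon}(p,q)$ whenever $p \leq l-1$, or $p = l$ with $q \leq n$; hence $f^{(n-1)}_{\romon}(p, q) = f^{(p-1)}_{\romon}(p,q)$ for $q \leq n$, and $f^{(n-1)}_{\romon}(p, n+1) = f^{(p)}_{\romon}(p, n+1)$. Plugging these values of $l$ into the formulas from Lemma~\ref{4path} produces, after the relabeling $(p, n+1) \mapsto (p+1, n+1)$, each of the five cases of the proposition, except that in the case $q \leq n-2$, $p < q$ the expression $\bangle{[q]}{1\cup[n-p+2,n]}$ obtained from Lemma~\ref{4path} must be shown equal to the stated $\bangle{[n+q+1]}{[n]\cup[2n-p+1,2n-1]}$. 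I will deduce this equality from the translation invariance $\tau_3$ applied to the core of the involved submatrix, which lies inside $\bar U_{\hat 1}$; the inequalities $p < q \leq n-2$ ensure that the shifted region still fits within $\bar U$. The analogous overlap at $q = p$ between the two branches of the proposition's $q \leq n-2$ case is handled in the same way.

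For part~(i), the description of $Q_\romtw(n)$ will be obtained by chaining the state descriptions (i)--(iii) from the proof of Lemma~\ref{4path}. The key observation is that each sequence $\T^l_{\romon}$ mutates only vertices whose first coordinate lies in $[l, n-1]$; consequently, once $\T^l_{\romon}$ is completed, every web edge both of whose endpoints lie in rows $[1, l-1]$ reaches its final state. Applying conditions (i)--(ii) at the final mutation of each $\T^l_{\romon}$ (which occurs at vertex $(l, n+1)$) and iterating across $l = 1, \ldots, n-1$, one reads off that every trapezoid of the resulting web is SE (or SW in column~$1$); the trapezoids $(2, 1)$ and $(2, n-1)$ remain incomplete (the latter was already incomplete in $Q_\romon(n)$, while the former becomes incomplete because the diagonal of the special triangle is consumed by the mutation at $(1, 1)$ in $\T^1_{\romon}$); the trapezoid $(n-1, n)$ is missing, consistent with Remark~\ref{first_last}(ii); and the special triangle evolves, after the relabeling of the ray $q=n+1$, into the additional triangle with vertices $(n, n+1), (n-1, n+1), (n-1, 1)$. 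The weave is then read off from condition (iii): at the conclusion of $\T^{n-1}_\romon$, the two paths featured in that condition degenerate into the single path $(n-1,1) \to (1,1) \to (n-1,2) \to \cdots \to (1, n-2) \to (n-1, n-1)$ stated in the proposition.

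The principal obstacle will be the bookkeeping in part~(i): one must verify that the mutations of $\T^{l+1}_{\romon}, \ldots, \T^{n-1}_{\romon}$ do not disturb the web structure already fixed in rows $[1, l]$ at the end of $\T^l_{\romon}$. This should follow from the four-valence analysis of intermediate vertices in condition~(i), which rules out any subsequent mutation inserting or reversing an edge whose both endpoints lie in rows $[1, l]$, combined with the explicit weave update in~(iii) that propagates cleanly across all remaining sequences. A secondary, more technical, obstacle is the reconciliation of the two core representations in part~(ii) via $\tau_3$, which requires an explicit size estimate on the irreducible leading principal submatrix in order to guarantee that the shift stays within the bounds of $\bar U$.
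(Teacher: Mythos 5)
Your proposal is correct and follows essentially the same route as the paper, whose proof of Proposition~\ref{end4path} consists precisely of reading off part (ii) from the stabilized formulas of Lemma~\ref{4path} (with a translation-invariance identification of the two core representations) and part (i) from the state descriptions (i)--(iii) maintained in that lemma's proof. One small correction: the reconciliation at the overlap $p=q$ is done in the paper via the shift $\tau_1$ (as already noted in the proof of Lemma~\ref{4path}), not via $\tau_3$ as in the genuinely shifted case $p<q$.
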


\begin{proof}
Follows immediately from Lemma~\ref{4path} and the description of the evolution of $Q_{\romon}(n)$.

Note that, apart from the stable variables, the cluster $\{f_{\romtw}(p,q)\}$ contains four  more variables from the cluster $\{f^{w_0}_{ij}\}$:
$f_{\romtw}(n,n+1)=x_{11} = \thetta^{w_0}_1=f_{nn}^{w_0}$,  $f_{\romtw}(n-1,n)=x_{21} =\phhi^{w_0}_1= 
f_{n-1,n}^{w_0}$,  
$f_{\romtw}(n-1,n-1)=x_{1n} = \pssi^{w_0}_1=f_{n1}^{w_0}$
and $f_{\romtw}(n-2,n-1)=\det \left (\begin{array}{cc} x_{1n}  &  x_{21}\\ x_{2n}  &  x_{31}
 \end{array}\right ) = \pssi^{w_0}_2=f_{n-2,n}^{w_0}$. The corresponding vertices are frozen in $Q_{\romtw}(n)$.
\end{proof}

The quiver $Q_{\romtw}(5)$ is shown on Fig.~\ref{fig:ann3}. 
The additional triangle is shown by dotted lines. The rest is as on Fig.~\ref{fig:ann}.

 \begin{figure}[ht]
\begin{center}
\includegraphics[height=8cm]{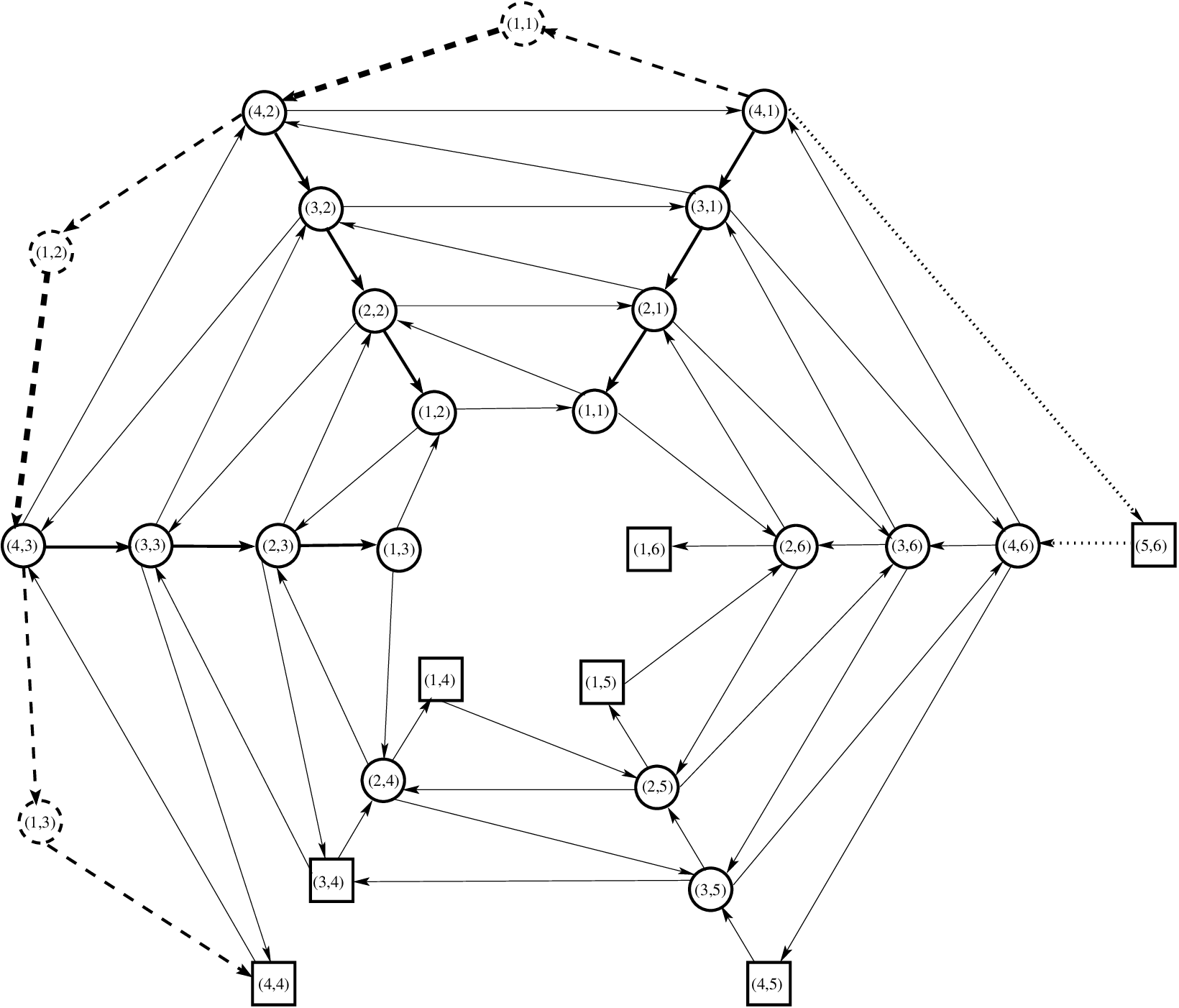}
\caption{Quiver $Q_{\romtw}(5)$}
\label{fig:ann3}
\end{center}
\end{figure}

The second stage consists in applying a  sequence $\T^1_{\romtw}$ of $(n-1)(n-2)$  mutations 
along the directed path that starts at $(n-1,1)$, ends at $(1,n-2)$ and consists of edges $(p,q)\to (p-1,q)$, $q\in 
[n-2]$, and $(1,q)\to (n-1,q+1)$, 
 $q\in [n-3]$; this path is shown by thick solid and dashed lines on Fig.~\ref{fig:ann3}.
Then, in the resulting quiver, we apply a sequence $\T^2_{\romtw}$ of $(n-1)(n-3)$ mutations  
along a similar path
 that starts at $(n-1,1)$ and  ends at $(1,n-3)$, etc., ending with a sequence $\T^{n-2}_{\romtw}$ of $n-1$ mutations  along the path  from $(n-1,1)$ to $(1,1)$.
 As before, denote $\T^{(l)}_{\romtw} = \T^l_{\romtw}\circ\cdots\circ \T^1$,
 $Q^{(l)}_{\romtw}(n)=\T^{(l)}_{\romtw}(Q_{\romtw}(n))$ for $l\in [n-2]$, $Q^{(0)}_{\romtw}(n)=Q_{\romtw}(n)$, $f^{(0)}_{\romtw}(p,q)=f_{\romtw}(p,q)$.

\begin{lemma}
 \label{cycles}
 For any $l\in [n-2]$,
   \begin{equation*}
f^{(l)}_{\romtw}(p,q)= \begin{cases}
 f^{(l-1)}_{\romtw}(p,q), & \quad \text{if   $q > n-l-1$},\\
 \bangle {[n]\cup[n+l+1,n+q+l]}{[n-p+l+1,n]},   &\quad  \text{if  $p > q+ l$, $q \leq n-l-1$},\\ 
 \bangle {[n+1]\cup[n+l+2,n+q+l+1]}{[n]\cup[2n-p+l+1,2n-1]},   &\quad  \text{if $l<p \leq q+ l$,  
 $q \leq n-l-1$},\\
 \bangle {[n]\cup[n+l+2,n+q+l+1]} {[l-p+3,n]},  &\quad  \text{if $p \leq l$, $q \leq n-l-1$}. \end{cases}\ 
 \end{equation*}
\end{lemma}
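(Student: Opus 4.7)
The plan is to mimic, almost verbatim, the strategy of the proof of Lemma~\ref{4path}: proceed by induction on $l$, and within each stage $\T^l_{\romtw}$ by induction on the step of the mutation sequence; at each step, first describe the local structure of the current quiver $Q^{(l-1)}_{\romtw}(n)$ around the vertex $(p,q)$ to be mutated, then read off the resulting exchange relation and verify it by an application of Lemma~\ref{coredodgson} to a suitable submatrix of $\bar U$.

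First I would establish, by induction, a local picture analogous to items (i)--(iii) in the proof of Lemma~\ref{4path}. At the start of $\T^l_{\romtw}$ the ``wavefront'' begins at $(n-1,1)$ and travels inward along the path $(n-1,1)\to(n-2,1)\to\cdots\to(1,1)\to(n-1,2)\to\cdots\to(1,n-l-1)$; as it moves, the oriented-trapezoid pattern of the web gets peeled off one shell at a time, while the untouched outer shells retain their $Q_{\romtw}$-shape and the weave is locally modified only when the path jumps through a long edge between the inner and outer circles. The generic vertex $(p,q)$ being mutated is four-valent with outgoing edges to $(p-1,q),(p,q-1)$ and incoming from $(p+1,q),(p,q+1)$; the special cases are those where $(p,q)$ sits next to one of the incomplete trapezoids $(2,1),(2,n-1)$, the missing trapezoid $(n-1,n)$, the extra triangle $(n,n+1),(n-1,n+1),(n-1,1)$, or on the weave jumps $(1,q)\to(n-1,q+1)$.

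Once the local picture is in place, each mutation translates into an exchange relation of the form
\begin{equation*}
f^{(l-1)}_{\romtw}(p,q)\,f^{(l)}_{\romtw}(p,q)
= f^{(l-1)}_{\romtw}(p,q-1)\,f^{(l)}_{\romtw}(p-1,q)
+ f^{(l-1)}_{\romtw}(p,q+1)\,f^{(l)}_{\romtw}(p+1,q),
\end{equation*}
with the obvious boundary modifications. Substituting the formulas claimed by the lemma, and using the translation invariances $\tau_1,\tau_2,\tau_3$ of $\bar U$ to re-express each factor as a minor of one common staircase submatrix $A\subseteq\bar U$, I expect every such relation to collapse to the ``master identity''
\begin{equation*}
{\bangle {[\kappa]}{[\mu]\cup[\nu+1,n]}}\,{\bangle {[\kappa-1]}{[\mu-1]\cup[\nu,n]}}
= {\bangle{[\kappa-1]}{[\mu]\cup[\nu+1,n]}}\,{\bangle{[\kappa]}{[\mu-1]\cup[\nu,n]}}
+ {\bangle{[\kappa-1]}{[\mu-1]\cup[\nu+1,n]}}\,{\bangle{[\kappa]}{[\mu]\cup[\nu,n]}}
\end{equation*}
used in the proof of Lemma~\ref{4path}, for the parameter triple
\begin{equation*}
(\mu,\nu,\kappa)=(n-p+l+1,\,n-p+1,\,n+q+l) \quad \text{or its analogue in the other three regimes,}
\end{equation*}
which is itself an instance of Lemma~\ref{coredodgson} with $\beta=\nu-\mu+1$. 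The non-degeneracy hypothesis $a_{\beta1}a_{1\beta}\ne 0$ will hold because the corresponding entries of $\bar U$ lie in the nonzero staircase region (this is where the case split $p>q+l$ vs.\ $l<p\le q+l$ vs.\ $p\le l$ comes from: it exactly records which $\tau_i$-shift must be applied to put the minor into $A$ in the required form).

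The main obstacle, exactly as in the previous lemma, will be the bookkeeping at the boundary: ensuring that the special triangle $(n,n+1),(n-1,n+1),(n-1,1)$, the frozen vertices $(n-1,n-1)$, $(n-1,n)$, $(n-2,n-1)$, $(n,n+1)$, and the weave transitions $(1,q)\to(n-1,q+1)$ all still fit the template \eqref{magicformula}. This is handled, case by case, by invoking the alternative representations of the relevant base-case variables $f^{(0)}_{\romtw}(p,q)$ obtained via $\tau_3$ and the conventions $\bangle{[n]}{[n]}={\bangle{[n+1]}{[n]}}=1$ for trivalent vertices, exactly as in the last paragraph of the proof of Lemma~\ref{4path}. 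Once this case analysis is carried out, each individual identity becomes a mechanical application of Lemma~\ref{coredodgson} and the induction closes.
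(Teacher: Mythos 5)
Your plan to re-run the Lemma~\ref{4path} machinery breaks down at the key verification step. You claim that each exchange relation collapses to the master identity \eqref{magicformula}, i.e.\ to an instance of Lemma~\ref{coredodgson}. But \eqref{magicformula} toggles \emph{one} column index ($\kappa$) and \emph{two} row indices ($\mu,\nu$), which is why it reduces to the square Desnanot--Jacobi identity \eqref{jacobi} underlying Lemma~\ref{coredodgson}. In Lemma~\ref{cycles} the claimed formulas have deleted-column sets of the form $[n]\cup[n+l+1,n+q+l]$ whose \emph{two} endpoints move independently as $l$ and $q$ vary, while only one row index moves; writing out the exchange relation at a generic vertex you get six minors in which two distinct (interior, and after the common deletions adjacent) columns and one row are toggled. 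That combinatorial shape is not an instance of Lemma~\ref{coredodgson} (nor of its transpose, since the moving rows there are the first row and one other), and the paper instead proves the resulting identities \eqref{magicformula_11}--\eqref{magicformula_14} by applying the \emph{rectangular} identity \eqref{notjacobi} to the leading principal $t\times(t+1)$ submatrix $B$ of the appropriate staircase matrix, with $\alpha=\mu$, $\beta=\nu$, $\gamma=t+1$, $\delta=\kappa$, and then cancelling common factors. Without switching to \eqref{notjacobi} your verification step does not go through.

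Two further points you would have to repair. First, the boundary cases are not the ones you list: the transition between the regimes $p>q+l$ and $l<p\le q+l$ is handled by the translation $\tau_2$ (not $\tau_3$), which rewrites $f^{(l)}_{\romtw}(q+l,q)$ as a minor of the other type; and for $l=1$ the vertices $(n-1,q)$ are five-valent because of the knob at $(n,n+1)$, so the exchange relation acquires the factor $f^{(0)}_{\romtw}(n,n+1)f^{(0)}_{\romtw}(1,q)$, which must be recognized as a two-component core ${\bangle{\cdot}{\cdot}}^{(2)}$ before \eqref{notjacobi} applies. Second, your generic exchange relation pairs the neighbors incorrectly for this stage (here mutations run up the columns, and the incoming/outgoing edges at $(p,q)$ are $(p,q\pm1)\to(p,q)$ and $(p,q)\to(p\pm1,q)$ plus the lacing and weave contributions); this affects which factor carries the superscript $(l)$ versus $(l-1)$ and hence which minors appear in each term.
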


\begin{proof}
To describe the evolution of the quiver $Q_{\romtw}(n)$ under transformations $\T^l_{\romtw}$, 
it will be convenient to refine the description
of the web and to decompose it into three parts: 
the {\it truncated web\/}, the {\it lacing\/} and the {\it knob\/}.
The knob is the vertex $(n,n+1)$ and the edges incident to it, the lacing is formed by the edges between the vertices of the first and the $(n+1)$th rays (including the edge $(n-1,n+1)\to (1,1)$, if it exists), 
and the rest is the truncated web. The weave, as before, consists of the edges
 between the inner and the outer circles, except for the edge $(n-1,n+1)\to (1,1)$. 
Therefore, before the beginning of $\T^1_{\romtw}$ the knob
contains edges $(n,n+1)\to (n-1,n+1)$ and $(n-1,1)\to (n,n+1)$, while the lacing is the path
$(1,1)\to (2,n+1)\to (2,1)\to (3,n+1)\to\cdots\to (n-1,1)$.
 
One can check by induction that prior to applying a mutation in the sequence $\T^l_{\romtw}$ to a vertex $(p,q)$, the current state of the quiver $Q^{(l-1)}_{\romtw}(n)$ can be described as follows. 

(i) Vertex $(n-1,q)$ is five-valent if $l=1$, $(1,n-2)$ is trivalent if $l=1$, $(p,q)$ is four-valent otherwise.
There are two edges pointing from $(p,q)$: $(p,q) \to (p\pm 1,q)$, where $p$ is understood $\bmod\ 
n-1$ with values in $[n-1]$. Besides, if $l=1$ and $p=n-1$ there is an additional edge $(n-1,q) \to (n,n+1)$.
The edges pointing towards $(p,q)$ are

i.1) $(p, q\pm 1) \to (p,q)$ if $ q\ne1$  
($q$ is understood $\bmod\ n+1$ with values in $[n+1]$, as before);

i.2) $(1, n- 3) \to (1,n-2)$  if $l=1$;

i.3) $(p, 2) \to (p,1)$ and $(\{p-l+1\}, n+1) \to (p,1)$, where $\{p\}$ denotes $p\bmod n-2$ with values in $[2,n-1]$.

(ii) The truncated web is subdivided into  oriented triangles and trapezoids. 
Since no mutations are performed at the vertices $(p,n-1)$, $(p,n)$, and $(p,n+1)$ for $p\in [n-1]$,
the corresponding trapezoids $(p,n)$ and $(p,n+1)$ do not change. In what follows we only
describe the remaining trapezoids.
There are at most two oriented trapezoids: $(p+1,q)$ and $(p+1,q+1)$. The first of them does not exist
if $q=1$, and none of them exist if $p=n-1$. 
The trapezoids $(p',q')$ are SE if $q'\in [2,q-1]$, or $q'\in [q+2,n-l]$, 
or $q'=q$ and $p'\in [p+2,n-1]$, or $q'=q+1$ and $p'\in [2,p]$.
The rest of trapezoids are SW.  A special case is the trapezoid $(2,n-1)$: it is SE prior to the unique mutation at
$(1,n-2)$; after this mutation it has edges $(2,n-2)\to (1,n-2),\ (2,n-1)\to (1,n-1)$ and $(2,n-2)\to (2,n-1)$ and is not affected by any subsequent transformations in $\T^{(l)}_{\romtw}$.

(iii) The lacing is only affected by mutations at the vertices of the first ray. Prior to the mutation 
at $(p,1)$ in $\T^l_{\romtw}$ it is the path
$(p+1,1)\to (\{p-l+2\},n+1)\to (p+2,1)\to (\{p-l+3\},n+1)\to\cdots
\to (\{p-l+1\},n+1)\to (p,1)$.
The length of the path is $2(n-2)$.

(iv) The knob is only affected by mutations at the vertices of the outer circle in $\T^1_{\romtw}$. 
Prior to the mutation at $(n-1,q)$ the knob contains
 edges $(n,n+1)\to (n-1,q-1)$ and $(n-1,q)\to (n,n+1)$.
After the mutation at $(n-1,n-2)$ the only remaining edge of the knob is  $(n,n+1)\to (n-1,n-2)$.
It will be convenient to move the vertex of the knob
to the position $(n,n-2)$ (this does not affect any computations
since there will be no further no mutations at $(n-1,n-2)$ on the second stage).

(v) The weave is only affected by mutations along the inner and the outer circles. Prior to the mutation
at $(n-1,q)$ in $\T^l_{\romtw}$ it consists of the paths
$(n-1,n-2)\to (1,n-2) \to (n-1,n-3) \to (1,n-3)\to \cdots \to (n-1,n-l) \to (1,n-l)$,
$(n-1,1)\to(1,1)\to (n-1,2) \to (1,2)\to \cdots \to (n-1,q-1) \to (1,q-1)$,
$(n-1,q)\to(1,q)\to (n-1,q+1) \to (1,q+1)\to\cdots \to (1,n-l-1) \to (n-1,n-l)$
and the edge $(1,q)\to (n-1,q-1)$. The first of the paths does not exist if $l=1$.  
Prior to the mutation at $(1,q)$ in $\T^l_{\romtw}$ the first two paths are the same as above, 
the third path is
$(n-1,q+1)\to(1,q+1)\to (n-1,q+2) \to (1,q+2)\to\cdots \to (1,n-l-1) \to (n-1,n-l)$
(it is empty if $q=n-l-1$), and 
the edge is $(1,q)\to (n-1,q)$. 

The quiver $Q_{\romtw}^{(1)}(5)$ before the mutation at $(3,2)$ in the sequence $\T^2_{\romtw}$ is shown on Fig.~\ref{fig:ann4}.
The vertex $(3,2)$ is four-valent, the edges pointing towards $(3,2)$ come from $(4,2)$ and $(2,2)$, according to i.i) above. 
Trapezoids $(4,2)$ and $(4,3)$
are oriented. The types of all other trapezoids are exactly as described in (ii). 
The lacing consists of the path $(1,1)\to(3,6)\to(2,1)\to(4,6)\to(3,1)\to(2,6)\to(4,1)$; it is shown by thick solid lines. The vertex of the knob is already shifted to position $(5,4)$, the edges are as stipulated by
(iv). The first path of the weave is the edge $(4,3)\to(1,3)$, the second path is the edge $(4,1)\to(1,2)$, the 
third path does not exist since $q=n-l-1=2$. The edge of the weave is $(1,2)\to(4,2)$.

 \begin{figure}[ht]
\begin{center}
\includegraphics[height=8cm]{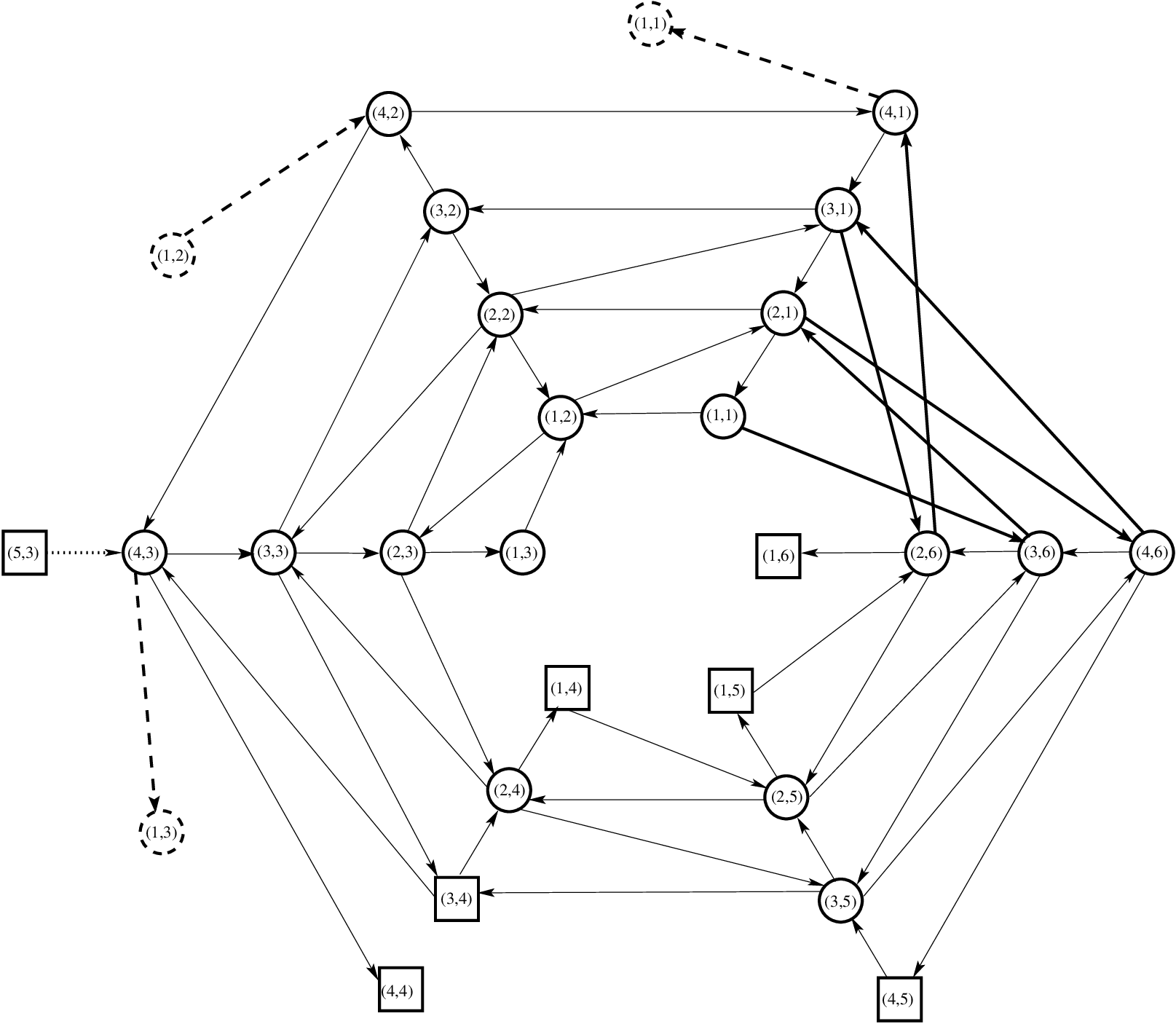}
\caption{Quiver $Q^{(1)}_{\romtw}(5)$ before the mutation at $(3,2)$ in $\T^2_{\romtw}$}
\label{fig:ann4}
\end{center}
\end{figure}

Proceeding as in the proof of Lemma~\ref{4path}, we can write identities for $f^{(l)}_{\romtw} (p,q)$
dictated by the order in which transformations are applied and by property (i) in the above description of the current state of the quiver. 
In the case $l<p\leq q+l$, we observe that all these identities are of the form
\begin{equation}\label{magicformula_12}
\begin{aligned}
  \bangle  {[n+1]\cup [\mu, \nu-1]}{[n]\cup[\kappa,2n-1] }&\bangle  {[n+1]\cup [\mu+1, \nu]}{[n]\cup[\kappa+1,2n-1] }\\ &= 
\ \bangle  {[n+1]\cup [\mu+1, \nu]}{[n]\cup[\kappa,2n-1] }\ \bangle  {[n+1]\cup [\mu, \nu-1]}{[n]\cup[\kappa+1,2n-1] }\\
&+ \bangle  {[n+1]\cup [\mu, \nu]}{[n]\cup[\kappa,2n-1] } \bangle  {[n+1]\cup [\mu+1, \nu-1]}{[n]\cup[\kappa+1,2n-1] }
\end{aligned}
\end{equation}
with
\begin{equation*}
\mu=n+l+1,\quad \nu=n+q+l+1,\quad \kappa=l-p+2n.
\end{equation*}
For $p< q+l$, this follows directly from the definition of $f^{(l)}_{\romtw} (p,q)$. 
The convention $\bangle {[2n]} {[n]}=1$ is used when $l=1$, $(p,q)=(1,n-2)$, in which case $(p,q)$
is trivalent.
If $p=q+l$, we use the translation $\tau_2$ to write the relation
\begin{align*}
f^{(l)}_{\romtw} (q+l,q)&= \bangle  {[n]\cup [n+l+1, n+q+l]}{[n-q+1,n] }\\
&  = \det X_{[n-q]}^{[1,l]\cup[q+l+1,n]} = 
\bangle  {[n+1]\cup [n+l+2, n+q+l+1]}{[n]\cup[2n-q+1,2n-1] },
\end{align*}
which is valid due to the inequality $n-q\geq l+1$.

For $p < l$ or $p>q+ l$  identities to be established are
 of  the form
 {\small
\begin{align}
\nonumber
& { \bangle  {[n]\cup [\mu, \nu-1]}{[\kappa,n] }\ \bangle  {[n]\cup [\mu+1, \nu]}{[\kappa+1,n] }}\\ 
\label{magicformula_11}
& {\quad=\ \bangle  {[n]\cup [\mu+1, \nu]}{[\kappa,n] }\ 
\bangle  {[n]\cup [\mu, \nu-1]}{[\kappa+1,n] }\ 
+ \bangle  {[n]\cup [\mu, \nu]}{[\kappa,n] }\ \bangle  {[n]\cup [\mu+1, \nu-1]}{[\kappa+1,n] }},
\end{align}}
and for $p=l$ they are
{\small
\begin{align}
\nonumber
& { \bangle  {[n+1]\cup [\mu, \nu-1]}{[n] }\ \bangle  {[n]\cup [\mu+1, \nu]}{[3,n] } }\\ 
\label{magicformula_13}
& { =\ \bangle  {[n+1]\cup [\mu+1, \nu]}{[n] }\ 
\bangle  {[n]\cup [\mu, \nu-1]}{[3,n] }\ 
+ \bangle  {[n+1]\cup [\mu, \nu]}{[n] }\ \bangle  {[n]\cup [\mu+1, \nu-1]}{[3,n] }\ }
\end{align}}
with
\begin{equation*}
\begin{aligned}
&\mu=n+l+1,\quad \nu=n+q+l+1,\quad \kappa=l-p+2 \qquad\text{for $p\leq l$},\\
&\mu=n+l,\quad \nu=n+q+l, \quad \kappa=l-p+n \qquad\text{for $p>q+ l$}.
\end{aligned}
\end{equation*}

Excluded from \eqref{magicformula_11} is the case $l=1$, $p=n-1$, when a cluster transformation is applied at a five-valent vertex of the quiver.
In this case, he identities that need to be established have a form
\begin{align*}
f^{(0)}_{\romtw} (n-1,1) f^{(1)}_{\romtw} (n-1,1) = f^{(0)}_{\romtw} (n-2,1) &f^{(0)}_{\romtw}(n,n+1) f^{(0)}_{\romtw} (1,1)\\ &+ f^{(0)}_{\romtw} (n-1,2) f_{\romtw}^{(1)}(n-2, n+1) 
\end{align*}
and
\begin{align*}
f^{(0)}_{\romtw} (n-1,q) f^{(1)}_{\romtw} (n-1,q) = f^{(0)}_{\romtw} (n-2,q) &f^{(0)}_{\romtw}(n,n+1) f^{(0)}_{\romtw} (1,q)\\ &+ f^{(0)}_{\romtw} (n-1,q+1) f_{\romtw}^{(1)}(n-1, q-1) 
\end{align*}
for $q \in [2, n-2]$.
Both can be re-written as 
{\small
\begin{align}
\nonumber
&  \bangle  {[n+q]}{[2,n] }\ \bangle  {[n]\cup [n+2,n+q+1]}{[3,n] }\ = \\ 
\label{magicformula_14}
&  \qquad {\bangle {[n] \cup [n+2, n+q+1]} {[2,n]}}^{(2)}
\bangle  {[n+q]}{[3,n] }\ 
+ \bangle  {[n+q+1]}{[2,n] }\ \bangle  {[n]\cup [n+2, n+q]}{[3,n] },
\end{align}}
where we used the fact that for any $q\in [n-2]$, 
$$
f^{(0)}_{\romtw}(n,n+1) f^{(0)}_{\romtw} (1,q)= x_{11} \bangle {[n+q+1]} {[n]} = {\bangle {[n] \cup [n+2, n+q+1]} {[2,n]}}^{(2)}.
$$ 

To establish \eqref{magicformula_12} - \eqref{magicformula_14}, consider the matrix $C$ obtained from the augmentation $\bar U$ of $U(X,X)$ by deleting columns and rows indicated in the second factor of the second term on the right hand side of the corresponding relation. For rows and columns of $C$ we retain the same indices they had as rows and columns of $\bar U$.
 $C$ has a staircase shape. 
 Let $t$ be the smallest index such that diagonal entries of $C$ are zero in rows with the index larger than $t$. Then we define $B$ to be the leading principal $t\times (t+1)$ submatrix of $C$. Consider $\mu$, $\nu$ and $\kappa$ defined by formulas above for $(p,q)$ in an appropriate range, and apply \eqref{notjacobi} to $B$ with  $\alpha= \mu$, $\beta=\nu$, $\gamma = t+1$, $\delta= \kappa$. Then we  arrive at \eqref{magicformula_12}-\eqref{magicformula_14} after canceling common factors.
\end{proof}
 
The quiver obtained upon the completion of $\T^{(n-2)}_{\romtw}$ 
and shifting all vertices counterclockwise via $(p,q)\mapsto (p,q+1)$ will be denoted $Q_{\romth}(n)$. 
In $Q_{\romth}(n)$ the lacing and the knob are 
again incorporated into the web.

 \begin{proposition}
\label{endcycles} 
{\rm(i)} $Q_{\romth}(n)$ is decomposed into the web and the weave. 
The web is subdivided into consistently oriented triangles,
forming trapezoids. All trapezoids $(p,q)$ are SW for $q\in [2,n]$, except for $(2,n)$, and SE
for $q=1,n+1$, except for $(n-1,n+1)$.  
The trapezoids $(2,n)$ 
and $(n-1,n+1)$ are special; the first one contains edges
 $(2,n-1)\to (1,n-1)$, $(2,n)\to (1,n)$ and $(2,n-1)\to (2,n)$,
while the second one contains edges $(n-1,n+1)\to(n-2,n+1)$ and $(n-2,n+1)\to (n-2,n)$. 
Finally, $Q_{\romth}(n)$ contains the edge $(n,n-1)\to(n-1,n-1)$.
 
 The weave consists of a directed path
$(n-1,n-1) \to (1,n-1)\to (n-1,n-2)\to (1,n-2)\to\cdots (n-1,2) \to (1,2)$.  

{\rm(ii)} The cluster variables  $f_{\romth}(p,q)$ attached to vertices of $Q_{\romth}(n)$ are
\begin{equation*}
 f_{\romth}(p,q)= \begin{cases}
  \bangle{[n]}{1 \cup[n-p+2,n]},   & \quad \text{if  $q=n+1$}, \\
   \bangle{[n-1]}{1\cup[n-p+2,n]},   &\quad  \text{if $q=n$}, \\
 \bangle{[n+1]\cup[2n-q+2,2n]}{[n]\cup[3n-p-q+1,2n-1]},   &\quad  \text{if $q<n$, $p+q > n$}, \\
 \bangle{[n] \cup[2n-q+2,2n]}{[n+3-p-q,n]},   &\quad  \text{if  $q<n$, $p+q \leq n$}.
    \end{cases}
 \end{equation*}
\end{proposition}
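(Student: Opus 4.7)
The proposition is essentially a bookkeeping statement that records the final state of the quiver and the cluster after the entire sequence $\T^{(n-2)}_{\romtw}$ has been executed. The plan is to read off both parts by specializing the evolutionary description in the proof of Lemma~\ref{cycles} at $l=n-2$ and then performing the $(p,q)\mapsto (p,q+1)$ relabeling.

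First, I would derive the formulas for $f_{\romth}(p,q)$ directly from Lemma~\ref{cycles}. Vertices lying on rays $q=n-1,n,n+1$ are never mutated during the second stage, so for them $f_{\romtw}^{(n-2)}=f_{\romtw}$ and Proposition~\ref{end4path} gives the stated expressions (after the shift $q\mapsto q+1$, the old ray $q=n-2$ becomes the ray $q=n-1$, etc.). For the mutated vertices, one sets $l=n-2$ in the three branches of Lemma~\ref{cycles}: the constraint $q\le n-l-1=1$ forces $q=1$ before relabeling, i.e.\ $q=2$ in $Q_{\romth}(n)$, and the three cases $p>q+l$, $l<p\le q+l$, $p\le l$ become $p=n-1$, $p\in[n-1]\setminus\{1,n-1\}$ and $p=1$ respectively, after which a routine translation via $\tau_2$ and $\tau_3$ puts them into the uniform form claimed in the two bottom cases of $f_{\romth}(p,q)$.

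Next, the description of the web and the weave in part~(i) is obtained by tracking the local combinatorics in properties (i)--(v) of the proof of Lemma~\ref{cycles} all the way to the final mutation. Each ray $q\in[n-l-1]$ (before relabeling) is mutated exactly once during the sequences $\T^l_{\romtw}$ with $l\le n-1-q$, so after the shift each ray $q\in[2,n]$ carries trapezoids whose orientation has been flipped from the initial SE/SW pattern in $Q_{\romtw}(n)$ described by Proposition~\ref{end4path}; this is what produces the rule ``SW for $q\in[2,n]$, SE for $q=1,n+1$'' in $Q_{\romth}(n)$. The two exceptional trapezoids $(2,n)$ and $(n-1,n+1)$ reflect, respectively, the freezing of $(1,n-2)$ after the first mutation of $\T^1_{\romtw}$ (recorded in (ii) of the proof of Lemma~\ref{cycles}) and the relocation of the knob vertex from $(n,n+1)$ to $(n,n-2)$ described in (iv), while the edge $(n,n-1)\to(n-1,n-1)$ comes from the final position of the knob after the shift.

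Finally, the weave in $Q_{\romth}(n)$ is obtained by specializing the weave description in (v) of the proof of Lemma~\ref{cycles} at $l=n-2$ and after the mutation at $(1,1)$, at which point the second and third paths listed there have collapsed and only the first, of length $2(n-2)$, survives; after the $q\mapsto q+1$ shift this is exactly the path $(n-1,n-1)\to(1,n-1)\to(n-1,n-2)\to\dots\to(1,2)$ stated in (i). The only step requiring genuine care is verifying that the incomplete/special trapezoids $(2,n-1)$, $(2,n)$, $(n-1,n+1)$ and the knob edges all appear with the orientations claimed; this is straightforward but tedious, since one must check that the mutations performed in the last few sequences $\T^l_{\romtw}$ (those for small $n-l-1$) do not inadvertently touch edges incident to these special trapezoids. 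Once this combinatorial verification is done, both parts of the proposition follow immediately.
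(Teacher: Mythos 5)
Your overall strategy --- read off the final state from Lemma~\ref{cycles} and the quiver evolution recorded in its proof, then apply the relabeling $(p,q)\mapsto(p,q+1)$ --- is exactly the paper's (its proof is essentially ``follows immediately from Lemma~\ref{cycles}'', plus the observation that $f^{(l)}_{\romtw}(1,n-l-1)=\thetta^{w_0}_{l+1}$ identifies the newly frozen vertices, which you omit). However, the way you propose to execute the specialization is wrong. Setting $l=n-2$ uniformly only updates the old ray $q=1$: a vertex on the old ray $q$ is mutated once in each sequence $\T^{l}_{\romtw}$ with $l\le n-q-1$ and is untouched afterwards, so its final value is $f^{(n-q-1)}_{\romtw}(p,q)$, and to obtain the two bottom cases of $f_{\romth}$ for all new rays $q\in[2,n-1]$ one must substitute $l=n-q-1$ (in the old labels) into cases 3 and 4 of Lemma~\ref{cycles}; case 2 ($p>q+l$) is then vacuous because $q+l=n-1$. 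Your identification of the three branches with ``$p=n-1$, $p\in[n-1]\setminus\{1,n-1\}$, $p=1$'' is also incorrect: at $q=1$, $l=n-2$ the branch $p>q+l$ is empty, $l<p\le q+l$ gives only $p=n-1$, and $p\le l$ gives all $p\in[n-2]$. Once the substitution $l=n-q-1$ is made, the stated formulas follow from the relabeling alone; no extra $\tau_2$/$\tau_3$ translation is required.

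The derivation of part (i) has a similar problem. The claim that each mutated ray has its trapezoid orientation ``flipped'' does not hold and does not follow from counting mutations: a vertex on old ray $q$ is mutated $n-q-1$ times, and comparing Proposition~\ref{end4path} with the statement shows that old rays $2,\dots,n-1$ flip from SE to SW while old rays $1$, $n$ and $n+1$ keep their orientation (old ray $1$ being the most mutated one). The orientations must be read off from the inductive description (ii) in the proof of Lemma~\ref{cycles} evaluated at the end of $\T^{(n-2)}_{\romtw}$, not from a parity heuristic. Also, the special trapezoid $(n-1,n+1)$ of $Q_{\romth}(n)$ is the shift of the trapezoid $(n-1,n)$ that is already missing in $Q_{\romtw}(n)$ by Proposition~\ref{end4path}; it is not caused by the knob relocation, which accounts only for the extra edge $(n,n-1)\to(n-1,n-1)$.
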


\begin{proof}
Follows immediately from Lemma~\ref{cycles}. Note that for $l\in[n-2]$, $f^{(l)}_{\romtw}(1,n-l-1)= \bangle {[n]\cup[n+l+2,2n]}{[l+2,n]} = X_{[l+1]}^{[l+1]}=\thetta^{w_0}_{l+1}=f_{n-l,n-l}^{w_0}$, and
hence the corresponding vertices are frozen in $Q_{\romth}(n)$.
\end{proof}

The quiver $Q_{\romth}(5)$ is shown on Fig.~\ref{fig:ann5}.
 
 \begin{figure}[ht]
\begin{center}
\includegraphics[height=8cm]{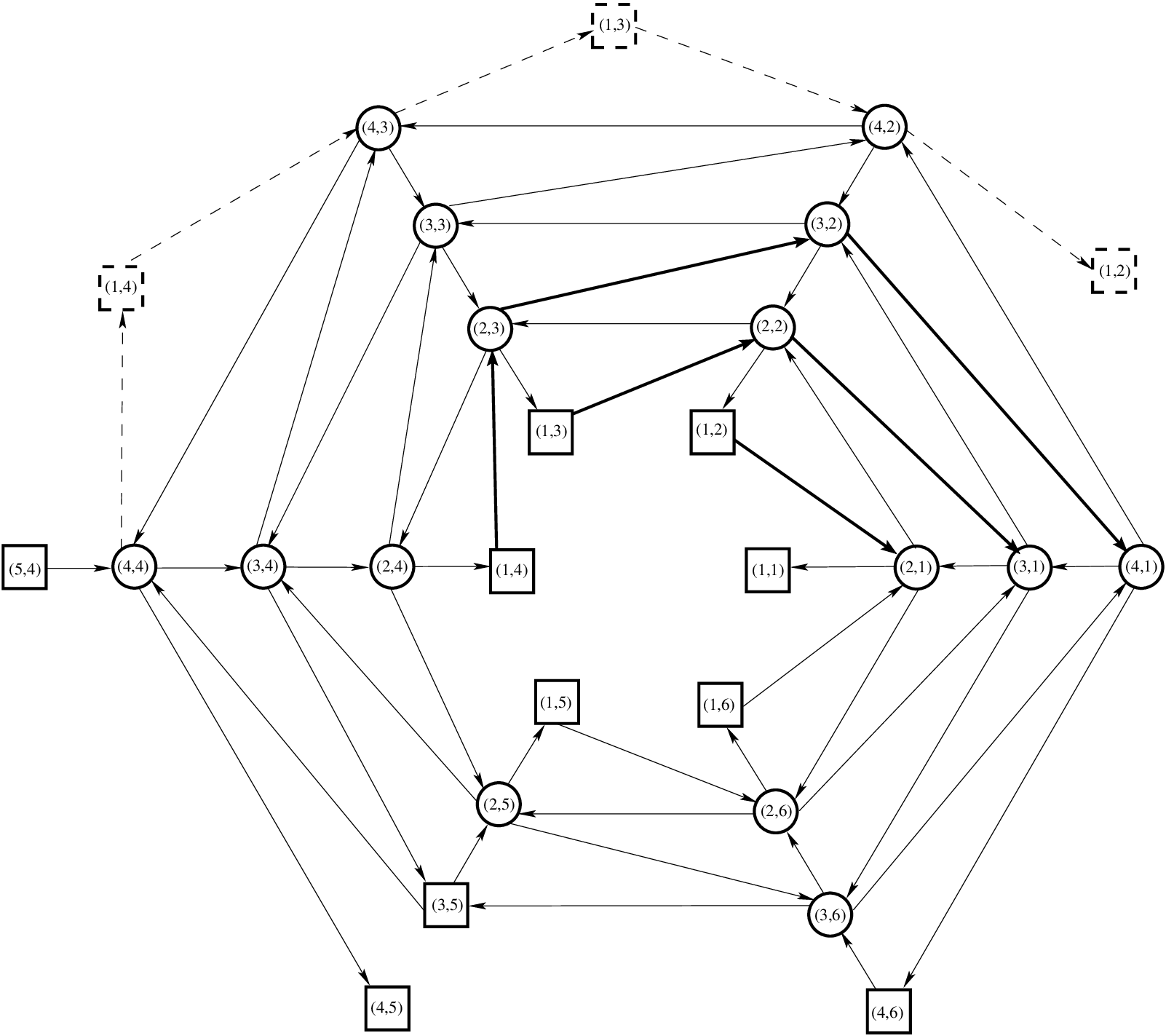}
\caption{Quiver $Q_{\romth}(5)$ }
\label{fig:ann5}
\end{center}
\end{figure}
 
 The third stage involves the sequence of mutations  along the path $(1,n-1)\to (2,n-2)\to \cdots
 \to (n-2,2)\to (n-1,1)$ in the opposite direction (starting at $(n-1,1)$ and ending at $(2,n-2)$). 
 After the mutation at
 $(i,n-i)$, $i\in[2,n-1]$, the mutated vertex is shifted  to the position $(i-1,n-i+1)$. So, at this 
 moment there are two vertices occupying the same position. 
 After the mutation at $(2,n-2)$, the ``new'' vertex $(1,n-1)$ is frozen,
and  the ``old'' $(1,n-1)$ it is shifted
 to the position $(n,n-2)$. We denote the resulting transformation $\T^{1}_{\romth}$. 
 Next, we define sequences of transformations
 $\T^{l}_{\romth}$ and $ \T^{(l)}_{\romth} = \T^{l}_{\romth}\circ\cdots\circ \T^{1}_{\romth}$, $l\in[n-2]$, inductively: $\T^{l}_{\romth}$ consists 
 in applying mutations along the path $(1,n-l)\to (2,n-l-1)\to \cdots
 \to (n-l-1,2)\to (n-l,1)$ in the opposite direction (starting at $(n-l,1)$ and ending at 
 $(2,n-l-1)$). The paths are shown in Fig.~\ref{fig:ann5} with thick solid lines. 
 After the mutation
 at  $(i,n-i-l+1)$, $i\in [2, n-l]$, the mutated vertex is shifted to the position $(i-1,n-i-l+2)$, so that there are 
 two vertices occupying the same position; after the mutation at $(2,n-l-1)$, the ``new'' $(1,n-l)$ is frozen,  
and the ``old'' $(1,n-l)$ is shifted to the position 
 $(n,n-l-1)$.   As before, denote 
 $Q^{(l)}_{\romth}(n)=\T^{(l)}_{\romth}(Q_{\romtw}(n))$ for $l\in [n-2]$, $Q^{(0)}_{\romth}(n)=Q_{\romth}(n)$, $f^{(0)}_{\romth}(p,q)=f_{\romth}(p,q)$.
 
 \begin{lemma}
\label{regularsteps} 
For any $l\in [n-2]$,
\begin{equation*}
f^{(l)}_{\romth}(p,q))= \begin{cases}
    \bangle{[n] \cup[2n-q+2,2n]}{1\cup [l+3,n]},   &\quad  \text{if $p+q = n - l +1$}, \\
    X^{[l]}_{[l]}, &  \quad\text{if $p = n$, $q= n-l$}, \\
    f^{(l-1)}_{\romth}(p,q)) & \quad \text{otherwise}.
 \end{cases}
 \end{equation*}
\end{lemma}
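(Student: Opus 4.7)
The plan is to proceed by induction on $l$, with base case $l=0$ given by Proposition~\ref{endcycles}. For the induction step, I would first establish a detailed description of the evolution of the quiver $Q^{(l-1)}_{\romth}(n)$ during the execution of $\T^{l}_{\romth}$, in the same spirit as the ``state-of-the-quiver'' arguments in the proofs of Lemmas~\ref{4path} and \ref{cycles}. Specifically, prior to each mutation along the antidiagonal path from $(n-l,1)$ to $(2,n-l-1)$, I would record (i) the local valence and in/out edges at the vertex $(p,q)$ about to be mutated, (ii) which trapezoids in the web are oriented versus consistently triangulated and the orientations of the diagonals, and (iii) the current state of the weave and of the special edges adjacent to the shifted/frozen vertices carried over from $\T^{l-1}_{\romth}$. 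The shift from $(1,n-l)$ to $(n,n-l-1)$ (and the subsequent freezing) must be absorbed into this description so that entering the next sequence $\T^{l+1}_{\romth}$ the pattern recurs with $l$ replaced by $l+1$.

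Next, using Lemma~\ref{cycles} together with Proposition~\ref{endcycles} to identify the functions attached to all untouched vertices, I would translate each exchange relation occurring in $\T^l_{\romth}$ into a determinantal identity for cores of submatrices of $\bar U$. The translation invariance of $\bar U$ (via $\tau_1,\tau_2,\tau_3$, Fig.~\ref{fig:baru}) allows one to present the same minor in multiple ways, ensuring that the two monomials on the right-hand side of an exchange and the product on the left-hand side can be aligned as minors of a \emph{single} ambient submatrix $A$. After this alignment, each relation takes the form
\begin{align*}
\bangle{J\cup\{\delta\}}{I\cup\{\alpha\}}\;\bangle{J\cup\{\gamma\}}{I\cup\{\beta\}}
&= \bangle{J}{I}\;\bangle{J\cup\{\gamma,\delta\}}{I\cup\{\alpha,\beta\}} \\
&\quad + \bangle{J\cup\{\gamma\}}{I\cup\{\alpha\}}\;\bangle{J\cup\{\delta\}}{I\cup\{\beta\}}
\end{align*}
for appropriate indices, and so reduces to the Desnanot--Jacobi identity \eqref{jacobi} applied to a suitable square submatrix of $\bar U$, followed by cancellation of the common factor dictated by the staircase shape (exactly as was done repeatedly in Sections~6 and in the proofs of Lemmas~\ref{coredodgson}, \ref{4path}, \ref{cycles}).

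The two cases in the lemma should correspond to two flavors of this computation: the generic diagonal entries $p+q=n-l+1$ are produced by \eqref{jacobi} applied to the submatrix $A = \bar U^{\widehat{[n]\cup[2n-q+2,2n]}}_{\widehat{1\cup[l+3,n]}}$, while the special value $X^{[l]}_{[l]}$ attached to $(n,n-l)$ results from the last mutation in $\T^l_{\romth}$, which converts the old $(1,n-l)$ cluster variable into a minor that, after using $\tau_2$ to shift rows and columns, is recognized as the leading principal $l\times l$ minor of $X$. The assertion $f^{(l)}_{\romth}(p,q)=f^{(l-1)}_{\romth}(p,q)$ for the remaining vertices is immediate since they are not mutated in $\T^l_{\romth}$.

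The main obstacle will be the bookkeeping: ensuring that the shift--and--freeze operation at the end of each $\T^l_{\romth}$ correctly matches the ``state of the quiver'' description required to start $\T^{l+1}_{\romth}$, and verifying that the two or three specially shaped trapezoids near the frozen rays (analogues of the $(2,n)$ and $(n-1,n+1)$ trapezoids of $Q_{\romth}(n)$) evolve in a compatible way. Once this combinatorial pattern is pinned down, each individual exchange identity follows routinely from \eqref{jacobi} and the translation invariance of $\bar U$, exactly as in the proofs of Lemmas~\ref{4path} and~\ref{cycles}.
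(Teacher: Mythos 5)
Your plan coincides with the paper's proof of this lemma: induction on $l$, an inductive description of the state of the quiver before each mutation in $\T^l_{\romth}$ (tracking the truncated web, lacing, weave, special and auxiliary edges), and reduction of every exchange relation to the Desnanot--Jacobi identity \eqref{jacobi} applied to the core of a suitable submatrix of $\bar U$, followed by cancellation of the common factors dictated by the staircase shape. The only bookkeeping details you gloss over, which the paper handles explicitly, are that for $l=1$ the mutated vertices are five-valent, so the extra frozen factor $f^{(0)}_{\romth}(n-1,n+1)=x_{21}$ must be absorbed into a reducible core ${\bangle{\cdot}{\cdot}}^{(2)}$ before \eqref{jacobi} applies, and that the value $X^{[l]}_{[l]}$ at $(n,n-l)$ is not produced by any mutation in $\T^l_{\romth}$ --- it is an unchanged variable carried to that position by the shift-and-freeze step.
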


\begin{proof}
To describe the evolution of the quiver $Q_{\romth}(n)$ under $\T^{(l)}_{\romth}$, it will be convenient once again to decompose the web into 
the lacing and the truncated web: the lacing consists of the edges connecting vertices $(p,2)$ with vertices $(p,n+1)$, 
and the rest is the truncated web. 

One can check by induction that prior to applying a mutation in the sequence $\T^l_{\romth}$ to a vertex $(p,q)$, the current state of the quiver $Q^{(l-1)}_{\romth}(n)$ can be described as follows. 

(i) Vertex $(p,q)$ is five-valent if $l=1$ and four-valent otherwise.
There are two edges pointing from $(p,q)$: $(p,q) \to (p,q+1)$ and $(p,q)\to (p-1,q)$.
Besides, if $l=1$ there is an additional edge $(p,q) \to (n-1,n+1)$.
The edges pointing towards $(p,q)$ are

i.1) $(p-1,q+1)\to (p,q)$;

i.2) the {\it special\/} edge from the ``new'' copy of $(p,q)$ if $q\ne 1$; 

i.3) $(p-1,n+1)\to (p,1)$.

(ii) The truncated web is subdivided into consistently oriented triangles forming 
trapezoids. Note that triangles with vertices at $(p,q)$, $(p,q+1)$, $(p+1,q)$
and at $(p,q)$, $(p-1,q)$, $(p,q-1)$ are special: they correspond to oriented quadruples involving
the special edge.
Since no mutations are performed at the vertices $(p,n-1)$, $(p,n)$, and $(p,n+1)$ for $p\in [n-1]$,
the corresponding trapezoids $(p,n)$ and $(p,n+1)$ do not change. In what follows we only
describe the remaining trapezoids.
All trapezoids $(p,q)$, $q\in [2,n-1]$ are SW if $p\in [2,n]$, $q\in [n-l,n-1]$, or $p\in [2,n-1]$, 
$q\in [3,n-l-1]$, or $p\in [2,n-l-1]$, $q=2$, trapezoids $(p,1)$ are SE
for $p\in [2,n-l-1]$.
The trapezoid 
$(n-l-1,1)$  is incomplete: 
it lacks the lower base. 
  
 (iii) The lacing is only affected by the mutation at $(n-l,1)$. 
After the mutation and the subsequent shift it consists of
 the directed path
$(n-l-1,2)\to (n-l-1,n+1)\to (n-l,2) \to (n-l,n+1)\to\ldots \to (n-1,2)$, where $l\in [n-3]$.   
  
(iv) The weave is only affected by the shift of the vertex $(1,n-l)$ to the position $(n,n-l-1)$. 
Before the shift it consists of the directed path 
$(n-1,n-l)\to(1,n-l)\to(n-1,n-l-1)\to (1,n-l-1)\to \ldots \to(n-1,2) \to (1,2)$.

(v) After the first mutation in $\T^1_{\romth}$ and the subsequent shift there arises an {\it auxiliary\/} edge between $(p,q)$ and $(n-1,n+1)$. It consists, in fact, of two edges joining $(n-1,n+1)$
with both vertices occupying the same position; these two edges together with the special edge form
an oriented triangle. The auxiliary edge disappears at the end of  $\T^1_{\romth}$.

The quiver $Q^2_{\romth}(5)$ prior to the mutation at $(2,2)$ in
$\T^2_{\romth}$ is shown in Fig.~\ref{fig:ann6}. The lacing is shown with thick solid
lines, the weave with dashed lines, the special edge with an arc. There is no auxiliary edge, since $l>1$.
 
 \begin{figure}[ht]
\begin{center}
\includegraphics[height=8cm]{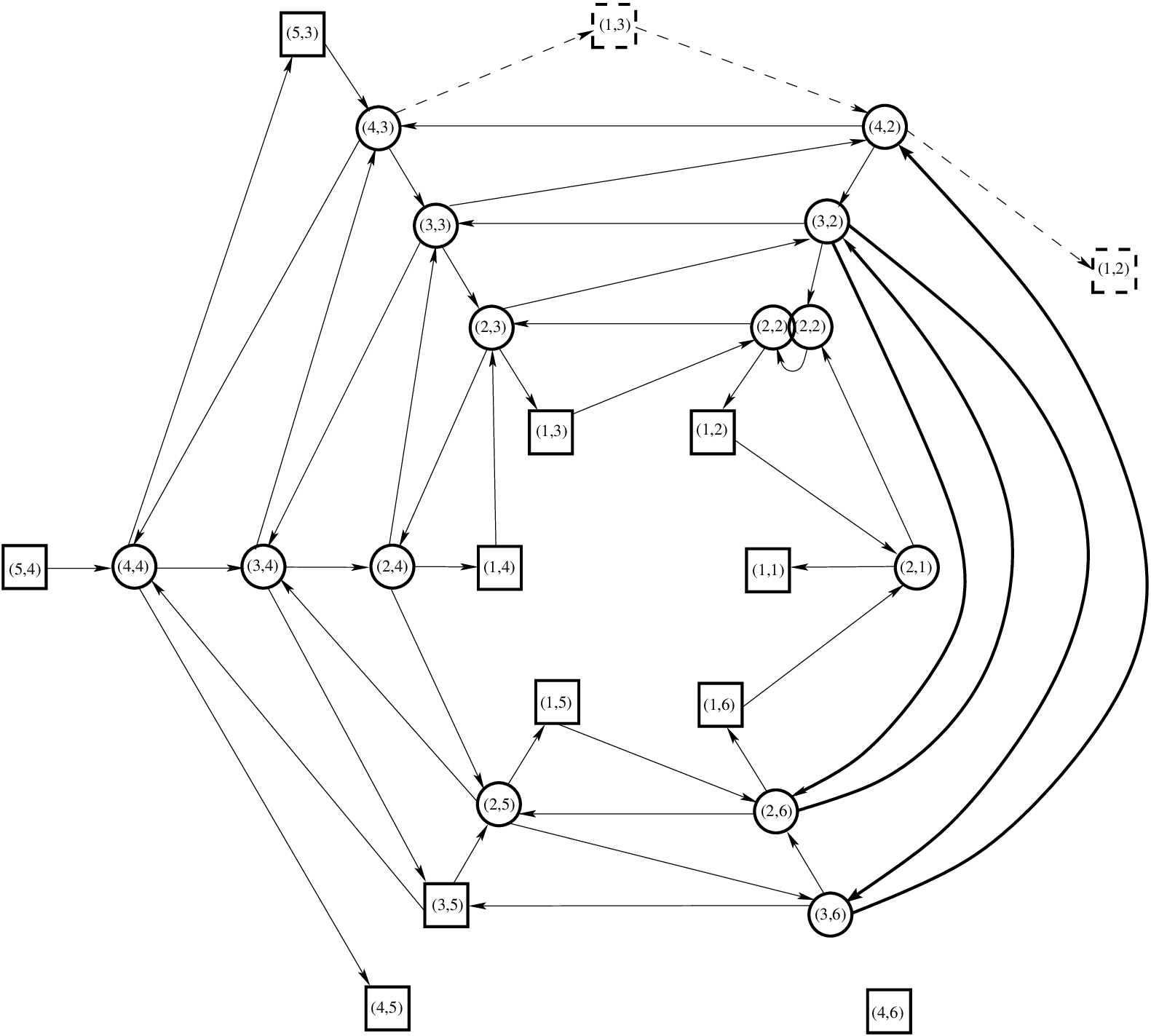}
\caption{Quiver $Q^2_{\romth}(5)$ prior to the mutation at $(2,2)$ in
$\T^2_{\romth}$ }
\label{fig:ann6}
\end{center}
\end{figure}

It follows from the description above that for $l=1$, 
the identities that need to be established are
\begin{align*}
&f^{(0)}_{\romth} (p,1) f^{(1)}_{\romth} (p-1,2) \\
&\qquad= f^{(0)}_{\romth} (p-1,1) f^{(0)}_{\romth}(p,n+1) f^{(0)}_{\romth} (p,2) + f^{(0)}_{\romth} (p-1,2) f_{\romth}^{(0)}(p-1, n+1)\ 
\end{align*}
with $p=n-1$ and
\begin{align*}
&f^{(0)}_{\romth} (p,q) f^{(1)}_{\romth} (p-1,q+1)\\
&\qquad = f^{(0)}_{\romth} (p-1,q) f^{(0)}_{\romth}(n-1,n+1) f^{(0)}_{\romth} (p,q+1) + f^{(0)}_{\romth} (p-1,q+1) f_{\romth}^{(1)}(p, q)\ 
\end{align*}
with $p=n-q$ for $q\in [2,n-2]$.

We note that 
 \begin{align*}
 & f^{(0)}_{\romth}(n-1,n+1) f^{(0)}_{\romth}(n-q, q+1) = \bangle {[n]} {1\cup  [3,n]}  
 \bangle {[n+1] \cup [2n-q+1, 2n]} {[n]}\\
 &\qquad = 
 x_{21} \bangle {[n+1] \cup [2n-q+1, 2n]} {[n]}={\bangle {[n]\cup  [2n - q + 1, 2n]} {1\cup [3, n]} }^{(2)}
 \end{align*}
and conclude that both identities above are of the form
{\small
\begin{align}
\nonumber
&  \bangle  {[n]\cup [\mu +1 , 2 n]}{[\nu,n] }\ \bangle  {[n]\cup [\mu , 2n]}{1\cup [\nu+1,n] }\ 
= \\ 
\label{magicformula_2}
&  \qquad
\bangle  {[n]\cup [\mu + 1, 2 n]}{[\nu + 1,n] }\ {\bangle {[n]\cup  [\mu, 2n]} {1\cup [\nu, n]} }^{(2)}
+ \bangle  {[n]\cup [\mu, 2 n]}{[\nu ,n] }\ \bangle  {[n]\cup [\mu +1 , 2n]}{1\cup [\nu + 1,n] },
\end{align}
}
where $\mu = 2n - q + 1$, $\nu = 3$.

For $l\in [2,n-2]$, 
the identities that need to be established are
\begin{align*}
&f^{(l-1)}_{\romth} (p,1) f^{(l)}_{\romth} (p-1,2) \\
&\qquad= f^{(l-1)}_{\romth} (p-1,1)  f^{(l-1)}_{\romth} (p,2) + f^{(l-1)}_{\romth} (p-1,2) f_{\romth}^{(l-1)}(p-1, n+1)\ 
\end{align*}
with $p=n-l$ and
\begin{align*}
&f^{(l-1)}_{\romth} (p+1,q) f^{(l)}_{\romth} (p,q+1)\\
&\qquad = f^{(l-1)}_{\romth} (p,q)  f^{(l-1)}_{\romth} (p+1,q+1) + f^{(l-1)}_{\romth} (p,q+1) f_{\romth}^{(l)}(p+1, q)\ 
\end{align*}
with $p=n-l-q$ for $q\in [2,n-2]$.
Both result in
{\small
\begin{align}
\nonumber
&  \bangle  {[n]\cup [\mu +1 , 2 n]}{[\nu,n] }\ \bangle  {[n]\cup [\mu , 2n]}{1\cup [\nu+1,n] }\ = \\ 
\label{magicformula_21}
&  \qquad
\bangle  {[n]\cup [\mu + 1, 2 n]}{[\nu + 1,n] }\ \bangle  {[n]\cup [\mu , 2n]}{1\cup [\nu,n] }
+ \bangle  {[n]\cup [\mu, 2 n]}{[\nu ,n] }\ \bangle  {[n]\cup [\mu +1 , 2n]}{1\cup [\nu + 1,n] }
\end{align}
}
 with $\mu=2n -q +1$, $\nu = l+2$. 
 
To prove~\eqref{magicformula_2}, \eqref{magicformula_21}, we consider the matrix $C$ obtained from $\bar U$  by deleting columns and rows indicated in the first factor of the first term on the right hand side of the corresponding relation. For rows and columns of $C$ we retain the same indices they had as rows and columns of $\bar U$.
Let $t$ be the smallest index such that  entries of the first sub-diagonal of $C$ are zero in rows with the index larger than $t$.  Then we define $A$ to be a leading principal $t\times t$ submatrix of $C$. For $\mu, \nu$ defined as above, apply \eqref{jacobi} to $A$ with  $\alpha= 1$, $\beta=\nu$, $\gamma = \mu$, $\delta= t$. Then we  obtain \eqref{magicformula_2}, \eqref{magicformula_21} after canceling common factors.

Note that  $f_{\romth}^{(n-q)}(1,q)= \bangle{[n]\cup [2n-q+2,2n]}{1\cup [n-q+3,n]}=X_{[2,n-q+2]}^{[n-q+1]} 
=f_{q-1,q}^{w_0}= \phhi_{n-q+1}^{w_0}$ for $q\in [2, n-1]$, which justifies freezing the 
last mutated vertex in each path.
\end{proof}

After $\T^{(n-2)}_{\romth}$ is complete, there are two isolated frozen vertices: $(1,1)$ and $(n-1,n+1)$.
 We delete them and note that the only remaining vertex in the first ray is $(1,n)$. Shift
 the vertices of the $(n+1)$st ray via $(p,n+1)\mapsto (p+1,1)$ 
 and denote the resulting quiver $Q_{\romfo}(n)$. Observe that $Q_{\romfo}(n)$ has only $n$ rays: 
 all the vertices
 from the $(n+1)$st ray in $Q_{\romth}(n)$ either moved or has been deleted. The frozen vertices 
 in $Q_{\romfo}(n)$ are $(1,q)$ for $q\in [2,n]$, $(n,q)$ for $q\in [n-1]$, $(2,1)$, $(n-1,n)$ and $(n-2,n)$. 
Lemma \ref{regularsteps} implies
 
 \begin{proposition}
 \label{endregularsteps}
{\rm (i)} The quiver $Q_{\romfo}(n)$ is planar. It is subdivided into consistently oriented triangles forming trapezoids. 
All trapezoids $(p,q)$ are SW if $p\in [2,n]$, $q\in [3,n-1]$ or $p\in [3,n]$, $q=2$, 
 The trapezoid 
$(n,2)$ is incomplete:  
it lacks 
the left side. Trapezoids $(p,1)$, $p\in [3,n-1]$, are slanted: their vertices are $(p,1)$, $(p-1,1)$,
$(p-2,n)$, $(p-1,n)$. They all are SE. 
The trapezoid $(2,n)$ is special; it remains unchanged
under $\T^{(n-2)}_{\romth}$.

{\rm (ii)} The cluster variables  $f_{\romfo}(p,q)$ attached to vertices of $Q_{\romfo}(n)$ are
\begin{equation*}
f_{\romfo}(p,q)= \begin{cases}
   \bangle{[n-1]}{1 \cup[n-p+2,n]},   &\quad  \text{if  $q=n$}, \\
    \bangle{[n] \cup[2n-q+2,2n]}{1\cup [n-p-q+4,n]},   &\quad  \text{if $p+q\in[3,n]$}, \\
   \bangle{[n+1]\cup [2n-q+2,2n]}{[n]\cup[3n-p-q+1,2n-1]},   &\quad  \text{if $p, q<n$, $p+q > n$}, \\
    X_{[n-q]}^{[n-q]},   & \quad \text{if  $p=n$, $q < n$}. 
    \end{cases}
 \end{equation*}
 \end{proposition}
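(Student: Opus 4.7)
The plan is to derive Proposition~\ref{endregularsteps} as a direct consequence of Lemma~\ref{regularsteps} by tracking both the quiver evolution items (i)--(v) from that proof through all $n-2$ sub-stages of $\T^{(n-2)}_{\romth}$, and then performing the cosmetic shift $(p,n+1)\mapsto(p+1,1)$ together with the deletion of the two isolated frozen vertices $(1,1)$ and $(n-1,n+1)$.

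For part (i), I would argue that the mutations in $\T^l_{\romth}$ act only along the anti-diagonal $p+q=n-l+1$, so that after the full sequence $\T^{(n-2)}_{\romth}$:
(a) every trapezoid $(p,q)$ with $q\in[3,n-1]$ and $p\in[2,n]$ that was SW in $Q_{\romth}$ is still SW, because such trapezoids are either never touched or are touched by exactly two mutations at diagonally opposite corners whose combined effect preserves the SW orientation;
(b) the trapezoids $(p,1)$ inherit precisely the slanted-SE edges from the final state of the lacing described in (iii) of Lemma~\ref{regularsteps}, because the shift $(p,n+1)\mapsto(p+1,1)$ turns the lacing path $(n-l-1,2)\to(n-l-1,n+1)\to(n-l,2)\to\dots$ into the slanted trapezoid edges;
(c) the special trapezoid $(2,n)$ is never disturbed since $q=n$ is outside the mutation region;
(d) the knob with vertex $(n,n-2)$ is unaffected after $\T^1_{\romth}$ and gives the incomplete trapezoid $(n,2)$ after the full sequence shifts the $(n+1)$-ray inward. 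The vertices $(1,q)$ for $q\in[2,n-1]$ become frozen $\phhi^{w_0}$-vertices at the end of each $\T^l_{\romth}$ and lose all remaining edges, justifying their removal; similarly $(1,1)=\thetta^{w_0}_n$ and $(n-1,n+1)$ become isolated.

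For part (ii), combining the cases of Lemma~\ref{regularsteps} with Proposition~\ref{endcycles} gives directly:
when $q=n$, no mutation touches the vertex so $f_{\romfo}(p,n)=f_{\romth}(p,n)=\bangle{[n-1]}{1\cup[n-p+2,n]}$;
when $q<n$ and $p+q>n$, the vertex lies outside the mutated region and the formula $\bangle{[n+1]\cup[2n-q+2,2n]}{[n]\cup[3n-p-q+1,2n-1]}$ is inherited from $Q_{\romth}$;
when $p+q\in[3,n]$ and $q\geq2$, the vertex lies on the anti-diagonal $p+q=n-l+1$ with $l=n-p-q+1$, so the first case of Lemma~\ref{regularsteps} yields $\bangle{[n]\cup[2n-q+2,2n]}{1\cup[n-p-q+4,n]}$;
when $q=1$, the vertex is the image under the shift of the old vertex $(p-1,n+1)$ in $Q_{\romth}$, whose variable was $\bangle{[n]}{1\cup[n-p+3,n]}$ by Proposition~\ref{endcycles}, which matches the previous formula specialized to $q=1$;
and when $p=n$ with $q<n$, the vertex is the shifted ``old'' $(1,n-l)$ with $l=n-q$, whose value by the second case of Lemma~\ref{regularsteps} is $X^{[l]}_{[l]}=X^{[n-q]}_{[n-q]}$.

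The main obstacle, and the one that requires the most care, will be verifying the quiver structure at the boundary of the mutated region --- specifically, showing that the edges incident to the shifted vertices $(n,q)$ and to the first ray $(p,1)$ in $Q_{\romfo}(n)$ indeed produce the slanted trapezoids and the incomplete trapezoid $(n,2)$, rather than any spurious extra or missing edges. This requires a careful bookkeeping of (i)--(v) in Lemma~\ref{regularsteps} precisely at those mutations that terminate each sub-sequence $\T^l_{\romth}$, since that is where the knob, lacing, and weave all interact. Once this boundary analysis is completed, the rest is a mechanical reading of the relevant cases of Lemma~\ref{regularsteps}.
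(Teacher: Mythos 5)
Your overall route is the same as the paper's: Proposition~\ref{endregularsteps} is given there with no separate argument beyond the observation that it follows from Lemma~\ref{regularsteps} together with the deletion of the two isolated vertices $(1,1)$, $(n-1,n+1)$ and the shift $(p,n+1)\mapsto(p+1,1)$. Your part~(ii) reads the four cases off Lemma~\ref{regularsteps} and Proposition~\ref{endcycles} exactly as intended, including the observation that the $q=1$ vertices are the shifted $(p-1,n+1)$ vertices and that their value $\bangle{[n]}{1\cup[n-p+3,n]}$ is the $p+q\in[3,n]$ formula in disguise because $[2n+1,2n]=\varnothing$. That part is correct.

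There is, however, a concrete error in your part~(i): you assert that the vertices $(1,q)$, $q\in[2,n-1]$, ``lose all remaining edges, justifying their removal.'' They do not. After $\T^{(n-2)}_{\romth}$ only $(1,1)$ and $(n-1,n+1)$ become isolated and are deleted; the vertices $(1,q)$, $q\in[2,n-1]$, which carry $\phhi^{w_0}_{n-q+1}=\bangle{[n]\cup[2n-q+2,2n]}{1\cup[n-q+3,n]}$, are frozen but remain connected and are retained in $Q_{\romfo}(n)$. Indeed, the paper lists $(1,q)$ for $q\in[2,n]$ among the frozen vertices of $Q_{\romfo}(n)$; part~(i) of the proposition needs them as the upper corners of the SW trapezoids $(2,q)$, $q\in[3,n-1]$; part~(ii) assigns them values via the case $p+q\in[3,n]$; and the exchange relations of the subsequent stages (e.g.\ the fringe in Lemma~\ref{grassmanind} and the $\phhi^{w_0}$-, $\pssi^{w_0}$-factors in Lemma~\ref{grassmannian}) use these frozen vertices explicitly. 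If you delete them, the trapezoid decomposition you are trying to establish does not exist. Relatedly, your attribution of the slanted trapezoids $(p,1)$ to the shifted lacing and of the incomplete trapezoid $(n,2)$ to the knob should be rechecked against the final states of items (ii)--(v) in the proof of Lemma~\ref{regularsteps}: the slanted trapezoids have vertices $(p,1)$, $(p-1,1)$, $(p-2,n)$, $(p-1,n)$, i.e.\ they join the first column to the $n$-th, not to the second, so they cannot come solely from the lacing path between columns $2$ and $n+1$. This is precisely the boundary bookkeeping you flag as the main obstacle, and as written your sketch gets it wrong at the first row.
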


 The quiver $Q_{\romfo}(5))$ is shown in Fig.~\ref{fig:ann7}.
 
 \begin{figure}[ht]
\begin{center}
\includegraphics[height=8cm]{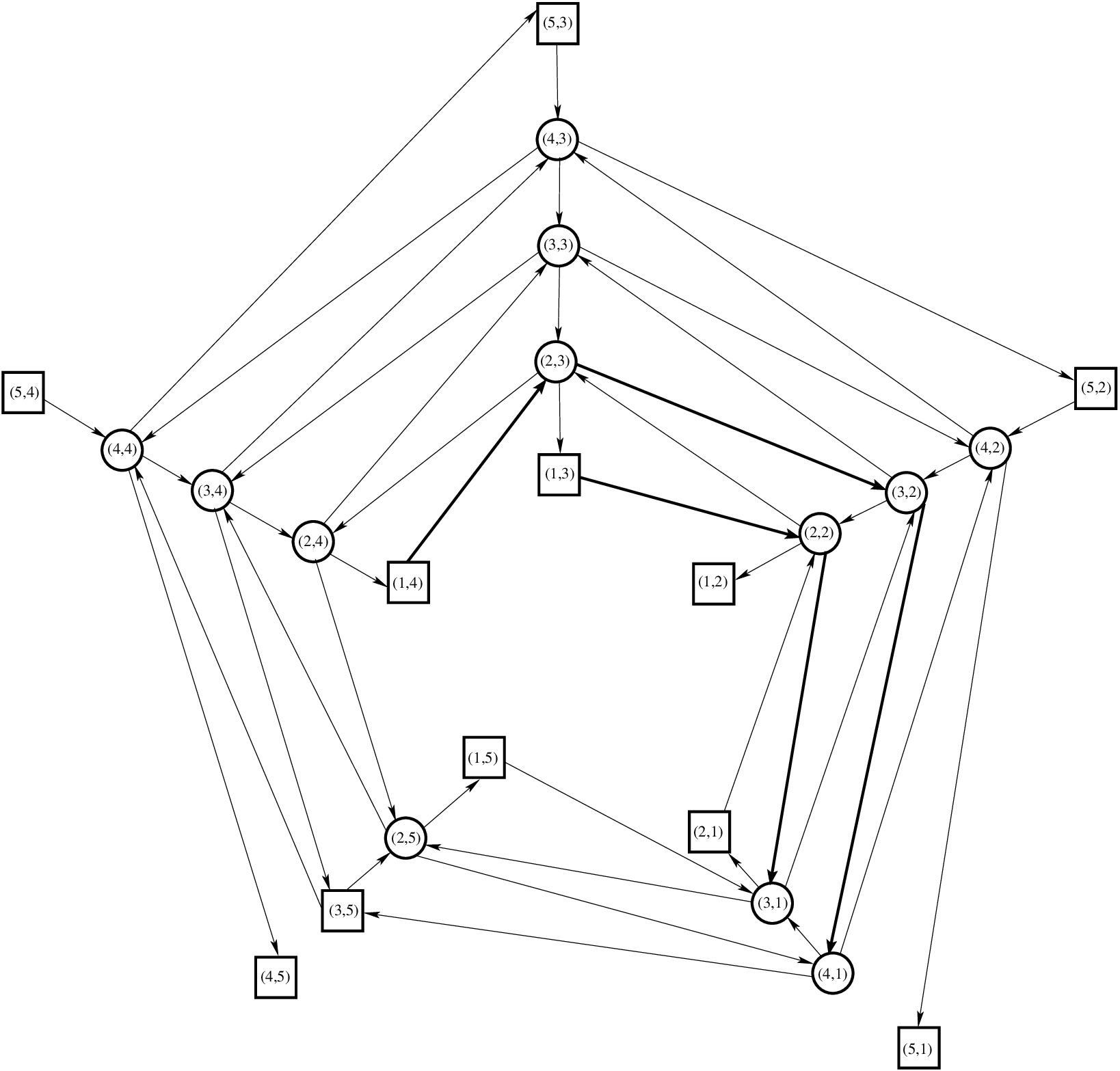}
\caption{Quiver $Q_{\romfo}(5)$ }
\label{fig:ann7}
\end{center}
\end{figure} 

The fourth stage is similar to the previous one both in its definition and analysis. 
It involves the sequence of mutations  along the path $(1,n-1)\to (2,n-2)\to \cdots \to
 (n-1,1)$ in the opposite direction (starting from $(n-1,1)$ and ending at $(2,n-2)$) 
 After the mutation at
 $(i,n-i)$, $i\in[2,n-1]$, the mutated vertex is shifted  to the position $(i-1,n-i+1)$. So, at this 
 moment there are two vertices occupying the same position. 
 After the mutation at $(2,n-2)$, the ``new'' vertex $(1,n-1)$ is frozen,
and  the ``old'' $(1,n-1)$ becomes isolated and is deleted
 to the position $(n,n-2)$.
We denote the resulting transformation $\T^{1}_{\romfo}$. 
 Next, we define sequences of transformations
 $\T^{l}_{\romfo}$ and $ \T^{(l)}_{\romfo} = \T^{l}_{\romfo}\circ\cdots\circ \T^{1}_{\romfo}$, $l\in[2,n-3]$, inductively: $\T^{l}_{\romfo}$ consists 
 in applying mutations along the path $(1,n-l)\to (2,n-l-1)\to \cdots
 \to (n-l-1,2)\to (n-l,1)$ in the opposite direction (starting at $(n-l,1)$ and ending at 
 $(2,n-l-1)$). The paths are shown in Fig.~\ref{fig:ann7} with thick solid lines. 
 After the mutation
 at  $(i,n-i-l+1)$, $i\in [2, n-l]$, the mutated vertex is shifted to the position $(i-1,n-i-l+2)$, so that there are 
 two vertices occupying the same position; after the mutation at $(2,n-l-1)$, the ``new'' $(1,n-l)$ is frozen,  
and the ``old'' $(1,n-l)$ becomes isolated and is deleted. As before, denote 
 $Q^{(l)}_{\romfo}(n)=\T^{(l)}_{\romfo}(Q_{\romth}(n))$ for $l\in [n-3]$, $Q^{(0)}_{\romfo}(n)=Q_{\romfo}(n)$, $f^{(0)}_{\romfo}(p,q)=f_{\romfo}(p,q)$.

The quiver obtained upon the completion of $\T^{(n-3)}_{\romfo}$ is denoted $Q_{\romfi}(n)$. 
Note that all the vertices of the first ray except for $(2,1)$ and $(n,1)$ has been moved to the 
second ray. It will be convenient
to put the vertices of $Q_{\romfi}(n)$ on the $n\times n$ grid in the following way: 
vertex $(p,q)$ remains at
$(p, q)$ for $p\in [n]$ and $q\in [2,n-1]$, or $p=2,n$, $q=1$, or $p=n-2,n-1$ and $q=n$, and
is placed at $(p+1,q+1)$ for $p\in[n-3]$ and $q=n$.   
In view of the similarity with the previous stage, we skip verification of the intermediate steps and 
simply present the description of $Q_{\romfi}(n)$ and cluster variables attached to its vertices.
 
 \begin{proposition}
 \label{sechalfregular}
 {\rm (i)}
 The quiver $Q_{\romfi}(n)$ is planar. 
 It can be decomposed into the {\rm front panel\/}, the {\rm fringe\/} and
the {\rm lacing}. The front panel consists of the vertices $(p,q)$ for 
$p\in [2,n]$ and $q\in [n-1]$ and of the
edges $(p,q)\to (p,q+1)$ for $p\in [2,n-1]$, $q\in [n-2]$, $(p,q)\to (p-1,q)$ for $p\in [2,n]$, $q\in [n-1]$,  except for $p=3,n$ and $q=1$, $(p,q)\to (p+1,q-1)$ for $p,q\in [2,n-1]$.

The fringe consists of the horizontal part containing vertices $(1,q)$ for $q\in [2,n-1]$ and the 
vertical part  
containing vertices $(n-2,n)$ and $(n-1,n)$; all vertices of the fringe are frozen. 
The edges connecting these vertices to the front panel are included in the fringe. They form two paths:
$(2,n-1)\to(1,n-1)\to (2,n-2)\to (1,n-2)\to\dots\to (2,2)\to (1,2)$ and $(n-2,n-1)\to(n-2,n)\to(n-1,n-1)
\to(n-1,n)$.

The lacing consists of the edges between the vertices of the front panel that do not belong to the front panel.
These edges form the path $(2,n-1)\to (4,1)\to (3,n-1)\to (5,1)\to\cdots\to (n-1,1)\to
(n-2,n-1)$.
 
{\rm (ii)} The cluster variables  $f_{\romfi}(p,q)$ attached to vertices of $Q_{\romfi}(n)$ 
are $f_{\romfi}(2,1)= \bangle{[n]}{1}=\phhi_N=\phhi_N^{w_0}$, 
$f_{\romfi}(n-1,n) = x_{1n}=\pssi_1^{w_0}$, $f_{\romfi}(n-2,n) = \pssi_2^{w_0}$ and
\begin{equation*}
f_{\romfi}(p,q)= \begin{cases}
    \bangle{[n-1] \cup[2n-q+2,2n]}{1\cup [n-p-q+5,n]},   & \quad \text{if $p+q\in [4,n]$}, \\
 \bangle{[n+1]\cup [2n-q+2,2n]}{[n]\cup[3n-p-q+1,2n-1]}, &\quad  \text{if $p, q<n$, $p+q > n$}, \\
    X_{[n-q]}^{[n-q]},   & \quad \text{if $p=n$, $q < n$}. 
    \end{cases}
 \end{equation*}
 \end{proposition}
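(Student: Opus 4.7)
The proof closely parallels that of Lemma~\ref{regularsteps} and will proceed by induction on $l \in [n-3]$, simultaneously tracking the state of the quiver $Q^{(l)}_{\romfo}(n)$ and the cluster variables $f^{(l)}_{\romfo}(p,q)$ attached to its vertices. The base case $l=0$ is furnished by Proposition~\ref{endregularsteps}. The inductive statement would assert that $f^{(l)}_{\romfo}(p,q)$ differs from $f^{(l-1)}_{\romfo}(p,q)$ only along the diagonal path of $\T^l_{\romfo}$, where the updated variables take the form $\bangle{[n-1]\cup [2n-q+2,2n]}{1\cup [l-p-q+n+4,n]}$ for the appropriate range of $(p,q)$, and where the ``old'' copy of $(1,n-l)$, which becomes isolated and is deleted at the end of $\T^l_{\romfo}$, takes the value $X_{[l+1]}^{[l+1]} = \thetta_{l+1}^{w_0}$.

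For the evolution of the quiver, I would mimic the template of Lemma~\ref{regularsteps}: decompose $Q^{(l-1)}_{\romfo}(n)$ into the truncated web, the lacing, and (when applicable) a fringe and weave, and verify by induction that prior to each mutation in $\T^l_{\romfo}$ at a vertex $(p,q)$ on the diagonal path, (i) $(p,q)$ is four-valent with outgoing edges $(p,q)\to(p,q+1)$ and $(p,q)\to(p-1,q)$, and incoming edges from the standard diagonal neighbor $(p-1,q+1)$ and from the ``new'' copy of $(p,q)$ via a special edge for $q\ne 1$; (ii) the adjacent trapezoids are consistently oriented (predominantly SW); (iii) the lacing updates occur only when the mutation reaches the first ray. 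The new wrinkle compared to Lemma~\ref{regularsteps} is that vertices removed during earlier sequences $\T^{l'}_{\romfo}$, $l'<l$, leave behind incomplete trapezoids whose types must be tracked carefully at the boundary.

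The cluster transformation identities required at each mutation would be of the form of \eqref{magicformula_2} or \eqref{magicformula_21}, and can be established exactly as in the proof of Lemma~\ref{regularsteps}: consider the submatrix $C$ of $\bar U$ obtained by deleting the rows and columns indicated in the first factor of the first term on the right-hand side, extract its maximal leading principal submatrix $A$ with nonzero first-subdiagonal entries, and apply the Desnanot--Jacobi identity \eqref{jacobi} with suitably chosen deleted rows and columns. The staircase shape of $\bar U$ then guarantees that a common factor cancels, yielding the required three-term relation.

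The main obstacle lies in the precise combinatorial bookkeeping at boundary cases: the first mutation in each $\T^l_{\romfo}$ (at $(n-l,1)$) triggers updates to the lacing, and the last mutation (at $(2,n-l-1)$) freezes the ``new'' vertex $(1,n-l)$ while the deleted ``old'' copy must be verified to equal $\thetta_{l+1}^{w_0}$. Once the inductive statement is established through $l=n-3$, the formula for $f^{(n-3)}_{\romfo}(p,q)$ specializes directly to the description of $f_{\romfi}(p,q)$ in the proposition after the relabeling $(p,n)\mapsto(p+1,n+1)$ for $p\in[n-3]$ described in the paragraph preceding the proposition, and the decomposition of $Q_{\romfi}(n)$ into the front panel, fringe, and lacing is read off from the final quiver state. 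In particular, the identifications $f_{\romfi}(2,1) = \bangle{[n]}{1} = \phhi_N = \phhi_N^{w_0}$, $f_{\romfi}(n-1,n) = x_{1n} = \pssi_1^{w_0}$, and $f_{\romfi}(n-2,n) = \pssi_2^{w_0}$ follow by direct inspection of the corresponding minors of $\bar U$, confirming that these frozen variables coincide with their $w_0$-rotated counterparts.
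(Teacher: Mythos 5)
Your approach is exactly the one the paper intends: in fact the paper gives no proof of this proposition at all --- it states that the fourth stage ``is similar to the previous one both in its definition and analysis'' and explicitly skips verification of the intermediate steps, so the intended argument is precisely the induction over the subsequences $\T^l_{\romfo}$ modeled on Lemma~\ref{regularsteps}, with each exchange relation reduced via cancellation of common factors to a Desnanot--Jacobi identity \eqref{jacobi} applied to a core of a submatrix of $\bar U$, which is what you describe. Two bookkeeping slips in your inductive statement would need correcting when the details are written out. First, on the anti-diagonal $p+q=n-l+1$ processed at step $l$ the updated row set should be $1\cup[l+4,n]=1\cup[n-p-q+5,n]$; your expression $[l-p-q+n+4,n]$ evaluates to $[2l+3,n]$ there and agrees with the target formula of the proposition only when $l=1$. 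Second, the ``old'' copy of $(1,n-l)$ that becomes isolated and is deleted is a frozen vertex that is never mutated during this stage, so it carries the value $\phhi^{w_0}_{l+1}$ inherited from stage III, not $\thetta^{w_0}_{l+1}$; the functions $\thetta^{w_0}_i$ sit on the $n$th row throughout, and it is the newly frozen copies of $(1,q)$ that acquire the values $\pssi^{w_0}_{n-q+2}$ recorded after the proposition. Neither slip affects the viability of the argument, which otherwise matches the paper's (omitted) verification.
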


Note that $f_{\romfi}(1,q)= \bangle{[n-1]\cup [2n-q+2,2n]}{1\cup [n-q+4,n]}=X_{[2,n-q+2]}^{[n-q+1]} 
=f_{q-2,q}^{w_0}= \pssi_{n-q+2}^{w_0}$ for $q\in [3,n-1]$. Therefore, the frozen vertices of 
$Q_{\romfi}(n)$ are $(1,q)$ for $q\in [2,n-1]$, $(n,q)$ for $q\in [n-1]$, $(2,1)$, $(3,1)$,
$(n-2,n)$ and $(n-1,n)$.

Our running example, the case of $n=5$, becomes too small to illustrate the next stages of the process, and
we switch to the case $n=7$.
 The quiver $Q_{\romfi}(7)$ is shown in Fig.~\ref{fig:grid2}. The lacing is shown with dashed lines.
 
 \begin{figure}[ht]
\begin{center}
\includegraphics[height=8cm]{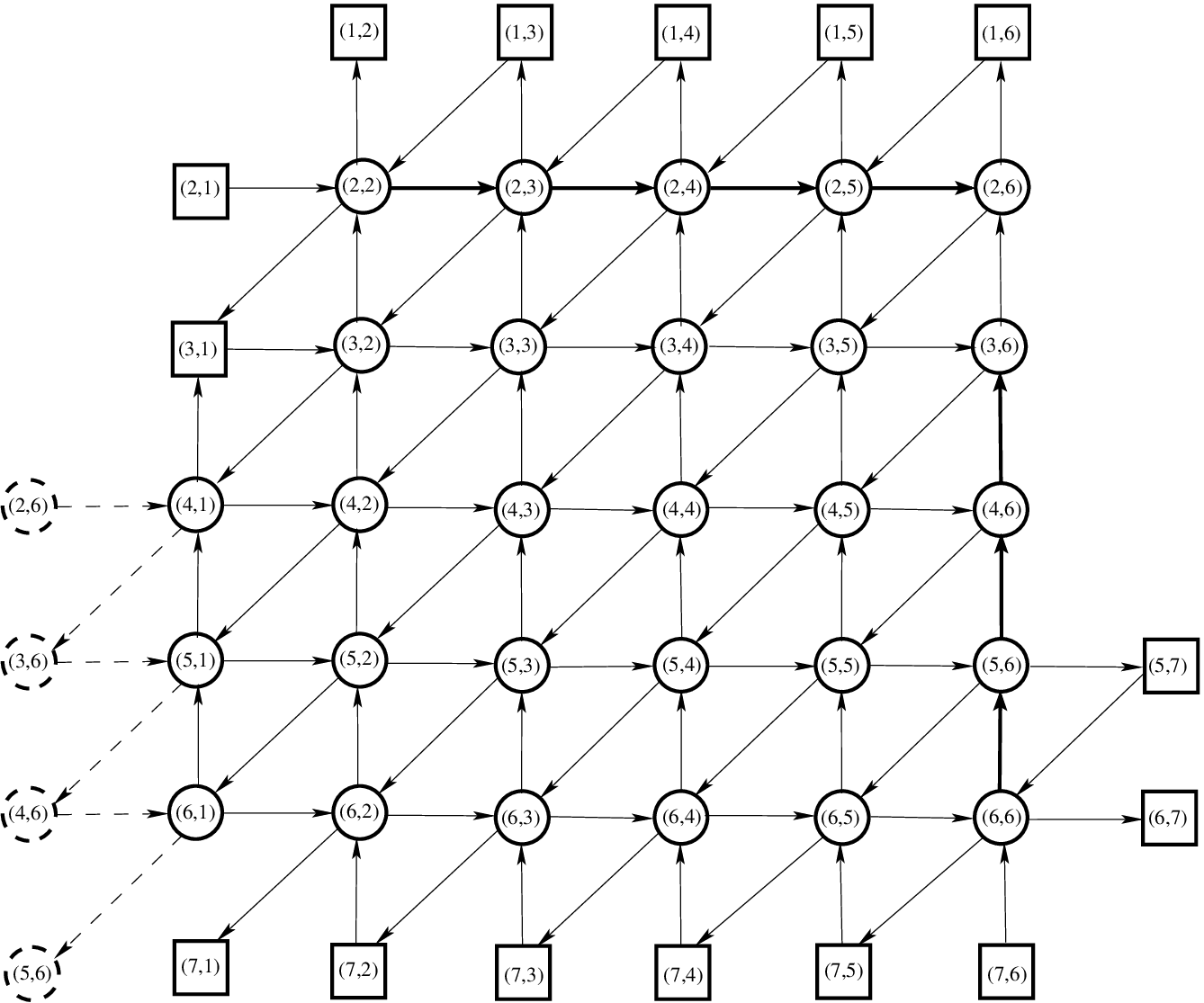}
\caption{Quiver $Q_{\romfi}(7)$ }
\label{fig:grid2}
\end{center}
\end{figure}  

Observe that the front panel is isomorphic, up to freezing the vertices $(p,1)$, $p\in [4,n-1]$, to the quiver
studied in \cite{GSV1}, \cite[Ch.~4.2]{GSVb} and related to the open cell of the Grassmannian $G_{n-1}(2(n-1))$;
we denote this quiver $Q_{Gr}(n-1)$.
On the fifths stage we apply to $Q_{\romfi}(n)$ the same sequence of transformations that was used in the proof of Lemma~4.13 in~\cite{GSVb}. 
Namely, define a sequence $\T_j^1$, $j\in [n-2]$, as mutations along the path $(j+1,2)\to(j+1,3)\to\dots\to
(j+1,n-j)$ in the opposite direction (starting from $(j+1,n-j)$) followed by mutations along the path 
$(n-1,n-j)\to(n-2,n-j)\to\dots\to(j+2,n-j)$ in the opposite direction (starting from $(j+2,n-j)$).
Similarly to the previous stages, we denote
 $\T_{(j)}^1=\T_{j}^1\circ\T_{j-1}^1\circ\dots\circ\T_{1}^1$, 
$j\in [n-2]$, and $\T^1_{\romfi}=\T_{(n-2)}^1$. After the end of $\T^1_{\romfi}$, the vertices of the
fringe are shifted in the following way: $(1,n-1)$ is moved to the position $(2,n)$ in the vertical part of the fringe; $(1,q)$, $q\in [2,n-2]$, is moved rightwards to the position $(1,q+1)$; 
$(p-1,n)$ is moved downwards to the position $(p,n)$;
for $p=n-3,n-2$, the vertex $(p,n)$ is  moved upwards to the position $(p-1,n)$.
The resulting quiver is denoted $Q^{(1)}_{\romfi}(n)$.

 \begin{lemma}
 \label{grassmanind}
{\rm (i)} The induced subquiver of $Q^{(1)}_{\romfi}(n)$
on the vertices $(p,q)$, $p\in [n-1]$, $q\in [2,n]$, can be decomposed into the
front panel and the fringe that are isomorphic to those for $Q_{\romfi}(n-1)$, 
up to freezing the vertices of the last 
row and the first two vertices in the first column.

{\rm (ii)} The vertices of the first and the second column are connected with the path
$(n,1)\to(n-1,2)\to(n-1,1)\to\dots\to (2,2)\to (2,1)$. The vertices of the $(n-1)$th and the
$n$th row are connected with the path
$(n-1,1)\to (n,1)\to(n-1,2)\to\dots\to (n-1,n-1)\to (n,n-1)$.

{\rm (iii)} The lacing 
consists of the paths $(2,1)\to(4,1)\to(2,2)$, 
$(2,n-1)\to(5,1)\to(3,n-1)\to(6,1)\to\dots\to(n-1,1)\to(n-3,n-1)$.
\end{lemma}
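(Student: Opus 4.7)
The plan is to reduce the claim to the Grassmannian case already handled in \cite[Lemma 4.13]{GSVb}, by showing that (after freezing the extra vertices listed in the statement) the sequence $\T_\romfi^1$ acts on the front panel of $Q_\romfi(n)$ exactly as the corresponding sequence acts on $Q_{Gr}(n-1)$, with the fringe and the lacing playing only a passive bookkeeping role. Concretely, if we freeze the vertices $(p,1)$ for $p\in [4,n-1]$, the subquiver of $Q_\romfi(n)$ spanned by the front panel together with the horizontal fringe $(1,q)$, $q\in [2,n-1]$, and the vertical fringe $(n-3,n),(n-2,n)$, is isomorphic to $Q_{Gr}(n-1)$ (on an $(n-1)\times(n-1)$ grid with one row and one column of frozen boundary vertices). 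The subsequences $\T_j^1$ are exactly the ``zig-zag'' sequences used in the Grassmannian proof: mutate across the $(j{+}1)$-st row from right to left along $(j{+}1,n-j)$ back to $(j{+}1,2)$, then mutate up the $(n-j)$-th column from $(j{+}2,n-j)$ to $(n-1,n-j)$.

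First, I would check by induction on $j$ that just before running $\T_j^1$ the front panel of $\T_{(j-1)}^1(Q_\romfi(n))$ still coincides, as an abstract quiver with frozen vertices, with the intermediate quiver produced by the same initial steps of the Grassmannian sequence on $Q_{Gr}(n-1)$; the only external arrows touching a vertex we are about to mutate are (a) one fringe arrow to or from the appropriate vertex $(1,\cdot)$ or $(\cdot,n)$, and (b) at the endpoints of each $\T_j^1$, one lacing arrow. The local mutation rule then acts on the fringe/lacing exactly as in the Grassmannian analysis: each horizontal step shifts a fringe arrow one position to the right, each vertical step shifts a fringe arrow one position up, and a lacing arrow entering the left endpoint $(j{+}1,2)$ or the top endpoint $(j{+}2,n{-}j)$ gets re-routed by exactly one grid step. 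Composing these moves over the full $\T_j^1$ shifts the horizontal fringe right by one, pushes $(n{-}3,n)$ and $(n{-}2,n)$ up by one, and rewires two successive lacing arrows into the new path prescribed in~(iii).

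With this bookkeeping in place, part~(i) is immediate from the Grassmannian statement: the induced subquiver on $p\in[n-1]$, $q\in[2,n]$ consists of the shifted front panel $(p,q)$ with $p\in [3,n-1]$, $q\in [2,n-1]$ together with the shifted horizontal fringe $(2,q)$, $q\in[3,n-1]$, and the shifted vertical fringe $(n-3,n),(n-2,n)$, which after freezing the top row and the first two vertices of the left column is isomorphic to the decomposition of $Q_\romfi(n-1)$. Parts~(ii) and~(iii) follow from collecting the end-of-stage arrows that $\T_\romfi^1$ leaves behind between the first/second columns and between the $(n-1)$-th/$n$-th rows: each $\T_j^1$ contributes exactly the two arrows of the zig-zag pattern listed there.

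The main obstacle is purely combinatorial: one must verify that the lacing is never ``broken'' in the middle of some $\T_j^1$, i.e., that a lacing vertex $(p,1)$ with $p\in [4,n-1]$ is always either a source or a sink of the surviving lacing edge at each intermediate step, and that at the two exceptional moments (the first mutation in $\T_j^1$, at $(j{+}1,n-j)$, which involves a lacing arrow into the top of the column, and the last mutation, at $(j{+}1,2)$, which involves a lacing arrow into the left of the row) the composition of quiver-mutations acts on the lacing by the single-step re-routing described above. This is the only step not already contained in the Grassmannian argument; once it is verified, the rest of the proof is a direct translation of \cite[Lemma 4.13]{GSVb}.
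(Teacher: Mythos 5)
Your overall strategy coincides with the paper's: both arguments proceed by induction along the mutation sequence, letting the front panel evolve as in the Grassmannian lemma of \cite[Lemma 4.13]{GSVb} while the fringe and the lacing are tracked as separate bookkeeping, and both identify the lacing as the piece requiring extra verification. However, several of your bookkeeping claims are false as stated and must be corrected before the induction closes. First, the fringe and the lacing are affected \emph{only} during the first subsequence $\T_1^1$: for $j\ge 2$ the mutated vertices $(j+1,q)$, $q\in[2,n-j]$, and $(p,n-j)$, $p\in[j+2,n-1]$, are not adjacent to any vertex of the first row, the $n$th column, or the lacing, so your assertion that every $\T_j^1$ shifts the horizontal fringe one step to the right and re-routes a lacing arrow at its endpoints fails for $j\ge 2$ (taken literally it would yield a total fringe shift of $n-2$ rather than the correct shift by one). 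Second, the evolution of the front panel is \emph{not} exactly that of $Q_{Gr}(n-1)$: because the vertices $(p,1)$ are frozen here, the first column behaves differently --- the edge $(p+1,1)\to(p,1)$ is deleted after the mutation at $(p,2)$ for $p\in[3,n-2]$, and the edge $(n-1,1)\to(n,1)$ is created after the mutation at $(n-1,2)$ --- and it is precisely this amendment that produces the two paths asserted in part (ii), which your ``collecting the end-of-stage arrows'' step does not account for. Finally, two smaller slips: the vertical part of the fringe of $Q_{\romfi}(n)$ consists of $(n-2,n)$ and $(n-1,n)$, not $(n-3,n)$ and $(n-2,n)$, and it is the front panel alone (not the front panel together with the fringe) that is isomorphic to $Q_{Gr}(n-1)$ up to freezing $(p,1)$, $p\in[4,n-1]$.
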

 
 \begin{proof}
 One can prove by induction that the evolution of $Q^{(1)}_{\romfi}(n)$ under $\T^1_{\romfi}$
 can be described as follows.
 
 {\rm (i)} The front panel is changed exactly as described in~{\rm \cite[Lemma 4.13]{GSVb}}, with the following amendment: after the 
mutation at $(p,2)$, $p\in[3,n-2]$, the edge $(p+1,1)\to(p,1)$ is deleted; after the mutation at $(n-1,2)$
the edge $(n-1,1)\to(n,1)$ is added. Therefore, the induced subquiver of
$Q^{(1)}_{\romfi}(n)$ on the vertices $(p,q)$, $p,q\in [2,n-1]$,
is isomorphic to $Q_{Gr}(n-2)$ up to freezing the vertices $(p,2)$, $p\in [2,n-1]$,
and $(n-1,q)$, $q\in [2,n-1]$. 
The edges of the front panel outside this induced subquiver form two paths:
$(n-1,1)\to(n,1)\to(n-1,2)\to(n,2)\to(n-1,3)\to\dots\to(n-1,n-1)\to(n,n-1)$ and
$(n-1,2)\to(n-1,1)\to(n-2,2)\to(n-2,1)\to\dots\to(2,2)\to(2,1)$.

{\rm (ii)} The fringe is only affected during $\T^1_1$. 
After the mutation at $(2,n-1)$ the vertex
$(1,n-1)$ is connected to the front panel via the path $(3,n-1)\to(1,n-1)\to(2,n-1)$. 
After the mutation at 
$(2,q)$, $q\in [2,n-2]$, the vertex $(1,q)$ is connected with the front panel via the path 
$(2,q+1)\to(1,q)\to(2,q)$.

After the mutation at $(p,n-1)$,
$p\in [3,n-2]$, the vertex $(p-1,n)$ is connected with the front panel via the path
$(p+1,n-1)\to(p-1,n)\to(p,n-1)$ {\rm(}the first of the two edges does not exist if $p=n-1${\rm).} 
In addition, for $p=n-3,n-2$ the vertex $(p,n)$ is connected 
with the front panel via the path $(p-1,n-1)\to(p,n)\to(p,n-1)$. 

After the end of $\T^1_{\romfi}$ the vertices of the fringe are moved in their new positions.

{\rm (iii)} The lacing is only affected during $\T_1^1$. 
Prior to the mutation at $(2,q)$, $q\in [2,n-1]$, 
the lacing consists of the paths $(2,q)\to(4,1)\to (2,q+1)$ and
$(3,n-1)\to(5,1)\to(4,n-1)\to(6,1)\dots\to(n-1,1)\to(n-2,n-1)$.
Prior to the mutation at $(p,n-1)$, $p\in [3,n-2]$, the lacing 
consists of the paths $(2,1)\to(4,1)\to(2,2)$, 
$(2,n-1)\to(5,1)\to(3,n-1)\to(6,1)\to\dots\to(p+1,1)\to(p-1,n-1)$ and
$(p,n-1)\to(p+2,1)\to(p+1,n-1)\to\dots\to(n-1,1)\to(n-2,n-1)$; the second of these paths is empty for $p=3$,
the third one is empty for $p=n-2$. Mutations at $(n-2,n-1)$ and $(n-1,n-1)$ do not change the lacing.
\end{proof}

The quiver $Q^{(1)}_{\romfi}(7)$ is shown in Fig.~\ref{fig:grid4}.

  \begin{figure}[ht]
\begin{center}
\includegraphics[height=8cm]{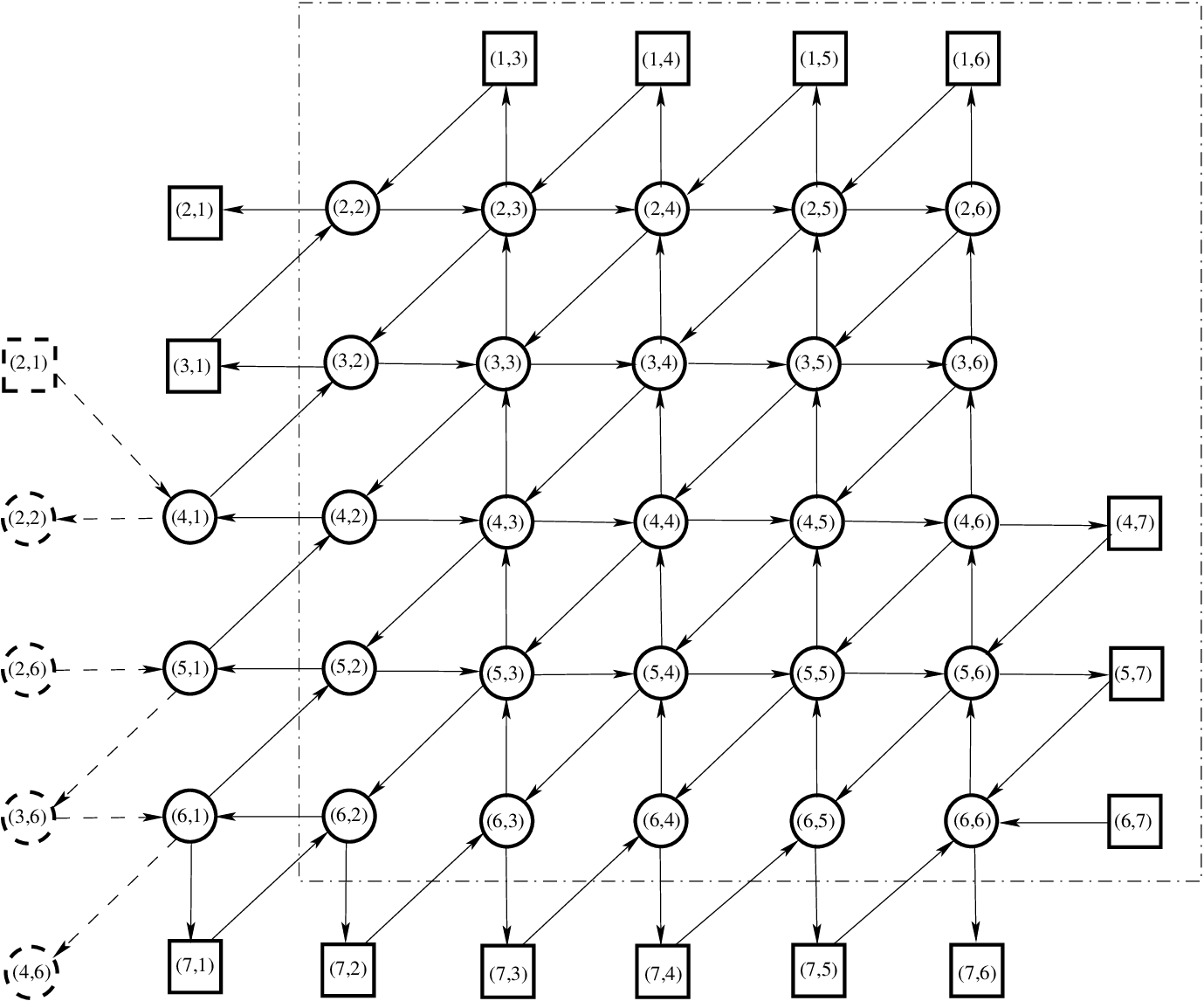}
\caption{Quiver $Q^{(1)}_{\romfi}(7)$ and its subquiver isomorphic to $Q_{\romfi}(6)$ 
up to freezing the first column and the last row }
\label{fig:grid4}
\end{center}
\end{figure} 

It follows from Lemma~\ref{grassmanind} that we can define the sequence $\T^2_{\romfi}$ as the
application of $\T^1_{\romfi}$ to the induced subquiver of
$Q^{(1)}_{\romfi}(n)$ on the vertices $(p,q)$, $p\in [n-1]$, $q\in [2,n]$, the sequence
$\T^3_{\romfi}$ as the
application of $\T^1_{\romfi}$ to the corresponding induced subquiver of the
resulting quiver, and so on up to $\T^{n-2}_{\romfi}$, which consists of the unique mutation at $(2,n-1)$. As before, we define $\T^{(l)}_{\romfi}=\T^l_{\romfi}\circ\cdots\circ\T^1_{\romfi}$,
$Q^{(l)}_{\romfi}(n)=\T^{(l)}_{\romfi}(Q_{\romfi}(n))$, $f^{(l)}_{\romfi}(p,q)=\T^{(l)}_{\romfi}
(f_{\romfi}(p,q))$, $l\in [n-2]$, $Q^{(0)}_{\romfi}(n)=Q_{\romfi}(n)$, $f^{(0)}_{\romfi}(p,q)=f_{\romfi}(p,q)$.

\begin{lemma}\label{grassmannian}
The functions $f_{\romfi}^{(l)}(p,q)$ attached to the vertices of
$Q^{(l)}_{\romfi}(n)$, $l\in [n-2]$, are described as follows.
 In the first row, $f_{\romfi}^{(l)}(1,l+1)= \phhi^{w_0}_{n-1}$, 
$f_{\romfi}^{(l)}(1,q)= \pssi^{w_0}_{n-q+l+2}$ for  $ l+2 < q < n $. 
In the $n$th column, 
$f_{\romfi}^{(l)}(n-2-l,n)= \pssi^{w_0}_{2}$, $f_{\romfi}^{(l)}(n-1-l,n)= \pssi^{w_0}_{1}$,
and $f_{\romfi}^{(l)}(p,n)= \pssi^{w_0}_{n-p+2}$ for $p\in [n-l, n-1]$. 
At other vertices,
 \begin{equation*}
f_{\romfi}^{(l)}(p,q)= \begin{cases}
   \bangle {[n-1] \cup [2n-q+l+2,2n]]} {1 \cup [l+4, n] \cup  [3n-p -q - l +1 ,2n-1]},   &  
  \text{if  $p\in [n+1- q, n- l]$}, \\
    \bangle {[n-1] \cup [2n-q+l+2,2n]]} {1 \cup [n-p -q + l +5, n] \cup  [2n - l ,2n-1]},   &  \text{if $q\in [l+1, n - p]$, $(p,q)\ne (2,l+1)$}, \\
    \bangle{[n]}{1\cup [2n-l,2n-1]},  &  \text{if $p=2$, $q=l+1$},\\
    f_{\romfi}^{(l-1)} (p,q) &  \text{otherwise}.    
    \end{cases}
 \end{equation*}
 \end{lemma}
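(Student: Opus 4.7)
The plan is to prove this lemma by induction on $l$, leveraging the recursive structure built into the definition of $\T^l_{\romfi}$. For the base case $l=1$, I would verify the formulas by analyzing, mutation by mutation, the subsequences $\T^1_j$ constituting $\T^1_{\romfi}$. Each mutation, whether at a four-valent vertex or at one of the five-valent vertices inherited from the Grassmannian-type quiver, corresponds to an identity of the form \eqref{coredodgsonformula}, \eqref{magicformula_11}, \eqref{magicformula_12}, or \eqref{magicformula_14}, applied to a staircase submatrix of $\bar U$ whose row and column index sets are read off from the claimed formulas. After cancellation of common factors determined by the staircase shape, the mutated variable takes the stated expression.

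For the inductive step $l \to l+1$, I would exploit Lemma~\ref{grassmanind}(i), which identifies the induced subquiver of $Q^{(1)}_{\romfi}(n)$ on the vertices $\{(p,q)\: p\in [n-1],\ q\in [2,n]\}$ with $Q_{\romfi}(n-1)$, up to freezing of the last row and first two entries of the first column. Since $\T^{l+1}_{\romfi}$ is by construction nothing other than $\T^1_{\romfi}$ applied to this subquiver, the induction reduces to verifying that the functions $f_{\romfi}^{(l)}(p,q)$ on the subquiver match, under the natural reindexing $(p,q)\mapsto (p,q-1)$, the formulas of Proposition~\ref{sechalfregular} with $n$ replaced by $n-1$. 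A direct inspection of the "otherwise" case of the formula for $f_{\romfi}^{(l)}$, together with the translation invariance of $\bar U$ (in particular, the shifts $\tau_1$, $\tau_2$, $\tau_3$ of Fig.~\ref{fig:baru}), shows that this alignment holds.

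The main obstacle will be the careful case analysis of the boundary values of $f_{\romfi}^{(l)}(p,q)$: the first-row entries $\pssi^{w_0}_{n-q+l+2}$, the $n$-th column entries $\pssi^{w_0}_{n-p+2}$ (and the neighboring $\pssi^{w_0}_1$, $\pssi^{w_0}_2$), the single value $\phhi^{w_0}_{n-1}$ at $(1,l+1)$, and the exceptional value at $(2,l+1)$ equal to $\bangle{[n]}{1\cup [2n-l,2n-1]}$. These arise precisely at the moments when vertices of the front panel are absorbed into the fringe, and must be verified separately. Following the evolution described in Lemma~\ref{grassmanind}(ii), the first-row values are produced after the mutations at $(2,q)$ for $q\in [2,n-1]$, the $n$-th column values after the mutations at $(p,n-1)$ for $p\in [3,n-1]$, and the exceptional value at $(2,l+1)$ upon the terminal mutation in each $\T^1_1$-type subsequence. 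I expect each of these boundary cases to reduce to a single application of \eqref{jacobi} or \eqref{notjacobi} with a non-generic choice of deleted rows and columns, and to match the stated $\pssi^{w_0}$, $\phhi^{w_0}$, or $\bangle{[n]}{\,\cdot\,}$ expression after cancellation.
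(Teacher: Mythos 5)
Your treatment of the base case $l=1$ — reducing each mutation to a three-term determinantal identity among cores of staircase submatrices of $\bar U$, with separate handling of the mutations that create fringe variables — is essentially the paper's method, except that the relevant identities are the stage-V relations \eqref{magicformula_51} and \eqref{magicformula_52} (proved via \eqref{jacobi} applied to a leading principal submatrix of a suitable $\bar U^{\hat J}_{\hat I}$, after cancelling the common factors forced by the staircase shape), not the stage-II identities \eqref{magicformula_11}--\eqref{magicformula_14} you cite; moreover the lacing and fringe contributions must be absorbed by rewriting products such as $\pssi^{w_0}_{n-q+l+1}f^{(0)}_{\romfi}(l+3,1)$ and $\phhi^{w_0}_{n-1}f^{(0)}_{\romfi}(l+3,1)$ as single two-component cores ${\bangle{\cdot}{\cdot}}^{(2)}$ before the identity applies.

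The genuine gap is your inductive step. Lemma~\ref{grassmanind}(i) identifies the induced subquiver of $Q^{(1)}_{\romfi}(n)$ with $Q_{\romfi}(n-1)$ only as an abstract quiver, up to freezing; it carries no information about the functions attached to its vertices. Those functions are cores of submatrices of $\bar U=\bar U(n)$, built from the $n\times n$ matrix $X$, with $l$-dependent index sets such as $[n-1]\cup[2n-q+l+2,2n]$ over $1\cup[l+4,n]\cup[3n-p-q-l+1,2n-1]$. They are not, and cannot be matched by any reindexing or by the shifts $\tau_1$, $\tau_2$, $\tau_3$ to, the initial functions of Proposition~\ref{sechalfregular} with $n$ replaced by $n-1$, which would be minors of a matrix built from an $(n-1)\times(n-1)$ matrix. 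A quiver isomorphism alone does not determine cluster variables, so your proposed reduction is circular: to know the functions after $\T^{l+1}_{\romfi}$ you must still verify that the claimed closed-form expressions satisfy the exchange relations at level $l$, which is exactly the verification you hoped to confine to $l=1$. The paper instead states the general-$l$ formulas outright, shows that every exchange relation dictated by the (inductively described) quiver reduces to \eqref{magicformula_51} or \eqref{magicformula_52} for appropriate $\kappa$, $\mu$, $\nu$, and proves those by Desnanot--Jacobi. Your induction on $l$ is the right skeleton, but the determinantal verification must be carried out at every level, not only at the bottom.
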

 
 \begin{proof} The evolution of the front panel and the fringe under $\T^{l}_{\romfi}$, $l\in 
[2,n-2]$, is similar to the evolution described in Lemma~\ref{grassmanind}. 
Namely, only the restriction of the quiver to the vertices $(p,q)$, $p\in [n-l]$, $q\in [l+1,n]$, 
is changed. The restriction of the front panel to the vertices $(p,q)$, $p\in [2,n-l]$, 
$q\in [l+1,n-1]$, is isomorphic to $Q_{Gr}(n-l-1)$.
The edges $(p,l)\to(p+1,l)$, $p\in [2,n-l]$, and $(n-l+1,q)\to(n-l+1,q-1)$, $q\in [l+1,n-1]$ 
are added, the edges $(p,q)\to(p+1,q-1)$ for $p\in[2,n-l]$, $q=l+1$, and $p=n-l$, $q\in[l+1,n-1]$,  $(p,l)\to(p,l+1)$ for $p\in[2,n-l]$, $(n-l+1,q)\to(n-l,q)$ for $q\in[l+1,n-1]$ are reversed.

 The horizontal part of the fringe consists of vertices $(1,q)$, $q\in [l^*,n-1]$, the vertical part
 consists of $(p,n)$, $p\in [n-l^*,n-1]$, where $l^*=\min\{l+2,n-2\}$. The fringe is connected to the front panel
 with the paths $(2,n-1)\to(1,n-1)\to(2,n-2)\to(1,n-2)\to\dots\to(1,l+2)\to(2,l+1)$, $(n-1,n)\to(n-1,n-1)
 \to(n-2,n)\to(n-2,n-1)\to\dots\to(n-l,n)\to(n-l,n-1)$ and $(n-l-2,n-1)\to(n-l-2,n)\to(n-l-1,n-1)\to(n-l,n)\to(n-l,n-1)$.
For $l=n-3$ the first two edges of the third path are $(2,n-2)\to(1,n-1)\to(2,n-1)$. For $l=n-2$ the first path is  
empty, and the third path becomes $(2,n-3)\to(1,n-2)\to(2,n-2)\to(1,n-1)\to(2,n-1)$.
 
For $l<n-3$ the lacing consists of the paths 
$(2,1)\to(4,1)\to(2,2)\to (5,1)\to\dots\to(l+3,1) \to(l+1,2)$ and
$(2,n-1)\to(l+4,1)\to(3,n-1)\to(l+5,1)\to\dots\to(n-1,1)\to(n-l-2,n-1)$; 
 the second path is empty for $l=n-4$. 
 Transformations $\T^{n-3}_{\romfi}$ and $\T^{n-2}_{\romfi}$ do not
 change the lacing.

As on previous steps, we observe that all identities dictated by cluster transformations can be derived in a uniform
 way: if a transformation is applied at a vertex $(p,q)$, 
 we denote $\kappa=2n-q+l+1$ and 
\begin{align*} 
&\mu = l+3,\ \nu = 3n - p - q - l + 1\quad \text{for $p+q > n$}, \\
&\mu = n-p-q + l + 4,\ \nu = 2n - l\quad \text{for $p+q \leq n$}
\end{align*}
Then the needed identities are of the form
 {\small
\begin{align}
\nonumber
&  \bangle  {[n-1]\cup [\kappa, 2 n]}{1\cup [\mu,n] \cup [\nu+1,2n-1] }\ 
 \bangle  {[n-1]\cup [\kappa+1, 2 n]}{1\cup [\mu+1,n] \cup [\nu,2n-1] }\ = \\ 
\label{magicformula_51}
&  \qquad
\bangle  {[n-1]\cup [\kappa+1, 2 n]}{1\cup [\mu,n] \cup [\nu+1,2n-1] }\ 
{ \bangle  {[n-1]\cup [\kappa, 2 n]}{1\cup [\mu+1,n] \cup [\nu,2n-1] }}^{(s)}
\ \\ \nonumber
&\qquad\qquad\ +\ 
\bangle  {[n-1]\cup [\kappa, 2 n]}{1\cup [\mu,n] \cup [\nu,2n-1] }\ 
 \bangle  {[n-1]\cup [\kappa+1, 2 n]}{1\cup [\mu+1,n] \cup [\nu+1,2n-1] } 
\end{align}
}
if $(p,q)\ne (2,l+1)$ and 
{\small
\begin{align}
\nonumber
&  \bangle  {[n-1]\cup 2 n}{1\cup [\nu+1,2n-1] }\ 
 \bangle  {[n]}{1\cup [\nu,2n-1] }\ = \\ 
\label{magicformula_52}
&  \quad
\bangle  {[n]}{1\cup [\nu+1,2n-1] }\ 
 \bangle  {[n-1]\cup  2 n}{1 \cup [\nu,2n-1] }
\ +\ 
{\bangle  {[n]\cup 2 n}{1\cup [\nu,2n-1] }}^{(2)}\ 
 \bangle  {[n-1]}{1\cup [\nu+1,2n-1] }
\end{align}
}
if $(p,q)=(2, l+1)$.
 
 For $p > 2$, $ q < n-1$, the value of $s$ in \eqref{magicformula_51} equals one, and the
relation follows directly from the definition of $f^{(l)}_{\romfi}$.
 For $p=2$, $q > l+1$, we need to consider
 \begin{align*}
&f^{(l-1)}_{\romfi} (2,q) f^{(l)}_{\romfi} (2,q) \\
&\qquad= f^{(l-1)}_{\romfi} (2,q-1) f^{(l)}_{\romfi} (2,q+1)+ f^{(l-1)}_{\romfi} (3,q-1) \psi^{w_0}_{n-q+l+1} f^{(0)}_{\romfi} (l+3,1)\ 
\end{align*}
 and  notice that
 \begin{align*}
\psi^{w_0}_{n-q+l+1} f^{(0)}_{\romfi} (l+3,1) &= \psi^{w_0}_{n-q+l+1} \bangle {[n-1]} {1\cup [n - l + 1, n]}\\
&= {\bangle {[n-1]\cup [2n-q+l+1,2n] } {1\cup [n-q+l+3,n] \cup [2 n - l, 2n - 1]}}^{(2)}.
\end{align*}

For $p>2$, $q=n-1$, we look at
 \begin{align*}
&f^{(l-1)}_{\romfi} (p,n-1) f^{(l)}_{\romfi} (p,n-1) \\
&\qquad= f^{(l-1)}_{\romfi} (p+1,n-1) f^{(l)}_{\romfi} (p-1,n-1)+ f^{(l-1)}_{\romfi} (p+1,n-2) \psi^{w_0}_{l+3} \Delta(p),
\end{align*}
where
$$
\Delta (p)=\begin{cases}  
  f^{(0)}_{\romfi} (p+l+1,1), &\quad  \text{if $p\in [2, n-l-2]$},\\
  \psi^{w_0}_{2},   &\quad  \text{if $p=n-l-1$},\\
    \psi^{w_0}_{1},   &\quad  \text{if $p=n-l$}\\
  \end{cases}\ = \bangle {[n-1]} {1\cup [n - p - l + 3, n]},
$$
 and observe that
 \begin{equation*}
 \psi^{w_0}_{l+3} \bangle {[n-1]} {1\cup [n - p - l + 3, n]}
 = {\bangle {[n-1]\cup [n+l+2,2n] } {1\cup [l+4,n] \cup [2 n - p - l + 2, 2n - 1]}}^{(2)}.
\end{equation*}
 
 Finally, relation \eqref{magicformula_52} follows from
 \begin{align*}
f^{(l-1)}_{\romfi} (2,l+1) &f^{(l)}_{\romfi} (2,l+1) \\
&= f^{(l-1)}_{\romfi} (2,l) f^{(l)}_{\romfi} (2,l+2)+ f^{(l-1)}_{\romfi} (3,l) \phhi^{w_0}_{n-1} f^{(0)}_{\romfi} (l+3,1)
\end{align*}
after one notices that
\begin{align*}
& \phhi^{w_0}_{n-1} f^{(0)}_{\romfi} (l+3,1) = X_{[2,n]}^{[1,n-1]} \bangle {[n-1]} {1\cup [n - l + 1, n]} = {\bangle {[n]\cup 2n } {1\cup [2 n - l, 2n - 1]}}^{(2)}.
\end{align*}
 
 To prove \eqref{magicformula_51}, we  consider the matrix $C$ obtained from $\bar U$  by deleting columns 
indexed by $[n-1]\cup [\kappa+1, 2 n]$ and rows indexed by $1\cup [\mu+1,n] \cup [\nu+1,2n-1]$. 
Let $t$ be the smallest index such that  entries of the first sub-diagonal of $C$ are zero in rows with the index larger than $t$, then we define $A$ to be the leading principal $t\times t$ submatrix of $C$. If $(p,q)\ne (2,l+1)$,  we apply \eqref{jacobi} to $A$ with  $\alpha=\mu$, $\beta=\nu$, $\gamma = \kappa$, $\delta= t$, where  $\mu, \nu$ are defined as above. The needed identities are obtained  after canceling common factors.
If $(p,q) = (2,l+1)$, $A$ is defined in the same way (note, that the interval $[\mu+1, n]$ is empty in this case), and  one needs to use
\eqref{jacobi}  with  $\alpha=\nu$, $\beta=t$, $\gamma = \kappa$, $\delta= t$ to arrive at  \eqref{magicformula_52}.
 \end{proof}
 
 Among functions $f^{(n-2)}_{\romfi} $ we now find seven new representatives of the family $\{ f^{w_0}_{ij}\}$. 
 Namely, 
 $f^{(n-2)}_{\romfi} (2, n-i) = f^{(n-i-1)}_{\romfi} (2, n-i)\bangle{[n]}{1\cup [n+i+1,2n-1]} = \phhi_{n+i-1}$ for $i\in[1,3]$,  and
  $f^{(n-2)}_{\romfi} (3, n-i) = f^{(n-i-1)}_{\romfi} (3, n-i)\bangle{[n-1]}{1\cup [n+i+1,2n-1]} = \pssi_{n+i-1}$ for $i\in [1,4]$.
  After they are frozen, vertices $(1,n-2)$, $(1,n-1)$,
$(2,n-2)$, $(2,n-1)$, $(2,n)$ become isolated.

The quiver obtained upon the completion of $\T^{n-2}_{\romfi}$ is denoted $Q_{\romsi}(n)$. 
It will be convenient to put the vertices of $Q_{\romsi}(n)$ on the square grid in the following way: 
vertex $(p,q)$ remains at $(p, q)$ for $p\in [3,n]$ and $q\in [n]$, vertex $(2,q)$ is placed at 
$(q+3,0)$ for $q\in [n-3]$. Therefore, the frozen vertices of $Q_{\romsi}(n)$ are $(3,q)$ for
$q\in[n]$, $q\ne 3$, $(n,q)$ for $q\in [0,n-1]$, $(p,n)$ for $p\in [4,n-1]$, and $(4,0)$.

It will be convenient to subdivide $Q_{\romsi}(n)$ into parallelograms with vertices $(p,q)$, $(p-1,q+1)$, $(p-1,q)$, $(p,q-1)$. Such a parallelogram is denoted $(p,q)$; it
may be south (S), north (N), northeast (NE)
and southwest (SW) depending on the direction of its diagonal. Lemma~\ref{grassmannian} implies

\begin{proposition}
\label{indgrass}
{\rm (i)} $Q_{\romsi}(n)$ is subdivided into consistently oriented triangles,
forming parallelograms. 
All  parallelograms $(p,q)$ are S for $p=4$ and $q\in [2,n-1]$, or $p\in [5,n]$ and $q\in [n-1]$, and oriented 
for $p\in [4,n-1]$ and $q=1$. Additionally, there are edges $(p,0)\to (p-1,0)$ for $p\in[5,n]$.

The lacing consists of the path $(3,1)\to(5,0)\to(3,2)\to (6,0)\to\dots\to(n-1,0)\to(3,n-4)$.

{\rm (ii)} The cluster variables $f_{\romsi}(p,q)$ attached to the vertices of $Q_{\romsi}(n)$
are
\begin{equation*}
f_{\romsi}(p,q)= 
   \bangle {[n-1] \cup [3n - p - q +2,2n]} 
   {1 \cup [n - p + 4,n ] \cup  [2n - q  + 1,2n-1]}   
 \end{equation*}
for  $p\in [3,n- 1]$ and $q \in [n]$,
\begin{equation*}
f_{\romsi}(p,0)= 
   \bangle {[n]} 
   {1 \cup  [2n - p + 4,2n-1]}   
 \end{equation*}
 for $p\in[4, n]$ and
 \begin{equation*}
f_{\romsi}(n,q)= \thetta^{w_0}_{n-q}
 \end{equation*}
 for $q\in [n-1]$.
\end{proposition}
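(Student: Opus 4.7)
The plan is to derive Proposition~\ref{indgrass} as a direct consequence of Lemma~\ref{grassmannian} specialized to $l=n-2$, together with the freezing and erasure step described immediately before the proposition statement and a straightforward relabeling of vertices.

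First, I would read off the cluster variables from the $l = n-2$ case of Lemma~\ref{grassmannian}. For vertices $(p,q)$ with $p \in [3, n-1]$, $q \in [n-1]$ that are not touched by the freezing step, one invokes the appropriate branch of the case analysis in Lemma~\ref{grassmannian}: since $l = n-2$, the condition $q \in [l+1, n-p]$ collapses, while the condition $p \in [n+1-q, n-l] = [n+1-q, 2]$ holds only at the boundary, so generically $f_{\romfi}^{(n-2)}(p,q)$ is given by the general formula from Lemma~\ref{grassmannian} with the substitution $l = n-2$. A direct substitution yields exactly the expression $\bangle {[n-1]\cup [3n-p-q+2,2n]}{1 \cup [n-p+4,n]\cup [2n-q+1,2n-1]}$ claimed in part (ii). For the $n$th column $(p,n)$ with $p\in[4,n-1]$, Lemma~\ref{grassmannian} yields $\pssi_{n-p+2}^{w_0}$; for the $0$th column, which is the relabeling of the second row vertices $(2,q)$, $q\in[n-3]$, one uses the observation just before the statement of the proposition that $f_{\romfi}^{(n-2)}(2,n-i) = \phhi_{n+i-1}^{w_0}$ for $i\in[1,3]$, together with the general formula at other vertices of the second row; after translating $(2,q) \mapsto (q+3,0)$ and using the appropriate $\tau_1$- or $\tau_3$-shift of the defining submatrix, these rewrite to $\bangle{[n]}{1 \cup [2n-p+4,2n-1]}$. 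The $\thetta$-row follows immediately since $\thetta_{n-q}^{w_0} = \det X_{[n-q]}^{[n-q]}$ was set up at the very beginning of Section 7.

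Second, I would extract the quiver structure in part (i) from the description of the evolution of the front panel and fringe given in the proof of Lemma~\ref{grassmannian}. After $\T_{\romfi}^{n-2}$ is complete, the only transformations still affecting vertices with $p \geq 3$ are the final mutations in each $\T_{\romfi}^l$; by the inductive description of the edges added/reversed in that proof (namely, the edges $(p,l)\to(p+1,l)$ and $(n-l+1,q)\to(n-l+1,q-1)$), one sees that every parallelogram $(p,q)$ with $p=4$, $q\in[2,n-1]$ or $p\in[5,n]$, $q\in[n-1]$ has the S orientation, while parallelograms $(p,1)$ with $p\in[4,n-1]$ are oriented (no diagonal) as inherited from the lacing structure. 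The edges $(p,0)\to(p-1,0)$ in the relabeled column come from the vertical edges $(2,q+1)\to(2,q)$ in the original second row, which were never mutated. Third, the lacing description comes verbatim from part (iii) of the induction in Lemma~\ref{grassmannian}: the path $(2,n-1)\to(5,1)\to(3,n-1)\to\cdots\to(n-1,1)\to(n-3,n-1)$ survives through all stages, and after the translation $(2,q)\mapsto (q+3,0)$ and $(3,n-i)\mapsto (3,n-i)$ it becomes exactly the claimed path $(3,1)\to(5,0)\to(3,2)\to\cdots\to(n-1,0)\to(3,n-4)$.

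Finally, I would verify that the five vertices $(1,n-2), (1,n-1), (2,n-2), (2,n-1), (2,n)$ indeed become isolated after freezing the seven new representatives of $\{f_{ij}^{w_0}\}$ identified just before the proposition, so that their removal does not alter the quiver on the remaining vertex set; this is a direct check using the incidence description from the proof of Lemma~\ref{grassmannian}. The main obstacle I expect is purely bookkeeping: carefully tracking the interplay between (a) the case distinctions in Lemma~\ref{grassmannian}, (b) the cancellation of irrelevant terms when $l = n-2$, and (c) the two relabelings (erasure of the top-left block plus translation of the second row to the $0$th column), so that the formulas line up with the succinct unified expressions in part (ii). No new cluster identities are needed; the argument is entirely a consequence of taking $l=n-2$ in the already-established Lemma~\ref{grassmannian}.
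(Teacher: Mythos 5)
Your overall strategy is the right one and coincides with the paper's: Proposition~\ref{indgrass} is stated there with no separate argument beyond the phrase ``Lemma~\ref{grassmannian} implies,'' so deducing it from Lemma~\ref{grassmannian} together with the freezing, erasure and relabeling steps is exactly the intended route, and your treatment of the quiver combinatorics, the lacing, the $n$th column, the $\thetta$-row and the isolation of the five deleted vertices is consistent with the paper.

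However, the central computation in your first paragraph fails as stated. You claim that for $p\in[3,n-1]$ the value $f_{\romfi}^{(n-2)}(p,q)$ is ``given by the general formula from Lemma~\ref{grassmannian} with the substitution $l=n-2$'' and that ``a direct substitution yields exactly'' the expression in part~(ii). It does not: with $l=n-2$ the first branch reads $\bangle{[n-1]\cup[3n-q,2n]}{1\cup[2n-p-q+3,2n-1]}$ (the interval $[l+4,n]=[n+2,n]$ being empty), which disagrees with the target $\bangle{[n-1]\cup[3n-p-q+2,2n]}{1\cup[n-p+4,n]\cup[2n-q+1,2n-1]}$ for every $p\ge 3$. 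The missing idea is that for such a vertex the hypotheses of the first two branches \emph{fail} at $l=n-2$, so the ``otherwise'' branch applies and $f_{\romfi}^{(n-2)}(p,q)=f_{\romfi}^{(l^*)}(p,q)$ with $l^*=\min\{n-p,\,q-1\}$, the last stage at which $(p,q)$ lies in the active window $p\le n-l$, $q\ge l+1$. Substituting $l=l^*$ into the branch that actually holds is what produces the unified formula: for $p+q>n$ the first branch with $l=n-p$ gives $2n-q+l+2=3n-p-q+2$, $l+4=n-p+4$ and $3n-p-q-l+1=2n-q+1$; for $p+q\le n$ the second branch with $l=q-1$ gives $n-p-q+l+5=n-p+4$ and $2n-l=2n-q+1$, while both $[2n-q+l+2,2n]$ and $[3n-p-q+2,2n]$ are empty in that range. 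Likewise $f_{\romsi}(p,0)$ comes from the third branch at $l=q-1=p-4$ after the relabeling $(2,q)\mapsto(q+3,0)$; no $\tau_1$ or $\tau_3$ shift is involved there. With this case split in place of the literal $l=n-2$ substitution, your argument goes through.
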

 
 The quiver $Q_{\romsi}(7)$ is shown in Fig.~\ref{fig:grid5}.
 
  \begin{figure}[ht]
\begin{center}
\includegraphics[width=12cm]{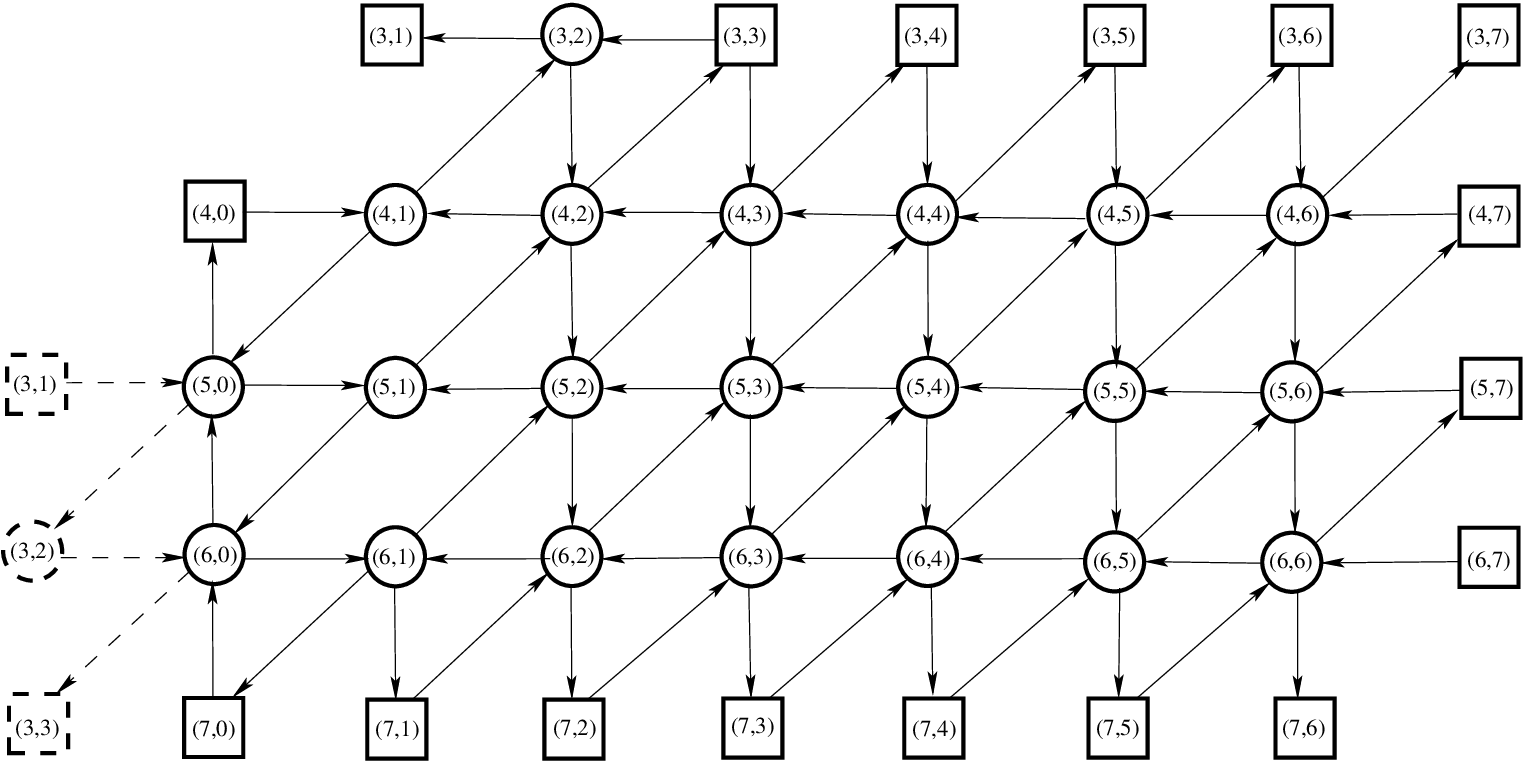}
\caption{Quiver $Q_{\romsi}(7)$}
\label{fig:grid5}
\end{center}
\end{figure} 

On the last, sixth stage we start with mutations at $(n-1,1), (n-2,1),\dots,(4,1)$, followed
by mutations at $(n-1,2), (n-2,2),\dots,(4,2)$, and so on until $(n-1,n-1), (n-2,n-1),\dots,(4,n-1)$.
 It will be convenient, after the mutation at $(n-1,q)$,
to switch vertices $(n,q-1)$ and $(n,q)$. This sequence is denoted $\T^1_{\romsi}$, and the resulting quiver $Q^{(1)}_{\romsi}(n)$. We relabel $Q^{(1)}_{\romsi}(n)$ by shifting
 $(3,q)\mapsto (q+4,-1)$, $q\in [n-4]$, similarly to what was done before the beginning of the sixth stage.

\begin{lemma}
\label{indgrass1}
{\rm (i)} $Q^{(1)}_{\romsi}(n)$ is subdivided into consistently oriented triangles,
forming parallelograms. 
All   parallelograms $(p,q)$ are S for $p=5$ and $q\in [n-2]$, or $p\in [6,n]$ and $q\in [0,n-2]$, and oriented 
for $p\in [5,n-1]$ and $q=0$. Additionally, there are edges $(p,-1)\to (p-1,-1)$ for $p\in[6,n]$.

The lacing consists of the path $(4,0)\to(6,-1)\to(4,1)\to (7,-1)\to\dots\to(n-1,-1)\to(4,n-4)$.

{\rm (ii)} The cluster variables $f^{(1)}_{\romsi}(p,q)$ attached to the vertices of $Q^{(1)}_{\romsi}(n)$
are
\begin{equation*}
f^{(1)}_{\romsi}(p,q)= 
   \bangle {[n] \cup [4n - p - q +2,3n+1]} 
   {1 \cup [2n - p + 4,2n-1] \cup  [3n - q - 1,3n-2]}   
 \end{equation*}
for  $p\in [4,n- 1]$ and $q\in[-1, n- 1] $,
\begin{equation*}
f^{(1)}_{\romsi}(p,-1)= 
   \bangle {[n-1]} 
   {1 \cup  [2n - p + 5,2n-1]}   
 \end{equation*}
 for $p\in[5, n]$, and    
 \begin{equation*}
f^{(1)}_{\romsi}(n,q)= \thetta^{w_0}_{n-q-1}
 \end{equation*}
 for $q\in [0,n-2]$.
\end{lemma}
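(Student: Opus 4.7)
The proof proceeds in parallel with those of Lemmas \ref{4path}, \ref{cycles}, \ref{regularsteps}, and \ref{grassmannian}: we track the evolution of the quiver through the sequence $\T^1_{\romsi}$ and verify that each cluster transformation realizes an instance of a Dodgson-type identity applied to a suitable submatrix of $\bar U$, with the formulas of part (ii) arising after cancellation of common factors dictated by the staircase shape of $\bar U$.

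First I would proceed by double induction on the column index $q \in [1, n-1]$ and, within each column, on the row index $p$ running from $n-1$ down to $4$, establishing the following intermediate claim: just before the mutation at vertex $(p,q)$, the quiver has the local structure inherited from Proposition \ref{indgrass}, modified as follows. Columns indexed by $q' \le q-1$ and the part of column $q$ lying below $(p,q)$ have already been processed: the incident edges at such vertices have been reversed in accordance with standard quiver mutation, the parallelograms $(p', q')$ for $q' < q$ have become S parallelograms with indices shifted by the translation $\tau_3$, and the lacing has been rerouted through the new positions of the boundary vertices. After the completion of column $q$ and the switch of vertices $(n, q-1)$ and $(n, q)$, the local structure at the top of column $q+1$ is precisely the one required for the next batch of mutations to proceed. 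The switch of $(n, q-1)$ and $(n, q)$ amounts to a relabeling that is forced by the observation that, at the end of column $q$, the cluster variable at $(n,q)$ has become $\thetta_{n-q}^{w_0}$ while the variable at $(n,q-1)$ has become $\thetta_{n-q-1}^{w_0}$, consistent with the formula in part (ii).

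Second, I would verify the cluster variable formulas. Each mutation in $\T^1_{\romsi}$ gives rise to an identity of the form
\begin{equation*}
\begin{aligned}
&{\bangle{[n-1]\cup [\kappa+1, 2n]}{1 \cup [\mu, n] \cup [\nu+1, 2n-1]}}\ {\bangle{[n-1]\cup [\kappa, 2n]}{1 \cup [\mu+1, n] \cup [\nu, 2n-1]}}\\
&\qquad = {\bangle{[n-1]\cup [\kappa, 2n]}{1 \cup [\mu, n] \cup [\nu+1, 2n-1]}}\ {\bangle{[n-1]\cup [\kappa+1, 2n]}{1 \cup [\mu+1, n] \cup [\nu, 2n-1]}}\\
&\qquad\quad + {\bangle{[n-1]\cup [\kappa, 2n]}{1 \cup [\mu, n] \cup [\nu, 2n-1]}}\ {\bangle{[n-1]\cup [\kappa+1, 2n]}{1 \cup [\mu+1, n] \cup [\nu+1, 2n-1]}}
\end{aligned}
\end{equation*}
for appropriate $\kappa, \mu, \nu$ depending on $(p,q)$, with minor modifications at boundary vertices analogous to those in the proof of Lemma \ref{grassmannian}. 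The application of $\tau_3$-invariance of $\bar U$ then rewrites the right-hand side in the form consistent with the shifted index sets $[4n-p-q+2, 3n+1]$, $[2n-p+4, 2n-1]$, and $[3n-q-1, 3n-2]$ appearing in the formula for $f^{(1)}_{\romsi}(p,q)$. Each identity above follows from applying the Desnanot--Jacobi identity \eqref{jacobi} to the smallest square submatrix of $\bar U$ containing the six minors on both sides, and canceling the common factor arising from the staircase structure. For boundary vertices, the variant \eqref{notjacobi} is used instead.

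The main obstacle will be the careful bookkeeping of the quiver evolution at the interfaces between already-processed and yet-to-be-processed columns, particularly the interaction between the lacing and the transformed front panel. The switch $(n,q-1) \leftrightarrow (n,q)$ must be performed at precisely the right moment to ensure that the special edges at the bottom boundary connect the correct vertices; this is analogous to the delicate vertex re-positioning in stages three through five. The relabeling $(3,q) \mapsto (q+4, -1)$ at the end reflects the fact that, after $\T^1_{\romsi}$, the vertices of row $3$ have become disconnected from the bulk of the quiver except through the single chain making up the new lacing. Once the quiver evolution is correctly tracked, the verification of the cluster variables reduces to the determinantal identities just described, which in turn reduce to \eqref{jacobi} after cancellation of the staircase common factors.
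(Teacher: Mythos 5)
Your proposal follows essentially the same route as the paper: an induction along the mutation order (column by column, $p$ descending from $n-1$ to $4$) to track the quiver, with each exchange realized as a three-term core identity in $\bar U$ obtained from the Desnanot--Jacobi identity \eqref{jacobi} after cancelling the staircase common factors, and the $\tau_3$-shift accounting for the new index sets. The only organizational difference is that the paper proves just the quiver evolution here and folds the verification of the variable formulas of part (ii) into the general induction on $l$ in Lemma~\ref{indgrass2} (identities \eqref{last_1}, \eqref{last_2}), where the $p=n-1$ boundary case is handled by the ${\bangle{\ }{\ }}^{(2)}$ device rather than \eqref{notjacobi}; the substance is the same.
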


\begin{proof}
One can check by induction that prior to applying a mutation in the sequence $\T^1_{\romsi}$ at
a vertex $(p,q)$, the current state of the quiver $Q_{\romsi}(n)$ can be described as follows.

(i) Vertex $(p,q)$ is five-valent if $p=n-1$ and four-valent otherwise. The edges pointing towards
$(p,q)$ are $(p,q-1)\to (p,q)$ and $(p,q+1)\to(p,q)$. The edges pointing from $(p,q)$ are

i.1) $(p,q)\to (p+1,q-1)$ and $((p,q)\to (p-1,q+1)$;

i.2) $(n-1,q)\to (n,q)$ (recall that $(n,q-2)$ and $(n,q-1)$ have been switched after the mutation
at $(n-1,q-1)$).

(ii) $Q_{\romsi}(n)$ is subdivided into consistently oriented triangles,
forming parallelograms. Parallelograms $(4,q')$ are NE for $q'\in[q-1]$, the parallelogram $(p+1,q)$
is NE for $p\ne n-1$. Parallelograms $(p',q')$ are oriented for $p'\in [5,p^*+1]$ and $q'=q-1$,
where $p^*=\min\{p,n-2\}$, for $p'\in [4,p]$ and $q'=q$, and for $p'\in[p+1,n-1]$ and $q'=q+1$.
All remaining parallelograms are S.

(iii) The lacing does not change.

The quiver $Q_{\romsi}(7)$ before the mutation at $(5,3)$ in $\T^1_{\romsi}$ is shown in Fig.~\ref{fig:grid6}. Vertices $(7,2)$ and $(7,3)$ should be switched now.

 \begin{figure}[ht]
\begin{center}
\includegraphics[width=12cm]{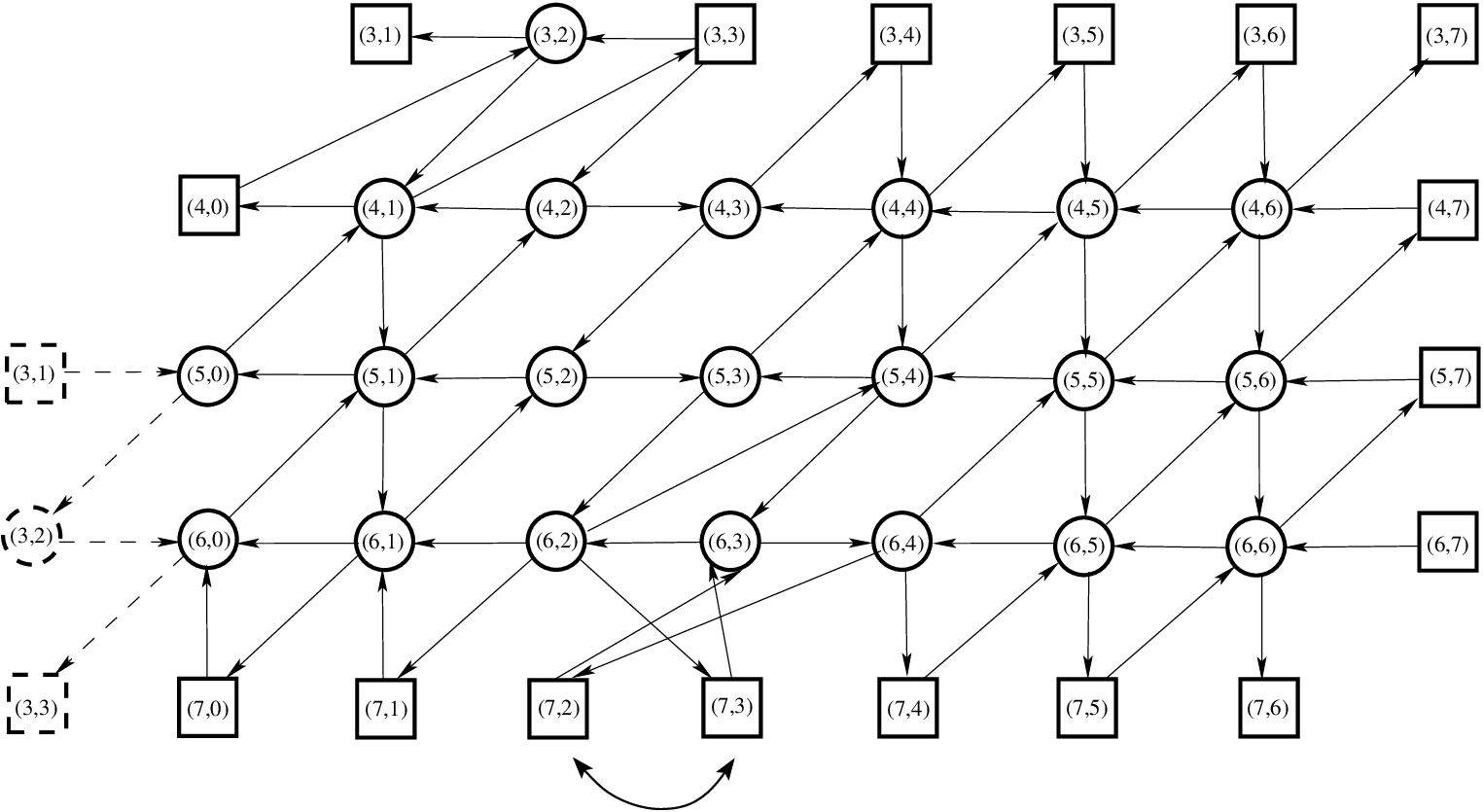}
\caption{Quiver $Q_{\romsi}(7)$ before the mutation at $(5,3)$ in $\T^1_{\romsi}$}
\label{fig:grid6}
\end{center}
\end{figure} 

The proof of the second part of the lemma is deferred until Lemma~\ref{indgrass2} below.
\end{proof}

It follows from Lemma~\ref{indgrass1} that 
$f_{\romsi}^{(1)}(n,-1) = \pssi^{w_0}_{n+3}$, 
$f_{\romsi}^{(1)}(4,0) = \phhi_N=\phhi_N^{w_0}$, 
$f_{\romsi}^{(1)}(5,-1) = \pssi_M=\pssi_M^{w_0}$, 
$f_{\romsi}^{(1)}(p,n-1) = \phhi^{w_0}_{2n+2 - p}$ for $p\in [4, n-1]$, and 
$f_{\romsi}^{(1)}(4, q) = \phhi^{w_0}_{3n - q-3}$ for $q\in [n-6, n-1]$. Therefore,
the corresponding vertices are frozen in $Q^{(1)}_{\romsi}(n)$, and hence vertices
$(3,q)$ for $q\in [n-3,n]$, $(p,n)$ for $p\in [4,n-1]$ and $(n,n-1)$ become isolated and
are deleted. We thus see that $Q^{(1)}_{\romsi}(n)$ looks very much like $Q_{\romsi}(n)$: it has
$n-3$ rows $4, 5,\dots,n$ and $n+1$ columns $-1,0, 1, \dots, n-1$, its lacing comes from the edges
between the upper two rows of $Q_{\romsi}(n)$, and the edges between its two leftmost columns
come from the lacing of $Q_{\romsi}(n)$. The quiver $Q^{(1)}_{\romsi}(7)$ is shown in Fig.~\ref{fig:grid7}. 

  \begin{figure}[ht]
\begin{center}
\includegraphics[width=12cm]{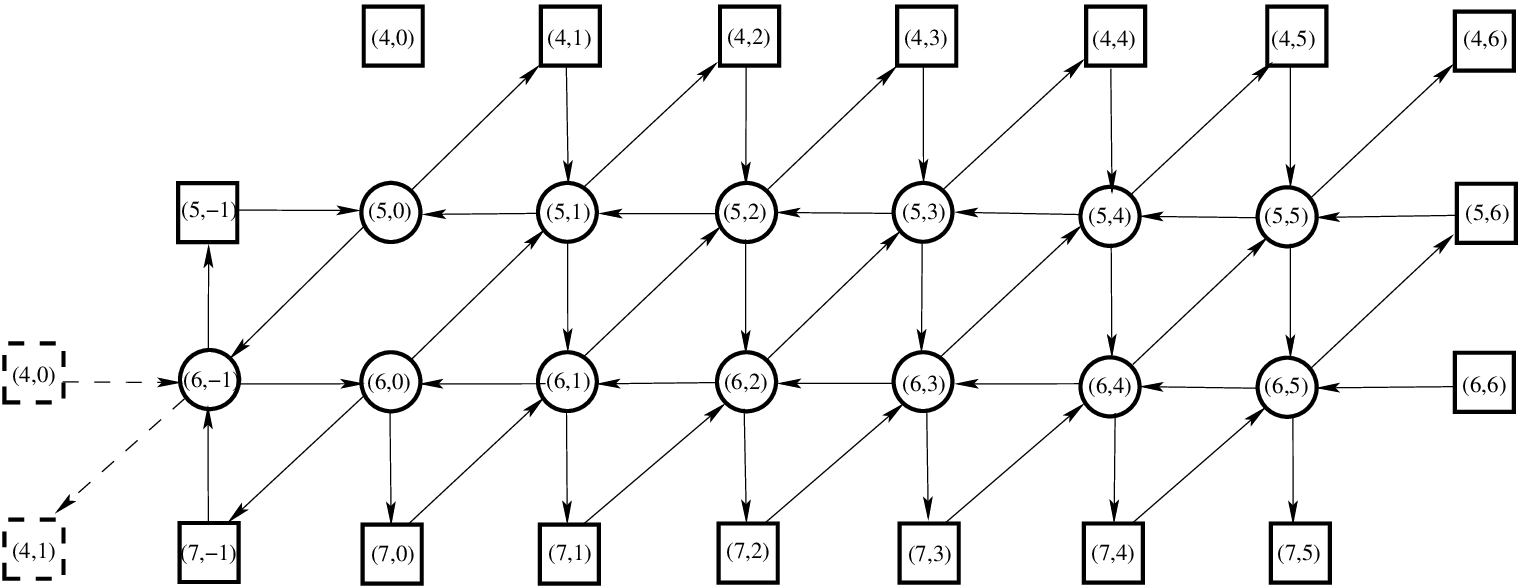}
\caption{Quiver $Q_{\romsi}^{(1)}(7)$}
\label{fig:grid7}
\end{center}
\end{figure} 

We now define the sequence $\T^2_{\romsi}$ as mutations at $(n-1,0), (n-2,0),\dots, (5,0)$
followed by mutations at $(n-1,1), (n-2,1),\dots,(5,1)$, and so on until $(n-1,n-2), (n-2,n-2),\dots,(5,n-2)$. As before, after the mutation at $(n-1,q)$ we 
switch vertices $(n,q-1)$ and $(n,q)$. The resulting quiver is denoted $Q^{(2)}_{\romsi}(n)$. We relabel it by shifting
 $(4,q)\mapsto (q+6,-2)$, $q\in [0,n-6]$. The quiver $Q^{(2)}_{\romsi}(n)$ is very similar to
 $Q^{(1)}_{\romsi}(n)$: it has one row less and the same number of columns. Thus, we can define
 $\T^{(3)}_{\romsi}$ similar to $\T^{(2)}_{\romsi}$ and so on. After $l$ steps we get to the quiver
 $Q^{(l)}_{\romsi}(n)$.

Denote
$$
r_s = s (n-1) +1,\qquad c_s = s(n+1) -2, \qquad s\in [k],
$$
where $k=\lfloor\frac{n+1}2\rfloor$.
Recall that the staircase shape of $\bar U$ is such that $\bar u_{ij}=0$ when $i> r_s$, $j < c_s + 3$ or $i < r_s + 1$, $j > c_{s+1}$.

\begin{lemma}
\label{indgrass2}
The description of functions $f_{\romsi}^{(l)}(p,q)$ attached to vertices of $Q_{\romsi}^{(l)}(n)$,  $l\in [n-4]$, depends on the parity of $l$. If $l=2s$, we have
\begin{equation}
 \label{romsi_even_1}
f_{\romsi}^{(l)}(p,q)= 
   \bangle {[n-1] \cup [\alpha^l_{pq},
   c_{s+2}]} 
   {1 \cup [\beta^l_{p},
   r_{s+1}] \cup  [\gamma^l_{q},
   r_{s+2}]}   
 \end{equation}
for  $p\in [l+3, n- 1]$ and $q\in [-l+1, n- l]$, where 
\begin{equation}\label{romsiabc}
\begin{aligned}
\alpha^l_{pq}&=(s+3)(n+1) - p - q - l-1,\qquad
\beta^l_{p}=(s+1)n - p + s + 4,\\ 
&\qquad\qquad\qquad\gamma^l_{q}=(s+2)n - q - 3 s + 1, 
\end{aligned}
\end{equation}
and 
\begin{equation}
 \label{romsi_even_2}
f_{\romsi}^{(l)}(p,-l)= 
   \bangle {[n]} 
   {1 \cup  [\beta_{p}^l+n,r_{s+2}]}   
 \end{equation}
 for $p\in[ l+4, n]$.
 
If $l=2s-1$, we have
\begin{equation}
 \label{romsi_odd_1}
f_{\romsi}^{(l)}(p,q)= 
   \bangle {[n] \cup [\alpha^{l+1}_{pq}+1,
   c_{s+2}]} 
   {1 \cup [\beta^{l+1}_{p}-1,
   r_{s+1}] \cup  [\gamma^{l+1}_{q}+1,
   r_{s+2}]}   
 \end{equation}
for  $p\in [l+3, n- 1]$ and $q\in [-l, n- l]$, 
and 
\begin{equation}
 \label{romsi_odd_2}
f_{\romsi}^{(l)}(p,-l)= 
   \bangle {[n-1]} 
   {1 \cup  [\beta^{l+1}_{p},r_{s+1}]}   
 \end{equation}
 for $p\in [l+4, n]$.
 
 In both cases 
 \begin{equation}
 \label{romsi_3}
f_{\romsi}^{(l)}(n,q)= \theta^{w_0}_{n-q-l}
 \end{equation}
 for $q\in [-l+1, n-l-1]$.
\end{lemma}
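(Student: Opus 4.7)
The proof is by induction on $l$, with the base case $l=1$ established in Lemma~\ref{indgrass1}. Assume the formulas hold for $l-1$ and analyze the sequence $\T^l_{\romsi}$ applied to $Q^{(l-1)}_{\romsi}(n)$. The first task is to track the evolution of the quiver. As in Lemma~\ref{indgrass1}, a secondary induction on the position of the vertex being mutated within $\T^l_{\romsi}$ shows that, prior to the mutation at $(p,q)$, the local structure is identical to the $l=1$ case: $(p,q)$ receives arrows from $(p,q\pm 1)$ and sends arrows to $(p\pm 1,q\mp 1)$, with an extra arrow $(n-1,q)\to (n,q)$ when $p=n-1$; meanwhile parallelograms above the current row remain S-oriented and those below become temporarily oriented, with oriented parallelograms propagating as mutations sweep down each column. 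Combining this with the relabeling $(l+2,q)\mapsto (q+l+4,-l)$ performed at the end of $\T^l_{\romsi}$ yields the structural description of $Q^{(l)}_{\romsi}(n)$, which is just $Q^{(l-1)}_{\romsi}(n)$ with one fewer row and shifted by one unit.

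Each cluster transformation in $\T^l_{\romsi}$ should then be identified with an instance of the Desnanot--Jacobi identity~\eqref{jacobi} applied to an appropriate square submatrix $A$ of the augmented matrix $\bar U$, followed by cancellation of a common factor arising from the staircase shape. Concretely, the indices $\alpha^l_{pq},\beta^l_p,\gamma^l_q$ of~\eqref{romsiabc} are designed so that the six minors appearing in Desnanot--Jacobi exactly match the six cluster variables at the six neighboring positions in the quiver---in complete analogy with the derivation of~\eqref{magicformula_51}. The parity dichotomy between~\eqref{romsi_even_1} and~\eqref{romsi_odd_1} arises because consecutive applications of $\T^{\bullet}_{\romsi}$ alternate between the two translation invariances of $\bar U$ (see Fig.~\ref{fig:baru}): one application reduces the row range by one while the next shifts the column range accordingly, which is reflected in the alternation between $[n-1]$ and $[n]$ as the initial row block of the defining minors.

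The main obstacle will be the boundary bookkeeping. Vertices of the form $(p,-l)$ in the leftmost column---which come from the relabeled top of the previous stage---use the alternative formulas~\eqref{romsi_even_2}/\eqref{romsi_odd_2}, and matching these to Desnanot--Jacobi requires the alternative representations of the relevant minors obtained via the translations $\tau_1,\tau_2,\tau_3$ of $\bar U$. Similarly, the bottom-row identities $f^{(l)}_{\romsi}(n,q)=\theta^{w_0}_{n-q-l}$ must be verified separately, taking into account the swap of $(n,q-1)$ and $(n,q)$ performed after each mutation at $(n-1,q)$ and using that $\theta^{w_0}_j=\det X_{[j]}^{[j]}$ can be realized as a minor of $\bar U$ via $\tau_2$. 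The trivalent vertices encountered when the range shrinks at the boundary are handled by the convention $\bangle{[n]}{[n]}=\bangle{[n+1]}{[n]}=1$ used in earlier proofs. Once all these special cases are reconciled, the Desnanot--Jacobi identity closes the induction in a uniform manner, and formula~\eqref{romsi_3} follows from the translation $\tau_2$ together with the evolution of the bottom row.
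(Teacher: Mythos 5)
Your overall strategy coincides with the paper's: induction on $l$, a secondary induction tracking the local quiver structure during $\T^l_{\romsi}$, and reduction of every exchange relation to the Desnanot--Jacobi identity \eqref{jacobi} applied to a submatrix of $\bar U$, with the translations $\tau_1,\tau_2,\tau_3$ and the parity of $l$ governing which representation of each core is used. However, there is one concrete flaw and one substantive underestimate.

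The flaw is your choice of base case. You anchor the induction at $l=1$, citing Lemma~\ref{indgrass1}; but the paper explicitly defers the proof of part (ii) of Lemma~\ref{indgrass1} to Lemma~\ref{indgrass2}, i.e., the $l=1$ formulas are themselves a consequence of the induction you are setting up. As written your argument is circular. The induction must start at $l=0$, where the formulas are supplied by Proposition~\ref{indgrass} (itself a consequence of Lemma~\ref{grassmannian}), and the $l=1$ case is then the first induction step.

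The underestimate concerns the phrase ``followed by cancellation of a common factor.'' The paper splits the verification into two regimes according to whether $\alpha^l_{pq}>c_{s+2}$ (equivalently $p+q+l\le n+1$). In the first regime one indeed truncates $C=\bar U^{\hat J}_{\hat I}$ to a leading principal block, applies \eqref{jacobi}, and cancels; but in the second regime the point is precisely that \emph{no} cancellation occurs: one must prove that all six cores are literally minors of a single irreducible contiguous submatrix, which requires analyzing how the diagonal of $C^{\hat n}_{\widehat{\beta^l_p-1}}$ breaks into three contiguous segments and checking that the relevant sub- and super-diagonal entries are nonzero (this is where the inequalities $q\le n-l+1$ and $l+2<p<n$ enter). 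In addition, before any of this can be carried out one needs the identifications \eqref{bangle_w0} of $\phhi_i^{w_0},\pssi_j^{w_0}$ as cores of $\bar U$, both to determine which vertices are frozen and deleted at each stage (hence the shape of $Q^{(l)}_{\romsi}(n)$) and to absorb the extra factor $\Delta_l\,\thetta^{w_0}_{n-q-l+1}$ at the five-valent vertices $p=n-1$ into a single $\bangle{\ }{\ }^{(2)}$ expression; and in the odd case the whole computation must be conjugated by $\tau_3$, which introduces a factor $\left(\phhi^{w_0}_{n-1}\right)^{-1}$ that has to cancel identically. Your sketch names the right tools, but these are the places where the proof actually lives.
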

 
 \begin{proof} 
 We prove the Lemma by induction on $l$, taking into account that the case $l=0$ is already established in Proposition~\ref{indgrass}.

First, we observe that  one can encode $\phhi_{i}^{w_0}$ and $\pssi_j^{w_0}$ using $\bangle \ \ $ notation,
based on their description as minors of $\bar U$ given in Section~\ref{sec:Strans}: 
 \begin{equation}\label{bangle_w0}
 \begin{aligned}
 &\phhi_i^{w_0}=  \left\{  
 \begin{aligned}
 \bangle {[n]\cup [n+i+1, c_{s+1}]}{1\cup [i+2, r_{s+1}]},& \quad \text{if $r_s < i \leq c_{s+1} + 2- n$, 
 $s > 0$},\\
 \bangle {[n]\cup [n+i+1, c_{s}]}{1\cup [i+2, r_{s+1}]}, & \quad \text{if $c_{s} + 2- n < i \leq r_{s}$, 
 $s > 0$};
 \end{aligned}\right.
 \\ 
 &\pssi_j^{w_0}  =  \left\{
 \begin{aligned}
 \bangle {[n-1]\cup [n+j, c_{s+1}]}{1\cup [j+2, r_{s+1}]}, & \quad \text{if $r_s \leq j < c_{s+1} + 3- n$,
 $s > 0$},\\
 \bangle {[n-1]\cup [n+j, c_{s}]}{1\cup [j+2, r_{s+1}]}, & \quad \text{if $c_{s} + 3- n \leq j < r_{s}$, 
 $s > 0$}.
 \end{aligned}\right.
 \end{aligned}
\end{equation}

Together with the induction hypothesis, this implies
\begin{equation}\label{frozen_even}
\begin{gathered}
 f_{\romsi}^{(l)}(l+3,-l+1) =\pssi_M=\pssi^{w_0}_M, \qquad
f_{\romsi}^{(l)}(l+4,-l) = \phhi_N=\phhi^{w_0}_N,\\
 f_{\romsi}^{(l)}(n,-l) = \phhi^{w_0}_{(s+1)(n+1) + 1}, \\
  f_{\romsi}^{(l)}(p,n-l) = \pssi^{w_0}_{(s+1)(n+1) - p+1},\quad p\in [l+3, n-1],\\
 f_{\romsi}^{(l)}(l+3, q) = \pssi^{w_0}_{(s+2)n - q-3s-1},\quad q\in [n-2l-4, n-l],
\end{gathered}
\end{equation}
 if $l=2s$, and
 \begin{equation}\label{frozen_odd}
\begin{gathered} 
f_{\romsi}^{(l)}(l+3,1-l) = \phhi_N=\phhi^{w_0}_N,\qquad f_{\romsi}^{(l)}(l+4,-l) = \pssi_M=\pssi^{w_0}_M,\\
f_{\romsi}^{(l)}(n,-l) = \pssi^{w_0}_{s(n+1) + 2},\\ 
f_{\romsi}^{(l)}(p,n-l) = \phhi^{w_0}_{(s+1)(n+1) - p},\quad p\in [l+3, n-1],\\
 f_{\romsi}^{(l)}(l+3, q) = \phhi^{w_0}_{(s+2)n - q-3s},\quad q\in [n-2l-4, n-l],
\end{gathered}
\end{equation}
 if $l=2s-1$.
 These relations follow from  \eqref{romsi_even_1}, \eqref{romsi_odd_1} combined with \eqref{bangle_w0}. 
 As a representative example, we will verify the last two formulas in \eqref{frozen_even}. If $q=n-l$, put
  $j= (s+1)(n+1) - p+1$; then $\beta_p^l = j +2$, $\gamma_q^l = r_{s+1} + 2$, 
$\alpha_{pq}^l= n+j$, and \eqref{romsi_even_1} becomes $f_{\romsi}^{(l)}(p,n-l) =  \bangle {[n]\cup [n+j, c_{s+1}]}{1\cup [j+2, r_{s+1}]}$.
If $p=l+3$, put $j= (s+2)n - q-3s-1$; then $\beta_p^l = r_{s+1} +1$, $\gamma_q^l = j + 2$, 
$\alpha_{pq}^l= n+j$, and \eqref{romsi_even_1} becomes $f_{\romsi}^{(l)}(p,n-l) =  \bangle {[n]\cup [n+j, c_{s+1}]}{1\cup [j+2, r_{s+2}]}$. This confirms the formulas in question.
 
 The corresponding vertices are frozen in $Q^{(l)}_{\romsi}(n)$; consequently the vertices 
 $(l+2,q)$ for $q\in [n-2l-1,n-l]$ and $(p,n-l+1)$ for $p\in [l+2,n]$ that were present in $Q_{\romsi}^{(l-1)}(n)$
 become isolated and are now deleted. Therefore, vertices of
$Q_{\romsi}^{(l)}(n)$,  $l\in [n-4]$, 
are located at  positions $(p,q)$, $p\in [l+3, n]$, $q\in[-l, n-l]$, except for $(l+3,-l)$ and $(n, n-l)$. The vertices $(l+3, q)$ for $q\in[n-2l-4, n-l]$, $(n,q)$ for $q\in [-l, n-l-1]$, $(p,n-l)$
for $p\in [l+3,n-1]$, $(l+4,-l)$ and $(l+3, -l+1)$ are frozen (the last one does not exist if $l=n-4$).

The following statements can be proved
by induction using the description of the current structure
of $Q^{(l-1)}_{\romsi}(n)$ during the execution of $\T^l_{\romsi}$ similar to the one given in
the proof of Lemma~\ref{indgrass1} for the case $l=1$. 

(i) $Q^{(l)}_{\romsi}(n)$ is subdivided into consistently oriented triangles,
forming parallelograms. 
All parallelograms  $(p,q)$ are S for $p=l+4$ and $q\in [-l+2,n-l-1]$, or $p\in [l+5,n]$ and $q\in [-l+1,n-l-1]$, 
and oriented for $p\in [l+4,n-1]$ and $q=-l+1$ (oriented parallelograms do not exist if $l=n-4$).
Additionally, there are edges $(p, -l ) \to (p-1, - l)$ for $p\in [l+5,n]$.

(ii) The lacing 
consists of the path $(l+3, -l+1) \to (l+5, -l) \to 
(l+3, -l+2) \to (l+6, -l) \to \cdots (l+3, n -2l - 5) \to (n-1, -l) \to (l+3, n - 2l -4)$. 
 
Let us turn to the description of the functions.  
First of all, consider equation~\eqref{romsi_3}. Recall that
the vertices of the $n$th row are frozen, and hence the change in the formulas is caused by 
their reshuffling after mutations at the $(n-1)$th row. This reshuffling results in the shift of the 
vertices to the left by one, hence the index $n-q-l$ does not change, and~\eqref{romsi_3} follows.

Next, it follows from the above mentioned description of the current structure that the 
identities we need to establish are
 \begin{equation*}
f^{(l-1)}_{\romsi} (p,q) f^{(l)}_{\romsi} (p,q) = f^{(l-1)}_{\romsi} (p,q+1) f^{(l)}_{\romsi} (p,q-1) + f^{(l-1)}_{\romsi} (p-1,q+1) f^{(l)}_{\romsi} (p+1,q-1)\ 
\end{equation*}
 for $p\in [l+3, n-1]$, $q\in [-l+2, n-l-1]$, and
 \begin{align*}
f^{(l-1)}_{\romsi} (n-1,q) f^{(l)}_{\romsi} (n-1,q)&=\ f^{(l-1)}_{\romsi} (n-1,q+1) f^{(l)}_{\romsi} (n-1,q-1)\\ &\qquad+ f^{(l-1)}_{\romsi} (n-2,q+1)\Delta_l  \thetta^{w_0}_{n-q-l+1}\ 
\end{align*}
for $p=n-1$,  $q\in [-l+2, n-l-1]$, where
$$
\Delta_l=\begin{cases}
\pssi^{w_0}_{s(n+1) + 2}, \qquad \text{if $l=2s$},\\
\phhi^{w_0}_{s(n+1) + 1}, \qquad \text{if $l=2s-1$}.
\end{cases}
$$ 
For $q=-l+1$ all expressions of the form $f^{(l)}_{\romsi}(*,q-1)$
in the both formulas above should be replaced by $f^{(l-1)}_{\romsi}(*,q-1)$.

Let us start with the even case $l=2s$. First, we notice that
\begin{align*}
& \pssi^{w_0}_{s(n+1) + 2} \thetta^{w_0}_{n-q-l+1}
=  \bangle {[n-1]} {1\cup [s(n+1) + 4, r_{s+1}]} \thetta^{w_0}_{n-q-l+1}\\
&\quad = \bangle {[n-1] \cup [(s+2) n - q - s +3 , 
c_{s+2}] } 
{1\cup [s(n+1) + 4, r_{s+1}] \cup [\gamma^l_{q}+1, r_{s+2}]}^{(2)}.
\end{align*}
It is easy to check that rows and columns involved in the latter expression coincide with those 
in the right-hand side of
\eqref{romsi_even_1} if  in \eqref{romsi_even_1} one uses $p=n$ and replaces $q$ with $q-1$. This allows us
to treat both relations above simultaneously. To this end, denote 
by $I=1\cup [\beta_{p}^l,r_{s+1}]\cup [\gamma_{q}^l+1,r_{s+2}]$ and $J=[n-1]\cup [\alpha_{pq}^l+1,c_{s+2}]$
the intersection of row (respectively, column) sets used in \eqref{romsi_odd_1}, \eqref{romsi_even_1}
to define $f^{(l-1)}_{\romsi} (p,q)$  and $f^{(l)}_{\romsi} (p,q)$. Then relations we need to establish have a form
 {\small
\begin{equation}
\label{last_1}
\begin{aligned}
\bangle  {[n]  }{(\beta^{l}_{p}-1) \cup I }\ &\bangle  { [n-1] }{\gamma^l_{q} \cup  I } \\
&=  
\bangle  {[n]   }{(\beta^{l}_{p}-1)\cup\gamma^l_{q}\cup  I }\ \bangle  { [n-1] }{ I }
+  \bangle  {[n] }{\gamma^l_{q}\cup I }\ {\bangle  { [n-1] }{(\beta^{l}_{p}-1) \cup I }}^{(t)}
\end{aligned}
\end{equation}
}
if $\alpha^l_{pq} > c_{s+2}$, which is equivalent to $p+q+l \leq n+1$ (note that in this case $J=[n-1]$), or
 {\small
\begin{align}
\label{last_2}
&  \bangle  {n \cup  J }{(\beta^{l}_{p}-1) \cup I }\ \bangle  { \alpha^l_{pq} \cup J }
{\gamma^l_{q} \cup  I }\ =  
\bangle  {n \cup \alpha^l_{pq} \cup  J }{(\beta^{l}_{p}-1)\cup\gamma^l_{q}\cup  I }\ 
\bangle  { J }{ I }
+  \bangle  {n \cup   J }{\gamma^l_{q}\cup I }\ \bangle  {\alpha^l_{pq} \cup  J }
{(\beta^{l}_{p}-1) \cup I }^{(t)}\ 
\end{align}
}
otherwise. Here $t=2$ if $p=n-1$ and $t=1$ otherwise.

To prove \eqref{last_1},\eqref{last_2}, we consider the  matrix $C=\bar U^{\hat J}_{\hat I}$.
If $\alpha_{pq}^l > c_{s+2}$, we let $\delta$ be the smallest index such that  entries of the first sub-diagonal of $C$ are zero in rows with the index larger than $\delta$. Next, we define $A$ to be the leading principal $\delta\times \delta$ submatrix of $C$ and  apply \eqref{jacobi} to $A$ with  $\alpha=\beta_{p}^l -1$, $\beta=\gamma_{q}^l$, $\gamma = \alpha_{pq}^l$ to obtain \eqref{last_1} after cancellations.

Next, we will show that  every factor of every term  featured in \eqref{last_2} is, in fact, equal to the minor of $A$ with the indicated index sets. Thus
the relation is simply an instance of the Desnanot--Jacobi identity \eqref{jacobi}. We will establish this property for $ \bangle  {n \cup  J }{(\beta^{l}_{pq} -1)\cup I }= f^{(l-1)}_{\romsi} (p,q)$. Other cores  featured in \eqref{last_2} can be treated in the same way.

Consider the submatrix $\bar C = C^{\hat n}_{\widehat {\beta^{l}_{p}-1}}$ of $C$. 
Using  the inequality $\alpha^{l}_{pq}+1-n\geq \beta^{l}_{p}-2$, which amounts to $q \leq n - l+1$, 
and the equality $n+\beta^{l}_{p}-r_{s+1}-3=\alpha^{l}_{pq}-\gamma^{l}_{q}$, 
one can check that the diagonal entries of $\bar C$ break into three contiguous segments 
\begin{align*}
&\bar u_{2,n+1}, \bar u_{3,n+2}, \ldots, \bar u_{\beta^{l}_{p}-2,n+\beta^{l}_{p}-3},\\ 
&\bar u_{r_{s+1}+1, n+\beta^{l}_{p} -2}, \bar u_{r_{s+1}+2,n+\beta^{l}_{p}-1}, \ldots, \bar u_{\gamma^{l}_{p}, \alpha^{l}_{pq}},\\ 
&\bar u_{r_{s+2} + 1, c_{s+2}+1}, \bar u_{r_{s+2} + 2, c_{s+2}+2}\ldots\ .
\end{align*} 

Since $\bar u_{r_{s+2} + 1, c_{s+2}+1}=0$, the submatrix of $\bar U$ whose determinant is equal to  $\bangle  {n \cup  J }{(\beta^{l}_{p}-1) \cup I }$ is also a submatrix of $\bar C$. In fact, it is a submatrix of the contiguous submatrix $C'$ of $\bar C$ whose lower right corner is 
$\bar u_{\gamma^{l}_{q}, \alpha^{l}_{pq}}$. 
To establish the needed property of $ \bangle  {n \cup  J }{(\beta^{l}_{p}-1) \cup I }$, it remains to show that $C'$ is irreducible. Let us prove 
that the entries of its first sub- and super-diagonal are nonzero.
From the construction of $\bar U$ we know that $\bar u_{i+2,n+i}$ and $\bar u_{i+2,n+i+2}$ are nonzero for all $i$, thus we only need to check the rows containing the diagonal elements from the second segment above.
They do have the required property, since they  are within the $(s+2)$nd strip of $U(X,X)$ and  
$c_{s+1} + 1 < n+ \beta^{l}_{p} -2 < c_{s+2} + 1$, which amounts to the inequality $l+2 < p < n$.

In the odd case, $l=2s-1$, we have to use the translation $\tau_3$ to re-write all expressions  $\bangle {[n-1]\cup J} {1 \cup I}$ as
$\left(\phhi^{w_0}_{n-1}\right)^{-1} {\bangle {[n]\cup 2n\cup  (J + n+1)} {1 \cup (I + n -1)}}^{(2)}$, where  
$I +i$ means that $i$ is added to every index in the set $I$. We also use the identity
\begin{align*}
&\phhi^{w_0}_{s(n+1) + 1} \thetta^{w_0}_{n-q-l+1}
 =  \bangle {[n]} {1\cup [s(n+1) + 3, r_{s+1}]} \thetta^{w_0}_{n-q-l+1}\\
&\qquad = {\bangle {[n] \cup [(s+2) n - q - s +4, c_{s+2}] } 
{1\cup [s(n+1) + 3, r_{s+1}] \cup [\gamma^{l+1}_{q}+2, r_{s+2}]}}^{(2)}.
\end{align*}

After this, relations that we need to establish again reduce to identities similar to  \eqref{last_1}, \eqref{last_2}, and
the same kind of  argument can be applied to verify them.
 \end{proof}
 
 To finish the proof of Theorem \ref{transform2}, we observe that the last two equations in \eqref{frozen_even} show
 that among the variables attached to frozen vertices in $Q_{\romsi}^{(2s)}$ there are functions $\pssi_i^{w_0}$ with
 $i\in [s(n+1)+3, (s+1) (n+1) + 2]$. Since $s$ ranges from $0$ to $\frac {n-5} 2 = k-3$, and since $\pssi_{1}^{w_0}$ and  $\pssi_{2}^{w_0}$ have been already obtained on stage I, 
 this means that we have recovered
 $\pssi_i^{w_0}$ for $i \in [1, (k-2)(n+1) + 2 = M]$. 
 Similarly, the last two equations in \eqref{frozen_odd} imply
 that among the variables attached to frozen vertices in $Q_{\romsi}^{(2s-1)}$ there are functions $\phhi_i^{w_0}$ with
 $i\in [s(n+1)+2, (s+1) (n+1) + 1]$ for $s$ ranging from $1$ to $\frac {n-6} 2=k-3$ and with $i\in [(k-2)(n+1)+2, (k-1) (n+1)=N]$ for $s= k-2$. Since $\phhi^{w_0}_i$ has been obtained on stage I for $i=1$,
 on stage III for $i\in [2,n-1]$ and on stage V for $i\in [n,n+2]$,  
 this means that  we have recovered $\phhi_i^{w_0}$ for all $i\in [N]$. Finally, $\thetta^{w_0}_i$
has been obtained on stage I for $i=1$ and on stage II for $i\in [2,n-1]$, whereas $\thetta^{w_0}_n$
is a stable variable. 

It remains to check that $Q^*_{CG}(n)$ defined at the beginning of Section~\ref{Ttrans} is isomorphic to $Q^{opp}_{CG}(n)=-Q_{CG}(n)$. By Proposition~\ref{end4path}, the vertices $(n-1,n-1)$ and $(n-2,n-1)$ of the quiver $Q_{\romtw}(n)$ are frozen, and the corresponding variables are $\pssi_1^{w_0}$ and $\pssi_2^{w_0}$, respectively. Therefore, in $Q^*_{CG}(n)$ they are moved to positions $(n,1)$ and $(n-2,n)$, respectively. The edge between these vertices in $Q^*_{CG}(n)$ is inherited from $Q_{\romon}^{(n-1)}$, hence, by Remark~\ref{first_last}(ii), it points from $(n,1)$ to $(n-2,n)$. It remains to observe that in $Q_{CG}(n)$ the edge points in the opposite direction.
Therefore,  the proof of Theorem \ref{transform2} is complete.
 
\section{Technical results on cluster algebras}

In this  Section we prove several technical lemmas that are used in the proofs of the main results.

\begin{lemma}
\label{Q-restore}
Let $(V, \Poi)$ be a Poisson variety possessing a coordinate system $\wx=(x_1,\dots,x_{n+m})$
that satisfies~\eqref{cpt} for some
$\omega_{ij}\in \mathbb{Z}$. Assume also that $V$ admits an action of  
$\left ( \mathbb{C}^*\right )^m$ that induces a local toric action of rank $m$ on $\wx$. 
Suppose there exists $\wB$ with a skew-symmetric irreducible principal part 
such that the cluster structure
$\CC(\wB)$ with  the initial extended cluster $\wx$ and stable variables $x_{n+1},\dots,x_{n+m}$  
is compatible with $\Poi$ and the local toric action above extends to a global toric action. 
Then $\wB$ is unique up to multiplication by a scalar.
\end{lemma}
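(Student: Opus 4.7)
The plan is to translate the cluster-algebraic hypotheses into linear constraints on $\wB$ and then exploit the skew-symmetric irreducible principal part to pin $\wB$ down up to a scalar multiple. First, by Proposition~\ref{Bomega}, compatibility of $\CC(\wB)$ with $\Poi$ gives $\wB\,\Omega^{\wx}=[D\ 0]$ with $D$ diagonal, where $\Omega^{\wx}$ is the coefficient matrix of $\Poi$ in $\wx$; by Lemma~2.3 of \cite{GSV1}, extension of the rank $m$ local toric action to a global action is equivalent to $\wB W=0$, where $W$ is the $(n+m)\times m$ weight matrix, of full column rank $m$. Given two matrices $\wB_1, \wB_2$ satisfying these hypotheses, the first step is to establish $\rank\wB_i=n$ and $\ker\wB_i=\Span(W)$, which follows from the rank $m$ global action being genuinely of rank $m$ (so that $\dim\ker\wB_i=m$). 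Consequently, the row spaces of $\wB_1, \wB_2$ coincide, and $\wB_2=M\wB_1$ for some invertible $n\times n$ matrix $M$.

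Substituting $\wB_2=M\wB_1$ into $\wB_i\Omega^{\wx}=[D_i\ 0]$ gives $MD_1=D_2$; since both are diagonal, the off-diagonal equations $M_{ij}(D_1)_{jj}=0$ force $M$ to be diagonal provided each diagonal entry of $D_1$ is non-zero. This non-degeneracy holds in the Cremmer--Gervais application, where \eqref{criterion} gives $D$ as a non-zero scalar multiple of the identity. The skew-symmetry of the principal parts $B_i$ of $\wB_i$ then enters: from $B_2=MB_1$ together with $B_i^T=-B_i$ we derive $MB_1=B_1 M^T$, and since $M$ is diagonal we have $M^T=M$, so $M$ commutes with $B_1$. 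Writing this entry-wise, $M_{ii}(B_1)_{ij}=(B_1)_{ij}M_{jj}$, whence $M_{ii}=M_{jj}$ whenever $(B_1)_{ij}\ne 0$. Irreducibility of $B_1$---i.e., connectedness of the graph on mutable vertices whose edges are the non-zero entries---then forces all diagonal entries of $M$ to coincide, so $M=cI$ and $\wB_2=c\wB_1$.

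The main obstacle is handling a possible degeneracy of $D$ in full generality. If some $(D_1)_{jj}$ vanishes, the corresponding column of $M$ is left free by $MD_1=D_2$, and one would have to recover the diagonality of $M$ by combining the relation $MB_1=B_1 M^T$ with the connectedness of $B_1$ to rule out off-diagonal coupling through the zero locus of $D_1$. This subtlety does not arise in the applications of the lemma in the present paper, where $D$ is always a non-zero scalar multiple of the identity and the argument above goes through directly.
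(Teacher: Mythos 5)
Your proof is correct in substance and follows essentially the same route as the paper: both arguments reduce the hypotheses to the linear system $\wB\,\Omega^{\wx}=[D\ 0]$, $\wB W=0$, observe that this pins down the row space of $\wB$, and then use skew-symmetry together with irreducibility of the principal part to force the residual diagonal freedom to be scalar. (The paper packages this by forming the invertible matrix $A=[(\Omega^{\wx})^{[n]}\ W]$ and writing $\wB=D(A^{-1})_{[n]}$, rather than comparing two solutions via $\wB_2=M\wB_1$, but the content is identical.) Two small remarks. First, your justification of $\rank\wB_i=n$ is backwards: $\wB_iW=0$ with $W$ of full column rank $m$ only gives $\dim\ker\wB_i\ge m$, i.e.\ $\rank\wB_i\le n$; the reverse inequality comes from $\wB_i(\Omega^{\wx})^{[n]}=D_i$ being invertible --- precisely the nondegeneracy of $D_i$ that you defer to the end, so the order of the argument should be adjusted. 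Second, the degeneracy of $D$ that you single out as the main obstacle does not actually arise: the compatibility criterion of Theorem~1.4 of \cite{GSV1}, as invoked in the paper's proof, yields an \emph{invertible} diagonal $D$, so the lemma holds in the stated generality and no appeal to the specifics of the Cremmer--Gervais application is needed.
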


\begin{proof} Let $\Omega^\wx=(\omega_{ij})_{i,j=1}^{n+m}$ and let $W$ be the weight matrix defined by the local toric action. By Proposition~\ref{Bomega} above and~\cite[Lemma~2.3]{GSV1}, 
compatibility conditions mean that $\wB$ satisfies equations $\wB \Omega^\wx = \left [ D\; 0\right ]$ 
and  $\wB W = 0$, where $D$ is an invertible diagonal matrix. 
The columns of $W$ span the kernel of $\wB$, since the rank of the global toric action is maximal. 
On the other hand,
the submatrix $\left (\Omega^\wx\right )^{[n]}$ is of full rank, and the span of its columns has a
 trivial intersection with the kernel of $\wB$. Define an $(n+m)\times (n+m)$ matrix $A = \left [ \left (\Omega^\wx\right )^{[n]}\; W \right ]$. Then $A$ is invertible and
$\wB A =  \left [ D\; 0\right ]$. Thus $\wB = D (A^{-1})_{[n]}$, and its principal part is irreducible 
if only if the principal part of  $(A^{-1})_{[n]}$ is. Therefore, 
$D=\lambda \one_{n}$ for some $\lambda \in \mathbb{Z}$,
since the principal part of $\wB$ is assumed to be skew-symmetric. This
completes the proof.
\end{proof}

\begin{remark}
\label{afterQ-restore}
{\rm (i) Let $Q$ be the quiver corresponding to $(A^{-1})_{[n]}$, where $A$ is defined in the proof above. Then Lemma~\ref{Q-restore} states that if  $(\wx, Q')$ is an initial seed of another cluster structure compatible with both the same Poisson bracket and the same toric action, then $Q'$ is obtained from $Q$ by replacing every edge with $\lambda$ copies of that edge if $\lambda$ is positive, or with $\lambda$ copies of the reversed edge if $\lambda$ is negative. In this case, we will write
$Q'=\lambda Q$.

(ii) It is easy to see that the formula for $\wB$ in the proof of Lemma \ref{Q-restore} depends on the column span of $W$ rather than on $W$ itself.
}
\end{remark}

 \begin{lemma}
 \label{twoloc}
Let $\CC$ be a cluster structure of geometric type and $\UU$ be the corresponding
upper cluster algebra.
Suppose $\frac{M_1}{f_1^{m_1}}=\frac{M_2}{f_2^{m_2}}=M$
for  $M_1, M_2\in \UU$, $m_1, m_2 \in \mathbb{N}$  and coprime cluster
variables $f_1\ne f_2$. Then $M\in \UU$.
 \end{lemma}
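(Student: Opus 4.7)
The plan is to prove $M \in \UU$ directly from the defining intersection property of the upper cluster algebra: I must show that $M$ is an $\AA$-Laurent polynomial in every cluster $\x$ of $\CC$, that is, $M \in \AA[\x^{\pm 1}]$.

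First I would dispose of the easy clusters. If the cluster $\x$ contains $f_1$, then $f_1$ is one of the generators of $\AA[\x^{\pm 1}]$, so $f_1^{-m_1}$ already lies in $\AA[\x^{\pm 1}]$; combined with $M_1 \in \UU \subset \AA[\x^{\pm 1}]$ this gives $M = M_1/f_1^{m_1} \in \AA[\x^{\pm 1}]$ immediately. The symmetric argument with $f_2$ in place of $f_1$ handles every cluster containing $f_2$.

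The substantive case is a cluster $\x$ containing neither $f_1$ nor $f_2$. The ring $R_\x := \AA[\x^{\pm 1}]$ is a UFD, since the ground ring $\AA = \Z\PP_+$ is a polynomial ring and Laurent polynomial extensions of UFDs are UFDs. By the Laurent phenomenon, $f_1, f_2 \in R_\x$, and by hypothesis $M_1, M_2 \in \UU \subset R_\x$. Equating the two expressions for $M$ yields the identity
\[
M_1 f_2^{m_2} = M_2 f_1^{m_1} \qquad \text{in } R_\x.
\]
Once I know $f_1$ and $f_2$ are coprime in $R_\x$, the powers $f_1^{m_1}$ and $f_2^{m_2}$ are coprime as well, so unique factorization in $R_\x$ forces $f_1^{m_1}$ to divide $M_1$. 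That is precisely the statement $M \in R_\x$, completing the verification at this cluster.

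The main obstacle is therefore transferring the coprimality of the cluster variables $f_1, f_2$ to coprimality inside each Laurent polynomial ring $R_\x$. In the applications in this paper, the functions being compared (such as $\phhi_i(X)$ versus $\pssi_{i+1}(X)$, or $\phhi_{n-1}(X)$ versus $\phhi_{n-1}^{w_0}(X)$) are distinct irreducibles in the polynomial ring $\O(\Mat_n)$. I would argue by contradiction: a hypothetical non-unit common factor $h \in R_\x$ can be normalized by a Laurent monomial in $\x$ to a genuine polynomial factor, and pulling this factor back through the embedding $R_\x \hookrightarrow \FF_\C$ and clearing the common denominators of $f_1, f_2$ viewed as rational functions on $\Mat_n$ would produce a non-unit common divisor of $f_1$ and $f_2$ in the ambient UFD $\O(\Mat_n)$, contradicting the distinct-irreducibles hypothesis. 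With that technical point settled, the UFD argument above closes the proof.
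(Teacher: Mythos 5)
Your argument is essentially the paper's own proof: both reduce to the single identity $M_1 f_2^{m_2} = M_2 f_1^{m_1}$ inside the Laurent polynomial ring of an arbitrary cluster, invoke unique factorization there, and use coprimality of $f_1$ and $f_2$ to force $f_1^{m_1}\mid M_1$, i.e.\ $M$ is Laurent in every cluster. The paper simply takes coprimality in each Laurent ring as given (and skips your case split for clusters containing $f_1$ or $f_2$), so your additional discussion of transferring coprimality from $\O(\Mat_n)$ is a cautious elaboration of the same route rather than a different one.
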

 
 \begin{proof}
 Let $x_1,...,x_n$ be cluster variables in some cluster $t$, and $L_t$ be the algebra of Laurent polynomials in $x_1,...,x_n$.  Then $M_1, M_2,f_1,f_2\in L_t$.

Consider $\tilde M=M_1 f_2^{m_2}=M_2 f_1^{m_1}\in \UU$.
Recall that  the algebra $L_t$  is a unique factorization domain and  $f_1, f_2$ are coprime.
Therefore $\tilde M=f_1^{m_1} f_2^{m_2} M$ with $M\in L_t$. Hence, $M$ is a Laurent polynomial in any cluster $t$, which accomplishes the proof.
\end{proof}

\begin{lemma} \label{MishaSha}
Let $\wB$ and 
$\hB$ be integer $n\times (n+m)$ matrices that differ in the last column only.
Assume that there exist $\tilde w, \hat w\in\C^{n+m}$ such that $\wB\tilde w=\hB\hat w=0$ and
$\tilde w_{n+m}=\hat w_{n+m}=1$. Then for any cluster $(x_1',\dots,x_{n+m}')$ in $\CC(\wB)$ there exists a collection of numbers
$\gamma_i'$, $i\in [n+m]$,  such that $x_i' x_{n+m}^{\gamma_i'}$ satisfy exchange relations of 
the cluster structure $\CC(\hB)$. In particular, for the initial cluster $\gamma_i=\hat w_i-\tilde w_i$,
$i\in [n+m]$.
\end{lemma}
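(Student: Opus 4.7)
The plan is to prove the claim by induction on the length of a mutation sequence reaching the given cluster from the initial one, with the base case handled by a direct substitution and the inductive step reduced to the base case at each new seed.

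First I would establish the key algebraic identity powering the base case. Let $\tilde c, \hat c \in \Z^n$ denote the last columns of $\wB, \hB$; by hypothesis all other columns coincide. Setting $\gamma := \hat w - \tilde w$, so $\gamma_{n+m}=0$, the conditions $\wB \tilde w = \hB \hat w = 0$ yield
$$\wB \gamma = (\wB - \hB)\hat w = \hat w_{n+m}(\tilde c - \hat c) = \tilde c - \hat c,$$
i.e., $(\wB \gamma)_k = \tilde b_{k,n+m} - \hat b_{k,n+m}$ for all $k\in[n]$. Now define $y_i := x_i x_{n+m}^{\gamma_i}$ and substitute $x_i = y_i y_{n+m}^{-\gamma_i}$ into the $\CC(\wB)$-exchange $x_k x_k' = \prod_i x_i^{[\tilde b_{ki}]_+} + \prod_i x_i^{[-\tilde b_{ki}]_+}$. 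Because $\tilde b_{ki} = \hat b_{ki}$ for $i\ne n+m$, the $y_i$-parts of each monomial agree with the corresponding monomials in the $\CC(\hB)$-exchange; the excess powers of $y_{n+m}$ on the two monomials differ by $A - B$, where $A := \sum_i \gamma_i [\tilde b_{ki}]_+$ and $B := \sum_i \gamma_i [-\tilde b_{ki}]_+$. But $A-B = (\wB \gamma)_k = \tilde b_{k,n+m} - \hat b_{k,n+m}$, which is exactly the jump needed so that multiplying both monomials by a common power of $y_{n+m}$ converts the $\wB$-exchange into the $\hB$-exchange. Rearranging determines $\gamma_k'$ explicitly, proving the claim for the initial cluster.

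For the inductive step I would verify two structural invariants under a mutation $\mu_k$ with $k\in[n]$. First, $\mu_k(\wB)$ and $\mu_k(\hB)$ still differ only in the last column, since the entry $b'_{ij}$ with $j\notin\{k,n+m\}$ depends only on columns $i$ and $k$ of the original matrix (both of which coincide for $\wB$ and $\hB$), while column $k$ flips sign identically in both. Second, the standard transformation rule for toric weight vectors,
$$\tilde w^{\mathrm{new}}_j := \begin{cases} -\tilde w_k + \sum_i [\tilde b_{ki}]_+\, \tilde w_i & j=k,\\ \tilde w_j & j\ne k, \end{cases}$$
and likewise for $\hat w$, yields vectors with $\mu_k(\wB)\tilde w^{\mathrm{new}}=\mu_k(\hB)\hat w^{\mathrm{new}}=0$ (a routine check using $\wB\tilde w=0$), and preserves the last coordinate since $k\ne n+m$. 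Thus the hypotheses of the lemma apply verbatim to the pair $(\mu_k(\wB),\mu_k(\hB))$ with vectors $\tilde w^{\mathrm{new}},\hat w^{\mathrm{new}}$. A short calculation—essentially the base-case argument repeated for the mutated seed—shows that the $\gamma$-exponent of the newly produced cluster variable matches $\hat w^{\mathrm{new}}_k - \tilde w^{\mathrm{new}}_k$, so the same bookkeeping continues at the next seed.

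The main obstacle is keeping track of the $\pm$-truncations $[\,\cdot\,]_+$ in the exchange relations: one must check that the excess powers of $x_{n+m}$ on the two binomial terms, which a priori differ, combine into a single common factor that can be absorbed by rescaling $x_k'$. The identity $(\wB\gamma)_k = A-B$ derived above is exactly the mechanism that makes this possible, and once it is in hand the remainder of the argument is a mechanical induction.
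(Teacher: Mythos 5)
Your proof is correct, but it takes a different route from the paper's. The paper's argument is global and functional: it applies the same mutation sequence to both seeds to get $\tilde y=\tilde g(x_1,\dots,x_{n+m})$ and $\hat y=\hat g(\hat x_1,\dots,\hat x_{n+m})$, observes that $\tilde g$ and $\hat g$ coincide after the substitution $z_{n+m}=1$ (since $\wB$ and $\hB$ differ only in the last column), invokes Lemma~2.3 of \cite{GSV1} to extend the local toric actions with weights $\tilde w,\hat w$ to global ones (so $\tilde g,\hat g$ are quasi-homogeneous), and then specializes the torus parameter to $d=x_{n+m}^{-1}$ to normalize the last argument to $1$ and read off $\hat y=\tilde y\, x_{n+m}^{\hat w_y'-\tilde w_y'}$. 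You instead unpack the same mechanism seed by seed: a direct verification at the initial cluster via the identity $(\wB\gamma)_k=\tilde b_{k,n+m}-\hat b_{k,n+m}=A-B$ (which is exactly the cancellation that makes the two binomial terms acquire a common power of $x_{n+m}$), followed by an induction showing that the hypotheses propagate under $\mu_k$ --- the mutated matrices still differ only in the last column, and the mutated weight vectors stay in the respective kernels with last coordinate $1$. Your inductive step in effect re-derives the content of the cited toric-action lemma, so your argument is more self-contained and makes the exponent bookkeeping explicit, at the cost of being longer; the paper's version is shorter but leans on the external lemma and on the compatibility of local toric actions across all clusters. (One trivial slip: in your check that $\mu_k(\wB)$ and $\mu_k(\hB)$ differ only in the last column, the entry $b'_{ij}$ depends on columns $j$ and $k$ of the original matrix, not columns $i$ and $k$; the conclusion is unaffected.)
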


\begin{proof} Apply the same sequence of cluster transformations  
to $(x_1,\dots,x_{n+m})$ and $(\hat x_1,\dots,\hat x_{n+m})$
to obtain cluster variables 
$\tilde y=\tilde g(x_1,\dots,x_{n+m})$ in $\CC(\wB)$ and $\hat y=\hat g(\hat x_1,\dots,\hat x_{n+m})$ in $\CC(\hB)$. Note that the restrictions of $\tilde g(z_1,\dots,z_{m+n})$ and $\hat g(z_1,\dots, \allowbreak z_{m+n})$
to $z_{n+m}=1$ coincide, since $\wB$ and $\hB$ differ only in the last column corresponding to a stable
variable.

By \cite[Lemma 2.3]{GSV1}, local toric actions with exponents $\tilde w$ and $\hat w$, respectively, can be extended to global toric actions, hence there exist $\tilde w_y'$ and $\hat w_y'$ such that 
$\tilde y d^{\tilde w_y'}=\tilde g(x_1d^{\tilde w_1},\dots,x_{n+m}d^{\tilde w_{n+m}})$ and 
$\hat yd^{\hat w_y'} =
\hat g(\hat x_1d^{\hat w_1},\dots,\hat x_{n+m}d^{\hat w_{n+m}})$ for any $d\in\C^*$. 
Assume $\hat x_i=x_ix_{n+m}^{\gamma_i}$, $i\in[n+m]$, for some $\gamma_i$ to be chosen later. For any 
fixed values of $x_1,\dots,x_{n+m}$ with $x_{n+m}\ne0$ we can take $d=x_{n+m}^{-1}$ to get
$$
\tilde y x_{n+m}^{-\tilde w_y'}=\tilde g(x_1x_{n+m}^{-\tilde w_1},\dots,x_{n+m}x_{n+m}^{-\tilde w_{n+m}})
$$ 
and 
$$
\hat y x_{n+m}^{-\hat w_y'} =
\hat g(x_1x_{n+m}^{\gamma_1-\hat w_1},\dots,x_{n+m}x_{n+m}^{\gamma_{n+m}-\hat w_{n+m}}).
$$
Let us choose $\gamma_i=\hat w_i-\tilde w_i$, $i\in [n+m]$. Then the arguments in the right hand sides of the above expressions coincide; moreover, the last argument equals $1$. Therefore, $\tilde y x_{n+m}^{-\tilde w_y'}=
 \hat y x_{n+m}^{-\hat w_y'}$, and hence $\hat y=\tilde y x_{n+m}^{\hat w_y'-\tilde w_y'}$ as required.
\end{proof}

  \begin{lemma}
 \label{antipoiss}
Let  $W_0$ be the matrix corresponding to the longest permutation $w_0$. The map $X
\mapsto W_0 X W_0$ is anti-Poisson with respect to
the Cremmer--Gervais Poisson bracket on $\Mat_n$.
 \end{lemma}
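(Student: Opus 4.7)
The plan is to verify the anti-Poisson property by a direct computation using the explicit formula for the Cremmer--Gervais bracket in Lemma~\ref{CGR}. Set $g_i(X)=f_i(W_0XW_0)$ and $Y = W_0 X W_0$; since $W_0^2 = \one$, a standard computation of gradients with respect to the trace form gives $\nabla g_i(X) = W_0\, \nabla f_i(Y)\, W_0$. Consequently $\nabla g_i(X)\cdot X = W_0(\nabla f_i(Y)\cdot Y)W_0$ and $X\cdot\nabla g_i(X) = W_0(Y\cdot \nabla f_i(Y))W_0$. Using the invariance $\langle W_0 A W_0, W_0 B W_0\rangle = \langle A,B\rangle$, I would rewrite $\{g_1,g_2\}(X)$ as $\langle \tilde R_+(u),v\rangle - \langle \tilde R_+(p),q\rangle$, where $u=\nabla f_1(Y)Y$, $v=\nabla f_2(Y)Y$, $p=Y\nabla f_1(Y)$, $q=Y\nabla f_2(Y)$, and $\tilde R_+(\eta):=W_0R_+(W_0\eta W_0)W_0$.

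The heart of the argument is to show that
\[
\tilde R_+(\eta) \;=\; -R_-(\eta) \;-\; \tfrac{1}{n}\Tr(\eta)\,\one,
\]
where $R_-:=R_+-\Id$ (the identity $R_+-R_-=\Id$ being a consequence of $r+r^{21}=\mathfrak{t}$). This reduces to three elementary observations. First, $W_0 S W_0 = S^T$, so conjugation by $W_0$ interchanges $\gamma_+$ and $\gamma_-$ and therefore swaps the Neumann-type operators $(1-\gamma_\pm)^{-1}$. Second, conjugation by $W_0$ interchanges the strictly upper and strictly lower triangular parts while stabilizing the diagonal, so $(W_0\eta W_0)_{>0}=W_0\eta_{<0}W_0$ and $(W_0\eta W_0)_{\leq 0}=W_0\eta_{\geq 0}W_0$. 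Third, $W_0 D_n W_0 = (n+1)\one - D_n$. Plugging these into \eqref{R+CG} and applying the identity $\bigl(\tfrac{\gamma_-}{1-\gamma_-}+\tfrac{\gamma_+}{1-\gamma_+}\bigr)(\eta_0)=\Tr(\eta)\one-\eta_0$ for any diagonal $\eta_0$ produces the asserted formula after cancellation of the $D_n$-dependent terms.

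Given this formula, $\{g_1,g_2\}(X)$ becomes $-\langle R_-(u),v\rangle + \langle R_-(p),q\rangle$ plus a trace-correction proportional to $\Tr(u)\Tr(v)-\Tr(p)\Tr(q)$; but $\Tr(u)=\Tr(\nabla f_1(Y)Y)=\Tr(Y\nabla f_1(Y))=\Tr(p)$ and similarly $\Tr(v)=\Tr(q)$, so this correction vanishes. What remains is $-\{f_1,f_2\}_{R_-}(Y)$; and since the Sklyanin formula \eqref{sklyabra} gives the same bracket whether one uses $R_+$ or $R_-$ (their difference being $\langle\nabla^L f_1,\nabla^L f_2\rangle-\langle\nabla^R f_1,\nabla^R f_2\rangle$, which vanishes by cyclicity of the trace), we obtain $\{g_1,g_2\}(X)=-\{f_1,f_2\}(Y)$, as required.

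The main technical obstacle will be the explicit computation of $\tilde R_+$, specifically keeping track of how conjugation by $W_0$ interacts with the shift operators $\gamma_\pm$ and with the $D_n$-dependent trace-correction terms appearing in \eqref{R+CG}. Once that single algebraic identity is in hand, the rest of the argument is essentially bookkeeping.
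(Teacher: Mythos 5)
Your proof is correct, but it takes a genuinely different route from the paper. The paper's argument is structural: for any Belavin--Drinfeld triple $T=(\Gamma_1,\Gamma_2,\gamma)$ one forms the triple $T^{w_0}$ with $\Gamma_1^{w_0}=-w_0(\Gamma_2)$, $\Gamma_2^{w_0}=-w_0(\Gamma_1)$ and the conjugated isometry, observes that $\Ad_{w_0}\otimes\Ad_{w_0}(r)=r^{w_0}_{21}$ (so conjugation by $w_0$ is anti-Poisson from the $T$-bracket to the $T^{w_0}$-bracket), and then notes that the Cremmer--Gervais triple is fixed by this involution. That argument is three lines long, works uniformly for all BD classes, and needs no explicit formula for $R_+$; its price is that it quietly relies on the classification (Proposition~\ref{bdclass}) and on the uniqueness of $r_0$ in the CG class ($k_T=1$) to conclude that the two brackets literally coincide rather than merely lie in the same class. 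Your computation instead verifies the identity $W_0R_+(W_0\eta W_0)W_0=-\bigl(R_+(\eta)-\eta\bigr)-\tfrac1n\Tr(\eta)\,\one$ directly from \eqref{R+CG}; I checked it and it holds (the three ingredients $W_0SW_0=S^T$, the swap of triangular parts, and $W_0D_nW_0=(n+1)\one-D_n$ are all right, and the residual diagonal terms cancel via the identity $\sigma_-\xi+\sigma_+\xi=\Tr\xi\cdot\one-\xi$, which is \eqref{idty2}). The trace correction then dies because $\Tr(\nabla f\,Y)=\Tr(Y\nabla f)$, and replacing $R_+$ by $R_+-\Id$ leaves the Sklyanin bracket unchanged by cyclicity. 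This buys you a self-contained, elementary verification on all of $\Mat_n$ that does not invoke the BD machinery, and it handles the $\gl_n$-extension terms explicitly rather than by appeal to $\det$ being a Casimir; the cost is that it is tied to the explicit Cremmer--Gervais formula and would have to be redone for any other class. One cosmetic caution: on $\gl_n$ the relation $r+r^{21}=\mathfrak t$ holds only modulo the center (indeed $R_+\one=0$ forces $\langle r,\one\otimes\one\rangle=0\ne\tfrac12\langle\one,\one\rangle$), so it is cleaner to treat $R_-:=R_+-\Id$ purely as a definition, as your argument effectively does, rather than as a consequence of the $\mathfrak t$-relation.
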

 
 \begin{proof} The map $-w_0$ permutes simple positive roots. For any
Belavin--Drinfeld triple
 $T=(\Gamma_1,\Gamma_2, \gamma)$, denote, $\Gamma_{1}^{w_0} = - w_0(\Gamma_{2})$ and
 $\Gamma_{2}^{w_0} = - w_0(\Gamma_{1})$.
 Define a map $\gamma^{w_0} \: \Gamma_{1}^{w_0} \to \Gamma_{2}^{w_0}$ by
$\gamma^{w_0} ( -w_0 (\gamma(\alpha)))=
 -w_0 (\alpha)$, $\alpha \in \Gamma_1$. Clearly,
$T^{w_0}=(\Gamma^{w_0}_1,\Gamma^{w_0}_2, \gamma^{w_0})$ is also a valid
Belavin--Drinfeld triple. If $r$ and $r^{w_0}$ are r-matrices corresponding to $T$
and $T^{w_0}$, then Proposition \ref{bdclass} implies that $\Ad_{w_0}\otimes
\Ad_{w_0} (r) = r^{w_0}_{21}$. This means that the conjugation by $w_0$ is an
anti-Poisson
 map from $(G,\{,\}_r)$ to $(G,\{,\}_{r^{w_0}})$. It remains to observe that for
$G=SL_n$ equipped with the Cremmer--Gervais Poisson bracket,
 $T$ and $T^{w_0}$ coincide.
 \end{proof}
 
\section*{Acknowledgments}
 M.~G.~was supported in part by NSF Grants DMS \#0801204 and  DMS \# 1101462. 
M.~S.~was supported in part by NSF Grants DMS \#0800671 and DMS \# 1101369.  
A.~V.~was supported in part by ISF Grant \#162/12. This paper was partially written during
our joint stays at MFO Oberwolfach (Research in Pairs program, August 2010), at the Hausdorff Institute
(Research in Groups program, June-August, 2011) and at the MSRI (Cluster Algebras program, August-December, 2012). We are grateful to these institutions for
warm hospitality and excellent working conditions. We are also grateful to our home institution for support in arranging visits by collaborators.
Special thanks are due to Bernhard Keller, whose quiver mutation applet proved to be an indispensable tool in our work on this project and to Sergei Fomin for illuminating discussions.

 \end{document}